\documentclass[12pt]{amsart}

\setlength{\parskip}{6pt}
\setlength{\parindent}{0pt}

\usepackage{tgpagella}
\usepackage{euler}
\usepackage[T1]{fontenc}
\usepackage{amsmath, amssymb}
\usepackage[hidelinks]{hyperref}
\usepackage[english]{babel}
\usepackage{mathrsfs}
\usepackage{eucal}
\usepackage[all]{xy}
\usepackage{tikz}
\usepackage{mathtools}
\usepackage{multicol}

\newtheorem{thm}{Theorem}[subsection]
\newtheorem*{thm*}{Theorem}
\newtheorem{lem}[thm]{Lemma}
\newtheorem*{prob*}{Problem}
\newtheorem{fact}[thm]{Fact}

\newtheorem{prop}[thm]{Proposition}
\newtheorem*{prop*}{Proposition}
\newtheorem{conj}[thm]{Conjecture}
\newtheorem{cor}[thm]{Corollary}
\newtheorem*{cor*}{Corollary}

\theoremstyle{definition}
\newtheorem{defn}[thm]{Definition}
\newtheorem*{defn*}{Definition}

\newtheorem{remark}[thm]{Remark}

\newtheorem{question}[thm]{Question}

\newtheorem*{question*}{Question}
\newtheorem*{Pquestion*}{Popa's question}

\newtheorem*{conv*}{Convention}

\newenvironment{customthm}[1]
  {\innercustomthm}
  {\endinnercustomthm}

\newenvironment{customcor}[1]
  {\innercustomcor}
  {\endinnercustomcor}

\newenvironment{customprop}[1]
  {\innercustomprop}
  {\endinnercustomprop}

\def\bf{\mathbf}

\def\bd{\mathbf}

\def\cal{\mathcal}

\def\u{\mathsf 1}

\def \ggr{G_{\operatorname{GR}}}

\def \Gsm{\cal G_{\operatorname{sm}}}
\def \Gam{\cal G_{\operatorname{am}}}
\def \Gll{\cal G_{\operatorname{ll}}}
\def \Gamll{\cal G_{\operatorname{am},\operatorname{ll}}}
\def \Gsmll{\cal G_{\operatorname{sm},\operatorname{ll}}}
\def \Gamw{\cal G_{\operatorname{am},w}}
\def \gos{G_{\operatorname{OS}}}
\def \ec{\operatorname{ec}}

\makeatletter

\def\dotminussym#1#2{%
  \setbox0=\hbox{$\m@th#1-$}%
  \kern.5\wd0%
  \hbox to 0pt{\hss\hbox{$\m@th#1-$}\hss}%
  \raise.6\ht0\hbox to 0pt{\hss$\m@th#1.$\hss}%
  \kern.5\wd0}

\def \u{\mathcal U}

%Scott's commands
\newcommand{\mc}{\mathcal}

\textwidth 5.75in
\oddsidemargin 0.375in
\evensidemargin 0.375in

%%%%%%%%%%%%%%%%%%%%%%%%%%%%%%%%%%%%%%%%%%%%%%

\begin{document}
%%%%%%%%%%%%%%%%%%%%%%%%%%%%%%%%%%%%%%%%%%%%%%
\begin{abstract}

     We introduce and study Polish topologies on various spaces of countable enumerated groups, where an enumerated group is simply a group whose underlying set is the set of natural numbers.
    Using elementary tools and well-known examples from combinatorial group theory, combined with the Baire category theorem, we obtain a plethora of results demonstrating that several phenomena in group theory are generic. In effect, we provide a new topological framework for the analysis of various well known problems in group theory. We also provide a connection between genericity in these spaces, the word problem for finitely generated groups and model-theoretic forcing. Using these connections, we investigate the natural question: when does a certain space of enumerated groups contain a comeager isomorphism class? We obtain a sufficient condition that allows us to answer the question in the negative for the space of all enumerated groups and the space of left orderable enumerated groups.  We document several open questions in connection with these considerations.
\end{abstract}
\title{Generic algebraic properties in spaces of enumerated groups}
\author{Isaac Goldbring, Srivatsav Kunnawalkam Elayavalli and Yash Lodha}
\thanks{I. Goldbring was partially supported by NSF  grant DMS-2054477. Y. Lodha was supported by a CMC fellowship at the Korea institute of advanced study, and a START-preis grant of the Austrian science fund.}

\address{Department of Mathematics\\University of California, Irvine, 340 Rowland Hall (Bldg.\# 400),
Irvine, CA 92697-3875}
\email{isaac@math.uci.edu}
\urladdr{http://www.math.uci.edu/~isaac}

\address{Department of Mathematics\\Vanderbilt University, 1326 Stevenson Center, Station B 407807, Nashville, TN 37240}
\email{srivatsav.kunnawalkam.elayavalli@vanderbilt.edu}
\urladdr{https://sites.google.com/view/srivatsavke}

\address{Faculty of mathematics, University of Vienna}
\email{yashlodha763@gmail.com}
\urladdr{https://yl7639.wixsite.com/website}

\maketitle

\section{Introduction}

This paper aims to contribute to the endeavour of studying the theory of countable groups from a topological lens. We are interested in the setting of \textbf{enumerated groups}, where an enumerated group is simply a group structure on the set $\mathbf{N}$ of natural numbers. (One may view this also as a countable group endowed with a fixed bijection with $\bf N$.) Equipped with a natural topology (constructed in Section \ref{Polish}), the set of all enumerated groups forms a Polish space. The space of enumerated groups is very natural from the point of view of first-order logic in that it is simply the space of countably infinite $L$-structures in the case that $L$ is the usual first-order language of groups. In group theoretic language, basic open sets in this topology are exactly the sets of all enumerated groups satisfying a given finite system of equations and inequations. It is imperative to caution the reader early on that this space is notably different from the usual topology that group theorists consider on the space of (finitely generated and marked) groups, namely, the \textbf{Grigorchuk space of marked groups}. 
%For instance, in the setting space of marked groups one considers pairs $(G,S)$ of finitely generated groups endowed with an ordered generating set, upto isomorphisms that induce an order preserving bijection between the generating sets. (For more on the connection between these topologies see Subsection \ref{marked} below). 

Rather than embarking on a study of this space itself, we isolate and study a large family of relevant subspaces.
Given one among a specified list of properties of countable groups, we show that the subspace topology endows the family of enumerated groups satisfying this property with the structure of a Polish space. In this paper, we consider the following properties:

 \begin{multicols}{2}
    \begin{itemize}
        \item Amenability
        \item No $\mathbf{F}_2$ subgroups
        \item left orderability
        \item local indicability
        \item biorderability
        \item unique product property
        \item torsion-free
        \item soficity
        \item does not satisfy a law
    \end{itemize}
    \end{multicols}

In the sequel, we denote the set of these group theoretic properties by $\mathcal{P}$.
Given a property $P\in \mathcal{P}$, we let $\mathcal{G}_P$ denote the subspace of $\mathcal{G}$ consisting of enumerated groups which satisfy $P$. We will show that, for each $P\in \mathcal{P}$, the space $\mathcal{G}_P$, endowed with the subspace topology, is a Polish space.

Our main interest in Polish spaces is that the Baire category theorem applies to such spaces.
Recall that the Baire category theorem states that, in any Polish space $X$, the intersection of countably many dense open subsets of $X$ is once again a dense subset of $X$. In the language of Baire category, an intersection of countably many dense open sets is called \textbf{comeager}, and if a certain property holds for all elements of a comeager subset of the space, it is natural to say that the property is \textbf{generic} in this space. 

This article is centered around the following question:

\begin{question}\label{Question: main}
Fix $P\in \mathcal{P}$.
\begin{enumerate}
\item What group-theoretic properties are generic in $\mathcal{G}_P$?
\item Is there is a comeager set $\mathcal{X}_P\subset \mathcal{G}_P$ such that all groups in $\mathcal{X}_P$ are isomorphic?\footnote{We would like to point out that after seeing an old version of this article, D. Osin asked us the this question in the context of $P$ being amenability. This is what inspired us to work on this problem in this generality. Osin's original question on amenability however remains a difficult open problem.}
\end{enumerate}
\end{question}

 The second part of the above question turns out to be quite difficult, and much of the work done in this article will illustrate why this is the case.
We provide a partial answer to this in Theorem \ref{main theorem 4}. This settles the question in the negative for the space of all enumerated groups, and also for the space of all left orderable enumerated groups (see Corollary \ref{cor:leftorderablemeager}).
In connection to the first part of the above question, we indeed demonstrate that a plethora of group theoretic phenomena are in fact generic in $\mathcal{G}_P$. 

A rather elementary, yet important, feature of all the  properties in $\mathcal{P}$ is that they are closed under direct sums and directed unions. Moreover, all properties in $\mathcal{P}$ (besides not satisfying a law) are inherited by subgroups. Our conceptual recipe may have applications for various group theoretic properties not considered here which also share these features.

To establish some of our results, an analogy is also drawn between the Polish space of enumerated groups and the aforementioned Grigorchuk space of marked groups; in this connection, various elementary tools from combinatorial group theory are used.
The Grigorchuk space of finitely generated marked groups is the space $$\mathcal{M}=\bigcup_{n\in \mathbf{N}}\mathcal{M}_n$$ where $\mathcal{M}_n$ is the set of marked groups consisting of pairs $(G,S)$, where $G$ is a group endowed with an ordered generating set $S$ of cardinality $n$.
(We will recall the topology on $\mathcal{M}$ in Subsection \ref{marked}.)
A property $P\in P$ is said to be an \textbf{open} (respectively \textbf{closed}) property if the set of groups that satisfies $P$ forms an open (respectively closed) set in $\mathcal{M}$.

A finitely generated group $G$ is said to be an \textbf{isolated group}, if some (equivalently, any) marking $(G,S)$ of $G$ is an isolated point in $\mathcal{M}$. (Note that isolated groups are finitely presentable.)  Examples of isolated groups include finite groups and finitely presented simple groups.

In this paper, we will need to work with a generalization of the notion of an isolated group.  A finitely generated group $G$ is said to be \textbf{$P$-near isolated} (where $P\in \mathcal{P}$) if there is a finitely presented group $H$ and a finite subset $X\subset H$ such that:
\begin{enumerate}
\item There is a surjective homomorphism $\phi:H\to K$ that is injective on $X$ and for which $K$ has property $P$.
\item For any surjective homomorphism $\phi:H\to K$ that is injective on $X$ and for which $K$ has $P$, we have that $K$ contains a subgroup isomorphic to $G$.
\end{enumerate}

Note that an isolated group $G$ is $P$-near isolated for $P$ being the tautologically true property; simply take $H=G$ and let the finite subset $X$ be the open ball in the Cayley graph that isolates $G$.

\begin{remark}   In this article, we will encounter several examples of $P$-near isolated groups for various $P\in \mathcal{P}$.  
These examples will play a key role in establishing the genericity of various properties in the relevant spaces.
The following are two natural situations when $G$ is $P$-near isolated:
\begin{enumerate}
\item $G$ embeds in a simple subgroup of a finitely presented group that satisfies $P$.
\item $G$ embeds in an isolated group that satisfies $P$.
%\item $G$ is an isolated group that satisfies $P$.
\end{enumerate}
\end{remark}
%As another example, consider a finitely presented group $H$ with a simple subgroup $N$, which in turn contains $G$ as a subgroup. Any nonempty finite set $X\subset N$ witnesses the fact that $G$ is near-isolated.
%If, moreover, $H$ has property $P$, then $G$ is $P$-near isolated.

\subsection{The subgroup structure of a "generic group".}

Our first collection of results center around the subgroup structure of generic groups in the relevant spaces: 

\begin{thm}\label{Theorem: main1}
Let $P\in \mathcal{P}$ be a property. Then the following hold:
\begin{enumerate}
\item There is a comeager set $\mathcal{X}_P\subset \mathcal{G}_P$ such that, for every enumerated group $G\in \mathcal{X}_P$ and every $P$-near isolated group $H$, $G$ contains a subgroup isomorphic to $H$.
%In particular, $G$ contains every isolated group that satisfies $P$.
\item Let $Q$ be an open property of finitely generated groups for which there is a finitely generated group that satisfies both $Q$ and $P$. Then there is an open dense set $\mathcal{X}_P\subset \mathcal{G}_P$ such that every enumerated group $G\in \mathcal{X}_P$ contains a finitely generated subgroup satisfying $Q$.

\end{enumerate}
\end{thm}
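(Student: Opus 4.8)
The plan is to prove both parts by exhibiting explicit dense open subsets of $\mathcal{G}_P$ and invoking the Baire category theorem; the entire point is to arrange that the relevant conditions, which \emph{a priori} quantify over infinitely much data, are genuinely open. For part (1) fix a property $P$ that is inherited by subgroups (so every member of $\mathcal{P}$ except ``does not satisfy a law''). The key observation is that, for a fixed finitely presented witness $H=\langle s_1,\dots,s_d\mid r_1,\dots,r_m\rangle$ with finite subset $X=\{x_1,\dots,x_\ell\}$ (each $x_k$ a word in the $s_j$), the set
\[
\mathcal{U}_{(H,X)}=\bigl\{M\in\mathcal{G}_P:\ \exists\,\bar{g}\in\mathbf{N}^d \text{ with } r_i(\bar{g})=e\ (1\le i\le m)\text{ and the } x_k(\bar{g}) \text{ pairwise distinct}\bigr\}
\]
is open. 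Indeed, for each fixed tuple $\bar{g}$ each equation $r_i(\bar{g})=e$ and each inequation $x_k(\bar{g})\ne x_{k'}(\bar{g})$ is decided by finitely many entries of the multiplication table, so the bracketed condition is clopen, and $\mathcal{U}_{(H,X)}$ is a union over the countably many $\bar{g}$. If $M\in\mathcal{U}_{(H,X)}$ with witness $\bar{g}$, then $s_j\mapsto g_j$ defines a surjection $\phi\colon H\to K:=\langle\bar{g}\rangle\le M$ that is injective on $X$; since $P$ passes to subgroups we get $K\models P$, so $\phi$ is a good quotient, and condition (2) of $P$-near isolatedness forces $G\hookrightarrow K\le M$. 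This is exactly why the definition insists that $H$ be finitely presented and $X$ finite: it converts ``$M$ contains a copy of $G$'' into a union of finitary conditions.

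To establish density of $\mathcal{U}_{(H,X)}$ for witnesses admitting at least one good quotient $K_0\models P$ (these include the witness of any $P$-near isolated $G$, by condition (1)), I would use the direct-sum trick. Given a basic open set $U$ and $M_0\in U\cap\mathcal{G}_P$, the group $M_0\oplus K_0$ again satisfies $P$ (as $P$ is closed under direct sums) and contains $K_0$; re-enumerating $M_0\oplus K_0$ by a bijection $\mathbf{N}\to M_0\oplus K_0$ that agrees with $n\mapsto(n,e)$ on the finitely many coordinates named in $U$ preserves every defining equation and inequation of $U$, so the re-enumerated group lies in $U$ and, taking the image of the generators of $K_0$ as witness, lies in $\mathcal{U}_{(H,X)}$. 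Since there are only countably many pairs $(H,X)$ (finitely presented $H$ up to isomorphism, finite $X$ given by words), $\mathcal{X}_P:=\bigcap_{(H,X)}\mathcal{U}_{(H,X)}$ is a comeager set with the required property.

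For part (2), let $(G_0,S_0)$ be a marking of a finitely generated group satisfying both $P$ and $Q$, with $|S_0|=d$. Because $Q$ is an \emph{open} property of marked groups, there is a radius $r$ such that any marked group whose ball of radius $r$ agrees with that of $(G_0,S_0)$ also satisfies $Q$. I would then take
\[
\mathcal{X}_P=\bigl\{M\in\mathcal{G}_P:\ \exists\,\bar{g}\in\mathbf{N}^d \text{ such that } (\langle\bar{g}\rangle,\bar{g}) \text{ and } (G_0,S_0) \text{ have the same radius-}r\text{ ball}\bigr\}.
\]
Agreement of the radius-$r$ balls is a finite conjunction of equations and inequations among the words of length $\le r$ in $\bar{g}$, hence clopen for each fixed $\bar{g}$ and open after unioning over $\bar{g}$; any witness $\bar{g}$ yields a finitely generated subgroup $\langle\bar{g}\rangle\le M$ satisfying $Q$ by openness of $Q$. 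Density is witnessed by $M_0\oplus G_0$ re-enumerated into the given basic open set exactly as before, and $M_0\oplus G_0\models P$ since $P$ is closed under direct sums. Note that subgroup-closure of $P$ is \emph{not} needed here, since $G_0$ itself carries $P$ while we only extract $Q$ from the embedded subgroup; thus part (2) goes through uniformly for every $P\in\mathcal{P}$.

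The main obstacle is the property ``does not satisfy a law,'' which is not inherited by subgroups, so the part (1) argument breaks down: a tuple $\bar{g}$ witnessing membership in $\mathcal{U}_{(H,X)}$ produces a quotient $K=\langle\bar{g}\rangle$ of $H$ injective on $X$, but there is no reason for $K$ itself to fail every law, and ``$K$ satisfies no law'' is a genuine $G_\delta$ (not open) condition that cannot be pinned down by any single finite witness. Handling this case appears to require a separate argument --- either restricting attention to near-isolated $G$ whose good quotients fail a law for a finitary, hence open, reason, or replacing the open-set argument by a forcing/fusion construction that builds a faithful copy of a fixed law-free good quotient into the generic group. I expect this to be the crux; for subgroup-closed $P$ part (1), and for all $P$ part (2), follow the clean template above.
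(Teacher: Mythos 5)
Your proposal follows essentially the same route as the paper's proof. For part (1), the paper also converts near-isolatedness into a finitary open condition --- phrased there via a radius-$m$ ball of the Cayley graph of $H$, viewed as an open subset of the space of marked groups, rather than directly via the system ``the relations hold and the $X$-images are pairwise distinct,'' but the two formulations are interchangeable --- it proves density by the same direct-sum-plus-re-enumeration trick (this is exactly Lemma \ref{closed under products density}), and it intersects over the countably many near-isolated groups; part (2) is likewise identical to the paper's argument, including the observation that subgroup-closure of $P$ is not needed there. One small point of hygiene: your final intersection should run only over pairs $(H,X)$ that admit at least one good quotient (equivalently, over witness pairs of $P$-near isolated groups), since for other pairs $\mathcal{U}_{(H,X)}$ may be empty rather than dense; this is clearly what you intend.

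The more interesting point is the obstacle you flag for ``does not satisfy a law.'' Your concern is legitimate, and in fact the paper's own proof does not resolve it: the step asserting that $\langle \vec a\rangle_K$ contains a copy of $G$ applies condition (2) of near-isolatedness to the quotient $\langle \vec a\rangle_K$, which requires that \emph{subgroup} to have property $P$ --- precisely the subgroup-closure you isolate, which holds for every property in $\mathcal{P}$ except lawlessness. So on this point your write-up is, if anything, more careful than the paper. The repair you gesture at (forcing in a fixed lawless good quotient) is the correct one, and the tools appear later in the paper: since lawlessness passes to overgroups, $M\oplus K_0$ is lawless whenever $M$ is, for \emph{any} countable group $K_0$; hence, given a lawless good quotient $K_0$ (which contains $G$ by condition (2)), player II can keep adjoining finite pieces of the diagram of $K_0$ on fresh constants, each extension being realized in $M\oplus K_0$ for some lawless $M$ realizing the current system. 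This shows ``contains a copy of $K_0$'' is enforceable for the space of lawless enumerated groups; since that property is analytic, hence Baire measurable, Theorem \ref{baireenforceable} converts enforceability into comeagerness, completing part (1) in the one case your (and the paper's) finitary argument does not cover.
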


In order to state our next result, we need a couple more definitions.  We say that a property $P\in \mathcal{P}$ is a \textbf{Boone-Higman} property if every group $G$ that has a solvable word problem and satisfies $P$ embeds in a simple subgroup of a finitely presented group $H$ that also satisfies $P$.  (The reason for the terminology is the Boone-Higman theorem, which states that the tautologically true property is a Boone-Higman property.)
Several of the properties that we consider in this article are Boone-Higman properties, most notably left orderability (which was demonstrated in a beautiful paper of Bludov and Glass \cite{bludovglass}).

Given a property $P\in \mathcal{P}$, we say that $P$ is \textbf{strongly undecidable} if there exists a finitely presented group $G$ and a finite subset $X\subset G$ such that:
\begin{enumerate}
\item There is a group $H$ satisfying $P$ and a surjective homomorphism $\phi:G\to H$ whose restriction to $X$ is injective.
\item If $H$ is a group satisfying $P$ for which there exists a surjective homomorphism $\phi:G\to H$ whose restriction to $X$ is injective, then $H$ has an unsolvable word problem.
\end{enumerate}
We first note that the tautologically true property is strongly undecidable, that is, there is indeed an example of a finitely presented group all of whose nonidentity quotients have an unsolvable word problem. (This is the main theorem in \cite{CFMiller}.) 
%It appears to be an open problem whether left orderability is strongly undecidable.
% However, our next result examines, in particular, the subgroup structure of generic groups in the space of left orderable groups:

%\begin{thm}\label{Theorem: main1}
%There is comeager set $\mathcal{X}_{lo}\subset \mathcal{G}_{lo}$ such that for each group $G\in \mathcal{X}_{lo}$, the following holds. Every finitely generated left orderable group that has a solvable word problem embeds in $G$.
%\end{thm}

%More generally, the following holds.

\begin{thm}\label{Theorem: main2}
Let $P\in \mathcal{P}$ be a Boone-Higman property (for instance, left orderability). Then there is a comeager set $\mathcal{X}_P\subset \mathcal{G}_P$ such that every group in $\mathcal{X}_P$ contains an isomorphic copy of every finitely generated group satisfying $P$ with solvable word problem.
\end{thm}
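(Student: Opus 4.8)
The plan is to derive Theorem \ref{Theorem: main2} as an essentially immediate consequence of Theorem \ref{Theorem: main1}(1), by showing that the Boone-Higman hypothesis forces every finitely generated group satisfying $P$ with solvable word problem to be $P$-near isolated. Concretely, I would take $\mathcal{X}_P$ to be exactly the comeager set produced by Theorem \ref{Theorem: main1}(1). Since that set already contains a subgroup isomorphic to every $P$-near isolated group inside each of its members, it suffices to verify the class inclusion: every finitely generated group $K$ satisfying $P$ with solvable word problem is $P$-near isolated.

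To establish this inclusion, let $K$ be finitely generated, satisfy $P$, and have solvable word problem; here $K$ plays the role of the embedded group $G$ in the definition of $P$-near isolated. Since $P$ is a Boone-Higman property, $K$ embeds into a simple subgroup $S$ of a finitely presented group $H$ that also satisfies $P$. This is precisely situation (1) of the Remark following the definition of $P$-near isolated, and I would record the short verification. Fix a nontrivial element $s \in S$ and set $X = \{1, s\} \subset H$. The identity map $\id_H$ witnesses condition (1) of the definition, since $H$ satisfies $P$ and contains $S \supseteq K$. For condition (2), suppose $\phi \colon H \to L$ is a surjection injective on $X$ with $L$ satisfying $P$; then $\phi(s) \neq 1$, so the kernel of the restriction $\phi|_S$ is a proper normal subgroup of the simple group $S$, hence trivial. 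Therefore $\phi|_S$ is injective, $\phi(S) \cong S$ contains a copy of $K$, and so does $L$. This shows that $K$ is $P$-near isolated.

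With the inclusion in hand, the theorem follows at once: for any $G \in \mathcal{X}_P$ and any such $K$, Theorem \ref{Theorem: main1}(1) guarantees that $G$ contains a subgroup isomorphic to $K$. I would emphasize that a single comeager set works uniformly across all such $K$ because the uniformity is already built into the statement of Theorem \ref{Theorem: main1}(1); no additional countable intersection of comeager sets is needed in this deduction.

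The genuine conceptual content, and hence the only real obstacle, lies not in this argument but in the inputs it consumes: the genericity machinery behind Theorem \ref{Theorem: main1}(1), and the nontrivial fact that the properties of interest (most notably left orderability, via Bludov--Glass \cite{bludovglass}) are actually Boone-Higman. Within the present deduction, the one point demanding care is the simplicity step above, where injectivity of $\phi$ on the single nontrivial element $s$ must be leveraged to force injectivity on all of $S$; this is exactly where the hypothesis that $S$ is simple is essential, since it is what upgrades ``$\phi$ does not kill $s$'' to ``$\phi$ embeds $S$''.
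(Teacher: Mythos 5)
Your proof is correct, and it takes a more modular route than the paper's. The paper proves Theorem \ref{Theorem: main2} directly, without invoking Theorem \ref{Theorem: main1}: for each finitely generated $G$ with $P$ and solvable word problem it takes the Boone--Higman embedding $G\leq H_1\leq H_2$ (with $H_1$ simple and $H_2$ finitely presented with $P$), chooses a Cayley-graph ball $B_n$ of $H_2$ containing loops for the relators and a nontrivial element of $H_1$, notes---by the same simplicity trick you use---that any quotient of $H_2$ injective on $B_n$ contains a copy of $G$, converts $B_n$ into a $P$-system $\Sigma$, obtains the open dense set $\mathcal{X}_{\Sigma,P}$ from Lemma \ref{closed under products density}, and finally intersects over the countably many such $G$. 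You instead verify that the Boone--Higman hypothesis makes every such $G$ a $P$-near isolated group and then quote Theorem \ref{Theorem: main1}(1); your verification (taking $X=\{1,s\}$ so that injectivity on $X$ forces $\phi(s)\neq 1$, whence $\ker\phi\cap S=\{1\}$ by simplicity and $\phi$ embeds $S$) is precisely a proof of situation (1) of the paper's Remark following the definition of $P$-near isolated, which the paper states without proof. Your observation that no further countable intersection is needed is also right, since Theorem \ref{Theorem: main1}(1) is already uniform over all $P$-near isolated groups. What your route buys is economy: the paper's proof of Theorem \ref{Theorem: main2} essentially repeats the ball-to-system construction from its proof of Theorem \ref{Theorem: main1}(1), while you reuse it. What the paper's direct proof buys is self-containedness, since it never relies on the unproven Remark; but mathematically the two arguments share the same core, namely the Boone--Higman embedding together with the upgrade, via simplicity, from injectivity at a single nontrivial point to injectivity on the whole simple subgroup.
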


On the other hand, the problem of determining the finitely generated subgroup structure of a generic group in $\mathcal{G}_{lo}$ is reduced to the following:

\begin{thm}\label{Theorem: main3}
Let $P\in \mathcal{P}$ be a Boone-Higman property that is inherited by subgroups. Then exactly one of the following holds:
\begin{enumerate}

\item $P$ is not strongly undecidable.  In this case, there is a comeager set $\mathcal{X}_{P}\subset \mathcal{G}_{P}$ such that for each group $G\in \mathcal{X}_{P}$, the set of finitely generated subgroups of $G$ coincides with the set of finitely generated groups satisfying $P$ that also have a solvable word problem.

\item $P$ is strongly undecidable.  In this case, there is a comeager set $\mathcal{X}_P\subset \mathcal{G}_P$ such that, for each group $G\in \mathcal{X}_P$, the set of finitely generated subgroups of $G$ contains all finitely generated groups satisfying $P$ that also have a solvable word problem, but also contains a finitely generated subgroup with $P$ that has an unsolvable word problem.

\end{enumerate}

\end{thm}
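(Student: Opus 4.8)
The dichotomy itself is immediate, since a property $P$ either is or is not strongly undecidable. The plan is to produce, in both cases, a single comeager set on which the finitely generated subgroups are completely understood, by intersecting the comeager set supplied by Theorem \ref{Theorem: main2} with a comeager set that controls word problems. Since $P$ is Boone--Higman, Theorem \ref{Theorem: main2} already gives a comeager $\mathcal{Y}\subseteq\mathcal{G}_P$ on which every $G$ contains an isomorphic copy of every finitely generated $P$-group with solvable word problem; and since $P$ is inherited by subgroups and $G\in\mathcal{G}_P$, every finitely generated subgroup of $G$ is automatically a finitely generated $P$-group. Thus in both cases the only remaining issue is the word problem of the finitely generated subgroups of a generic $G$, and the two cases are distinguished precisely by whether this word problem can be kept solvable.

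For the strongly undecidable case, let $G_0$ and $X\subseteq G_0$ witness strong undecidability. First I would consider the set of $G\in\mathcal{G}_P$ for which there is a tuple $\bar a$ in $G$ satisfying the finitely many defining relations of $G_0$ together with the finitely many inequations asserting that the map $X\to G$ induced by $\bar a$ is injective. This set is open, being a union of basic open sets over the finite data $\bar a$, and it is dense: given any condition, one realizes it inside a $P$-group $A$ and, invoking clause (1) of strong undecidability to obtain a $P$-quotient $K$ of $G_0$ injective on $X$, passes to $A\times K$, which again satisfies $P$ since $P$ is closed under direct sums, mapping fresh generators onto the copy of $K$. On this dense open (hence comeager) set, $\langle\bar a\rangle_G$ is a $P$-quotient of $G_0$ injective on $X$, so by clause (2) it has an unsolvable word problem; intersecting with $\mathcal{Y}$ yields the required $\mathcal{X}_P$.

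For the case where $P$ is not strongly undecidable, the key reduction is that it suffices to enforce, for each $k$, that the finitely generated subgroup $\langle 0,1,\dots,k\rangle_G$ has solvable word problem: since $G=\bigcup_k\langle 0,\dots,k\rangle_G$ is a directed union, any finitely generated subgroup of $G$ lies in some $\langle 0,\dots,k\rangle_G$, and a finitely generated subgroup of a finitely generated group with solvable word problem again has solvable word problem, so this forces every finitely generated subgroup of $G$ to have solvable word problem. Here the failure of strong undecidability enters as a density statement: for every finitely presented $G_0$ and finite $X\subseteq G_0$ admitting some $P$-quotient injective on $X$, there is such a quotient with solvable word problem. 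The idea is then that at every stage of the construction one may steer the marked group $\langle 0,\dots,k\rangle_G$ toward a $P$-group with solvable word problem, recursively enumerating its relations while protecting its non-relations by inequations. Combining the resulting comeager set with $\mathcal{Y}$ and using heredity would then give that the finitely generated subgroups of $G$ are exactly the finitely generated $P$-groups with solvable word problem.

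The hard part is this last enforcement. Unlike the strongly undecidable case, where the target subgroup is pinned by a dense \emph{open} condition, ``having solvable word problem'' is not an open condition on $G$ -- non-isolated groups such as $\mathbf{Z}^2$ must still appear as subgroups -- so one cannot simply invoke that a dense open set is comeager. Instead the enforcement must be run as a forcing/Banach--Mazur argument in which an opponent may repeatedly impose new relations on the tuple, and the genuine difficulty is to coordinate the construction so that the relation set of $\langle 0,\dots,k\rangle_G$ accumulated in the limit is recursive rather than merely recursively enumerable. This is exactly the point at which the contrast between strong undecidability and its negation becomes a watershed, and where the connection between genericity, the word problem, and model-theoretic forcing is essential: controlling the limiting relation set, so that the solvable word problem targets furnished by the failure of strong undecidability survive all later stages, is the crux of the argument.
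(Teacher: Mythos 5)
Your treatment of the strongly undecidable case is correct and essentially the same as the paper's (the paper phrases the open condition via a ball in the Cayley graph of the witnessing group rather than directly via its presentation and the injectivity inequations, but the content, including the density-via-direct-sums argument, is identical). The genuine gap is in the other case: you correctly reduce to showing that, generically, each $\langle 1,\dots,m\rangle_G$ has solvable word problem, correctly observe that this is not an open condition, and then stop. Your final paragraph describes the difficulty (running a Banach-Mazur/forcing argument so that the limiting relation set of $\langle 1,\dots,m\rangle_G$ comes out recursive rather than merely recursively enumerable) but offers no mechanism for overcoming it; as you yourself say, this is ``the crux of the argument,'' and it is exactly the part that is missing. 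As sketched, nothing in your setup prevents the opponent from imposing relations forever and making the limiting relation set non-recursive.

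The paper's resolution is to make solvability of the word problem follow from an \emph{open} condition after all, by invoking the Boone-Higman property a second time together with the classical fact that a finitely generated subgroup of a simple subgroup of a finitely presented group has solvable word problem. For each $m$, the paper takes $\mathcal{Z}_m$ to be the union, over all triples consisting of a finitely presented group $G_n=\langle S_n\mid R_n\rangle$ and words $W_1,\dots,W_{m+1}$ lying in some simple subgroup $H_n\le G_n$, of the open conditions: $\vec a$ satisfies the relations $R_n$, $W_i(\vec a)=i$ for $1\le i\le m$, and $W_{m+1}(\vec a)\neq e$. Simplicity of $H_n$ together with the single inequation $W_{m+1}(\vec a)\neq e$ forces the induced quotient $G_n\to\langle\vec a\rangle_G$ to be injective on all of $H_n$: any extra relation among images of elements of $H_n$ would put a nontrivial element of $H_n$ in the kernel, hence all of $H_n$, killing $W_{m+1}$. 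So $\langle 1,\dots,m\rangle_G$ sits inside an isomorphic copy of $H_n$ and has solvable word problem no matter what relations hold elsewhere in $G$; this is precisely what defeats the opponent you were worried about, since one inequation freezes the isomorphism type of the relevant overgroup for the rest of the construction. Density of $\mathcal{Z}_m$ is where both hypotheses combine: given a condition $[\Delta(\vec a)]_P$, present a finitely presented group by its equations, use the failure of strong undecidability to find a $P$-quotient compatible with the inequations that has solvable word problem, then use the Boone-Higman property to embed that quotient in a simple subgroup of a finitely presented $P$-group, producing a point of $[\Delta(\vec a)]_P\cap\mathcal{Z}_m$. Then $\bigcap_m\mathcal{Z}_m$, intersected with your $\mathcal{Y}$, is the required comeager set.
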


\begin{remark}\label{Remark: StronglyUndecidable}
Since the tautologically true property is a strongly undecidable Boone-Higman property, part $(2)$ of the previous theorem \ref{Theorem: main2} provides a comeager set $\mathcal{X}\subset\mathcal{G}$ such that every isomorphism type in $\mathcal{X}$ contains a finitely generated subgroup with an unsolvable word problem.
\end{remark}

\begin{remark} 
Since a natural way of distinguishing isomorphism types of countable groups is to distinguish the finitely generated subgroup structure, Theorem \ref{Theorem: main3} illustrates the difficulty of answering the second part of Question \ref{Question: main}.
\end{remark}

Using Theorem \ref{Theorem: main3} and techniques from model theory (see Hodges \cite{hodges}), we are able to prove the following theorem, which provides a road map to answer to Question \ref{Question: main}:(2) for some  interesting subspaces: 

\begin{thm}\label{main theorem 4}
If $P$ is a strongly undecidable, recursively axiomatizable, Boone-Higman property that is closed under subgroups, then there is no comeager isomorphism class in $\mathcal{G}_P$.  
\end{thm}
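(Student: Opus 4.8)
The plan is to argue by contradiction, combining the subgroup dichotomy of Theorem \ref{Theorem: main3} with an effective form of model-theoretic forcing. Suppose $\mathcal{G}_P$ contains a comeager isomorphism class, and fix an enumerated group $G_0$ representing it, so that $\mathcal{C} := \{G \in \mathcal{G}_P : G \cong G_0\}$ is comeager. Since $P$ is a Boone-Higman property that is closed under subgroups and, by hypothesis, strongly undecidable, Theorem \ref{Theorem: main3} applies and places us in its case (2): there is a comeager set $\mathcal{X}_P \subseteq \mathcal{G}_P$ each of whose members contains a finitely generated subgroup (satisfying $P$) with an \emph{unsolvable} word problem. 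As $\mathcal{G}_P$ is Polish, hence Baire, the two comeager sets $\mathcal{C}$ and $\mathcal{X}_P$ meet; choosing $G \in \mathcal{C} \cap \mathcal{X}_P$ and using that solvability of the word problem of a finitely generated group is an isomorphism invariant, I conclude that $G_0$ itself has a finitely generated subgroup with an unsolvable word problem.

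The second ingredient, and the point where recursive axiomatizability enters, is to show that $G_0$ admits a \emph{computable} copy, i.e. is isomorphic to an enumerated group whose multiplication map $\mathbf{N}\times\mathbf{N}\to\mathbf{N}$ is computable. Here I would invoke the correspondence between Baire genericity and model-theoretic (finite) forcing: a comeager isomorphism class in a space of countable models is exactly the isomorphism type of the enforceable, or generic, structure in the terminology of Hodges \cite{hodges}. Because $P$ is recursively axiomatizable and the language of groups is finite, the relevant forcing conditions---finite systems of equations and inequations consistent with the axioms for $P$---form a recursively enumerable notion of forcing, and player $\exists$ can follow a recursive winning strategy in the associated building game, taking care that the constructed structure satisfies the axioms for $P$ and is therefore an element of $\mathcal{G}_P$. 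Running this strategy produces the enforceable structure as a \emph{computable} enumerated group; hence the generic model, and thus $G_0$ up to isomorphism, is computable.

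Finally I would record the elementary but decisive observation that closes the loop: if $G_1$ is a computable enumerated group, then \emph{every} finitely generated subgroup of $G_1$ has a solvable word problem. Indeed, multiplication and inversion in $G_1$ are computable (inversion by an unbounded search that is guaranteed to terminate), so given fixed generators $a_1,\dots,a_k \in \mathbf{N}$ and a word $w$ in them, one evaluates $w$ in $G_1$ and compares the result with the identity. Applying this to a computable copy of $G_0$ shows that all finitely generated subgroups of $G_0$ have solvable word problem, directly contradicting the conclusion of the first paragraph. This contradiction shows that no comeager isomorphism class exists in $\mathcal{G}_P$.

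The main obstacle is the middle step: extracting a genuinely computable representative of the generic model from the mere recursive axiomatizability of $P$. One must verify that the forcing relation is sufficiently effective, so that the dense requirements pinning down the enforceable structure can all be met by a single recursive strategy, and---crucially---that the resulting \emph{structure} is computable, not merely that its complete theory (the forcing companion) is recursively axiomatizable. This is the technical heart of the argument, and it is precisely where the hypothesis that $P$ is recursively axiomatizable, rather than only Borel or analytic, is indispensable.
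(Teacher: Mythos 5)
Your first paragraph is exactly the paper's opening step: assuming a comeager isomorphism class, Theorem \ref{Theorem: main3}(2) plus the Baire category theorem places a finitely generated subgroup $H$ with property $P$ and unsolvable word problem inside the generic group. Your closing observation (every finitely generated subgroup of a computable enumerated group has solvable word problem) is also correct. The gap is the middle step, and it is not a repairable technicality: the principle you invoke --- that the enforceable model of a recursively axiomatizable theory, when it exists, has a computable copy --- is simply false. Counterexample: fix a recursively enumerable, noncomputable set $A$ of primes and let $T$ be the recursively axiomatizable universal theory of abelian groups with no $p$-torsion for $p\in A$. Using the Conjunction Lemma as in the paper, one checks that being a model of $T$, being divisible, having all elements of finite order, and containing infinitely many independent elements of order $q$ for each prime $q\notin A$ are each enforceable in $\mathcal{C}_T$; these properties determine the isomorphism type $D=\bigoplus_{q\notin A}\mathbb{Z}(q^{\infty})^{(\omega)}$, so the $\mathcal{C}_T$-enforceable group exists and equals $D$. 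But $D$ has no computable presentation: from a computable multiplication table one could recursively enumerate $\{q \text{ prime} : q\notin A\}$ by searching for a nonidentity $a$ with $a^{q}=e$, contradicting the fact that this set is co-r.e.\ and noncomputable. Your proof sketch fails at the very first move for the same reason: the legal conditions are the finite systems consistent with $T$, and consistency with an r.e.\ theory is only co-r.e., not r.e.\ (in the example, $\{x^{p}=e,\ x\neq e\}$ is a condition iff $p\notin A$); moreover the dense sets pinning down the generic isomorphism type are handed to you abstractly by the comeagerness assumption, with no effectivity. Finally, note that under the theorem's actual hypotheses your middle claim could only hold vacuously: by your own first paragraph the generic group, if it existed, would contain a subgroup with unsolvable word problem and so could not be computable, so any honest proof of the middle step would already have to refute the contradiction hypothesis --- which is circular.

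The paper avoids asserting any effectivity of the generic group itself. In its proof, recursive axiomatizability enters only through Corollary \ref{ecsolv} (a Macintyre-style argument): for a recursively enumerable universal theory, any finitely generated group $H$ with unsolvable word problem fails to embed into \emph{some} e.c.\ model. This is combined with Fact \ref{embedsintoallec}: the enforceable group, when it exists, embeds into \emph{every} e.c.\ model of the class. Thus the enforceable group would both contain $H$ (your step one) and embed into an e.c.\ group omitting $H$ --- a contradiction reached without ever claiming the generic object is computable. If you want to salvage your outline, redirect the effectivity argument away from the generic group and toward producing a single e.c.\ group omitting $H$; that is precisely Corollary \ref{ecsolv}.
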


We verify the fact that left orderability is recursively axiomatizable in Proposition \ref{leftrec}. Morever, we also present an argument of A. Darbinyan showing that left orderability is a strongly undecidable property in Proposition \ref{Arman's proof}. Hence, we obtain: 

\begin{cor}\label{cor:leftorderablemeager}
The space of left orderable enumerated groups does not contain a co-meager isomorphism class.
\end{cor}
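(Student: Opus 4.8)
The corollary follows by verifying that left orderability satisfies the four hypotheses of Theorem~\ref{main theorem 4}, so the plan is to assemble the necessary facts rather than to construct anything new.  Specifically, I would show that left orderability is (i) a Boone--Higman property, (ii) closed under subgroups, (iii) recursively axiomatizable, and (iv) strongly undecidable, and then simply invoke Theorem~\ref{main theorem 4} to conclude that $\mathcal{G}_{lo}$ contains no comeager isomorphism class.

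For the first two conditions I would appeal to results already available.  Closure under subgroups is immediate from the definition of a left order, since the restriction of a left-invariant total order on $G$ to any subgroup $H \leq G$ is again a left-invariant total order on $H$.  The Boone--Higman property for left orderability is precisely the content of the Bludov--Glass theorem \cite{bludovglass} cited in the discussion preceding Theorem~\ref{Theorem: main2}: every left orderable group with solvable word problem embeds into a simple subgroup of a finitely presented left orderable group.  Thus these two hypotheses require only a citation.

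The remaining two conditions are exactly the propositions flagged in the excerpt as the new inputs.  Recursive axiomatizability is verified in Proposition~\ref{leftrec}; the underlying idea is that left orderability, while not itself first-order expressible, is captured by a recursively enumerable (indeed recursive) set of first-order axioms, so I would simply invoke that proposition.  Strong undecidability is the content of Darbinyan's argument recorded in Proposition~\ref{Arman's proof}: there is a finitely presented group $G$ with a finite subset $X$ such that some left orderable quotient is injective on $X$, yet every such left orderable quotient has unsolvable word problem.  I would cite this proposition directly.

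The only genuinely substantive work, and hence the main obstacle, lies behind the citations rather than in the assembly itself—in particular, establishing strong undecidability of left orderability (Proposition~\ref{Arman's proof}).  Once all four properties are in hand, the corollary is a one-line application of Theorem~\ref{main theorem 4} with $P$ taken to be left orderability.
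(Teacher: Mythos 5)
Your proposal is correct and follows exactly the paper's own route: the paper deduces the corollary by verifying the same four hypotheses of Theorem \ref{main theorem 4}, namely recursive axiomatizability via Proposition \ref{leftrec}, strong undecidability via Darbinyan's argument in Proposition \ref{Arman's proof}, the Boone--Higman property via the Bludov--Glass theorem, and closure under subgroups, which is immediate from restricting a left-invariant order. There is no gap and no meaningful difference in approach.
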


\subsection{The algebraic structure and first order theory of a "generic group".}
Our next result examines the algebraic structure of generic groups in some of the relevant spaces.
Recall that a group $G$ is said to be \textbf{verbally complete} if, for each freely reduced word $W(x_1,...,x_n)$ in the letters $x_1^{\pm 1},...,x_n^{\pm 1}$ (for an arbitrary $n\in \mathbf{N}$)
and each element $f\in G$, there are elements $g_1,...,g_n\in G$ such that $$W(g_1,...,g_n)=f$$ 
Verbally complete groups are in particular \textbf{divisible}: for each $f\in G$ and $n\in \mathbf{N}\setminus \{0\}$, there is a $g\in G$ such that $g^n=f$.
We use elementary tools from combinatorial group theory to deduce the following, which applies in particular in the case when $P\in \{\text{left orderable},\text{locally indicable}, \text{torsion-free}\}$.

\begin{thm}\label{Theorem ConjOrd}
Let $P\in \mathcal{P}$ be a property of torsion-free groups that is closed under amalgamation along infinite cyclic subgroups and HNN extensions with associated subgroups that are infinite cyclic. 
%For instance, this holds when $$P\in \{\text{left orderable},\text{locally indicable}, \text{torsion free}\}$$
Then there is a comeager set $\mathcal{X}_P\subset \mathcal{G}_P$ such that each $G\in \mathcal{X}_P$ has only one nontrivial conjugacy class (in particular, it is simple) and is verbally complete.
\end{thm}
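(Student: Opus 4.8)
The plan is to realize both conclusions simultaneously as a countable intersection of dense open subsets of $\mathcal{G}_P$ and then to invoke the Baire category theorem (available since $\mathcal{G}_P$ is Polish). For naturals $a,b$ I would set
\[
\mathcal{D}_{a,b}=\{G\in\mathcal{G}_P: a\cdot_G a=a\ \text{ or }\ b\cdot_G b=b\ \text{ or }\ \exists x\,(x a x^{-1}=b)\},
\]
where $a\cdot_G a=a$ detects that $a$ is the identity of $G$; a group lies in every $\mathcal{D}_{a,b}$ exactly when any two non-identity elements are conjugate, i.e.\ it has a single nontrivial conjugacy class. Enumerating the countably many nonempty freely reduced words $W(x_1,\dots,x_n)$ and setting, for $f\in\mathbf{N}$,
\[
\mathcal{D}_{W,f}=\{G\in\mathcal{G}_P:\exists g_1,\dots,g_n\,(W(g_1,\dots,g_n)=f)\},
\]
a group lies in every $\mathcal{D}_{W,f}$ precisely when it is verbally complete. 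Granting that all these sets are dense open, their intersection $\mathcal{X}_P$ is comeager, every $G\in\mathcal{X}_P$ has a single nontrivial conjugacy class and is verbally complete, and such a $G$ is simple: a nontrivial normal subgroup contains a non-identity element together with all its conjugates, hence every non-identity element, hence all of $G$ (which is nontrivial, being infinite).

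Openness is the routine half. Each displayed set is a union, over the countably many possible witnesses $x$ (respectively $(g_1,\dots,g_n)$), of sets of the form $\{G:w^G=f\}$ for a fixed word $w$ evaluated at fixed naturals; such a set is cut out by finitely many atomic conditions on the multiplication table and is therefore open, as is the idempotency set $\{G:a\cdot_G a=a\}$. Hence each $\mathcal{D}_{a,b}$ and $\mathcal{D}_{W,f}$ is open in $\mathcal{G}_P$.

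The substance is density, which I would reduce to an embedding statement: given a group $G_0$ with property $P$ and a single requirement, produce an overgroup $G_1\supseteq G_0$ still satisfying $P$ in which the requirement is met. Fix a nonempty basic open $U$ and choose an actual enumerated group $G_0\in U\cap\mathcal{G}_P$; since the equations and inequations defining $U$ are preserved under injective homomorphisms, any $P$-preserving embedding $G_0\hookrightarrow G_1$ followed by a re-enumeration of $G_1$ that keeps the finitely many naturals named in $U$ at their positions yields a point of $U$ witnessing the requirement. For conjugacy, if neither $a$ nor $b$ is the identity of $G_0$ then $G_0$ is torsion-free, so $\langle a\rangle$ and $\langle b\rangle$ are infinite cyclic and $a\mapsto b$ extends to an isomorphism between them; the HNN extension $G_1=\langle G_0,t\mid tat^{-1}=b\rangle$ then has $P$ by the hypothesis on HNN extensions with infinite cyclic associated subgroups, and $tat^{-1}=b$ in $G_1$. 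For verbal completeness I may assume $f\neq e$ (else $W(e,\dots,e)=e=f$ already), take $w=W(x_1,\dots,x_n)\in\mathbf{F}_n$, which is a nontrivial, hence infinite-order, element of the free group, and form
\[
G_1=G_0 *_{\langle f\rangle=\langle w\rangle}\mathbf{F}_n,
\]
the amalgam identifying $f$ with $w$ along infinite cyclic subgroups; this has $P$ by the amalgamation hypothesis, and in it $W(x_1,\dots,x_n)=w=f$, so $(x_1,\dots,x_n)$ solves the equation.

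I expect the verbal completeness step to be the main obstacle, precisely in keeping property $P$ under the amalgamation: this is where the closure hypotheses on $P$ are essential, and it additionally requires an auxiliary $P$-group in which the given word assumes an infinite-order value. The free group $\mathbf{F}_n$ is the natural such auxiliary, and for the motivating properties (left orderability, local indicability, torsion-freeness) free groups indeed satisfy $P$ and every nontrivial word is nontrivial there. A secondary, purely bookkeeping, issue is the re-enumeration: because an enumerated group already exhausts $\mathbf{N}$, one must interleave the elements of $G_1$ so that the finitely many naturals constrained by $U$ retain their group elements while the conjugator $t$ (respectively the solution coordinates $x_i$) receive fresh naturals; since only finitely many positions are constrained this is always possible, and it simultaneously witnesses membership in $\mathcal{D}_{a,b}$ (respectively $\mathcal{D}_{W,f}$), completing the density argument.
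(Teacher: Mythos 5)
Your proposal is correct and follows essentially the same route as the paper's proof: the same decomposition into countably many dense open sets (one family for conjugacy of pairs of non-identity elements, one for each word--target pair), density obtained by passing to an HNN extension $\langle G_0,t\mid t a t^{-1}=b\rangle$ for conjugacy and to the amalgam $G_0 *_{\langle f\rangle=\langle w\rangle}\mathbf{F}_n$ for verbal completeness, followed by a re-enumeration fixing the finitely many constrained naturals and an appeal to the Baire category theorem. Even the subtlety you flag (that the amalgamation step uses the free group as the auxiliary $P$-group) is present, and handled identically, in the paper's argument.
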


A fundamental notion from logic relevant to our considerations is that of \textbf{elementary equivalence}: two groups are elementary equivalent if they have the same first-order theory.
We next discuss the question of when an amenable group can have the same first-order theory as a nonamenable group or a group with property (T),
and prove the following:

\begin{thm}\label{firstorder}
There is a comeager set $\mathcal{X}\subset \mathcal{G}_{am}$ such that, for each $G\in \mathcal{X}$, the following holds:
%The generic amenable group $G$ satisfies the following:
\begin{enumerate}
\item There are continuum many pairwise nonisomorphic countable nonamenable groups with the same first-order theory as $G$.
\item $G$ cannot have the same first-order theory as a group with property (T).
\end{enumerate}
\end{thm}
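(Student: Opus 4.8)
The plan is to treat the two assertions separately, in both cases exploiting that amenability is not a first-order property and that, generically, $G$ carries enough finitary structure to be elementarily equivalent to wildly non-amenable groups. For part (1), the guiding principle is that a generic amenable group cannot first-order-detect the absence of free subgroups. For each $n$ let $Q_n$ be the (open) property of finitely generated groups of possessing a pair $x,y$ all of whose nontrivial reduced words of length $\le n$ are nontrivial; a two-generated $\mathrm{PSL}_2(\mathbf{F}_p)$ whose Cayley graph has girth exceeding $n$ is a finite, hence amenable, group with $Q_n$. Theorem \ref{Theorem: main1}(2) then produces an open dense $\mathcal{X}_n\subset\mathcal{G}_{am}$ every member of which contains a finitely generated subgroup with $Q_n$; intersecting, on the comeager set $\bigcap_n\mathcal{X}_n$ each $G$ satisfies all the sentences $\phi_n$ asserting the existence of such a pair. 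Consequently the partial type $p(x,y)=\{w(x,y)\neq 1 : w\ \text{nontrivial reduced}\}$ is finitely satisfiable in $G$, hence consistent with $\Th(G)$. By compactness and downward L\"owenheim--Skolem there is a countable $H_0\equiv G$ realizing $p$; the witnessing pair generates a copy of $\mathbf{F}_2$, so $H_0$ is non-amenable, while $G$ itself remains an amenable model of the very same theory.

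To upgrade this to continuum many pairwise non-isomorphic non-amenable models, I would keep $p$ as a fixed requirement and vary the remaining type structure. Genericity supplies, for distinct finitely generated amenable groups, infinitely many mutually independent non-isolated types realized in $G$ (the quantifier-free types of finite tuples generating prescribed subgroups), so that $\Th(G)$ is not small; toggling which of these are realized or omitted while always realizing $p$ yields, by the standard model-theoretic construction, $2^{\aleph_0}$ pairwise non-isomorphic countable models of $\Th(G)$, each containing $\mathbf{F}_2$ and hence non-amenable. I expect this bookkeeping---simultaneously guaranteeing continuum many non-isomorphic types and preserving non-amenability across all of them---to be the most delicate point of part (1), though conceptually routine.

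For part (2), suppose toward a contradiction that some group $H$ with property (T) satisfies $H\equiv G$. Property (T) forces $H$ to be finitely generated with finite abelianization, and since $H$ is infinite (being elementarily equivalent to the infinite group $G$) it is algebraically rigid in a way a generic $G$ is not. The plan is to isolate a first-order consequence of finiteness of the abelianization---upon fixing a finite exponent $e$ annihilating $H^{\mathrm{ab}}$ and a commutator-width bound $w$, the sentence asserting that every $e$-th power is a product of at most $w$ commutators---and to show, again by a density argument feeding in finite amenable groups carrying the requisite abelian quotient data, that a comeager family of $G$ violates it for every pair $(e,w)$, realizing an element that is provably not an $e$-th power times a width-$w$ commutator product.

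The main obstacle, and the crux of the whole theorem, lies precisely here: since neither amenability nor finite generation is first-order, the contradiction must be extracted entirely from transferable consequences of (T), yet (T) does not in general bound commutator width, so for a (T) group of infinite commutator width there is no single sentence encoding finiteness of the abelianization to transfer to $G$. I would therefore need to replace the abelianization constraint by a finer transferable invariant that is forced by (T) on all of $H$ and demonstrably fails on a comeager subset of $\mathcal{G}_{am}$; pinning down such an invariant, and verifying its genericity via the density machinery above, is the essential difficulty of the argument.
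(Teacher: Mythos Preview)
Your approach to part~(1) correctly isolates that a generic amenable $G$ is lawless, so $\mathbf{F}_2$ lives in $G^\u$ and one nonamenable countable $H_0\equiv G$ exists via L\"owenheim--Skolem. The gap is in the jump to continuum many. Saying ``$\Th(G)$ is not small'' and ``toggle types while always realizing $p$'' is not a proof: you have not exhibited a perfect set of types, and even granting one, the omitting/realizing machinery does not automatically let you fix the realization of $p$ across the entire tree of constructions. The paper avoids this model-theoretic bookkeeping entirely. It invokes the Juschenko--Monod result that there exist continuum many pairwise nonisomorphic finitely generated infinite simple amenable groups $K_r$; then $\mathbf{F}_2\times K_r$ are continuum many pairwise nonisomorphic nonamenable groups, each of which embeds in $G^\u$ (using that $G$ is locally universal for $\mathcal{G}_{am}$, so $G\times G$ and hence $(G\times G)^\u$ sit inside $G^\u$). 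Taking a countable elementary substructure $H_r\preceq G^\u$ containing $\mathbf{F}_2\times K_r$ for each $r$ immediately gives the continuum many nonisomorphic nonamenable models, distinguished by which $K_r$ they contain.

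For part~(2) you have correctly diagnosed that your own line of attack fails: finite abelianization is not first-order absent a commutator-width bound, and you offer no replacement invariant. The paper's key idea, which you are missing, is \emph{residual finiteness}. If $H$ has property~(T), is finitely presented, and $G\equiv H$, then $H$ embeds in $G^\u$ and (being finitely presented) is therefore fully residually $G$, hence residually amenable; but a residually amenable group with property~(T) is residually finite. Running the embedding the other way, $G$ is residually $H$, hence also residually finite. Now one produces the contradiction on the $G$ side: Grigorchuk's finitely presented amenable, non-elementary-amenable group $G_{GR}$ is isolated, so by Theorem~\ref{Theorem: main1} it embeds in every group in a comeager subset of $\mathcal{G}_{am}$; since $G_{GR}$ is not residually finite, neither is the generic $G$. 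Note that the paper's argument, as written, uses finite presentation of $H$ (needed for the ``fully residually $G$'' step), which matches the restated version of the theorem in the proof section rather than the broader formulation in the introduction.
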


Another key concept we investigate is the notion of locally universal groups.
Let $P\in \mathcal{P}$ be a property and consider the Polish space $\mathcal{G}_P$ as above.
An enumerated group $H\in \mathcal{G}_P$ is \textbf{locally universal for $\mathcal{G}_P$} if any group in $\mathcal{G}_P$ embeds into an ultrapower of $H$.
We denote the set of all locally universal groups in $\mathcal{G}_P$ by $\mathcal{G}_{lu,P}$.

\begin{thm}\label{locunivcomeager}
For each $P\in \mathcal{P}$, $\mathcal{G}_{lu,P}$ is a comeager subset of $\mathcal{G}_P$.
\end{thm}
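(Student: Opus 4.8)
The plan is to exhibit $\mathcal{G}_{lu,P}$ as a countable intersection of dense open subsets of $\mathcal{G}_P$ and then invoke the Baire category theorem. The first step is to reformulate local universality in purely finitary, enumeration-free terms. By a \emph{finite partial multiplication table} I mean a finite set $D$ equipped with a partial binary operation $m\colon D\times D\rightharpoonup D$; such a table $\tau$ \emph{appears faithfully} in a group $K$ if there is an injection $\iota\colon D\to K$ with $\iota(m(a,b))=\iota(a)\iota(b)$ whenever $m(a,b)$ is defined. (These tables are exactly finite fragments of the atomic diagram, the identity and inverses being term-definable from multiplication.) I would invoke the standard fact about local embeddability into ultrapowers, proved by {\L}o\'s's theorem in one direction and a directed-ultrafilter construction in the other: a countable group $G$ embeds into some ultrapower $H^{\mathcal U}$ if and only if every finite partial multiplication table occurring in $G$ appears faithfully in $H$. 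Consequently, letting $T_P$ denote the set of all finite partial multiplication tables that appear faithfully in at least one group of $\mathcal{G}_P$ (call these \emph{$P$-admissible}), an enumerated group $H\in\mathcal{G}_P$ is locally universal for $\mathcal{G}_P$ precisely when every $\tau\in T_P$ appears faithfully in $H$. Since there are only countably many isomorphism types of finite partial multiplication tables, $T_P$ is countable.

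For each $\tau\in T_P$ set $U_\tau=\{H\in\mathcal{G}_P : \tau \text{ appears faithfully in } H\}$, so that $\mathcal{G}_{lu,P}=\bigcap_{\tau\in T_P}U_\tau$. I first check that each $U_\tau$ is open. Writing $D=\{d_1,\dots,d_n\}$, the condition that $\tau$ appears faithfully in $H$ asserts the existence of naturals $h_1,\dots,h_n\in\mathbf{N}$ that are pairwise distinct and satisfy the finitely many equations $h_ih_j=h_k$ dictated by $m$. For each fixed tuple $(h_1,\dots,h_n)$ these finitely many equations and inequations cut out a basic open subset of $\mathcal{G}_P$, and $U_\tau$ is the union of these basic open sets over all tuples, hence open.

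The heart of the argument is density. Fix $\tau\in T_P$ and a nonempty basic open set $V\subseteq\mathcal{G}_P$; I must produce $H\in V\cap U_\tau$. The constraints defining $V$ mention only finitely many elements of $\mathbf{N}$ and take the form of finitely many equations and inequations among their products. I would choose any $G_0\in V$ and, using $P$-admissibility of $\tau$, any $G_1\in\mathcal{G}_P$ in which $\tau$ appears faithfully. Since $P$ is closed under direct sums, the abstract group $\bar G=G_0\oplus G_1$ again satisfies $P$, and it contains isomorphic copies of both $G_0$ and $G_1$. Now re-enumerate $\bar G$: send the finitely many naturals constrained by $V$ onto the corresponding elements of the copy of $G_0$ (matching $G_0$'s operation on them), then extend to an arbitrary bijection $\mathbf{N}\to\bar G$. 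Because $P$ is isomorphism-invariant, the resulting enumerated group $H$ lies in $\mathcal{G}_P$; because the constrained products were copied verbatim from $G_0\in V$, we have $H\in V$; and because $\tau$ appears faithfully in the copy of $G_1$ inside $\bar G$, a feature independent of the enumeration, we have $H\in U_\tau$. Thus $U_\tau$ is dense.

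With each $U_\tau$ open and dense and $T_P$ countable, the Baire category theorem yields that $\mathcal{G}_{lu,P}=\bigcap_{\tau\in T_P}U_\tau$ is comeager. I expect the main obstacle to be the finitary characterization of local universality in the first paragraph: one must verify carefully that embeddability into an ultrapower is detected exactly by the faithful appearance of finite partial tables, and that the relevant tables form the single countable set $T_P$ regardless of which group of $\mathcal{G}_P$ they are extracted from. The remaining bookkeeping—openness and the re-enumeration in the density step—is routine, the only genuine inputs being closure of $P$ under direct sums and under isomorphism.
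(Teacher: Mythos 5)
Your proposal is correct and takes essentially the same approach as the paper: your finite partial multiplication tables are just a different encoding of the paper's ``$P$-systems'' of equations and inequations, your openness and density arguments (the latter via closure of $P$ under direct sums and re-enumeration) reproduce the paper's Lemma~\ref{closed under products density}, and the conclusion is the same countable-intersection-plus-Baire argument. The one ingredient you invoke as a standard fact---that embeddability into an ultrapower is equivalent to faithful realization in $H$ of every finite fragment of $G$---is precisely the Claim the paper proves inside Lemma~\ref{locunivequiv}, so nothing is missing beyond writing out that routine ultrafilter construction.
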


Since comeager sets in a Polish space are closed under countable intersection, we may assume that all the comeager sets defined in this introduction consist of locally universal groups and thus satisfy the conclusions of all the relevant theorems.
 
 \subsection{Applications}

The above theorems allow us to deduce a plethora of genericity phenomena.
%In what follows below, we use the word "generic" to denote the set of enumerated groups in some suitable comeager subset of the given space.
A nice illustration of the consequences of Theorem \ref{Theorem: main1} is an application to
the \textbf{von Neumann-Day Problem}, which refers to the conjunction of two problems about the class of amenable groups. 
For a detailed survey on the notion of amenability for countable, discrete groups, we refer the reader to \cite{bartholdi}.

\begin{prob*}[The von Neumann-Day Problem]
Must a group without nonabelian free subgroups be amenable?  Must an amenable group be elementary amenable?
\end{prob*}

%Recall that a group is amenable if it can be equipped with a finitely additive, left-invariant probability measure.  Von Neumann introduced this class of groups in \cite{vonneumann}, where he showed, among other things, that the class of amenable groups contains finite and abelian groups, and is closed under the operations of taking subgroups, quotients, group extensions, and direct limits. He also proved that if a group contains an isomorphic copy of $\mathbb F$, the nonabelian free group on two generators, then the group is not amenable. 
%In \cite{day}, Day introduced the class of \textbf{elementary amenable groups}, which is the smallest class of groups containing the finite groups and the abelian groups and closed under the aforementioned four operations, namely taking subgroups, quotients, group extensions, and direct limits.  He observed that, at the time of that paper, no amenable group was known that was not elementary amenable and thus he posed the question:  must every amenable group be elementary amenable?

The first problem was resolved by Olshanskii in 1980 \cite{olshanskii} and the second problem was resolved by Grigorchuk in 1984 \cite{grigorchuk84}.
%where he showed that certain so-called Tarski monster groups exist, and are nonamenable. The first finitely presented counterexamples were constructed by Olshanskii and Sapir in 2003 \cite{olshanskiisapir}.  In 2013, Monod \cite{monod} gave arguably the simplest counterexample to von Neumann's question, and in the same year, the third author and Moore 
In 1998, Grigorchuk \cite{grigorchuk98} provided the first finitely presented counterexample to the second problem.
In \cite{lodhamoore}, the third author with Moore constructed the first torsion-free finitely presented counterexample to the first problem. 
Since the examples of Lodha-Moore and Grigorchuk are easily seen to be isolated groups, we obtain the following striking corollary to Theorem \ref{Theorem: main1}:

\begin{cor}\label{Corollary VNDay}
The following holds:
\begin{enumerate}
\item The generic enumerated group without $\mathbf{F}_2$ subgroups is nonamenable.
\item The generic left orderable enumerated group without $\mathbf{F}_2$ subgroups is nonamenable. 
\item The generic enumerated amenable group is not elementary amenable.
\end{enumerate}
In fact, we can choose these comeager sets to be open dense sets.
\end{cor}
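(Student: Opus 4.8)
**The plan is to deduce Corollary \ref{Corollary VNDay} as a direct application of Theorem \ref{Theorem: main1}, by exhibiting appropriate $P$-near isolated groups for the relevant choices of $P$.** The key observation is that each part asserts the genericity of nonamenability (or non-elementary-amenability) in some space $\mathcal{G}_P$, and Theorem \ref{Theorem: main1}(1) guarantees that the generic group in $\mathcal{G}_P$ contains every $P$-near isolated group as a subgroup. So the entire task reduces to finding, for each part, a single witnessing group $H$ that (a) is $P$-near isolated and (b) destroys amenability (respectively elementary amenability) by its mere presence as a subgroup.

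\textbf{For parts (1) and (2),} I would take $P$ to be ``no $\mathbf{F}_2$ subgroups'' and ``left orderable with no $\mathbf{F}_2$ subgroups,'' respectively. The witnessing group is the Lodha--Moore group $H$ from \cite{lodhamoore}: it is a torsion-free, finitely presented, nonamenable group containing no nonabelian free subgroup, and (crucially for part (2)) it is left orderable. As the text notes, this group is isolated, hence finitely presented and an isolated point of $\mathcal{M}$; using the remark following the definition of $P$-near isolated, an isolated group satisfying $P$ is automatically $P$-near isolated (take $H$ itself, with $X$ the isolating ball, and the identity homomorphism). Since any group containing a nonamenable subgroup is itself nonamenable, Theorem \ref{Theorem: main1}(1) then yields a comeager set of enumerated groups in $\mathcal{G}_P$ each containing a copy of $H$, hence each nonamenable. \textbf{For part (3),} I would take $P$ to be amenability and let $H$ be Grigorchuk's finitely presented amenable-but-not-elementary-amenable group from \cite{grigorchuk98}, which is likewise isolated and therefore $P$-near isolated; a group containing a non-elementary-amenable subgroup cannot be elementary amenable (elementary amenability is closed under subgroups), so the generic amenable enumerated group is not elementary amenable.

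\textbf{For the final ``open dense'' sentence,} I would instead invoke Theorem \ref{Theorem: main1}(2) rather than part (1). Take $Q$ to be the open property of ``containing the fixed finite presentation of the witnessing isolated group $H$'' — more precisely, being a marked group in the (open) isolated neighborhood of $H$ in $\mathcal{M}$, which is an open property since $H$ is isolated. Because $H$ itself satisfies both $Q$ and $P$, Theorem \ref{Theorem: main1}(2) provides an \emph{open dense} set $\mathcal{X}_P \subset \mathcal{G}_P$ whose members all contain a finitely generated subgroup satisfying $Q$, i.e. a copy of $H$; the same amenability/elementary-amenability arguments then upgrade each conclusion from comeager to open dense.

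\textbf{The main point requiring care} is verifying that the cited counterexamples are genuinely isolated and satisfy the stated property $P$, so that the ``isolated $\Rightarrow$ $P$-near isolated'' shortcut applies cleanly; in particular for part (2) one must confirm that the Lodha--Moore group is simultaneously left orderable and isolated, and that left orderability together with the absence of $\mathbf{F}_2$ is preserved in the right sense. Once these group-theoretic facts are in hand, each part is an immediate substitution into the already-proved Theorem \ref{Theorem: main1}, so I expect no further obstacle.
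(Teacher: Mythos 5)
Your proposal follows the same route as the paper: deduce all three parts from Theorem \ref{Theorem: main1}, taking the Lodha--Moore group $G_0$ as the witness for parts (1) and (2) and Grigorchuk's group $G_{\operatorname{GR}}$ for part (3), and using that an isolated group satisfying $P$ is $P$-near isolated. The one substantive step you defer --- that these groups really are isolated --- is precisely where the paper's proof does its work. The paper first records a criterion: a finitely presented nonsolvable group all of whose proper quotients are solvable of derived length at most $n$ is isolated, because a sufficiently large $m$-ball in the Cayley graph contains loops witnessing the relators together with a nontrivial element of the $n$-th derived subgroup, and that ball isolates the group in $\mathcal{M}$. It then invokes the structural facts that every proper quotient of $G_0$ is abelian (with $G_0'$ simple, from \cite{burilllodhareeves}) and every proper quotient of $G_{\operatorname{GR}}$ is metabelian (from \cite{grigorchuk98}). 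So your outline is correct but incomplete exactly at the point you flag; closing it requires these quotient-structure results, not merely the introduction's assertion that isolation is ``easily seen.'' Your treatment of the open-dense refinement via Theorem \ref{Theorem: main1}(2), with $Q$ the open property of lying in the isolating neighborhood of the witness (equivalently, being isomorphic to the isolated witness), is a valid variant; the paper instead obtains open density because, for a \emph{single} $P$-near isolated group $G$, the set $\mathcal{X}_{\Sigma_G,P}$ constructed in the proof of Theorem \ref{Theorem: main1}(1) is already open and dense, the countable intersection being needed only when one wants all $P$-near isolated groups simultaneously.
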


The following was pointed out to us by Denis Osin and stands in contrast to the previous corollary:
\begin{prop}\label{remark VNDay}

\

\begin{enumerate}
\item The generic enumerated group without $\mathbf{F}_2$ subgroups is inner amenable.
\item The generic left orderable enumerated group without $\mathbf{F}_2$ subgroups is inner amenable. 
\end{enumerate}
\end{prop}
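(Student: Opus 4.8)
The plan is to exhibit, in each of the two spaces of the statement, a comeager set of groups carrying an \emph{asymptotically central sequence}, and to observe that any such group is inner amenable. Write $\mathcal{G}_P$ for whichever of the two spaces is under consideration: the space of enumerated groups without $\mathbf{F}_2$ subgroups, or its intersection with the space of left orderable enumerated groups. Each is a $G_\delta$ subset of $\mathcal{G}$ (one, or an intersection of two, of the Polish spaces $\mathcal{G}_Q$), hence Polish, so the Baire category theorem applies. Recall that a countable group $G$ is inner amenable if $\ell^\infty(G)$ admits a conjugation-invariant mean $m$ with $m(\{e\})=0$. The sufficient condition I will target is the existence of pairwise distinct elements $g_1,g_2,\dots\in G$ that escape to infinity (each value occurs only finitely often) and are asymptotically central: for every $h\in G$ there is $N$ with $hg_ih^{-1}=g_i$ for all $i\ge N$. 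Fixing a nonprincipal ultrafilter $\mathcal{U}$ on $\mathbf{N}$ and setting $m=\lim_{i\to\mathcal{U}}\delta_{g_i}$, asymptotic centrality gives $h\cdot m=m$ for each $h$, while escape to infinity gives $m(\{x\})=0$ for every $x$; thus $m$ witnesses inner amenability.

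To make such sequences generic, for $k,\ell\in\mathbf{N}$ I would set
\[
U_{k,\ell}=\{\,G\in\mathcal{G}_P : \exists\, g>k \text{ with } g\cdot_G i=i\cdot_G g \text{ for all } i\le\ell\,\},
\]
where $g,i$ range over the underlying set $\mathbf{N}$. Each $U_{k,\ell}$ is open, being the union over candidate values $g>k$ of the clopen conditions $\bigwedge_{i\le\ell}(g\cdot i=i\cdot g)$. The key point is that $U_{k,\ell}$ is dense. Given a nonempty basic open set $V$, determined by finitely many equations and inequations among elements $\le N_0$, I pick $G_0\in V$ and pass to the abstract group $G_0\times\mathbf{Z}$. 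This group again satisfies $P$: absence of $\mathbf{F}_2$ and (in the second space) left orderability are each preserved under direct sums, and $\mathbf{Z}$ enjoys both properties. I then re-enumerate $G_0\times\mathbf{Z}$ by a bijection $\beta\colon\mathbf{N}\to G_0\times\mathbf{Z}$ with $\beta(i)=(i,0)$ for all $i\le\max(N_0,\ell)$ and $\beta(p)=(0,1)$ for a chosen $p>\max(k,N_0,\ell)$, extended arbitrarily to a bijection. Since $\beta$ reproduces the multiplication of $G_0$ on positions $\le N_0$, the resulting enumerated group still lies in $V$; and since $(0,1)=\beta(p)$ commutes with each $(i,0)=\beta(i)$, it lies in $U_{k,\ell}$.

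By Baire, $\mathcal{X}:=\bigcap_{k,\ell}U_{k,\ell}$ is comeager in $\mathcal{G}_P$. Finally, for $G\in\mathcal{X}$ I extract the sequence by a diagonal choice: fix thresholds $k_1<k_2<\cdots$ and let $g_\ell$ witness membership of $G$ in $U_{k_\ell,\ell}$, so $g_\ell>k_\ell$ (whence $g_\ell\to\infty$, forcing escape to infinity) and $g_\ell$ commutes with $0,1,\dots,\ell$. For fixed $h\in G$ one then has $g_\ell h=hg_\ell$ for all $\ell\ge h$, so $(g_\ell)$ is asymptotically central and $G$ is inner amenable. As $\mathcal{X}$ is comeager, both parts of the proposition follow.

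The step I expect to demand the most care is checking that the enrichment keeps us inside $\mathcal{G}_P$---that a direct product with $\mathbf{Z}$ creates no $\mathbf{F}_2$ subgroup (a would-be $\mathbf{F}_2\le G_0\times\mathbf{Z}$ projects to a subgroup of $\mathbf{Z}$; a trivial image forces $\mathbf{F}_2\le G_0$, and an image $\cong\mathbf{Z}$ forces the kernel, a nontrivial free normal subgroup of infinite index in $\mathbf{F}_2$ and hence of infinite rank, to embed in $G_0$, contradicting in either case that $G_0$ has no $\mathbf{F}_2$)---together with the bookkeeping in $\beta$ that simultaneously preserves the finitely many constraints cutting out $V$ and plants a commuting element beyond index $k$.
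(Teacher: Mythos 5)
Your proposal is correct, and its engine is the same as the paper's: generically, every finite subset of the group is centralized by some element that cannot concentrate mass at a fixed point, and limits (along a nonprincipal ultrafilter, or weak*) of the Dirac masses at such elements give a conjugation-invariant mean witnessing inner amenability. Where you differ is in the genericity machinery. The paper encodes the property by the sentences $\sigma_n=\forall x_1\cdots\forall x_n\exists y\,(\bigwedge_{i}x_iy=yx_i\wedge y\neq e)$ and invokes its zero-one law (Corollary \ref{genericsentence}: a saturated, Baire measurable subset of a space whose isomorphism types are closed under direct sums is meager or comeager), so that mere density---which the paper asserts as clear---already yields comeagerness. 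You instead exhibit explicit open sets $U_{k,\ell}$ and verify their density by hand (pass to $G_0\times\mathbf{Z}$ and re-enumerate so as to preserve the finitely many constraints cutting out $V$ while planting a commuting element with large label), then intersect by Baire. Your route is more elementary and self-contained: it needs neither Corollary \ref{genericsentence} nor the Borelness of $L_{\omega_1,\omega}$-definable sets, and it supplies the density construction that the paper leaves implicit; the cost is the bookkeeping with labels. Two smaller points: your nondegeneracy condition is ``$g>k$'' (escape to infinity) where the paper uses ``$y\neq e$'', and either suffices to keep the limiting mean from concentrating on the identity; and your verification that $G_0\times\mathbf{Z}$ stays in the class (no $\mathbf{F}_2$ subgroup, left orderable) is correct, though you could simply cite the paper's standing observation that every property in $\mathcal{P}$ is closed under direct sums.
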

Our next set of applications concerns the class of orderable groups.
Endowing groups with order structures has been an important theme in modern group theory.
There is a deep and striking interplay between notions of orderability and the topology and dynamics of group actions.
For instance, whether a countable group admits a faithful action by orientation preserving homeomorphisms on the real line admits a surprisingly elementary algebraic characterisation:
such an action exists if and only if the group is left orderable.
%there is a total order $<$ on $G$ satisfying that for all elements $f,g,h\in G$, whenever $f<g$, we have $hf<hg$.
Related notions of orderability include: local indicability, biorderability, and the unique product property.
(All of these notions will be defined in Section \ref{orderability}.)
One has the following inclusions: $$
\text{bi-orderable groups}\subsetneq \text{locally indicable groups}\subsetneq \text{left orderable groups}$$ $$\subsetneq \text{groups with the unique product property}\subsetneq \text{ torsion-free groups}.$$
Note that in each case, only a handful examples of groups that witness that the inclusions are proper are known.
We deduce the following corollaries of Theorem \ref{Theorem: main1}:

\begin{cor}\label{Corollary LeftOrderable}
There is a comeager set $\mathcal{X}_{lo}\subset \mathcal{G}_{lo}$ such that each $G\in \mathcal{X}_{lo}$ satisfies:
\begin{enumerate}
\item It is not locally indicable.
\item It does not have the Haagerup property.
\item It does not admit nontrivial actions by $C^1$-diffeomorphisms on the closed interval or the circle.
\item Contains an isomorphic copy of every finitely generated left orderable group with a solvable word problem.
\end{enumerate}
\end{cor}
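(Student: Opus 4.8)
The plan is to realize $\mathcal{X}_{lo}$ as an intersection of comeager sets already produced in the paper and then read off each of the four conclusions. The two structural inputs I would use are Theorem \ref{Theorem ConjOrd} and Theorem \ref{Theorem: main2}. Since left orderability is a property of torsion-free groups closed under the relevant cyclic amalgamations and HNN extensions, Theorem \ref{Theorem ConjOrd} supplies a comeager set of \emph{simple}, verbally complete left orderable groups; and since left orderability is a Boone--Higman property, Theorem \ref{Theorem: main2} supplies a comeager set of left orderable groups containing an isomorphic copy of \emph{every} finitely generated left orderable group with solvable word problem. As the excerpt notes, countable intersections of comeager sets are comeager, so I may take $\mathcal{X}_{lo}$ to be contained in both (and, if convenient, in the locally universal set of Theorem \ref{locunivcomeager}). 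Thus every $G\in\mathcal{X}_{lo}$ is simple and contains every finitely generated left orderable group with solvable word problem. Conclusion (4) is then literally the containment statement of Theorem \ref{Theorem: main2}, so nothing further is required for it.

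For conclusion (1) I would fix a finitely generated \emph{simple} left orderable group $H$ with solvable word problem; such groups are available from the constructions of Hyde and Lodha (and finitely presented simple groups automatically have solvable word problem by Kuznetsov's theorem). Any such $H$ is perfect, hence is a nontrivial finitely generated group admitting no surjection onto $\mathbb{Z}$. Since $H\le G$, the group $G$ has a nontrivial finitely generated subgroup that does not surject onto $\mathbb{Z}$, and therefore $G$ is not locally indicable. Note that simplicity of $G$ alone does not give this, since local indicability is a statement about finitely generated subgroups; the perfect finitely generated \emph{subgroup} produced by Theorem \ref{Theorem: main2} is what does the work.

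Conclusions (2) and (3) both proceed by passing to a finitely generated witness inside $G$. The Haagerup property is inherited by subgroups, so for (2) it suffices to exhibit a single finitely generated left orderable group $K$ with solvable word problem that fails the Haagerup property (for instance a left orderable group carrying an infinite subgroup with relative property (T)); then $K\le G$ together with subgroup-heredity forces $G$ to fail Haagerup. For (3) I would crucially exploit that $G$ is simple. If $G$ acted by $C^1$-diffeomorphisms on $[0,1]$ or $S^1$ nontrivially, then the orientation homomorphism $G\to\mathbb{Z}/2$ would be trivial (a nontrivial one has an index-two kernel, impossible for an infinite simple group) and the action would be faithful. On $[0,1]$ every element then fixes the endpoints; setting $s=\sup\{x:\ G\text{ acts trivially on }[0,x]\}$ gives a global fixed point with nontrivial one-sided germ, and Thurston's stability theorem yields a nonzero homomorphism $G\to(\mathbb{R},+)$, contradicting perfectness. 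On $S^1$ I would instead invoke a finitely generated left orderable subgroup $H\le G$ with solvable word problem admitting no nontrivial $C^1$-action on the circle: faithfulness of the (hypothetical) $G$-action restricts to a faithful $C^1$-action of $H$, which must be trivial, forcing the nontrivial $H$ into the trivial kernel -- a contradiction. Hence $G$ admits no nontrivial $C^1$-action on the interval or the circle.

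The main obstacle I anticipate lies not in these deductions but in the \emph{existence} of the two witness groups with solvable word problem: a finitely generated left orderable group failing the Haagerup property, and a finitely generated left orderable group with no nontrivial $C^1$-action on $S^1$. The first relies on constructions of left orderable groups with spectral-gap/rigidity features, and the second on explicit non-$C^1$-smoothable left orderable groups; in each case the delicate point is to confirm that a concrete such example has solvable word problem, so that Theorem \ref{Theorem: main2} actually places it inside the generic $G$. A secondary subtlety is the minimal (or exceptional-minimal-set) case of a circle action, where the perfectness/Thurston obstruction used for the interval is not directly available; this is precisely the case that the non-smoothable subgroup witness, combined with simplicity of $G$, is designed to bypass.
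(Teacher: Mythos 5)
Your overall strategy is close in spirit to the paper's, though the mechanisms differ: you embed witness subgroups via Theorem \ref{Theorem: main2} (which requires witnesses with solvable word problem) together with simplicity from Theorem \ref{Theorem ConjOrd}, whereas the paper runs everything through Theorem \ref{Theorem: main1}, exhibiting witnesses that are $lo$-near isolated or isolated: $\overline{T}$ (perfect, finitely presented, left orderable, whose only nontrivial left orderable quotient is itself) for (1), $\mathbf{F}_2\ltimes \mathbf{Z}^2$ (relative property (T), hence no Haagerup property) for (2), and the Lodha--Moore group $G_0$ for (3). Your parts (1), (2), and (4) are essentially fine: for (4) nothing is needed beyond Theorem \ref{Theorem: main2}; for (1) you do not need the Hyde--Lodha groups, since $\overline{T}$ is already a finitely generated perfect left orderable group with solvable word problem; and for (2) the standard witness $\mathbf{F}_2\ltimes\mathbf{Z}^2$ is linear over $\mathbf{Z}$, hence has solvable word problem, so the "delicate point" you flag there is harmless.

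The genuine gaps are both in part (3). First, your interval argument applies Thurston's stability theorem to $G$ itself, but that theorem requires finite generation, and the generic $G$ is \emph{not} finitely generated (Proposition \ref{fgmeager} shows finitely generated groups are meager). The step genuinely fails for infinitely generated groups: the derived subgroup of Thompson's group $F$ is perfect (indeed simple), infinitely generated, and acts faithfully by $C^\infty$-diffeomorphisms of $[0,1]$ via the Ghys--Sergiescu action \cite{GhysSergiescu}, so "faithful $C^1$-action fixing a point contradicts perfectness" is false at this level of generality; relatedly, your claim that $s$ has nontrivial one-sided germ can fail, since each element may be trivial near $s$ without any uniform neighborhood working for all of $G$. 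The fix is to use faithfulness (from simplicity of $G$) to restrict the action to the finitely generated perfect subgroup $H\le G$ produced in your part (1) and apply Thurston stability to $H$. Second, your circle argument hinges on a witness -- a finitely generated left orderable group with solvable word problem admitting no nontrivial $C^1$-action on $S^1$ -- whose existence you explicitly leave open; as stated, the proof is therefore incomplete. The paper closes exactly this hole with $G_0$: by \cite{BLT} it admits no nonabelian action by $C^1$-diffeomorphisms on the closed interval or the circle, it is left orderable with solvable word problem (and isolated, which is why the paper can invoke Theorem \ref{Theorem: main1} without any word-problem verification), and since any nontrivial action of the simple generic $G$ is faithful, its restriction to the nonabelian subgroup $G_0$ is faithful, hence nonabelian -- a contradiction that settles the interval and circle cases simultaneously.
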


\begin{cor}\label{Corollary LocallyIndicable}
There is a comeager set $\mathcal{X}_{li}\subset \mathcal{G}_{li}$ such that each $G\in \mathcal{X}_{li}$ satisfies:
\begin{enumerate}
\item It is not biorderable.
%\item It does not have the Haagerup property.
\item It does not admit nontrivial actions by $C^1$-diffeomorphisms on the closed interval, $[0,1)$ or the circle.
\end{enumerate}
\end{cor}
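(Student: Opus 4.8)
The plan is to realize $\mathcal X_{li}$ as a countable intersection of comeager subsets of $\mathcal G_{li}$, one per conclusion; comeager sets being closed under countable intersection, it is enough to produce each separately. Two inputs for $P$ equal to local indicability will be used throughout. First, Theorem \ref{Theorem ConjOrd} applies to local indicability, so the generic $G\in\mathcal G_{li}$ is \emph{simple} (indeed has a single nontrivial conjugacy class) and verbally complete. Second, Theorem \ref{Theorem: main1}(1) shows that the generic $G$ contains an isomorphic copy of every $(li)$-near isolated group. The strategy for each item is to isolate a finitely generated locally indicable group carrying the relevant pathology, to check that it is $(li)$-near isolated, and to transport the pathology to $G$.

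For (1), recall from the introduction that biorderability is inherited by subgroups; hence it suffices to plant inside $G$ a finitely generated locally indicable group that is not biorderable. The Klein bottle group $K=\langle a,b\mid aba^{-1}=b^{-1}\rangle$ is such a group: as a poly-$\mathbf Z$ group it is locally indicable, while any bi-invariant order would give the contradiction $b>1\Rightarrow aba^{-1}=b^{-1}>1$. To see that $K$ is $(li)$-near isolated I would use the first criterion recorded after the definition of $P$-near isolation, namely embed $K$ into a simple subgroup of a finitely presented locally indicable group (the word problem of $K$ being solvable). Theorem \ref{Theorem: main1}(1) then puts a copy of $K$ inside the generic $G$, so $G$ is not biorderable.

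For (2), first note that since $G$ is simple its orientation-preserving subgroup (normal of index at most two) is all of $G$, and any nontrivial $C^1$-action is faithful, its kernel being a proper normal subgroup. A faithful orientation-preserving $C^1$-action on $[0,1]$ restricts to a faithful one on $[0,1)$, so among the interval statements it suffices to exclude $[0,1)$. By Thurston stability a finitely generated group acting faithfully by $C^1$-diffeomorphisms of $[0,1)$ is locally indicable, so local indicability alone is no obstruction for the generic (locally indicable) $G$; the point is that local indicability is \emph{not} sufficient, and I would invoke a finitely generated locally indicable group $H$ admitting no faithful $C^1$-action on $[0,1)$. Arranging $H$ to be $(li)$-near isolated and embedding it into $G$, a nontrivial $C^1$-action of $G$ on $[0,1)$ (or $[0,1]$) would restrict to a faithful $C^1$-action of $H$, a contradiction. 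For the circle, simplicity precludes proper finite-index subgroups (as $G$ is infinite), so any finite orbit of a $C^1$-action on $S^1$ is a single global fixed point; cutting there produces a $C^1$-action on $[0,1]$, already excluded. The remaining case of an action on $S^1$ with no finite orbit I would dispatch by embedding a dedicated finitely generated locally indicable group with no faithful $C^1$-action on $S^1$, exactly as for the interval.

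The main obstacle is the one-dimensional dynamics feeding item (2): producing finitely generated locally indicable groups that admit \emph{no} faithful $C^1$-action on $[0,1)$ and on $S^1$, and---just as crucially for the present machinery---verifying that these groups, together with the Klein bottle group, are $(li)$-near isolated. The latter amounts to embedding them into simple subgroups of finitely presented locally indicable groups, a local-indicability analogue of the Boone--Higman theorem (paralleling the Bludov--Glass theorem \cite{bludovglass} for left orderability); establishing such embeddings, or instead placing the obstruction groups inside isolated locally indicable groups, is the delicate step. By contrast the transfer of non-biorderability, the reduction of nontrivial actions to faithful ones via simplicity, and the circle-to-interval reduction are routine.
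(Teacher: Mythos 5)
Your overall architecture (intersect comeager sets; use Theorem \ref{Theorem ConjOrd} for simplicity and hence faithfulness of nontrivial actions; plant a finitely generated obstruction group via Theorem \ref{Theorem: main1}) matches the paper's, and your reductions (nontrivial to faithful, circle to interval) are sound. But both steps you flag as ``delicate'' are genuine gaps, and for item (1) your chosen route is the wrong one. You propose to verify that the Klein bottle group $K=BS(1,-1)$ is $(li)$-near isolated by embedding it into a simple subgroup of a finitely presented locally indicable group, i.e.\ by a Boone--Higman theorem for local indicability. No such theorem is available: the paper establishes the Boone--Higman property only for left orderability (Bludov--Glass), never for local indicability, and note the added wrinkle that a nontrivial finitely generated simple group is never locally indicable, so any such simple subgroup would have to be infinitely generated; your appeal to ``the first criterion'' is an unproved (and likely open) claim. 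The paper avoids near-isolation entirely for (1): non-biorderability is an \emph{open} property (Lemma \ref{Lemma: OrderabilityClosedProperties}), and $BS(1,-1)$ is a finitely generated group satisfying both local indicability and non-biorderability, so Theorem \ref{Theorem: main1}(2) directly yields an open dense subset of $\mathcal{G}_{li}$ whose members contain a finitely generated non-biorderable subgroup; since biorderability passes to subgroups, these groups are not biorderable.

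For item (2), you leave the crucial object --- a finitely generated, locally indicable, $(li)$-near isolated group with no faithful $C^1$-action --- entirely unspecified, so nothing is actually proved. The paper supplies it: the Lodha--Moore group $G_0$ is locally indicable (it is a group of piecewise projective homeomorphisms of the real line), it is an \emph{isolated} group (every proper quotient is abelian, by \cite{burilllodhareeves}), and by \cite{BLT} it admits no nonabelian action by $C^1$-diffeomorphisms on the closed interval or the circle. An isolated group satisfying $P$ is automatically $P$-near isolated --- this is exactly your fallback of placing the obstruction group inside an isolated locally indicable group, but carried out with a concrete group, and it needs no Boone--Higman-type input. Moreover the single group $G_0$ handles the closed interval, $[0,1)$, and the circle simultaneously, so your separate finite-orbit/no-finite-orbit analysis on $S^1$, and the second dedicated obstruction group it would require, are unnecessary.
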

A well known conjecture of Peter Linnel asserts that every left orderable group is either locally indicable or else contains a nonabelian free subgroup.
Corollary \ref{Corollary LeftOrderable}$(4)$ suggests why it is difficult to find counterexamples to this conjecture as the generic left orderable group contains a nonabelian free subgroup.  
Recall that the Witte-Morris theorem states that every left orderable amenable group is locally indicable, and the Thurston stability theorem states that any group of $C^1$-diffeomorphisms of an inteval of the form $[x,y)$ or $(x,y]$ is locally indicable. Corollary \ref{Corollary LeftOrderable} also implies that the converses to both these statements fail generically.

Kaplansky made the following conjectures in the $1960$'s. (See \cite{gardam} for a brief historical survey.)

\begin{conj}
Let $G$ be a torsion-free group and let $K$ be a field. Consider the group ring $K[G]$.
\begin{enumerate}

\item (Kaplansky's unit conjecture) Every unit in $K[G]$ is of the form $kg$ for $k\in K,g\in G$.

\item (Kaplansky's zero divisor conjecture) $K[G]$ has no non-trivial zero divisors.

\item (Kaplansky's idempotent conjecture) $K[G]$ has no idempotents other than $0
$ and $1$.

\end{enumerate}
\end{conj}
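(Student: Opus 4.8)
The plan is to address the three conjectures through their classical common sufficient condition, the unique product property, which is one of the properties in $\mathcal{P}$ and which occupies the penultimate position in the displayed chain of strict inclusions, just inside the torsion-free hypothesis. I would be upfront that the conjectures as \emph{literally} stated cannot be proved in this generality: part (1), the unit conjecture, is now known to be \emph{false} for general torsion-free groups by Gardam's counterexample (cited above as \cite{gardam}), while parts (2) and (3) remain open. What can be proved cleanly, and what is germane to the spaces studied in this paper, is that all three statements hold whenever $G$ enjoys the unique product property.

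First I would record the leading-support argument for the zero-divisor conjecture. Fix nonzero $\alpha=\sum_{a} c_a a$ and $\beta=\sum_b d_b b$ in $K[G]$, with finite supports $A,B\subseteq G$. The unique product property furnishes an element $g\in AB$ admitting a \emph{unique} factorization $g=ab$ with $a\in A$, $b\in B$; the coefficient of $g$ in the product $\alpha\beta$ is then the single term $c_a d_b\ne 0$, so $\alpha\beta\ne 0$ and $K[G]$ is a domain. The idempotent conjecture is immediate from this, since in any domain the relation $e^2=e$ gives $e(e-1)=0$ and hence $e\in\{0,1\}$. For the unit conjecture I would invoke the two-unique-products refinement of the hypothesis, which is equivalent to the unique product property by a theorem of Strauss: when $\ab A+\ab B>2$ the set $AB$ contains at least two elements with unique factorizations, so $\alpha\beta=1$ can hold only if $\ab A=\ab B=1$, that is, $\alpha=kg$ for some $k\in K$ and $g\in G$.

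The main obstacle, and the precise reason the conjecture as worded resists a uniform proof, is the gap between the unique product property and bare torsion-freeness, which is the final strict inclusion of the displayed chain. The support argument consumes the unique product hypothesis in an essential way, and no replacement covering all torsion-free groups is known. Gardam's group is the sharp witness to this: it is torsion-free yet fails the unique product property, and in its group algebra the unit conjecture genuinely fails. Consequently the most that the elementary methods of this paper deliver is a statement about the subspace $\mathcal{G}_P$ for $P$ the unique product property, where every group satisfies all three conclusions; the zero-divisor and idempotent conjectures for arbitrary torsion-free groups lie beyond these tools.
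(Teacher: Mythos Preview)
The statement you were asked to address is labeled in the paper as a \emph{Conjecture}, not a Theorem; the paper does not attempt to prove it and contains no proof to compare your proposal against. The authors simply record Kaplansky's three conjectures as background and then, in Corollary~\ref{Corollary Kaplansky}, prove results about \emph{generic} behavior in the relevant Polish spaces: that the generic torsion-free enumerated group is a counterexample to the unit conjecture (via Gardam's group), that the generic torsion-free group lacks the unique product property, and that the zero-divisor and idempotent conjectures each either hold universally or fail generically.

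Your write-up is mathematically sound as a summary of the classical implication ``unique product property $\Rightarrow$ all three Kaplansky conclusions,'' and you correctly flag that part~(1) is now refuted by Gardam while parts~(2) and~(3) remain open. But this is orthogonal to the paper's aims: the paper neither proves nor claims to prove the conjectures, and your leading-support and two-unique-products arguments, while standard, do not appear in the paper at all. In short, there is no ``paper's own proof'' here---you have supplied background the paper takes for granted rather than reproduced or paralleled anything the authors prove.
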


It is known that a positive solution to the unit conjecture implies a positive solution to the zero divisor conjecture, which in turn implies a positive solution to the idempotent conjecture.  It is also known that groups with the unique product property satisfy the unit conjecture, although this conjecture has a negative solution in general.
% Groups with the unique product property are known to satisfy the unit conjecture, which implies the zero divisor conjecture, which in turn implies the idempotent conjecture.
We note the following consequences of our results for the Kaplansky conjectures as it relates to the unique product property.
The first is the "generic" version of Gardam's recent breakthrough counterexample to the unit conjecture (\cite{gardam}).
%In what follows, by "generic" we mean the class of enumerated groups in a suitable comeager set.

\begin{cor}\label{Corollary Kaplansky}
The following holds:
\begin{enumerate}
\item There is an open dense set $\mathcal{X}\subset \mathcal{G}_{tf}$ such that each enumerated group $G\in \mathcal{X}$ is a counterexample to the unit conjecture.
\item The generic torsion-free enumerated group does not have the unique product property.
\item The generic group with the unique product property is not left orderable.
\item Either the Kaplansky zero divisor conjecture holds or else the generic torsion-free group does not satisfy the zero divisor conjecture.
\item Either the Kaplansky idempotent conjecture holds or else the generic torsion-free group does not satisfy the idempotent conjecture.
\end{enumerate}
\end{cor}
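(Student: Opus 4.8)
The plan is to treat parts (1), (4) and (5) by a single mechanism, and then obtain (2) and (3) as companion consequences. The common engine will be Theorem \ref{Theorem: main1}(2), applied to an \emph{open} property $Q$ recording the presence in the group ring of a prescribed ``bad'' element. The key observation is that, for a fixed field $K$, the existence of a nontrivial unit, a nontrivial zero divisor, or a nontrivial idempotent in $K[\cdot]$ is an open property of finitely generated marked groups. Indeed such an element is a finite formal sum $\sum_i c_i g_i$ with fixed coefficients $c_i \in K$, and the defining relation (respectively $uv=1$; or $uv=0$ with $u,v\neq 0$; or $e^2=e$ with $e\neq 0,1$) unwinds into finitely many assertions that certain products $g_ig_j$ coincide with certain group elements and that certain support elements are distinct. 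These are finitely many equations and inequations, so they depend only on a ball of fixed radius in the Cayley graph; hence the set of markings carrying such an element is open. The second recurring ingredient is persistence under group ring embeddings: if $H\le G$ then $K[H]\hookrightarrow K[G]$, and a bad element of $K[H]$ remains bad in $K[G]$, since its finite support (of size $\ge 2$ for a nontrivial unit, or the nonvanishing of $u,v$, or the constraints $e\neq 0,1$) is preserved by injectivity.

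For part (1) I would let $Q$ be the property ``$\F_2[\cdot]$ has a nontrivial unit''. Gardam's theorem \cite{gardam} supplies a finitely presented, torsion-free group whose group ring over $\F_2$ has a nontrivial unit, so $Q$ and torsion-freeness are simultaneously satisfiable by a finitely generated group. Theorem \ref{Theorem: main1}(2) then yields an open dense $\mathcal{X}\subset\mathcal{G}_{tf}$ such that each $G\in\mathcal{X}$ has a finitely generated subgroup $H$ with a nontrivial unit in $\F_2[H]$; by persistence $\F_2[G]$ has a nontrivial unit, so $G$ is a counterexample to the unit conjecture. Part (2) is then immediate: as recorded in the text, the unique product property implies the unit conjecture, so any torsion-free group carrying a nontrivial unit fails to have UPP; hence the generic torsion-free group is not a UPP group.

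Parts (4) and (5) proceed by the stated dichotomy. If the zero divisor (respectively idempotent) conjecture holds, there is nothing to prove. Otherwise some torsion-free group and field $K$ witness its failure; since any single element of $K[\cdot]$ has finite support, we may pass to a finitely generated torsion-free subgroup $G_0$ that already carries the nontrivial zero divisor (resp.\ idempotent). Taking $Q$ to be ``$K[\cdot]$ has a nontrivial zero divisor'' (resp.\ ``$K[\cdot]$ has a nontrivial idempotent''), which is open by the paragraph above, and noting that $G_0$ satisfies $Q$ together with torsion-freeness, Theorem \ref{Theorem: main1}(2) combined with persistence gives a (comeager, in fact open dense) set of torsion-free enumerated groups whose group rings over this same $K$ carry a zero divisor (resp.\ idempotent), i.e.\ that fail the relevant conjecture.

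For part (3) the strategy is the same, but the genuine input is group-theoretic rather than ring-theoretic: I would use the existence of a finitely generated non-left-orderable group $H$ with the unique product property (witnessing the proper inclusion of left orderable groups in UPP groups, cf.\ Section \ref{orderability}). Because the space of left-invariant sign functions on a group is compact and cut out by local constraints, non-left-orderability of $H$ is certified by a \emph{finite} obstruction: a finite subset of $H$, together with its partial multiplication table, admitting no consistent sign assignment. The property $Q$ that a marked group contains such a configuration is then open, and $H$ satisfies both $Q$ and UPP. Theorem \ref{Theorem: main1}(2) provides an open dense set in $\mathcal{G}_{up}$ of groups containing this configuration; since left orderability passes to subgroups, any such group is not left orderable. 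The main obstacle I expect is precisely this last step's input---producing (or citing) a finitely generated non-left-orderable UPP group---together with the bookkeeping needed to confirm, in parts (1), (4) and (5), that the finite combinatorial witness is transferred faithfully and that openness and persistence hold as claimed.
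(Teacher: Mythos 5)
Your proposal is correct, and on parts (3)--(5) it essentially coincides with the paper's argument: the paper handles (3) via Dunfield's finitely generated UPP groups that are not left orderable, the openness of the failure of left orderability (Lemma \ref{Lemma: OrderabilityClosedProperties}), and Theorem \ref{Theorem: main1}(2); for (4) and (5) it says only that ``a very similar argument'' applies, and the dichotomy-plus-openness-plus-persistence argument you spell out is exactly that argument made explicit. The genuine divergence is in parts (1) and (2). For (1), the paper does not argue openness of ``$\F_2[\cdot]$ has a nontrivial unit'' at all: it notes that Gardam's (Promislow) group is finitely presented with solvable word problem, that torsion-freeness is a Boone--Higman property, and applies Theorem \ref{Theorem: main3} to embed that group into every member of a comeager set. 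Your route via Theorem \ref{Theorem: main1}(2), using the finite-support/coincidence-pattern argument to show that carrying a nontrivial unit (with fixed coefficients) is an open property of marked groups and persists under group embeddings, is sound, avoids needing solvability of the word problem, and in fact matches the statement better: it genuinely produces an \emph{open dense} set, whereas the paper's Theorem \ref{Theorem: main3} route as literally written yields only a comeager one. For (2), the paper invokes an independent input (the Rips--Sageev finitely generated torsion-free groups without the unique product property, plus openness of the failure of UPP), while you derive (2) directly from (1) using the implication that UPP groups satisfy the unit conjecture; both are valid, yours being more economical at the cost of making (2) logically dependent on Gardam's theorem. The one point you flag as an obstacle in (3) --- producing a finitely generated non-left-orderable UPP group --- is resolved exactly by the paper's citation of Dunfield's examples (the appendix of \cite{Dunfield}), which is what backs the proper-inclusion chain stated in the introduction.
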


The notion of a sofic group was introduced by Gromov as a generalisation of both amenable and residually finite groups. The property of soficity is of considerable interest because it implies several important general conjectures of group theory.
(We direct the reader to \cite{pestov_2008} for a survey.)
While it remains open whether there is a nonsofic group, we note the following consequence for soficity in the context of the spaces $\mathcal{G}$ and $\mathcal{G}_P$ (for $P\in \mathcal{P}$):

\begin{cor}\label{Corollary Sofic}
For any $P\in \mathcal{P}$, either all groups in $\mathcal{G}_P$ are sofic or else the generic group in $\mathcal{G}_P$ is nonsofic.
\end{cor}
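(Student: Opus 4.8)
The plan is to prove Corollary~\ref{Corollary Sofic} by exhibiting the set of nonsofic enumerated groups in $\mathcal{G}_P$ as an open subset, and then invoking the dichotomy that an open set in a Baire space is either empty or, when combined with genericity considerations from the earlier theorems, comeager. The key structural observation I would exploit is that soficity is a \emph{local} property in a strong sense: a group is sofic if and only if all of its finitely generated subgroups are sofic, and in fact soficity is detected by the existence of approximate embeddings into finite symmetric groups that respect finitely many relations at a time. This means that nonsoficity, should it occur, is witnessed by a \emph{finite} amount of data, which is exactly what makes the relevant set open in the enumerated-groups topology.

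First I would recall that the basic open sets in $\mathcal{G}_P$ are determined by finite systems of equations and inequations among the generators $0,1,2,\dots \in \mathbf{N}$. The crucial step is to show that if some $G_0 \in \mathcal{G}_P$ is nonsofic, then there is a finitely generated subgroup $H \le G_0$ that is already nonsofic, and moreover the failure of soficity for $H$ is witnessed on a finite ball of its Cayley graph. Concretely, nonsoficity of a finitely generated group $H = \langle s_1,\dots,s_k\rangle$ is equivalent to the failure of a certain approximation condition that, by a standard compactness/diagonalization argument, must already fail at some finite radius $r$ and some finite error threshold $\varepsilon$. I would then argue that any enumerated group $G \in \mathcal{G}_P$ whose multiplication table agrees with that of $G_0$ on a sufficiently large finite set (large enough to contain the radius-$r$ ball around the chosen generators) must contain an isomorphic copy of $H$ on that finite portion, and hence must also be nonsofic. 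This exhibits a basic open neighborhood of $G_0$ consisting entirely of nonsofic groups, so the set of nonsofic groups in $\mathcal{G}_P$ is open.

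Next I would combine this openness with Theorem~\ref{locunivcomeager}: the locally universal groups form a comeager subset $\mathcal{G}_{lu,P}$ of $\mathcal{G}_P$. A locally universal group embeds every member of $\mathcal{G}_P$ into one of its ultrapowers, and since soficity passes to ultrapowers and subgroups appropriately, a locally universal group is sofic only if every group in $\mathcal{G}_P$ is sofic. Therefore, if even one group in $\mathcal{G}_P$ is nonsofic, then every locally universal group is nonsofic, and the comeager set $\mathcal{G}_{lu,P}$ consists of nonsofic groups; this yields the conclusion that the generic group in $\mathcal{G}_P$ is nonsofic. Conversely, if no group in $\mathcal{G}_P$ is nonsofic, then trivially all groups in $\mathcal{G}_P$ are sofic, which is the first horn of the dichotomy.

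**The hard part will be** the precise formulation and verification of the finitary witness for nonsoficity, i.e.\ establishing that nonsoficity of a finitely generated group is genuinely an open condition in the enumerated-groups topology. Soficity is defined via a limiting process (existence of sofic approximations by partial permutations with vanishing error), so I must take care that its negation is captured by a finite approximation failure rather than merely an infinitary one; the quantifier structure ``there do not exist approximations'' is prima facie a closed or $\Pi$-type condition, and turning it into an open statement requires the local characterization together with the observation that agreement of the group operation on a large finite set transfers the relevant finite-radius obstruction. Once this openness is in hand, the appeal to local universality via Theorem~\ref{locunivcomeager} and the Baire-category framework is routine, paralleling the arguments used for the other properties handled earlier in the paper.
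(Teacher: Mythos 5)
Your proposal is correct, but it reaches the dichotomy by a genuinely different mechanism than the paper. The paper's proof is a one-liner: the failure of soficity is an open property of finitely generated groups in the Grigorchuk space $\mathcal{M}$ (precisely your finite-radius-witness observation, which is immediate from Definition \ref{soficity via graph approximations}), and then Theorem \ref{Theorem: main1}(2), applied with $Q$ equal to nonsoficity, already delivers an open \emph{dense} set of enumerated groups containing a nonsofic finitely generated subgroup --- the density there coming from the direct-sum argument of Lemma \ref{closed under products density}. You instead prove openness of the nonsofic set directly in $\mathcal{G}_P$ and then obtain comeagerness from Theorem \ref{locunivcomeager}, using that soficity passes to subgroups and is preserved under ultrapowers, so that a single nonsofic group in $\mathcal{G}_P$ forces every locally universal group to be nonsofic. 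Two remarks on the comparison. First, your openness step is actually redundant in your own argument: once every locally universal group is nonsofic, the comeager set $\mathcal{G}_{lu,P}$ already consists of nonsofic groups, with no appeal to topology of the nonsofic set. Second, the trade-off is real: your route imports the nontrivial (though standard, Elek--Szab\'o) fact that ultraproducts of sofic groups are sofic, which the paper never needs; in exchange, it sidesteps a subtlety in the paper's route, namely that Theorem \ref{Theorem: main1}(2) requires a finitely generated group satisfying both $P$ and nonsoficity, which for $P$ not inherited by subgroups (lawlessness) requires an extra repair such as passing to $H\times \mathbf{F}_2$. One small inaccuracy in your write-up: a group agreeing with $G_0$ on a large finite portion of the multiplication table need not ``contain an isomorphic copy of $H$''; it only contains a finitely generated subgroup whose marked Cayley ball of the relevant radius agrees with that of $H$ --- but, as you yourself note, this is all that is needed to transfer the finite-radius obstruction to soficity, so the conclusion stands.
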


\begin{remark}
The special case of the previous corollary when $P$ is the tautologically true property was observed by Glebsky in \cite{gleb17}.
\end{remark}

In the last section of the paper, we provide an analysis of Question \ref{Question: main} part $(1)$, and collect some partial answers.
Moreover, we study certain natural questions that emerge in the setting of Polish spaces consisting of enumerated groups satisfying a given law, and provide some partial answers. We also document a connection of these considerations with the von Neumann-Day problem for ultrapowers.
%Next, we study the algebraic structure of generic left orderable, locally indicable and torsion free groups.
%In this context, we find that they exhibit surprisingly strong algebraic features.

\section{Preliminaries}
\subsection*{Conventions and Notations} 
In this paper, we use $\bf N$ to denote the set of positive natural numbers, that is, ${\bf N}=\{1,2,3,\ldots\}$.\footnote{Apologies to the logicians for this notation, but it makes a lot of our expressions cleaner to read.}  Given $n\in \bf N$, we also set $[n]=\{1,\ldots,n\}$.

Given a (group-theoretic) word $w(x_1,\ldots,x_n)$, we call $n$ the \textbf{arity} of the word and denote it by $n_w$.  By a \textbf{system} we mean a finite system of equations and inequations of the form $w=e$ or $w\not=e$, where $w$ is a word.  We use letters such as $\Sigma$ and $\Delta$ (sometimes with accents or subscripts) to denote systems.  If each word in the system has its variables amongst $x_1,\ldots,x_n$, then we write $\Sigma(x_1,\ldots,x_n)$ and extend the notion of arity to systems in the obvious way, using the notation $n_\Sigma$.  If $\Sigma(\vec x,\vec y)$ is a system, $G$ is a group, and $\vec a$ is a tuple from $G$ with the same length as $\vec x$, then we can consider the system $\Sigma(\vec a,\vec y)$, which we call a system \textbf{with coefficients}.  Given a system $\Sigma$, an enumerated group $G$, and $\vec a\in G^{n_\Sigma}$, we write $G\models \Sigma(\vec a)$ to denote that the system is true in $G$ when $\vec a$ is plugged in for $\vec x$.  
\subsection{Ultraproducts of groups}

Given a set $I$, an \textbf{ultrafilter on $I$} is a $\{0,1\}$-valued finitely additive probability measure $\u$ defined on all subsets of $I$.  One often conflates an ultrafilter $\u$ with its collection of measure $1$ sets, thus writing $A\in \u$ rather than $\u(A)=1$.  An ultrafilter $\u$ on $I$ is called \textbf{nonprincipal} if all finite sets have measure $0$.  A straightforward Zorn's lemma argument shows that nonprincipal ultrafilters exist on any infinite set.

Now suppose that $(G_i)_{i\in I}$ is a family of groups and that $\u$ is an ultrafilter on $I$.  The \textbf{ultraproduct of the family $(G_i)$ with respect to $\u$} is the group $$\prod_\u G_i:=(\prod_{i\in I} G_i)/N$$ where $N$ is the normal subgroup of $\prod_{i\in I}G_i$ given by $$N=\{g\in \prod_{i\in I}G_i\mid \text{ for which }\{i\in I\mid g(i)=e_{G_i}\}\in \mathcal{U}\}$$  Given $g\in \prod_{i\in I}G_i$, we denote its coset in $\prod_\u G_i$ by $g_\u$.  Thus, $g_\u=h_\u$ if and only if $\{i\in I\mid g(i)=h(i)\}\in \u$.  Given any word $w(\vec x)$ and $\vec a_\u\in (\prod_\u G_i)^{n_w}$, note that $w(\vec a_\u)=(w(\vec a(i))_\u$, whence $$\prod_\u G_i\models w(\vec a_\u)=e$$ if and only if $$\{i\in I\mid G_i\models w(\vec a(i))=e_{G_i}\}\in \u$$

When each $G_i=G$, we speak of the \textbf{ultrapower} $G^\u$ of $G$ with respect to $\u$.  The map which sends $g\in G$ to the coset of the sequence constantly equal to $g$ is called the \textbf{diagonal embedding} of $G$ into $G^\u$.

\subsection{Some model theory of groups}

A \textbf{quantifier-free formula} is a finite disjunction of systems.\footnote{This abuse of terminology is justified by the existence of disjunctive normal form.} A formula $\varphi(\vec x)$ is an expression of the form $$Q_1x_1\cdots Q_m x_m\psi(\vec x,\vec y)$$ with $\psi$ quantifier-free and each $Q_i\in \{\forall,\exists\}$.\footnote{This abuse of terminology is justified by the existence of prenex normal form.}  Given a formula $\varphi(\vec x)$, a group $G$, and a tuple $\vec a\in G^{n_\varphi}$, the definition of $\varphi(\vec a)$ being true in $G$, denoted $G\models \varphi(\vec a)$, is defined in the obvious way.  A formula without any free variables is called a sentence and is either true or false in a given group.  For example, $$G\models \forall x\forall y\exists z (   x=e\vee y=e\vee x^2\not=e\vee y^2\not= e\vee z^{-1}xzy^{-1}=e)$$ if and only if any two elements of $G$ of order 2 are conjugate. 

The following fundamental fact is known as \textbf{\L os' theorem} or the \textbf{Fundamental theorem of ultraproducts}:

\begin{fact}
For any family $(G_i)_{i\in I}$ of groups, any ultrafilter $\u$ on $I$, any formula $\varphi(\vec x)$, and any $\vec a\in \prod_\u G_i$, we have
$$\prod_\u G_i\models \varphi(\vec a)\Leftrightarrow G_i\models \varphi(\vec a(i)) \text{ for }\u\text{-almost all }i\in I.$$
\end{fact}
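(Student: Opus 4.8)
The plan is to prove the equivalence by induction on the structure of $\varphi$ — more precisely, by induction on the number $m$ of quantifiers in its prenex presentation — carrying the free variables along as parameters so that the inductive hypothesis applies after a single quantifier has been stripped off. Concretely I would establish the statement simultaneously for all tuples $\vec a$: the base case $m=0$ treats quantifier-free $\varphi$, and the inductive step peels off the outermost quantifier $Q_1 x_1$, applying the inductive hypothesis to the formula $Q_2 x_2\cdots Q_m x_m\,\psi$, which has one fewer quantifier but one more parameter (the value substituted for $x_1$).

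For the base case, recall that a quantifier-free formula is a finite disjunction of systems, and a system is a finite conjunction of equations $w=e$ and inequations $w\neq e$. The atomic equation case is exactly the computation already recorded in the preliminaries: since $w(\vec a_\u)=\big(w(\vec a(i))\big)_\u$, the definition of the ultraproduct gives $\prod_\u G_i\models w(\vec a_\u)=e$ iff $\{i: G_i\models w(\vec a(i))=e_{G_i}\}\in\u$. The inequation case follows by complementation: $w(\vec a_\u)\neq e$ iff $\{i: w(\vec a(i))=e_{G_i}\}\notin\u$ iff $\{i: w(\vec a(i))\neq e_{G_i}\}\in\u$, where the last step uses that $\u$, being an ultrafilter, contains exactly one of any set and its complement. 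Closing under conjunction uses that a filter is closed under finite intersections and supersets, while closing under disjunction uses primality of the ultrafilter: $A_1\cup\cdots\cup A_k\in\u$ iff some $A_j\in\u$.

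For the inductive step it suffices to handle the existential quantifier, the universal case being entirely dual (again invoking primality of $\u$ and the axiom of choice). So suppose $\varphi(\vec x)=\exists y\,\chi(\vec x,y)$, where $\chi$ has one fewer quantifier and the inductive hypothesis therefore applies to it. If $\prod_\u G_i\models\exists y\,\chi(\vec a_\u,y)$, pick a witness $b_\u$; by the inductive hypothesis $\{i: G_i\models\chi(\vec a(i),b(i))\}\in\u$, and this set is contained in $\{i: G_i\models\exists y\,\chi(\vec a(i),y)\}$, which is therefore in $\u$. Conversely, if $A:=\{i: G_i\models\exists y\,\chi(\vec a(i),y)\}\in\u$, then for each $i\in A$ choose, using the axiom of choice, some $b(i)\in G_i$ witnessing $\chi(\vec a(i),b(i))$, and set $b(i)=e_{G_i}$ otherwise; the set where $\chi(\vec a(i),b(i))$ holds contains $A$, so by the inductive hypothesis $\prod_\u G_i\models\chi(\vec a_\u,b_\u)$, whence $\prod_\u G_i\models\exists y\,\chi(\vec a_\u,y)$.

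The two genuinely essential ingredients — and the places where I expect the real work to lie — are the backward direction of the existential step and the disjunction clause of the base case. The former is the heart of the theorem: it is precisely there that one must assemble a single element $b_\u$ of the ultraproduct out of coordinatewise witnesses $b(i)$, and this is where the axiom of choice enters. The latter is where the hypothesis that $\u$ is an \emph{ultra}filter, rather than a mere filter, is indispensable; a filter closed only under intersections and supersets would suffice for the conjunction clause and for the forward directions, but the equivalence for disjunctions and inequations breaks down without primality.
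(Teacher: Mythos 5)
Your proof is correct, but there is nothing in the paper to compare it against: the paper states this result without proof, as the well-known \L os' theorem (Fundamental theorem of ultraproducts). The only ingredient the paper records is the atomic computation in the ultraproduct preliminaries, namely that $w(\vec a_{\mathcal{U}})=\bigl(w(\vec a(i))\bigr)_{\mathcal{U}}$, which yields the equivalence for a single equation $w=e$; your base case incorporates exactly that computation and extends it to inequations, systems, and disjunctions of systems using the ultrafilter's primality. The remainder of your argument --- induction on the number of quantifiers in prenex form, carried out simultaneously for all parameter tuples, with the axiom of choice assembling coordinatewise witnesses into an element of the ultraproduct in the backward existential step, and the universal step handled dually by applying the inductive equivalence contrapositively --- is the standard proof of \L os' theorem and is sound. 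Your identification of the two essential ingredients (choice for witnesses, ultraness for inequations and disjunctions) is also accurate; in particular the dual universal case genuinely works as you claim, since the inductive hypothesis is an equivalence and ultrafilters decide every subset of $I$, so no syntactic negation of formulas is ever needed.
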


Groups $G$ and $H$ are called \textbf{elementarily equivalent}, denoted $G\equiv H$, if, given any sentence $\sigma$, we have $G\models \sigma$ if and only if $H\models \sigma$.  It follows from \L o's theorem that any group is elementarily equivalent to any of its ultrapowers.  Although we will not need it in this paper, the \textbf{Keisler-Shelah theorem} shows that elementary equivalence can be given a completely group-theoretic reformulation, namely two groups are elementarily equivalent if and only if they have isomorphic ultrapowers.

%The following is a special case of the \textbf{Downward L\"owenheim-Skolem theorem}:

%\begin{fact}
%Given any group $G$ and subset $X\subseteq G$, there is a subgroup $H$ of $G$ such that $X\subseteq H$, $|H|=|X|$, and with $G\equiv H$.
%\end{fact} 

A set of sentences is called a \textbf{theory}.  If $T$ is a theory, we write $G\models T$ to indicate that $G\models \sigma$ for all $\sigma\in T$.  A class $\frak C$ of groups is called \textbf{elementary} (or \textbf{axiomatizable}) if there is a theory $T$ such that, for any group $G$, $G\in \frak C$ if and only if $G\models T$; in this case, we call the theory $T$ a set of \textbf{axioms} for the class.  For example, the classes of abelian groups and nilpotent class 2 groups are elementary.  

A sentence is called \textbf{universal} if, using the above notation, $Q_i=\forall$ for all $i=1,\ldots,m$.  A theory is called universal if it consists only of universal sentences.  An elementary class is called \textbf{universally axiomatizable} if it has a universal set of axioms.  The following is a special case of a more general test for axiomatizability of a class of groups:

\begin{fact}
A class of groups is universally axiomatizable if and only if it is closed under isomorphism, ultraproducts, and subgroups.
\end{fact}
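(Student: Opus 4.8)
The plan is to prove the standard characterization that a class of groups is universally axiomatizable if and only if it is closed under isomorphism, ultraproducts, and subgroups. The forward direction is the easy one: if $\frak C$ is defined by a universal theory $T$, then closure under isomorphism is immediate, closure under ultraproducts follows directly from {\L}o{\'s}' theorem (since each $G_i \models T$ implies $\prod_\u G_i \models T$), and closure under subgroups follows because universal sentences persist downward to substructures — if $H \leq G$ and $G \models \forall \vec x\, \psi(\vec x)$ with $\psi$ quantifier-free, then any tuple from $H$ is a tuple from $G$, so $\psi$ holds of it in $G$, and since $\psi$ is quantifier-free its truth is absolute between $H$ and $G$. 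This last point deserves a short verification that quantifier-free formulas (finite disjunctions of systems of equations and inequations, per the paper's definition) are preserved under passage to subgroups, which is clear since equality and the group operations are computed identically in $H$ and $G$.

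The substantive direction is the converse: assuming $\frak C$ is closed under isomorphism, ultraproducts, and subgroups, I would produce a universal axiomatization. The natural candidate is to let $T$ be the set of \emph{all} universal sentences true in every member of $\frak C$, i.e. $T = \{\sigma : \sigma \text{ universal}, \ G \models \sigma \text{ for all } G \in \frak C\}$. Clearly every group in $\frak C$ models $T$; the crux is the reverse inclusion: if $G \models T$, then $G \in \frak C$. The key step will be to show that $G$ embeds into some group that is elementarily equivalent to a member of $\frak C$, and then invoke the closure hypotheses.

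The standard route here uses the \emph{universal theory} of $G$ together with a compactness-style argument. I would consider the set $\Delta$ of all quantifier-free sentences with parameters naming the elements of $G$ that hold in $G$ (the quantifier-free diagram of $G$), and argue that $\Delta$ together with the full first-order theory shared by $\frak C$ is consistent; a model of this combined theory yields a group $H \in \frak C$ (or elementarily equivalent to one, via ultraproducts) into which $G$ embeds. Concretely, one shows that for every finite subset of the diagram of $G$ there is a member of $\frak C$ realizing it — this is where $G \models T$ is used, since the failure to realize a finite piece of the diagram would produce a universal sentence in $T$ that $G$ violates. By a compactness argument (or directly by taking an ultraproduct of the witnessing members of $\frak C$, indexed by finite subsets of the diagram, along a suitable ultrafilter), one obtains a single group elementarily equivalent to an ultraproduct of members of $\frak C$ into which $G$ embeds. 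Closure under ultraproducts places this group in $\frak C$, and then closure under subgroups places $G$ itself in $\frak C$.

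The main obstacle, and the step requiring the most care, is the consistency/compactness argument establishing that $G$ embeds into a member of $\frak C$. The cleanest self-contained version avoids raw compactness by building the target ultraproduct explicitly: for each finite subset $\Delta_0$ of the quantifier-free diagram of $G$, the condition $G \models T$ guarantees (by contraposition, negating a putative universal consequence) a group $G_{\Delta_0} \in \frak C$ together with an assignment of its elements to the finitely many constants in $\Delta_0$ satisfying $\Delta_0$; then one takes the ultraproduct of the $G_{\Delta_0}$ over the directed set of finite subsets, along an ultrafilter containing all the cofinal cones, so that every individual quantifier-free sentence of the diagram holds {\L}o{\'s}-almost everywhere. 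This produces an embedding of $G$ into $\prod_\u G_{\Delta_0}$, which lies in $\frak C$ by closure under ultraproducts, and then $G \in \frak C$ by closure under subgroups. Verifying that the resulting map is a well-defined injective homomorphism — using that both equations and inequations from the diagram are preserved — is the one genuinely technical bookkeeping step.
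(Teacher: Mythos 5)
The paper offers no proof of this Fact to compare against: it is stated as a known special case of a more general model-theoretic test for axiomatizability (essentially the {\L}o\'s--Tarski/Frayne--Morel--Scott characterization), so what you have done is fill in a proof the authors deliberately omitted. Your argument is the standard one and it is correct. The easy direction is handled properly: closure under ultraproducts follows from {\L}o\'s' theorem, and closure under subgroups follows from downward persistence of universal sentences, with the (correct) observation that quantifier-free formulas are absolute between a group and a subgroup. For the converse, your final, explicit construction is the right one: take $T$ to be all universal sentences true throughout $\mathfrak{C}$; given $G\models T$, each finite piece $\Delta_0$ of the quantifier-free diagram of $G$ must be realized in some member of $\mathfrak{C}$, since otherwise the universal sentence $\forall \vec x\,\neg\bigwedge\Delta_0(\vec x)$ would belong to $T$ yet fail in $G$; then the ultraproduct of these witnesses over an ultrafilter containing the cones $\{\Delta_0 : \Delta_0\supseteq\Delta_1\}$ receives an embedding of $G$ (well-defined, injective, and a homomorphism because the relevant equations and inequations each hold almost everywhere), and closure under ultraproducts, subgroups, and isomorphism finishes. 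One caution: the intermediate compactness sketch in your second paragraph does not work as stated, because $\mathfrak{C}$ is not known to be elementary at that stage --- a model of the quantifier-free diagram of $G$ together with the full common theory of $\mathfrak{C}$ need not lie in $\mathfrak{C}$, nor be elementarily equivalent to any member of $\mathfrak{C}$, since that common theory is not complete. You implicitly recognized this by replacing it with the explicit ultraproduct construction; only that final version should be retained, and with it the proof is complete.
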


Occasionally we will need to leave the confines of first-order logic and speak of infinitary formulae.  The class of $L_{\omega_1,\omega}$ formulae is the extension of the collection of all formulae obtained by allowing countable conjunctions and disjunctions rather than merely finite conjunctions and disjunctions.

\section{The Polish space of enumerated countable groups}\label{Polish}

\subsection{Introducing the space}  
By an \textbf{enumerated group}, we mean a group whose underlying set is $\mathbf{N}$. 
  We let $\cal G$ denote the set of enumerated groups and we let $\frak G$ denote the class of all isomorphism classes of countable groups.  We let $\rho:\cal G\to \frak G$ denote the obvious ``reduction'' with the convention that we write $G$ instead of $\rho(\bd G)$ (which is a bit abusive as we are conflating the difference between a group and its isomorphism class).  We adopt a similar convention with subsets of $\cal G$:  if $\cal C$ is a subset of $\cal G$, then we write $\frak C$ for the image of $\cal C$ under $\rho$.    In particular, for any property $P$, the set of isomorphism types of $\mathcal{G}_P$ is denoted by $\mathfrak{G}_P$.  We call $\cal C\subseteq \cal G$ \textbf{saturated} if $\rho^{-1}(\rho(\cal C))=\cal C$; in other words, $\cal C$ is saturated if it is closed under relabeling of elements.  Note that all sets of the form $\cal G_P$ are saturated.

  For ease of notation, in most cases we shall denote an enumerated group and its isomorphism type by the same symbol, as it is usually the case that which one is being considered will be made clear from the context.
  However, in certain situations, we may denote the group by a letter such as $G$ and its (chosen) enumeration as $\bd G$.

To each enumerated group $G$, we assign the associated multiplication function $\mu_\bd G:\bf N\times \bf N\to \bf N$, inversion function $\iota_\bd G:\bf N\to \bf N$, and identity element $e_\bd G\in \bf N$.  Consequently, we identify each element of $\cal G$ with a unique element of the zero-dimensional Polish space $\cal X:=\bf N^{\bf N\times \bf N}\times \bf N^\bf N\times \bf N$.

\begin{prop}
$\cal G$ is a closed subspace of $\cal X$. Consequently, with the induced topology, $\cal G$ is a zero-dimensional Polish space.\footnote{Although all of the information about $\bd G$ is contained in the multiplication map $\mu_\bd G$, if we identified $\bd G$ with $\mu_\bd G$, the resulting subspace of $\bf N^{\bf N\times \bf N}$ would not be Polish but rather $\boldsymbol{\Sigma}^0_3$.}  
\end{prop}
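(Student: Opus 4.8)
The plan is to show that $\cal G$ is a closed subset of the ambient space $\cal X = \bf N^{\bf N\times \bf N}\times \bf N^\bf N\times \bf N$, from which the conclusion follows immediately: a closed subspace of a Polish space is Polish (with the subspace topology), and $\cal X$ is zero-dimensional and Polish as a countable product of discrete countable spaces, so $\cal G$ inherits zero-dimensionality as well. Thus the entire content of the proof is the closedness of $\cal G$.

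To establish closedness, I would identify precisely which tuples $(\mu,\iota,e)\in\cal X$ correspond to genuine group structures. The point $(\mu,\iota,e)$ lies in $\cal G$ exactly when the group axioms hold: associativity $\mu(\mu(a,b),c)=\mu(a,\mu(b,c))$ for all $a,b,c\in\bf N$; that $e$ is a two-sided identity, $\mu(a,e)=\mu(e,a)=a$ for all $a$; and that $\iota$ gives two-sided inverses, $\mu(a,\iota(a))=\mu(\iota(a),a)=e$ for all $a$. The key observation is that each of these requirements is a condition ranging over finitely many coordinates at a time, quantified universally over $\bf N$. Concretely, I would write
\[
\cal G=\bigcap_{a,b,c\in\bf N}\{(\mu,\iota,e): \mu(\mu(a,b),c)=\mu(a,\mu(b,c))\}\cap\bigcap_{a\in\bf N}\{\cdots\}\cap\cdots,
\]
intersecting the associativity sets over all triples, the identity sets over all $a$, and the inverse sets over all $a$.

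The main step is then to verify that each set appearing in this intersection is closed (indeed clopen) in $\cal X$. For a fixed triple $(a,b,c)$, the associativity condition $\mu(\mu(a,b),c)=\mu(a,\mu(b,c))$ depends only on the values of $\mu$ at finitely many coordinates, namely the coordinates indexed by $(a,b)$, $(b,c)$, and then $(\mu(a,b),c)$ and $(a,\mu(b,c))$. Since the topology on $\cal X$ is the product topology with each factor discrete, the evaluation maps $(\mu,\iota,e)\mapsto \mu(x,y)$, $(\mu,\iota,e)\mapsto\iota(x)$, and $(\mu,\iota,e)\mapsto e$ are all continuous, and equalities of such continuous $\bf N$-valued functions (into a discrete space) cut out clopen sets. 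The only mild subtlety worth spelling out is the associativity set: the coordinate being read off depends on the value of $\mu(a,b)$, so I would argue via the continuity of the composite evaluation rather than by naively fixing which coordinate is inspected. In detail, the map sending $(\mu,\iota,e)$ to the pair $(\mu(\mu(a,b),c),\,\mu(a,\mu(b,c)))\in\bf N\times\bf N$ is continuous because it is a finite composition of continuous evaluations, and the diagonal of $\bf N\times\bf N$ is clopen.

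I do not anticipate a genuine obstacle here; the only point requiring a little care is making the dependence-on-values argument for associativity rigorous rather than hand-waving past it. Once every set in the intersection is shown clopen, $\cal G$ is an intersection of clopen sets, hence closed, completing the proof. I would close by recalling the two standard facts invoked: a closed subspace of a Polish space is Polish, and a subspace of a zero-dimensional space is zero-dimensional, so $\cal G$ with the induced topology is a zero-dimensional Polish space as claimed.
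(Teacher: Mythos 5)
Your proof is correct and takes essentially the same approach as the paper: both realize $\cal G$ as an intersection, over $\bf N$-indexed families, of sets cut out by the group axioms (associativity, two-sided identity, two-sided inverses), each of which is clopen because the relevant evaluation maps are continuous into a discrete space (your explicit treatment of the composite evaluation in the associativity condition is exactly the point the paper leaves implicit). The paper's list also includes the condition $f(m,g(n))\neq a$ for $m\neq n$, but this is redundant --- it follows from the group axioms by cancellation --- so your shorter list defines the same set and your argument is complete.
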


\begin{proof}
It suffices to observe that $\cal G$ is the intersection of the following closed subsets of $\cal X$:
\begin{enumerate}
    \item $\bigcap_{m,n,p\in \bf N}\{(f,g,a)\in \cal X \ : \ f(f(m,n),p)=f(m,f(n,p))\}$
    \item $\bigcap_{m\in \bf N}\{(f,g,a) \in \cal X \ : \ f(m,a)=f(a,m)=m\}$
    \item $\bigcap_{m\in \bf N}\{(f,g,a) \in \cal X \  : \ f(m,g(m))=f(g(m),m)=a\}$
    \item $\bigcap_{m\in \bf N, n\in \bf N\setminus \{m\}}\{(f,g,a) \in \cal X \  : \ f(m,g(n))\neq a  \}$
\end{enumerate}
\end{proof}

It will be convenient to recast the induced topology on $\cal G$ in more group-theoretic terms.  We let ${\bf W}$ denote the set of expressions of the form $w(\vec a)$, where $w(\vec x)$ is a word and $\vec a\in \mathbf{N}^{n_w}$.  Given an enumerated group $G$ and $w\in {\bf W}$, we let $g_w \in \bf N$ denote the corresponding element.

\begin{lem}
The map $\Psi:\cal G\to \bf N^{\bf W}$ given by $\Psi(G)(w)=g_w$ is a continuous map.
\end{lem}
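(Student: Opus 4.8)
The plan is to show that $\Psi$ is continuous by verifying that the preimage of every subbasic open set of $\bf N^{\bf W}$ is open in $\cal G$. Since $\bf N^{\bf W}$ carries the product topology, a subbasis for the topology is given by sets of the form $U_{w,k}=\{\xi\in \bf N^{\bf W}: \xi(w)=k\}$, where $w\in {\bf W}$ and $k\in \bf N$. Thus it suffices to prove that for each fixed $w\in {\bf W}$ and each $k\in \bf N$, the set $\Psi^{-1}(U_{w,k})=\{G\in \cal G : g_w=k\}$ is open in $\cal G$. Equivalently, fixing $w$, I would show that the evaluation map $G\mapsto g_w$ from $\cal G$ to the discrete space $\bf N$ is continuous; continuity into a countable discrete target is exactly the statement that each fibre is open (and in fact clopen).

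The key step is an induction on the structure (length) of the word $w(\vec a)\in {\bf W}$, using the fact that the topology on $\cal G$ is the one induced from $\cal X=\bf N^{\bf N\times \bf N}\times \bf N^\bf N\times \bf N$, so that the three coordinate maps $G\mapsto \mu_\bd G$, $G\mapsto \iota_\bd G$, and $G\mapsto e_\bd G$ are continuous; concretely, for fixed $m,n\in\bf N$ the maps $G\mapsto \mu_\bd G(m,n)$ and $G\mapsto \iota_\bd G(m)$ into discrete $\bf N$ are continuous, and $G\mapsto e_\bd G$ is continuous. For the base cases, a single generator $a\in\bf N$ evaluates to the constant $a$ (trivially continuous), and the empty word evaluates to $e_\bd G$ (continuous as the identity coordinate). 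For the inductive step, if $w(\vec a)=u(\vec a)\cdot v(\vec a)$ then $g_w=\mu_\bd G(g_u,g_v)$, and if $w(\vec a)=u(\vec a)^{-1}$ then $g_w=\iota_\bd G(g_u)$. In each case $g_w$ is obtained from the continuous maps $G\mapsto g_u$, $G\mapsto g_v$ (available by induction) by composing with the continuous multiplication or inversion coordinate.

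The one point deserving care is the composition argument, and this is the step I expect to be the mildest obstacle: although $G\mapsto (g_u,g_v)$ is continuous and $(m,n)\mapsto \mu_\bd G(m,n)$ is continuous for each fixed $G$, the map $\mu$ depends on $G$ as well, so I am really composing two maps whose common variable is $G$. The clean way to handle this is to argue directly at the level of fibres. Fix $G_0\in\cal G$ and let $k=g_w(G_0)$; by the inductive hypothesis there is a basic open neighbourhood of $G_0$ on which $g_u$ is constantly equal to some value $p=g_u(G_0)$ and $g_v$ is constantly equal to $q=g_v(G_0)$. Shrinking this neighbourhood further to the basic open set that also fixes the single multiplication-table entry $\mu_\bd G(p,q)=k$ (respectively the inversion entry $\iota_\bd G(p)=k$ in the inverse case), I obtain an open neighbourhood of $G_0$ on which $g_w$ is constantly $k$. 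Since this works at every point, each fibre $\{G:g_w=k\}$ is open, establishing continuity of the coordinate map $G\mapsto g_w$ and hence of $\Psi$.

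Finally I would remark that this argument in fact shows each fibre is clopen, so $\Psi$ is continuous with respect to the zero-dimensional structure, which is consistent with $\bf N^{\bf W}$ being zero-dimensional; but only openness of the fibres is needed for the stated conclusion.
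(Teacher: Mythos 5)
Your proposal is correct and follows essentially the same route as the paper: reduce continuity to showing each fibre $\{G \in \cal G : g_w = k\}$ is open, then induct on the structure of the word, using that single entries of the multiplication and inversion tables determine open sets. Your pointwise neighbourhood argument at the inductive step is just the local rephrasing of the paper's expression of the fibre as a union over all possible intermediate values of the subword(s).
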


\begin{proof}
It is enough to show, for any $w\in {\bf W}$ and $m\in \bf N$, that the set $$\cal G_{w,m}:=\{G\in \cal G \ : \ g_w=m\}$$ is open in $\cal G$, which we prove by recursion on the length of $w$.  This is obvious when $w$ is a variable.  When $w$ is the inverse of a variable, say $x^{-1}$, then $\cal G_{w,m}=\{G \ : \ \iota_\bd G(a)=m\}$, which is clearly open.  Now suppose that $w=w_1\cdot x_i$.  Then $$\cal G_{w,m}=\bigcup_{n\in \bf N}\{G\in \cal G \ : \ w_1^{\bd G}=n\text{ and }\mu_\bd G(n,a_i)=m\}$$ which is open by the induction hypothesis.  Similarly, if $w=w_1\cdot x_i^{-1}$, then $$\cal G_{w,m}=\bigcup_{n,p}\{ G\in \cal G \ : \ w_1^{G}=n\text{ and }\iota_\bd G(a_i)=p \text{ and }\mu_\bd G(n,p)=m\}$$ which is again open.
\end{proof}

% Note in the above proof that $\{\bd G\in \cal G \ : \ w^\bd G=m\}$ is actually clopen in $\cal G$.  

For any system $\Sigma(\vec x)$ and $\vec a\in \bf N^{n_\Sigma}$, set $[\Sigma(\vec a)]=\{G \in \cal G \mid G\models \Sigma(\vec a)]$.

\begin{cor}\label{cor: top}
The sets $[\Sigma(\vec a)]$, as $\Sigma$ ranges over all systems and $\vec a$ ranges over $\bf N^{n_\Sigma}$, form a basis for $\cal G$ consisting of clopen sets.
% The topology on $\cal G$ has as basic open sets those of the form $$[\Sigma(\vec a)]:=\{\bd G\in \cal G \ : \ \bd G\models \Sigma(\vec a)\},$$ where $\Sigma(\vec x)$ is a finite system of equations and inequations in the variables $\vec x$, $\vec a\in \mathbb N^{|\vec x|}$ (in the usual group-theoretic sense of the word), and $\bd G\models \Sigma(\vec a)$.
\end{cor}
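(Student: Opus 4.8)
The plan is to prove Corollary~\ref{cor: top} by leveraging the continuity of the map $\Psi$ established in the preceding lemma, together with the fact that $\cal G$ is a zero-dimensional Polish space. First I would recall the abstract principle: in any topological space, if $\Psi \colon X \to \prod_{w \in {\bf W}} \bf N$ is continuous, then the preimage under $\Psi$ of any basic clopen set of the product $\bf N^{\bf W}$ is clopen in $X$. Since $\bf N$ carries the discrete topology, the basic clopen sets of $\bf N^{\bf W}$ are exactly those cylinders fixing finitely many coordinates; taking a single coordinate $w$ and a value $m \in \bf N$, the preimage $\Psi^{-1}(\{f : f(w) = m\})$ is precisely the set $\cal G_{w,m} = \{G \in \cal G : g_w = m\}$, which is therefore clopen (it is open by the lemma, and its complement $\bigcup_{m' \neq m} \cal G_{w,m'}$ is open by the same lemma, so it is closed as well).

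The key step is then to express each set $[\Sigma(\vec a)]$ as a finite Boolean combination of sets of the form $\cal G_{w,m}$. Writing the system $\Sigma(\vec a)$ as a finite conjunction of equations $w_j(\vec a) = e$ and inequations $w_k(\vec a) \neq e$, I would observe that a single equation $w_j(\vec a) = e$ corresponds to the clopen set $\cal G_{w_j(\vec a),\, e_\bd G}$ --- but here one must be slightly careful, since the identity element $e_\bd G$ is itself a coordinate that varies with $G$. So the correct description is
\begin{equation*}
  \{G \in \cal G : g_{w_j(\vec a)} = e_\bd G\} = \bigcup_{m \in \bf N} \bigl( \cal G_{w_j(\vec a),\, m} \cap \{G : e_\bd G = m\} \bigr),
\end{equation*}
and each set $\{G : e_\bd G = m\}$ is clopen since the identity is a coordinate of the defining space $\cal X$. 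A single inequation $w_k(\vec a) \neq e$ is then the complement of the corresponding equation set. Intersecting finitely many such clopen sets shows $[\Sigma(\vec a)]$ is clopen.

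Finally I would verify that these sets form a \emph{basis}. Given any $G \in \cal G$ and any open neighborhood of $G$ in the induced topology from $\cal X$, it suffices to produce a system $\Sigma$ and tuple $\vec a$ with $G \in [\Sigma(\vec a)]$ contained in that neighborhood. Since $\cal G \subseteq \cal X = \bf N^{\bf N \times \bf N} \times \bf N^\bf N \times \bf N$ and the product topology has a basis of sets fixing finitely many values of $\mu_\bd G$, $\iota_\bd G$, and $e_\bd G$, each such constraint ($\mu_\bd G(m,n) = p$, or $\iota_\bd G(m) = p$, or $e_\bd G = m$) can be encoded by an appropriate equation with coefficients: for instance $\mu_\bd G(m,n) = p$ is captured by the system asserting that the word $x_1 x_2 x_3^{-1}$ evaluated at $(m,n,p)$ equals the identity, combined with the identity-coordinate constraint. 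Matching up the finitely many coordinate constraints defining a basic open neighborhood of $G$ with finitely many such equations yields the required system.

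The main obstacle I anticipate is the bookkeeping around the identity element: because $e_\bd G$ is a varying coordinate rather than a fixed symbol in the word algebra, one cannot naively identify ``$w(\vec a) = e$'' with a single slice $\cal G_{w(\vec a), m}$, and the decomposition over the value of $e_\bd G$ must be handled explicitly. Once this is dispatched --- and it is genuinely routine given that the identity is one of the three coordinate functions cut out in the definition of $\cal X$ --- the rest follows formally from continuity of $\Psi$ and the discreteness of $\bf N$.
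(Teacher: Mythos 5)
Your proposal is correct and follows essentially the same route as the paper's proof: clopenness of $[\Sigma(\vec a)]$ via continuity of $\Psi$, with the identity element handled by decomposing over its possible values, and the basis property obtained by encoding the finitely many coordinate constraints of a basic neighborhood in $\cal X$ as equations with coefficients (the paper's phrase ``using the multiplication table''). The one step to make explicit is that your countable union $\bigcup_{m}\bigl(\cal G_{w_j(\vec a),m}\cap\{G \ : \ e_{\bd G}=m\}\bigr)$ is a priori only open; for closedness you should note that, since the sets $\{G \ : \ e_{\bd G}=m\}$ partition $\cal G$ into clopen pieces, the complement is the analogous open union $\bigcup_{m}\bigl(\{G \ : \ e_{\bd G}=m\}\cap\bigcup_{m'\neq m}\cal G_{w_j(\vec a),m'}\bigr)$ --- which is exactly what the paper accomplishes by also writing $[w(\vec a)=e]$ as the intersection $\bigcap_{n}\{\bd G \ : \ e_{\bd G}\neq n \text{ or } \Psi(\bd G)(w)=n\}$.
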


\begin{proof}
For any word $w(\vec x)$ and any $\vec a\in \bf N$, we see that $$[w(\vec a)=e]=\bigcup_{n\in \bf N}\{\bd G\in \cal G \ : \ e_G=n \text{ and }\Psi(\bd G)(w)=n\},$$ which is open by the continuity of $\Psi$.  On the other hand, $$[w(\vec a)=e]=\bigcap_{n\in \bf N}\{\bd G\in \cal G \ : \ e_G\not=n \text{ or }\Psi(\bd G)(w)=n\},$$ which is closed by the continuity of $\Psi$ as well.  It follows that $[w(\vec a)=e]$ is a clopen subset of $\cal G$.  It follows immediately that every set of the form $[\Sigma(\vec a)]$ is also clopen.  The union of these sets clearly cover $\cal G$:  given $\bd G\in \cal G$, if $\iota_\bd G(1)=n$, then $G\in [1\cdot n=e]$.  It is easy to see that these sets are closed under finite intersections. Moreover, for each open set in $\mathcal{G}$, one can find a family of sets of the form $[\Sigma(\vec a)]$ (using the multiplication table) whose union is the given open set. Therefore, these clopen sets form a basis.
\end{proof}

The following is obvious but worth recording:

\begin{prop}
Any permutation $\sigma$ of $\bf N$ induces a homeomorphism $\sigma^\#$ of $\cal G$ for which $\sigma^\#[\Sigma(\vec a)]=[\Sigma(\sigma(\vec a))]$.
\end{prop}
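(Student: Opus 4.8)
The plan is to define $\sigma^\#$ explicitly as the operation that transports the group structure of an enumerated group along $\sigma$, and then to verify the announced identity on the basic clopen sets, from which the homeomorphism property follows formally. Concretely, given an enumerated group $\bd G$ with data $(\mu_{\bd G},\iota_{\bd G},e_{\bd G})$, I would define $\sigma^\#(\bd G)$ to be the enumerated group whose multiplication, inversion, and identity are
$$\mu_{\sigma^\#(\bd G)}(m,n) = \sigma\bigl(\mu_{\bd G}(\sigma^{-1}(m), \sigma^{-1}(n))\bigr), \quad \iota_{\sigma^\#(\bd G)}(m) = \sigma\bigl(\iota_{\bd G}(\sigma^{-1}(m))\bigr), \quad e_{\sigma^\#(\bd G)} = \sigma(e_{\bd G}).$$
By construction, $\sigma$ is a group isomorphism from $\bd G$ onto $\sigma^\#(\bd G)$; in particular $\sigma^\#(\bd G)$ again satisfies the group axioms, so $\sigma^\#$ is a well-defined self-map of $\cal G$. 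It is clearly a bijection with inverse $(\sigma^{-1})^\#$, since transporting along $\sigma$ and then along $\sigma^{-1}$ recovers the original structure.

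The key computation is that $\sigma$, being an isomorphism, commutes with word evaluation: for every word $w(\vec x)$, every enumerated group $\bd G$, and every tuple $\vec a\in \bf N^{n_w}$,
$$w^{\sigma^\#(\bd G)}(\sigma(\vec a)) = \sigma\bigl(w^{\bd G}(\vec a)\bigr).$$
I would prove this by induction on the length of $w$, exactly paralleling the recursion used in the proof of the continuity of $\Psi$: the base cases $w=x_i$ and $w=x_i^{-1}$ are immediate from the definition of $\sigma^\#$, and the inductive steps $w=w_1\cdot x_i^{\pm 1}$ follow from the definitions of $\mu_{\sigma^\#(\bd G)}$ and $\iota_{\sigma^\#(\bd G)}$ together with the induction hypothesis.

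With this identity in hand, the rest is formal. Since $\sigma$ is a bijection of $\bf N$, for any word $w$ we have $w^{\bd G}(\vec a)=e_{\bd G}$ if and only if $\sigma(w^{\bd G}(\vec a))=\sigma(e_{\bd G})$, that is, $w^{\sigma^\#(\bd G)}(\sigma(\vec a))=e_{\sigma^\#(\bd G)}$; thus $\bd G\models w(\vec a)=e$ iff $\sigma^\#(\bd G)\models w(\sigma(\vec a))=e$, and likewise for inequations. As a system is a finite conjunction of such equations and inequations, it follows that $\bd G\models \Sigma(\vec a)$ iff $\sigma^\#(\bd G)\models \Sigma(\sigma(\vec a))$; since $\sigma^\#$ is a bijection of $\cal G$, this yields precisely $\sigma^\#[\Sigma(\vec a)]=[\Sigma(\sigma(\vec a))]$.

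Finally, to conclude that $\sigma^\#$ is a homeomorphism: by Corollary \ref{cor: top} the sets $[\Sigma(\vec a)]$ form a basis of clopen sets for $\cal G$, and the displayed identity shows that $\sigma^\#$ carries each basic open set to a basic open set. Applying the same fact to $(\sigma^\#)^{-1}=(\sigma^{-1})^\#$ shows that preimages of basic open sets under $\sigma^\#$ are open, so $\sigma^\#$ is continuous with continuous inverse, hence a homeomorphism. I do not anticipate a genuine obstacle here; the only point requiring care is fixing the placement of $\sigma$ and $\sigma^{-1}$ in the definition of $\sigma^\#$ so that $\sigma$ is literally an isomorphism $\bd G\to\sigma^\#(\bd G)$, which is exactly what makes the word-evaluation identity, and hence everything else, go through cleanly.
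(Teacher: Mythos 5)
Your proof is correct. The paper in fact records this proposition without any proof (labeling it ``obvious''), and your transport-of-structure argument --- defining $\sigma^\#(\bd G)$ by conjugating the multiplication, inversion, and identity data by $\sigma$ so that $\sigma$ becomes an isomorphism $\bd G\to\sigma^\#(\bd G)$, then deducing the identity $\sigma^\#[\Sigma(\vec a)]=[\Sigma(\sigma(\vec a))]$ and the homeomorphism property from the invariance of word evaluation --- is precisely the canonical argument the authors leave implicit, with all details (bijectivity via $(\sigma^{-1})^\#$, the induction on word length, continuity via the clopen basis) correctly supplied.
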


Given $\vec a=(a_1,...,a_n)\subset \mathbf{N}^n$ for some $n\in \mathbf{N}$ and an enumerated group $G\in \mathcal{G}$,
we denote by $\langle \vec a\rangle_G$ as the subgroup generated by the elements $a_1,...,a_n$ in $G$.

Let $P$ be a property of countable groups that is closed under direct sums for which $\mathcal{G}_P$ is Polish. 
%And let $\mathcal{G}_{P,lu},\mathcal{G}_P$ as defined in the introduction. 
Consider a system of equations and inequations $\Sigma(\vec x)$. Note that $[\Sigma(\vec a)]\cap \mathcal{G}_P$ is nonempty for some $\vec a\in \bf N^{n_\Sigma}$ if and only if it is nonempty for all $\vec a\in \bf N^{n_\Sigma}$. If this is the case, we call $\Sigma(\vec x)$ a \textbf{$P$-system}. For $\Sigma(\vec x)$  a $P$-system, we define the sets
$$[\Sigma(\vec a)]_P=[\Sigma(\vec a)]\cap \mathcal{G}_P$$
and
$$\mathcal{X}_{\Sigma,P}:=\bigcup_{\vec a\in \bf N^{n_{\Sigma}}}[\Sigma(\vec a)]_P$$
%[\Sigma(\vec a)]\cap \mathcal{G}_P$$

The proof of the following is similar to that of Corollary \ref{cor: top}.
\begin{lem}
 The sets $[\Sigma(\vec a)]_P$, where $\Sigma$ is a $P$-system and $\vec a\in \mathbf{N}^{n_{\Sigma}}$, form a basis of clopen sets for the induced subspace topology on $\mathcal{G}_P$.
 \end{lem}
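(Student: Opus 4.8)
The plan is to follow the proof of Corollary~\ref{cor: top} almost verbatim, exploiting the elementary principle that intersecting a clopen basis of an ambient space with a subspace yields a clopen basis of that subspace (in the induced topology). Concretely, there are exactly two things to verify: that each $[\Sigma(\vec a)]_P$ is clopen in the subspace topology on $\mathcal{G}_P$, and that these sets generate that topology.

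For clopenness, I would simply invoke Corollary~\ref{cor: top}: each $[\Sigma(\vec a)]$ is clopen in $\mathcal{G}$, and since $[\Sigma(\vec a)]_P=[\Sigma(\vec a)]\cap\mathcal{G}_P$ by definition, it is the intersection of a clopen subset of $\mathcal{G}$ with $\mathcal{G}_P$, hence clopen in the induced topology on $\mathcal{G}_P$.

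For the basis property, let $V\subseteq\mathcal{G}_P$ be relatively open, so $V=U\cap\mathcal{G}_P$ for some open $U\subseteq\mathcal{G}$. By Corollary~\ref{cor: top} I can write $U=\bigcup_i[\Sigma_i(\vec a_i)]$ for suitable systems $\Sigma_i$ and tuples $\vec a_i$, whence $V=\bigcup_i[\Sigma_i(\vec a_i)]_P$. For each index $i$, either $[\Sigma_i(\vec a_i)]_P=\emptyset$, in which case it contributes nothing to the union and may be discarded, or it is nonempty, in which case $\Sigma_i$ is by definition a $P$-system and $[\Sigma_i(\vec a_i)]_P$ is one of the proposed basic sets. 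Thus $V$ is a union of sets of the required form, which establishes the claim.

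The proof carries no serious obstacle; the only point meriting care is the bookkeeping around the $P$-system restriction. Here I would note that restricting attention to $P$-systems costs nothing, since a system $\Sigma$ fails to be a $P$-system precisely when $[\Sigma(\vec a)]_P=\emptyset$ for every $\vec a$, and such empty sets are irrelevant to forming a basis. Implicit in the definition of a $P$-system is the assertion that $[\Sigma(\vec a)]_P$ is nonempty for some $\vec a$ if and only if it is nonempty for all $\vec a$; this is the place where I would appeal to the fact that $\mathcal{G}_P$ is saturated, together with the permutation homeomorphisms $\sigma^\#$ satisfying $\sigma^\#[\Sigma(\vec a)]=[\Sigma(\sigma(\vec a))]$, which carry one fiber onto another while preserving membership in $\mathcal{G}_P$.
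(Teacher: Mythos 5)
Your proof is correct and takes essentially the same approach as the paper, which gives no separate argument but simply notes that the proof is similar to that of Corollary \ref{cor: top}; your write-up is precisely that corollary relativized to the subspace, with the right observation that non-$P$-systems contribute only empty sets and may be discarded. The one point of care you flag (the ``nonempty for some $\vec a$ iff for all $\vec a$'' clause) is not actually needed for your argument, since nonemptiness of $[\Sigma_i(\vec a_i)]_P$ at the particular tuple $\vec a_i$ already certifies that $\Sigma_i$ is a $P$-system under the paper's definition.
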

The following is a fundamental observation concerning the sets $\mathcal{X}_{\Sigma,P}$.

\begin{lem}\label{closed under products density}
If $\Sigma(\vec x)$ is a $P$-system then the set $\mathcal{X}_{\Sigma,P}$ is an open dense subset of $\mathcal{G}_P$.
\end{lem}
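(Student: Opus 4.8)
The plan is to dispatch openness immediately and then concentrate all the work on density. Openness is free: by the preceding lemma each set $[\Sigma(\vec a)]_P$ is clopen in $\mathcal{G}_P$, and $\mathcal{X}_{\Sigma,P}$ is by definition a union of such sets, hence open. For density, since the sets $[\Delta(\vec b)]_P$ (with $\Delta$ a $P$-system and $\vec b\in \mathbf{N}^{n_\Delta}$) form a basis for $\mathcal{G}_P$, it suffices to show that every nonempty such basic set meets $\mathcal{X}_{\Sigma,P}$. Concretely, given a $P$-system $\Delta(\vec b)$ that is realized in $\mathcal{G}_P$, I must produce an enumerated group $G\in \mathcal{G}_P$ with $G\models \Delta(\vec b)$ and $G\models \Sigma(\vec a)$ for some $\vec a\in \mathbf{N}^{n_\Sigma}$.

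The idea is to build $G$ as a single group witnessing both systems simultaneously by placing them on disjoint direct factors. First I would pick $G_1\in [\Delta(\vec b)]_P$ (nonempty by hypothesis) and, using that $\Sigma$ is a $P$-system, pick $G_2\in [\Sigma(\vec c)]_P$ for some $\vec c$. Setting $H:=G_1\oplus G_2$, closure of $P$ under direct sums guarantees $H$ has $P$. Writing $\iota_1:G_1\hookrightarrow H$ and $\iota_2:G_2\hookrightarrow H$ for the canonical embeddings, the tuple $\iota_1(\vec b)$ realizes $\Delta$ in $H$ (embeddings preserve equations and, being injective, inequations) and $\iota_2(\vec c)$ realizes $\Sigma$ in $H$. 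It then remains to re-enumerate $H$, i.e. to fix a bijection $\phi:\mathbf{N}\to H$, so that the element in position $b_i$ is exactly $\iota_1(b_i)$ for each $i$. The resulting enumerated group $G:=(H,\phi)$ satisfies $G\models \Delta(\vec b)$, while $\vec a:=\phi^{-1}(\iota_2(\vec c))$ witnesses $G\models \Sigma(\vec a)$, giving $G\in [\Delta(\vec b)]_P\cap \mathcal{X}_{\Sigma,P}$.

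The one point requiring care, and the only real obstacle, is that the prescribed partial assignment $b_i\mapsto \iota_1(b_i)$ must extend to a bijection of $\mathbf{N}$ with $H$. For this I would verify that this partial map is a well-defined injection on the finite set $\{b_1,\ldots,b_{n_\Delta}\}$: since the underlying set of $G_1$ is $\mathbf{N}$ and $\iota_1$ is injective, one has $\iota_1(b_i)=\iota_1(b_j)$ if and only if $b_i=b_j$, so the assignment is consistent and injective. As both the complement of this finite set in $\mathbf{N}$ and the complement of its finite image in $H$ are countably infinite, the partial map extends to a bijection $\phi$. Finally $G\cong H$ has $P$ since $P$ is isomorphism-invariant, so $G\in \mathcal{G}_P$, which completes the verification of density and hence the lemma.
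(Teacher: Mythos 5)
Your proof is correct and follows essentially the same route as the paper: realize $\Delta(\vec b)$ and $\Sigma$ in two separate groups of $\mathcal{G}_P$, form their direct sum (which has $P$ by closure under direct sums), and re-enumerate it so that both systems are satisfied at appropriate coordinates. The only cosmetic difference is that the paper fixes disjoint tuples for both systems in advance, whereas you prescribe positions only for $\Delta$ and read off where the $\Sigma$-witnesses land; your explicit verification that the partial assignment extends to a bijection is a detail the paper leaves implicit.
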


\begin{proof}
The set $\mathcal{X}_{\Sigma,P}$ is an open subset of $\mathcal{G}_P$ by definition.  To see that it is dense, fix a nonempty basic open set $[\Delta(\vec a)]_P$, for a $P$-system $\Delta(\vec x)$ and $\vec a\in \mathbf{N}^{n_{\Delta}}$. Choose $G\in [\Delta(\vec a)]_P$.  Take $\vec b\in \bf N^{n_\Sigma}$ disjoint from $\vec a$ and fix $H\in [\Sigma(\vec b)]_P$.  Let $K$ denote an enumeration of the isomorphism type of $G\oplus H$ such that $$K\models\Sigma(\vec b)\qquad K\models \Delta(\vec a)$$
Since the property $P$ is closed under direct sums, it follows that $K\in [\Delta(\vec a)]_P\cap \mathcal{X}_{\Sigma,P}$.
\end{proof}

Given a first-order theory $T$ of groups, we let $\frak C_T$ denote the class of countable models of $T$ and we let $\cal C_T:=\rho^{-1}(\frak C_T)$. 
%We say that $\cal C\subset \cal G$ is \emph{saturated} if for every isomorphism type in $\frak C$ (the set of isomorphism types of elements in $\cal C$), all its enumerations lie in $\cal C$.
\begin{prop}\label{closeduniversal}
Suppose that $\cal C$ is a saturated subclass of $\cal G$ such that $\frak C$ is closed under subgroups.  Then the following are equivalent:
\begin{enumerate}
      \item $\cal C$ is closed in $\cal G$.
    \item $\cal C=\cal C_T$ for some \emph{universal} theory $T$ extending the theory of groups.
\end{enumerate}
\end{prop}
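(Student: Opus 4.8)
The plan is to prove the equivalence by showing the two nontrivial implications separately, with the forward direction $(1)\Rightarrow(2)$ being the main content and the reverse $(2)\Rightarrow(1)$ being a routine consequence of \L o\'s' theorem.

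For $(2)\Rightarrow(1)$, I would argue that $\cal C_T$ is closed whenever $T$ is universal. The key observation is that $\cal G\setminus \cal C_T$ is open: if $\bd G\not\models T$, then $\bd G$ fails some universal sentence $\sigma=\forall \vec x\, \psi(\vec x)$ with $\psi$ quantifier-free, so there is a witnessing tuple $\vec a\in \bf N^{n_\sigma}$ with $\bd G\models \neg\psi(\vec a)$. Since $\neg\psi$ is (equivalent to) a disjunction of systems, the set of all enumerated groups satisfying $\neg\psi(\vec a)$ is open by Corollary \ref{cor: top}, and it is contained in the complement of $\cal C_T$. Thus every point of the complement has an open neighborhood inside the complement, so $\cal C_T$ is closed.

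For the harder direction $(1)\Rightarrow(2)$, I would take $T$ to be the theory consisting of all universal sentences true in every group in $\frak C$ (equivalently, in every group in $\cal C$), together with the axioms of groups; call this $T=\Th_\forall(\cal C)$. Clearly every group in $\cal C$ models $T$, so $\cal C\subseteq \cal C_T$; the real work is the reverse inclusion $\cal C_T\subseteq \cal C$. The standard model-theoretic fact I would invoke is that a group $\bd H$ models the universal theory of $\cal C$ if and only if $\bd H$ embeds into some ultraproduct of members of $\cal C$ (this is the universal-theory analogue of the elementary-class test; it follows from the diagram method, or one may cite the characterization behind the second Fact in the excerpt). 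So fix $\bd H\in \cal C_T$. Then $\bd H$ embeds into an ultraproduct $\prod_\u G_i$ with each $G_i\in \cal C$. Now I would assemble the hypotheses: since $\cal C$ is saturated and $\frak C$ is closed under subgroups, and since $\frak C$ is closed under ultraproducts (being closed in $\cal G$—this is exactly where topological closedness must be converted into closure under ultraproducts), the ultraproduct lies in $\frak C$ and hence so does its subgroup $H$. Therefore $\bd H\in \rho^{-1}(\frak C)=\cal C$, completing the inclusion.

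The main obstacle, and the step I would treat most carefully, is the claim that a \emph{topologically closed} saturated class is closed under \emph{ultraproducts} (up to isomorphism)—this is the bridge connecting the Polish topology to the model-theoretic machinery. To establish it I would argue as follows: given groups $G_i\in \frak C$ and an ultrafilter $\u$, the ultraproduct $\prod_\u G_i$ is a countable (if the index set is chosen countably, or after passing to a countable elementary substructure via downward L\"owenheim–Skolem) group, and I claim its isomorphism type lies in $\frak C$. The point is that any finite piece of the multiplication table of $\prod_\u G_i$ is, by \L o\'s' theorem applied to the relevant system, realized in $\u$-almost every $G_i$; hence every basic clopen neighborhood $[\Sigma(\vec a)]$ of an enumeration of the ultraproduct meets $\cal C$, so the enumeration lies in the closure of $\cal C$, which equals $\cal C$ by hypothesis. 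This density-to-membership argument, combined with saturation, is precisely what forces the ultraproduct into $\frak C$; once it is in place, the embedding-into-ultraproduct characterization of universal theories closes the proof cleanly. I would make sure to handle the cardinality/countability bookkeeping for the ultraproduct so that it genuinely defines a point of $\cal G$.
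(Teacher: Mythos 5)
Your proposal is correct, but in the main direction $(1)\Rightarrow(2)$ it takes a genuinely different route from the paper. The paper sets $T$ equal to the \emph{full} first-order theory of $\frak C$ and shows that every countable model $G$ of $T$ lies in $\frak C$ by a purely topological approximation: each finite piece of the multiplication table of an enumeration $\bd G$ must be realized in some member of $\frak C$ (otherwise a blocking universal sentence would belong to $T$), and suitable enumerations of these witnesses converge to $\bd G$ in $\cal G$, so closedness gives $\bd G\in\cal C$; only at the very end does the paper use closure under subgroups, invoking a \L os--Tarski-type preservation fact to conclude that $T$ may be taken universal. You instead take $T=\Th_\forall(\cal C)$ from the outset, quote the diagram/ultraproduct characterization of models of a universal theory, and convert topological closedness into closure under (countable subgroups of) ultraproducts by a density argument. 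Your route buys a cleaner choice of $T$ and avoids the preservation theorem entirely; the paper's route stays inside the topology of $\cal G$ in this direction (no ultraproducts needed) at the cost of that extra preservation step. In fact, your approach makes the subgroup hypothesis nearly superfluous in this direction: you can run your density argument directly on an enumeration $\bd H$ of the countable model $H\models T$ itself (which embeds in $\prod_\u G_i$), rather than on the ultraproduct, and conclude $\bd H\in\overline{\cal C}=\cal C$ outright. Your direction $(2)\Rightarrow(1)$ is essentially the paper's argument, phrased via an open complement rather than via convergent sequences.

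Two points of bookkeeping deserve care. First, your bridge lemma as stated (a topologically closed saturated class is ``closed under ultraproducts'') cannot be literally true: an ultraproduct of infinite groups over a nonprincipal ultrafilter is uncountable, hence admits no enumeration and determines no point of $\cal G$. You flag this, and your downward L\"owenheim--Skolem repair works, but the cleanest fix is the one just mentioned: apply the density argument to $\bd H$ and never to the ultraproduct itself. Second, in passing from ``$\u$-almost every $G_i$ realizes the system $\Sigma$'' to ``$\cal C$ meets $[\Sigma(\vec a)]$,'' you must place the realizing tuple at the positions $\vec a$ of an enumeration, which requires its entries at distinct coordinates of $\vec a$ to be distinct elements; this is arranged by augmenting $\Sigma$ with the inequations $x_j\neq x_k$ whenever $a_j\neq a_k$ (which $\bd H$ satisfies automatically, since distinct positions of an enumeration hold distinct elements). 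The paper's own proof elides the same point, so this is a gloss to be filled in, not a gap in your approach.
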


\begin{proof}
First suppose that $\cal C$ is a closed subset of $\cal G$ and set $$T=\{\sigma \ : \ G\models \sigma \text{ for all }G\in \frak C\}.$$  Suppose that $G$ is a countable group satisfying $G\models T$.  We show that $G\in \frak C$.  Towards this end, fix an enumeration $\bd G$ of $G$ with $e_\bd G=1$.  For each $n\in \bf N$, let $\Sigma_n(\vec x)$ be the system of equations that determines the products $\mu_\bd G(i,j)$ and inverses $\iota_\bd G(i)$ for $1\leq i,j\leq n$.  Since $\Sigma_n$ has a solution in $G$, it must have a solution in some group $G_n\in \frak G$, for otherwise $\forall \vec x\bigvee_{\varphi(\vec x)\in\Sigma(\vec x)}\neg\varphi(\vec x)$ belongs to $T$, contradicting that $G\models T$.  

Let $\bd G_n$ be an enumeration of $G_n$ so that $e_{\bd G_n}=1$ and so that, for every $1\leq i,j\leq n$, we have
$$\bd G_n\in [i\cdot j=\mu_\bd G(i,j)]\cap [i^{-1}=\iota_\bd G(i)].$$ It follows that $\lim_{n\to \infty}\bd G_n=\bd G$.  Since $\cal G$ is closed, we have that $\bd G\in \cal C$, whence $G\in \frak C$, as desired.  Consequently, $\frak G=\frak G_T$.  Since $\frak G$ is closed under subgroups, it follows that $T$ is universal.

Now suppose that $\frak G=\frak G_T$ with $T$ a universal theory.  Suppose also that $\bd G_n$ is a sequence from $\cal C$ with $\lim_{n\to \infty}\bd G_n=\bd G$.  We must show that $\bd G\in \cal C$.  To see this, fix a universal axiom $\sigma$ of $T$; it suffices to show that $G\models \sigma$.  Write $\sigma=\forall \vec x \varphi(\vec x)$, where $\varphi(\vec x)=\Sigma_1(\vec x)\vee\cdots\vee \Sigma_m(\vec x)$, a finite disjuntion of systems.  Suppose, towards a contradiction, that there is $\vec a\in \bf N^{n_\varphi}$ so that $\bd G\not\models \varphi(\vec a)$.  As a result, for each $i=1,\ldots,m$, there is an equation $w_i(\vec x)=e$ and $\epsilon_i\in \{0,1\}$ such that $$(w_i(\vec x)=e)^{\epsilon_i}\in \Sigma_i(\vec x)\qquad \text{ but }\bd G\models (w_i(\vec a)=e)^{1-\epsilon_i}$$ (Here by $(w_i(\vec x)=e)^{\epsilon_i}$ we denote $w_i(\vec x)=e$ if $\epsilon_i=1$ and $w_i(\vec x)\neq e$ if $\epsilon_i=0$.)

Let $$\Sigma(\vec x)=\{(w_i(\vec x)=1)^{1-\epsilon_i} \ : \ i=1,\ldots,m\}$$  Since $\bd G\in [\Sigma(\vec a)]$ and $\bd G_n\to \bd G$, there is $n\in \bf N$ such that $\bd G_n\in [\Sigma(\vec a)]$.  Since $G_n\models \varphi(\vec a)$, this is a contradiction.
\end{proof}

% \begin{thm}
% Given $G_k,G\in \cal G$ and an ultrafilter $\u$, we have $\lim_\u G_k=G$ if and only if $G$ is the canonical subgroup of the ultraproduct $\prod_\u G_k$.
% \end{thm}

% We should call this the enumerated ultraproduct.

% \begin{thm}
% Suppose that $\cal C$ is a subclass of $\cal G$ closed under subgroup.  Then the following are equivalent:
% \begin{enumerate}
%     \item $\cal C$ is closed in $\cal G$.
%     \item $\cal C$ is the collection of enumerated groups that model some universal theory of groups.
% \end{enumerate}
% \end{thm}

\subsection{The Grigorchuk space of marked groups}\label{marked}

In geometric group theory, a different topological space is often used when studying the space of all finitely generated groups, namely the \textbf{Grigorchuk space of marked groups} $\cal M$.  
This space was
first systematically studied by Grigorchuk in \cite{GrigorchukGrowth}, considered by Gromov
in \cite{Gromov} in his celebrated work on groups of polynomial growth, and has an antecedent in the
Chabauty topology (see \cite{BridsonDeLaHarpe}).
%We recall the \emph{Grigorchuk space of marked groups}.
Recall that a \textbf{marked group} is a pair $(G,S)$, where $G$ is a group and $S=\{s_1,...,s_n\}$ is a finite, ordered generating set for $G$.
Such pairs are considered up to equivalence by \textbf{marked isomorphisms}, that is, $(G_1,S_1),(G_2,S_2)$ are equivalent if $S_1$ and $S_2$ have the same length and the unique order-preserving bijection between $S_1$ and $S_2$ extends to an isomorphism between $G_1$ and $G_2$.

The collection of all marked groups whose marking has size $n$ is denoted by $\mathcal{M}_n$
and is endowed with the topology induced by the following pseudometric:
two marked groups $(G_1,S_1)$ and $(G_2,S_2)$ are distance $e^{-n}$ apart if $n$ is the largest number such that the $n$-balls around the identity in the respective Cayley graphs admit graph isomorphisms that preserve the order of the markings (emerging as edge labels).  Note that two marked groups are at distance $0$ from either other precisely when they are equivalent in the sense of the previous paragraph, whence this pseudometric on the collection of all marked groups descends to an actual metric on the set of equivalence classes.

Another way to provide the same topology on $\mathcal{M}_n$ is as follows.
Consider a marked group $(G,S)$, where $S=\{s_1,...,s_n\}$.
Let $\mathbf{F}_n$ be the free group of rank $n$ which is freely generated by $f_1,...,f_n$.
We identify $(G,S)$ with $Ker(\phi)\in \{0,1\}^{\mathbf{F}_n}$, where $\phi:\mathbf{F}_n\to G$
is the homomorphism determined by mapping $f_i\mapsto s_i$ for all $1\leq i\leq n$.
In this way, we may view $\mathcal{M}_n$ as a subset of $\{0,1\}^{\mathbf{F}_n}$.
It is easy to see that, after this identification, $\mathcal{M}_n$ is a closed subspace of $\{0,1\}^{\mathbf{F}_n}$ and the aforementioned topology on $\mathcal{M}_n$ is the same as the subspace topology inherited from $\{0,1\}^{\mathbf{F}_n}$. 
This perspective makes it clear that $\mathcal{M}_n$ is a totally disconnected, compact, Haursdorff topological space.

The map $$(G,\{x_1,...,x_m\})\xhookrightarrow{} (G,\{x_1,...,x_m,id_G\})$$ induces a natural inclusion $$\mathcal{M}_n\xhookrightarrow{} \mathcal{M}_{n+1}.$$
The directed union $$\mathcal{M}=\bigcup_{i\in \mathbf{N}\setminus \{0\}} \mathcal{M}_n$$ is called the \textbf{space of marked groups}.

A related space of marked groups, which also accommodates infinitely generated groups, is the following.
Let $\bf F_\infty$ denote the free group on the generators $\{x_i \ : \ i\in \bf N\}$.  Then the set of all normal subgroups of $\bf F_\infty$ is a closed subset of $\mathcal P(\bf F_\infty)$ when this latter space is identified with the compact space $2^{\bf F\infty}$.  To each normal subgroup $N$ of $\bf F_\infty$, one obtains the countable \textbf{marked group} $\bf F_\infty/N$.  Clearly every countable group can be marked in this way and consequently the compact space $\cal M_{\infty}$ of marked groups serves as another topological space for dealing with all countable groups.  Notice that this method also allows for one to deal with finite groups.\footnote{The space of enumerated groups could be adapted to accommodate finite groups as well, but since finite groups are uninteresting for our purposes, we chose to deal with the simpler set-up above.}  Note that $\cal G$ is not compact, whence $\cal G$ and $\cal M_{\infty}$ are not homeomorphic; in other words, these topological models for dealing with countable groups are genuinely different.  Nevertheless, we do have:

\begin{prop}\label{compare}
The map $\tau:\cal G\to \cal M_{\infty}$ given by $$\tau(\bd G):=\{w(x_1,\ldots,x_n)\in \bf F_\infty \ : \ w(1,\ldots,n)^\bd G=e\}$$ is a continuous surjection.
\end{prop}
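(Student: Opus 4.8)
The plan is to prove Proposition \ref{compare} by verifying the two assertions separately: that $\tau$ is well-defined and continuous, and that it is surjective. For well-definedness, I would first observe that for any $\bd G \in \cal G$, the set $\tau(\bd G) = \{w \in \bf F_\infty : w(1,\ldots,n)^\bd G = e\}$ is precisely the kernel of the homomorphism $\bf F_\infty \to G$ sending each generator $x_i$ to the element $i \in \bf N$ (recalling that the underlying set of $\bd G$ is $\bf N$). Since every element of $\bf N$ is hit, this homomorphism is surjective, so $\tau(\bd G)$ is a normal subgroup with $\bf F_\infty / \tau(\bd G) \cong G$; hence $\tau(\bd G)$ genuinely lies in $\cal M_\infty$.

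For continuity, since $\cal M_\infty$ carries the subspace topology from $2^{\bf F_\infty}$, a subbasis is given by the sets $\{N : w \in N\}$ and $\{N : w \notin N\}$ for $w \in \bf F_\infty$. Thus it suffices to show that, for each fixed word $w(x_{i_1},\ldots,x_{i_k}) \in \bf F_\infty$, the set $\{\bd G \in \cal G : w \in \tau(\bd G)\} = \{\bd G : w(i_1,\ldots,i_k)^\bd G = e\}$ is clopen in $\cal G$. But this is exactly a set of the form $[w(\vec a) = e]$ with the fixed coefficient tuple $\vec a = (i_1,\ldots,i_k)$, which was shown to be clopen in the proof of Corollary \ref{cor: top} via the continuity of the evaluation map $\Psi$. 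Taking preimages of the subbasic clopen sets of $\cal M_\infty$ therefore yields clopen (in particular open) subsets of $\cal G$, giving continuity.

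For surjectivity, I would start with an arbitrary point $N \in \cal M_\infty$, i.e. a normal subgroup $N \trianglelefteq \bf F_\infty$, and set $G := \bf F_\infty / N$, a countable group. The task is to produce an enumerated group $\bd G$ with $\tau(\bd G) = N$; the subtlety is that $\tau$ records not just the isomorphism type of $G$ but the specific way the standard generators $x_i$ map onto the elements $1,2,3,\ldots$ of $\bf N$. So I must choose an enumeration of $G$ — a bijection $\bf N \to G$ — under which the element labelled $i$ equals the image $x_i N$ of the $i$-th free generator. The natural move is to use the canonical surjection $q : \bf F_\infty \to G$, $q(x_i) = x_i N$, and build a bijection $\bf N \to G$ whose composite behaviour forces $\tau(\bd G) = N$; concretely, one wants an enumeration in which, for every $w \in \bf F_\infty$, the element $w(1,\ldots,n)^\bd G$ equals $q(w)$, so that $w(1,\ldots,n)^\bd G = e \iff q(w) = e \iff w \in N$.

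The main obstacle is exactly this last bookkeeping step: when $G$ is not freely generated (i.e. $N \neq 1$), the assignment $x_i \mapsto x_i N$ need not be injective, so one cannot simply declare $i \mapsto x_i N$ to be a bijection. I would handle this by picking any enumeration $\bd G$ of $G$ as a countable set whatsoever, and then correcting it: choose a bijection $\bf N \to G$ so that the list $1, 2, 3, \ldots$ of labels maps to a sequence of elements $g_1, g_2, \ldots$ of $G$ satisfying $g_i = x_i N$ whenever this is consistent, filling in remaining labels arbitrarily to exhaust $G$. Since the $x_i N$ already generate $G$, one can arrange $g_i = q(x_i)$ for all $i$ (allowing repetitions in the list to be resolved by relabeling, or more cleanly by noting that $q$ itself, though not injective, determines the multiplication: define $\bd G$ on $\bf N$ by transporting the group structure of $G$ along \emph{any} bijection $\bf N \to G$ that sends $i$ to $q(x_i)$ for the needed indices). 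The key verification is then that, with this enumeration, $w(1,\ldots,n)^\bd G = q(w)$ for all words $w$, which follows by induction on word length since $q$ is a homomorphism and the enumeration was chosen to send the standard generator $x_i$ to $q(x_i)$. Once this identity holds, the chain of equivalences above gives $\tau(\bd G) = N$, completing surjectivity.
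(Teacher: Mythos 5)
Your well-definedness and continuity arguments are correct, and the continuity part is essentially the paper's own proof: the preimages of the subbasic sets $\{N \ : \ w\in N\}$ and $\{N \ : \ w\notin N\}$ are exactly the sets $[w(i_1,\ldots,i_k)=e]$ and $[w(i_1,\ldots,i_k)\not=e]$, which are clopen by Corollary \ref{cor: top}.

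The surjectivity argument, however, has a genuine gap, and it sits exactly at the step you labeled ``bookkeeping.'' If $\tau(\bd G)=N$, then the homomorphism $\phi:\bf F_\infty\to G$, $x_i\mapsto i$, is surjective with kernel $N$, and this forces the assignment $i\mapsto x_iN$ to be a \emph{bijection} from $\bf N$ onto $\bf F_\infty/N$: it is injective because distinct labels $i\not=j$ are distinct elements of the group $\bd G$, so $x_ix_j^{-1}\notin\tau(\bd G)$; and it is onto because every coset $gN$ equals $x_mN$ for $m=\phi(g)$. No relabeling or ``filling in of remaining labels'' can produce such a bijection when none exists, and for most $N$ none exists. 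Concretely, if $x_1x_2^{-1}\in N$ (e.g.\ any $N$ of finite index, or the normal closure of $\{x_ix_1^{-1} \ : \ i\geq 2\}$), then $x_1x_2^{-1}\in N\setminus\tau(\bd G)$ for \emph{every} $\bd G\in\cal G$; and if $N=\{e\}$, then setting $m=(1\cdot 2)^{\bd G}$, the nontrivial word $x_1x_2x_m^{-1}$ lies in $\tau(\bd G)\setminus N$ for every $\bd G$. So such $N$ are simply not in the image, and surjectivity as literally stated fails: the image of $\tau$ is precisely the set of those $N\in\cal M_{\infty}$ for which $i\mapsto x_iN$ is a bijection onto $\bf F_\infty/N$. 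To be fair, this is as much a defect of the source as of your attempt: the paper's proof never addresses surjectivity at all (its argument only verifies that preimages of subbasic open sets are open, i.e.\ continuity, despite some garbled wording about openness). A correct statement would either restrict the codomain to those $N$ with bijective marking, or read ``surjection'' at the level of isomorphism types, namely that every countably infinite group is isomorphic to $\bf F_\infty/\tau(\bd G)$ for some $\bd G$ --- which is what your construction actually proves. Your instinct that the generators must map bijectively onto the labels was exactly right; the error was assuming this constraint can always be met by re-enumerating.
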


\begin{proof}
It is clear that $\tau$ is continuous.  To see that it is open, it suffices to see that the preimages of the subbasic open sets $\{N\in \cal M \ : \ w(x_1,\ldots,x_n)\in N\}$ and $\{N\in \cal M \ : \ w(x_1,\ldots,x_n)\not\in N\}$ are open in $\cal G$.  However, these preimages are simply $[w(1,\ldots,n)=e]$ and $[w(1,\ldots,n)\not=e]$ respectively, which are both open in $\cal G$. 
\end{proof}

\begin{remark}
As pointed out to us by Alekos Kechris, although the space of enumerated groups and $\cal M_{\infty}$ are not homeomorphic, they induce the same \emph{Borel structure} on the set of isomorphism classes of countable groups.  More precisely, one can equip $\frak G$ with the largest $\sigma$-algebra $\cal B_\rho$ for which the map $\rho$ is measurable (where $\cal G$ is equipped with its Borel $\sigma$-algebra).  If one lets $\rho':\cal M\to \frak G$ denote the analogous reduction map, then the corresponding $\sigma$-algebra $\cal B_{\rho'}$ coincides with $\cal B_\rho$.  In other words, one can, in a Borel manner, recover an enumeration of a given countable group from a marking of that group and vice-versa.
\end{remark}

We can recover the spaces $\cal M_n$ from the space $\cal M_\infty$ by noting that, for each $n\in \bf N$, we have that $\cal M_n$ can be identified (as a topological space) with $\{N\in \cal M_\infty \ : \ i\in N \text{ for all }i>n\}$ (endowed with the subspace topology).  The proof of the following proposition is analogous to the proof of Proposition \ref{compare} above:

\begin{prop}
For each $m$, the map $\tau_m:\cal G\to \cal M_m$ given by $\tau_m(\bd G):=\langle 1,\ldots,m\rangle$ (viewed as a marked group) is a continuous surjection.
\end{prop}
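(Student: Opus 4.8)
The plan is to follow the template of Proposition \ref{compare}, since the only genuinely new content is the surjectivity, the continuity being completely routine. Recall that, under the identification of $\cal M_m$ with the (closed) set of normal subgroups of $\mathbf{F}_m$ inside $2^{\mathbf{F}_m}$, a subbasis of clopen sets is given by the sets $\{N : w\in N\}$ and $\{N : w\notin N\}$ as $w$ ranges over $\mathbf{F}_m$; in marked-group language these are $\{(G,S) : w(s_1,\ldots,s_m)=e\}$ and its complement. For $w=w(x_1,\ldots,x_m)\in \mathbf{F}_m$ I would observe that
$$\tau_m^{-1}(\{N : w\in N\})=\{\bd G\in \cal G : w(1,\ldots,m)^{\bd G}=e\}=[w(1,\ldots,m)=e],$$
and likewise $\tau_m^{-1}(\{N : w\notin N\})=[w(1,\ldots,m)\neq e]$. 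By Corollary \ref{cor: top} both of these are clopen in $\cal G$, so $\tau_m$ is continuous (indeed, exactly as in Proposition \ref{compare}, it is open).

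For surjectivity, I would fix a marked group $(G,S)$ with $S=(s_1,\ldots,s_m)$ an ordered generating set of cardinality $m$ and realize it as a value of $\tau_m$ as follows. Set $H=G\oplus \mathbf{Z}$, a countably infinite group, and identify $G$ with $G\oplus\{0\}\le H$, so that the (still pairwise distinct) elements $s_1,\ldots,s_m$ generate $G$ inside $H$. Since $H$ is countably infinite and $s_1,\ldots,s_m$ are $m$ distinct elements, there is a bijection $\beta\colon H\to \mathbf{N}$ with $\beta(s_i)=i$ for $1\le i\le m$; transporting the group law of $H$ along $\beta$ yields an enumerated group $\bd H\in \cal G$. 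Because $\beta$ is a group isomorphism, $\langle 1,\ldots,m\rangle_{\bd H}=\beta(\langle s_1,\ldots,s_m\rangle_H)=\beta(G)$, and $\beta|_G$ is a marked isomorphism from $(\langle 1,\ldots,m\rangle_{\bd H},(1,\ldots,m))$ to $(G,(s_1,\ldots,s_m))$. Hence $\tau_m(\bd H)=(G,S)$ in $\cal M_m$, as required.

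The only point that requires care---and the reason for passing to $H=G\oplus\mathbf Z$---is the rigidity of the enumerated-group framework: the underlying set of any $\bd H\in\cal G$ is the infinite set $\mathbf{N}$, and its marking under $\tau_m$ is forced to be the tuple of distinct natural numbers $(1,\ldots,m)$, at most one of which can equal the identity. Replacing $G$ by the infinite over-group $G\oplus\mathbf Z$ removes the finiteness obstruction (which would otherwise arise when $G$ is finite) while leaving the subgroup generated by the marking unchanged, and the hypothesis that $S$ has cardinality $m$ (so that the $s_i$ are pairwise distinct) is precisely what allows a single bijection to match them to the distinct naturals $1,\ldots,m$. These are the features that make the construction succeed, and no further computation is needed beyond verifying that $\beta$ indeed restricts to a marked isomorphism.
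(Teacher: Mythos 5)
Your proof is correct, and its continuity half is exactly the paper's argument: the paper proves this proposition simply by declaring it ``analogous to the proof of Proposition \ref{compare}'', i.e.\ by the same subbasic-preimage computation you carry out, identifying $\tau_m^{-1}(\{N : w\in N\})=[w(1,\ldots,m)=e]$ and $\tau_m^{-1}(\{N : w\notin N\})=[w(1,\ldots,m)\neq e]$ and invoking Corollary \ref{cor: top}. Where you genuinely go beyond the paper is surjectivity: the template proof of Proposition \ref{compare} contains no surjectivity argument at all, and for $\tau_m$ this step is not vacuous, since $\cal M_m$ contains finite marked groups while every enumerated group has underlying set $\mathbf{N}$ and is therefore infinite, so one cannot simply enumerate the given marked group itself. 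Your passage to the infinite overgroup $G\oplus\mathbf{Z}$, together with the observation that this does not change the subgroup generated by the marking, is precisely the missing step. Your remark about distinctness is also well taken: the marking of $\tau_m(\bd G)$ is forced to be the tuple of pairwise distinct elements $(1,\ldots,m)$, so the proposition must be read against the paper's first definition of $\cal M_m$ (ordered generating sets of cardinality $m$); under the alternative identification of $\cal M_m$ with the space of all normal subgroups of $\mathbf{F}_m$, markings with repeated entries would lie outside the image of $\tau_m$ and surjectivity would fail. One small blemish: your parenthetical claim that $\tau_m$ is ``open'' does not follow from what you computed --- openness of preimages of subbasic open sets is continuity, not openness of the map. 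The paper's proof of Proposition \ref{compare} contains the same wording slip, but since neither proposition asserts openness, you should simply delete the parenthetical.
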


\subsection{Notions of orderability of countable groups}\label{orderability}

Now we recall the notions of orderability that we study in this article.
We often state various well-known facts and definitions and refer the reader to \cite{deroinnavasrivas} for a comprehensive survey on the topic, including the proofs of many of these facts.

\begin{defn}
A group $G$ is \textbf{left orderable} (resp. \textbf{bi-orderable})  if there exists a total order on the group that is invariant under left translation (resp. left and right translation), that is, given any $f,g,h\in G$, if $f<g$ then $hf<hg$ (resp. $hf<hg$ and $fh<gh$).  A left-orderable group equipped with a particular left-invariant order will be called a \textbf{left ordered group}.
\end{defn}

%\begin{defn}
%A group is said to be locally indicable if every finitely generated subgroup admits a homomorphism onto $\mb{Z}$.
%\end{defn}

Let $G$ and $H$ be left-ordered groups. A homomorphism $f:G\to H$ is \textbf{monotone increasing} if, for every $g,h\in G$, we have $g<h\implies f(g)\leq f(h)$.

The following fact is well-known:

\begin{lem}\label{orderingextensions}
Consider the short exact sequence of groups $$1\to N\xrightarrow{i} G\xrightarrow{p} Q\to 1.$$
If $N$ and $Q$ are left-ordered groups, then there  exists a unique left-invariant total order on $G$ for which $i$ and $p$ become monotone increasing.
\end{lem}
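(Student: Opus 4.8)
The plan is to build the order on $G$ explicitly via its \emph{positive cone} and then establish uniqueness by a cone-theoretic computation. Recall the standard dictionary (recorded in \cite{deroinnavasrivas}, which we may assume): a left-invariant total order on a group $H$ is the same data as a \emph{positive cone}, i.e.\ a subset $C\subseteq H$ satisfying $C\cdot C\subseteq C$ and $H=C\sqcup\{e\}\sqcup C^{-1}$; the associated order is $g<h\iff g^{-1}h\in C$. Writing $C_N\subseteq N$ and $C_Q\subseteq Q$ for the positive cones of the given orders on $N$ and $Q$, I would define the candidate cone on $G$ \emph{lexicographically, with $Q$ as the dominant coordinate and $N$ as the infinitesimal coordinate}:
\[
C_G \;=\; \{\,g\in G : p(g)\in C_Q\,\}\ \cup\ \{\, i(n) : n\in C_N\,\}.
\]
In words, $g$ is declared positive when its image $p(g)$ is positive in $Q$, and among the elements of $\ker p=i(N)$ it is positive exactly when the corresponding element of $N$ is positive.

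First I would check that $C_G$ is genuinely a positive cone. For trichotomy, I split on whether $p(g)=e_Q$: if $p(g)\neq e_Q$, trichotomy in $Q$ applied to $p(g)$ decides between $g\in C_G$ and $g^{-1}\in C_G$; if $p(g)=e_Q$, then $g=i(n)$ and trichotomy in $N$ applied to $n$ decides (with $g=e$ exactly when $n=e_N$). For closure under multiplication, take $g,h\in C_G$ and compute $p(gh)=p(g)p(h)$, splitting into the four cases according to whether each of $p(g),p(h)$ lies in $C_Q$ or equals $e_Q$; the cases with a nonidentity $Q$-image use that $C_Q$ is a semigroup, and the remaining case ($p(g)=p(h)=e_Q$, so $g=i(n),h=i(m)$ with $n,m\in C_N$) uses that $C_N$ is a semigroup together with $i$ being a homomorphism. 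This produces the desired left-invariant total order $<_G$. I would then verify the monotonicity clauses: $i$ is (strictly) monotone since $n<_N m$ gives $i(n)^{-1}i(m)=i(n^{-1}m)\in i(C_N)\subseteq C_G$; and $p$ is monotone increasing since $g<_G h$ means $g^{-1}h\in C_G$, whence $p(g^{-1}h)\in C_Q\cup\{e_Q\}$, i.e.\ $p(g)\leq_Q p(h)$.

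For uniqueness, let $<'$ be any left-invariant total order making $i$ and $p$ monotone increasing, with positive cone $C'$; I would show $C'=C_G$ by fixing $g\neq e$ and splitting on $p(g)$. If $p(g)\in C_Q$, then $g\notin C'$ would force $g^{-1}\in C'$, i.e.\ $e<'g^{-1}$, and monotonicity of $p$ would give $e_Q\leq_Q p(g)^{-1}$, contradicting $p(g)\in C_Q$; so $g\in C'$, and the reverse inclusion is symmetric. If $p(g)=e_Q$, write $g=i(n)$ with $n\neq e_N$: when $n\in C_N$, monotonicity of $i$ gives $e_G=i(e_N)\leq' i(n)=g$, and since $g\neq e_G$ and $<'$ is total this upgrades to $e_G<'g$, so $g\in C'$; the case $n^{-1}\in C_N$ is handled by applying this to $g^{-1}$.

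The main obstacle here is conceptual rather than computational: recognizing that the order is forced to be lexicographic with $Q$ dominant and $N$ infinitesimal, which is why the target-side inequality in \emph{monotone increasing} must be the non-strict $\leq$ (the map $p$ collapses the entire $N$-part to $e_Q$, so strictness cannot survive). The other delicate point is the uniqueness argument for the $N$-part, where monotonicity of $i$ only yields $e_G\leq' i(n)$ and one must invoke totality of $<'$ together with injectivity of $i$ to conclude the strict inequality $e_G<'i(n)$.
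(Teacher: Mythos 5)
Your proof is correct. The paper itself offers no proof of this lemma (it is stated as a well-known fact), so there is no in-paper argument to compare against; your lexicographic positive-cone construction, with $Q$ dominant and $N$ infinitesimal, is precisely the standard argument the paper implicitly relies on. Both halves of your verification are complete: the cone axioms and monotonicity checks are routine and correctly split into cases via exactness, and the uniqueness argument correctly exploits the two delicate points you flag — that \emph{monotone increasing} is defined with the non-strict inequality $\leq$ (which is essential, since $p$ collapses $i(N)$ to $e_Q$), and that injectivity of $i$ together with totality of $<'$ is needed to upgrade $e_G\leq' i(n)$ to $e_G<'i(n)$.
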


The following are striking results of Bludov and Glass (see \cite{bludovglass2} and  \cite{bludovglass} respectively.)

\begin{thm}\label{bludovglass}
Let $G_1$ and $G_2$ be left-ordered groups with subgroups $H_1$ and $H_2$, respectively. 
If $\phi:H_1\to H_2$ is an order-preserving isomorphism, then the free product of 
$G_1$ and $G_2$ with $H_1$ and $H_2$ amalgamated via $\phi$ admits a left-invariant order extending the orders on $G_1$ and $G_2$.
\end{thm}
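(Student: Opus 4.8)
The plan is to prove a slightly more flexible statement: a group is left orderable if and only if it acts faithfully on a totally ordered set $(\Omega,\prec)$ by order-preserving bijections, and then to realize the amalgam $G=G_1\ast_H G_2$ (identifying $H=H_1$ and $H_2=\phi(H_1)$) as such an action built from the given orders. Left-invariance, which is usually the painful point in these arguments, becomes automatic in this framework, so the real content is shifted into constructing a suitable ordered $G$-set.

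First I would set up the engine. Given a faithful action of a group $G$ on a totally ordered set $(\Omega,\prec)$ by order-automorphisms, fix (by the axiom of choice) a well-ordering $\lhd$ of $\Omega$. For $g\neq h$ faithfulness guarantees that $D(g,h)=\{\omega\in\Omega : g\omega\neq h\omega\}$ is nonempty; letting $\omega^\ast$ be its $\lhd$-least element, declare $g<h$ iff $g\omega^\ast\prec h\omega^\ast$. Equivalently, $<$ is the pullback under the injection $g\mapsto(\omega\mapsto g\omega)$ of the lexicographic order on $\Omega^{\Omega}$ (index set well-ordered by $\lhd$, value set ordered by $\prec$), which is a genuine total order. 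The key point is that for any $k\in G$ the map $\omega\mapsto k\omega$ is an order-preserving bijection, so it preserves both the $\lhd$-least differing coordinate and the $\prec$-comparison there, yielding $g<h\iff kg<kh$. To additionally secure that $<$ restricts to $<_i$ on $G_i$, I would arrange $\Omega$ to contain points whose $G_i$-orbit map $(G_i,<_i)\to(\Omega,\prec)$ is an order-embedding with trivial stabilizer, and place such points at the bottom of $\lhd$; then for $g,h\in G_i$ the least differing coordinate is one of these distinguished points and the comparison simply reads off $<_i$.

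The substance is the construction of $\Omega$ together with its $G$-action. I would use the Bass--Serre tree $T$ of the amalgam and build $\Omega$ as a ``tree of ordered lines'': attach to each type-$i$ vertex $gG_i$ a copy of the ordered line $(G_i,<_i)$, on which its stabilizer acts by left translation, and order $\Omega$ lexicographically along geodesics in $T$ emanating from a fixed base, branching at each vertex according to the order of that vertex's line. The only place two adjacent lines interact is across an edge $gH$, where the overlap is governed by $H$ sitting inside $G_1$ on one side and inside $G_2$ on the other; this is exactly where the hypothesis that $\phi$ is order-preserving enters, ensuring that the order induced on the shared $H$-coset from the $G_1$-side agrees with the one induced from the $G_2$-side, so the branchings glue into a single coherent total order preserved by $G$. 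Faithfulness of $G\curvearrowright\Omega$ follows because a nontrivial element either moves the base vertex in $T$ or acts nontrivially on one of the attached lines.

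I expect the main obstacle to be precisely this coherence check: verifying that the lexicographic-along-geodesics relation is well-defined, total, and $G$-invariant, and pinpointing that the compatibility of $<_1|_{H}$ and $<_2|_{H}$ under $\phi$ is both necessary and sufficient for the two local orders at the endpoints of each edge to be reconcilable (without order-preservation of $\phi$ the gluing is inconsistent and no such $\Omega$ exists). A secondary technical point is confirming that the global order extends $<_1$ and $<_2$ \emph{simultaneously}, which the placement of the distinguished orbit points low in $\lhd$ is designed to force. For readers preferring a dynamical picture, one may instead first reduce to the countable case (left orderability passes from finitely generated subgroups to the whole group by a standard compactness argument on the space of orders) and then replace ``totally ordered set'' by $\mathbb{R}$, gluing the dynamical realizations of $G_1$ and $G_2$ along $T$; the order-preserving $\phi$ lets one conjugate the two $H$-actions to coincide. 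Either route recovers the theorem of Bludov and Glass recalled here.
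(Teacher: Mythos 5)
Your ``engine'' is fine: a faithful action by order-preserving bijections on a totally ordered set yields a left order via the well-ordering/lexicographic trick, and that part is standard. The fatal gap is in the construction of $(\Omega,\prec)$, and it is not the ``coherence check'' you anticipate: the basic objects your construction needs do not exist in general. To order $\Omega$ lexicographically along geodesics, ``branching at each vertex according to the order of that vertex's line,'' you must compare two distinct branches at a vertex of type $i$, i.e.\ compare two distinct $H_i$-cosets sitting inside the ordered line $(G_i,<_i)$, and the comparison must be invariant under the vertex stabilizer. This is well defined precisely when distinct cosets of $H_i$ do not interleave in $(G_i,<_i)$, i.e.\ when $H_i$ is a \emph{convex} subgroup with respect to $<_i$. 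Nothing in the hypotheses gives convexity: the assumption that $\phi$ is order-preserving constrains only the order \emph{on} $H_i$, not how its cosets sit in $G_i$. The simplest instance of the theorem already defeats the construction: take $G_1=\langle a\rangle\cong\mathbf{Z}$, $G_2=\langle b\rangle\cong\mathbf{Z}$ with their standard orders, $H_1=\langle a^2\rangle$, $H_2=\langle b^2\rangle$, $\phi(a^2)=b^2$. The amalgam is the Klein bottle group $\langle a,b\mid a^2=b^2\rangle$, and the conclusion of the theorem does hold for it (pull back the standard order under $a,b\mapsto 1\in\mathbf{Z}$ and refine lexicographically by an order on the kernel $\langle ab^{-1}\rangle$; this extends both given orders). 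But $2\mathbf{Z}$ is not convex in $\mathbf{Z}$ --- the two cosets interleave --- and in the Bass--Serre tree the element $a$ fixes the vertex $\langle a\rangle$ while \emph{swapping} the two edges incident to it, so no stabilizer-invariant order on the branches can exist. Your construction therefore cannot produce any order for this example; what it proves, once details are filled in, is the classical and strictly weaker statement in which each $H_i$ is additionally assumed convex in $(G_i,<_i)$.

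The fallback dynamical route founders on the same point in different clothing: two order-preserving actions of $H$ on $\mathbf{R}$ inducing the same left order on $H$ (via the orbit of a basepoint) need \emph{not} be conjugate in $\textup{Homeo}^+(\mathbf{R})$ --- already for $H=\mathbf{Z}$ one restriction may act freely while the other has fixed points --- so ``the order-preserving $\phi$ lets one conjugate the two $H$-actions to coincide'' is unjustified, and this is exactly the crux. (If such a conjugation were always available, the theorem would follow cheaply: agreeing $H$-actions induce an action of $G_1\ast_H G_2$ whose kernel meets every conjugate of a factor trivially, hence is free, and one concludes by the extension lemma; the entire difficulty that Bludov and Glass must overcome is that this naive gluing is unavailable.) Note finally that the paper contains no proof of this statement to compare against: it is quoted from Bludov--Glass, whose argument is a genuinely algebraic one, working with families of left orders on the factors and normal forms in the amalgam rather than with any planar structure on the Bass--Serre tree; your proposal does not reproduce, and cannot be repaired into, that argument.
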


\begin{thm}\label{bludovglass2}
The following holds:
\begin{enumerate}
\item Every recursively presented left orderable group embeds in a finitely presented left orderable group.
\item Left-orderability is a Boone-Higman property, that is, a finitely generated left orderable group has solvable word problem if and only if it can be embedded in a simple left orderable group which can be embedded in a finitely presented left orderable group.
\end{enumerate}
\end{thm}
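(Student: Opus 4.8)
The plan is to establish the two parts in sequence, deducing the substantive direction of (2) from (1) together with a ``simplification'' step, while the reverse direction of (2) is a purely recursion-theoretic argument that makes no use of the orders.

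I would dispose first of the direction of (2) asserting that embeddability into a simple subgroup $S$ of a finitely presented group $H=\langle X\mid R\rangle$ implies solvable word problem, as it requires nothing about left-orderability. Fix a nontrivial $s_0\in G\subseteq S$ (if $G$ is trivial there is nothing to prove). Given a word $w$ in the generators of $G$, run two semi-algorithms in parallel: the first enumerates consequences of the finitely many relators $R$ and halts if it certifies $w=e$ in $H$, which is possible since $H$ finitely presented makes $\{w : w=_H e\}$ recursively enumerable; the second enumerates all finite products of $H$-conjugates of $w^{\pm1}$ and halts if one of them equals $s_0$ in $H$. Since $S$ is simple, for the element $w\in S$ we have $s_0\in\langle\langle w\rangle\rangle_S\subseteq\langle\langle w\rangle\rangle_H$ exactly when $w\neq e$, so precisely one of the two procedures halts, deciding the word problem.

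For part (1) the plan is to carry out the classical Higman embedding theorem inside the category of left-ordered groups. After reducing to the finitely generated case by the Higman--Neumann--Neumann embedding of a countable group into a two-generated group (an iterated HNN construction), one realizes the recursively presented group as a subgroup of a finitely presented group built from free groups through a finite sequence of amalgamated free products and HNN extensions encoding the recursively enumerable relator set. The one new ingredient is preservation of left-orderability: free groups are bi-orderable, Theorem \ref{bludovglass} keeps amalgamated products left-orderable along an order-preserving identification of subgroups, and the analogous HNN statement (for associated subgroups identified by an order-preserving map) follows from the same circle of ideas. Thus left-orderability propagates to the final finitely presented group, \emph{provided} every amalgamating and associated subgroup is identified by the restriction of the intended embedding and is therefore order-preserving for the induced orders. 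Verifying this order-compatibility at each stage of Higman's intricate construction is the main obstacle and where the real work lies.

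Finally, for the nontrivial direction of (2) I would bootstrap from (1). A finitely generated left-orderable $G$ with solvable word problem is recursively presented, so (1) embeds it in a finitely presented left-orderable group; it then remains to locate $G$ inside a \emph{simple} left-orderable group that is again a subgroup of a finitely presented left-orderable group, i.e.\ to run the Boone--Higman construction in the left-orderable category. I would adapt the classical argument that enlarges the ambient finitely presented group by HNN extensions and amalgams forcing a distinguished subgroup to have no proper nontrivial normal subgroups, thereby manufacturing the required simple group while keeping the ambient group finitely presented. As in (1), each enlargement is an amalgam or HNN extension, so left-orderability survives as long as the associated subgroups carry compatible orders; the crux, once more, is to choose these subgroups and identifying isomorphisms so that simplicity, finite presentability, and order-compatibility hold simultaneously. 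Combining the two directions gives the equivalence and exhibits left-orderability as a Boone--Higman property.
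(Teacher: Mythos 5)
The recursion-theoretic direction of (2) is fine: your two parallel semi-algorithms (enumerate consequences of the finitely many relators to certify $w=e$ in $H$; enumerate products of $H$-conjugates of $w^{\pm 1}$ and certify that one of them equals a fixed nontrivial $s_0\in G$, which must happen when $w\neq e$ by simplicity of $S$) constitute Kuznetsov's classical argument, and your write-up of it is correct.

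The rest of the proposal, however, is a plan rather than a proof, and you say so yourself: for part (1) you write that verifying order-compatibility at each stage of Higman's construction ``is the main obstacle and where the real work lies,'' and for the forward direction of (2) that ``the crux, once more, is to choose these subgroups and identifying isomorphisms so that simplicity, finite presentability, and order-compatibility hold simultaneously.'' Those deferred steps are not routine bookkeeping; they \emph{are} the theorem. Amalgamated products and HNN extensions of left-orderable groups are in general not left-orderable: the amalgamation theorem (Theorem \ref{bludovglass}) applies only when the identifying isomorphism is order-preserving for suitably chosen left orders, and there is no a priori reason that the subgroups arising in Higman's machinery (the benign subgroups, the Turing-machine encodings, the towers of HNN stages) admit orders compatible with the required identifications. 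Arranging orders that survive the entire construction --- including proving the needed HNN analogue of Theorem \ref{bludovglass} and re-engineering both the Higman embedding and the Boone--Higman simple-group construction around order-compatibility --- is precisely the content of the Bludov--Glass paper \cite{bludovglass}, and it is a long and delicate piece of work. Note also that the paper you were given does not prove this statement at all: it quotes it as a theorem of Bludov and Glass. So while your high-level strategy does match theirs, a proposal that isolates the hard step and leaves it unverified has a genuine gap.
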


Our next notion of orderability is presented in the following fact:

\begin{fact}\label{C-orderability}
For any group $G$, the following are equivalent:
\begin{itemize}
    \item $G$ is \textbf{locally indicable}, that is, every nontrivial finitely generated subgroup of $G$ has an infinite cyclic quotient.
    \item $G$ is \textbf{C-orderable}, that is, there is a total order $<$ on $G$ that is left invariant and moreover, for each pair $f,g\in G, f,g>id_G$, it holds that $fg^2>g$.
\end{itemize}

\end{fact}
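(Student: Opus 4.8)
The plan is to prove the two implications separately, treating the inequality $fg^2>g$ (the \emph{Conradian inequality}) as the device that upgrades the general theory of convex subgroups; throughout I will use freely that in any left-ordered group the convex subgroups form a chain under inclusion, together with H\"older's theorem (a left order with no nontrivial proper convex subgroups is archimedean, hence order-isomorphic to a subgroup of $(\mathbb{R},+)$ and in particular abelian).

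For the implication C-orderable $\Rightarrow$ locally indicable, suppose $<$ is a left order on $G$ satisfying the Conradian inequality and let $H\le G$ be nontrivial and finitely generated. The restriction of $<$ to $H$ is again a Conradian order, so I may assume $H$ is finitely generated and look for a surjection onto $\mathbb{Z}$. Since the convex subgroups of $H$ form a chain and $H$ is finitely generated, the union $C$ of all \emph{proper} convex subgroups is itself a proper convex subgroup (otherwise the finitely many generators would all lie in a single proper member of the chain, forcing that member to equal $H$); thus $C$ is the largest proper convex subgroup and $C\subset H$ is a convex jump. Here the Conradian inequality enters through Conrad's lemma on jumps: for a Conradian order every convex jump $C'\subset C''$ satisfies $C'\trianglelefteq C''$ with $C''/C'$ abelian and archimedean. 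Applying this to $C\subset H$ gives $C\trianglelefteq H$ and, via H\"older, an order-embedding $H/C\hookrightarrow(\mathbb{R},+)$. As $H/C$ is then a nontrivial finitely generated torsion-free abelian group it surjects onto $\mathbb{Z}$, and the composite $H\twoheadrightarrow H/C\twoheadrightarrow\mathbb{Z}$ is the desired infinite cyclic quotient.

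For the converse, I first note that local indicability passes to subgroups, since a finitely generated subgroup of a subgroup of $G$ is a finitely generated subgroup of $G$; hence every finitely generated subgroup of a locally indicable group is locally indicable. I then reduce to the finitely generated case by compactness. The set of Conradian left orders on a group, identified with its set of Conradian positive cones inside $\{0,1\}^{G}$, is a closed and therefore compact subspace, and restriction of a Conradian order to a subgroup is Conradian. Writing $G=\bigcup_i H_i$ as the directed union of its finitely generated subgroups, the sets of Conradian orders on the $H_i$ form an inverse system of nonempty (by the finitely generated case) compact Hausdorff spaces under restriction; the inverse limit of such a system over a directed index set is nonempty, and any compatible family of orders glues to a left order on $G$ whose Conradian inequality is verified inside whichever $H_i$ contains the two elements involved. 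Thus it suffices to Conradian-order a finitely generated locally indicable group.

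For finitely generated locally indicable $H$, local indicability supplies a surjection $\phi\colon H\twoheadrightarrow\mathbb{Z}$ with kernel $N$, and the intended construction is to order $H$ lexicographically from the extension $1\to N\to H\to\mathbb{Z}\to 1$ via Lemma \ref{orderingextensions}, after checking that an extension of a Conradian order by $\mathbb{Z}$ remains Conradian. I expect the main obstacle to be exactly this inductive step: $N$ need not be finitely generated, so the finitely generated case does not directly provide a Conradian order on it and a naive induction fails. I plan to resolve this by running the construction through a maximal tower. A Zorn's lemma argument produces a maximal chain of convex subgroups of $H$ equipped with compatible partial orders whose successive jumps are infinite cyclic (each jump supplied by local indicability applied to the relevant finitely generated section), and local indicability forces the tower to exhaust $H$; the Conradian inequality is then built in jump by jump, mirroring the structure used in the first implication. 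The technical heart, and the step most in need of care, is verifying that the lexicographic assembly along this possibly transfinite tower preserves $fg^2>g$ globally rather than only within each individual jump.
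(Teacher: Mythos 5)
A preliminary remark: the paper never proves this fact --- it is quoted from the literature, with the reader pointed to \cite{deroinnavasrivas} and \cite{GOD} --- so your proposal can only be judged against the standard arguments there. Your first implication is correct and is the standard one: restricting to a nontrivial finitely generated $H\leq G$, your chain argument does produce a maximal proper convex subgroup $C$; since the paper's condition $fg^2>g$ implies the usual Conradian condition ``$fg^n>g$ for some $n$,'' Conrad's theorem on convex jumps applies and gives $C\trianglelefteq H$ with $H/C$ a nontrivial finitely generated subgroup of $(\mathbb{R},+)$, hence with $\mathbb{Z}$ as a quotient. The compactness reduction of the converse to finitely generated groups is also correct.

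The genuine gap is the finitely generated case of ``locally indicable $\Rightarrow$ C-orderable,'' which you never actually prove: you describe an intended construction and explicitly defer its ``technical heart.'' Worse, the proposed repair fails as stated. Its engine is that the jumps of the tower are ``supplied by local indicability applied to the relevant finitely generated section,'' but the sections that arise are exactly the non-finitely-generated groups (beginning with $\ker(H\twoheadrightarrow\mathbb{Z})$) that defeated the naive induction, so the obstruction you identified simply recurs; and the claim that ``local indicability forces the tower to exhaust $H$'' is false for towers with infinite cyclic jumps. Concretely, take $H=BS(1,2)=\langle a,t\mid tat^{-1}=a^2\rangle$, which is finitely generated and locally indicable. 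Any chain of subgroups of $H$ with infinite cyclic successive quotients would intersect $N=\ker(H\twoheadrightarrow\mathbb{Z})\cong\mathbb{Z}[1/2]$ in a chain whose jumps embed in $\mathbb{Z}$, hence are trivial or infinite cyclic; but $\mathbb{Z}[1/2]$ admits no exhausting chain of this kind, because any two nonzero subgroups of $\mathbb{Q}$ have rational rank one, so every jump lying above a nonzero subgroup has torsion quotient --- only the bottom jump can be infinite cyclic, and $\mathbb{Z}[1/2]$ is not cyclic. Thus a maximal tower in your sense gets stuck strictly below $H$. The standard way to close the gap --- presumably the proof in the cited references --- is not a tower but the Conradian version of the Burns--Hale argument, which uses precisely the finitary criterion the paper records as Fact \ref{orderabilityopen}(2): given nontrivial $f_1,\dots,f_n$ in $G$, set $H=\langle f_1,\dots,f_n\rangle$, pick $\phi\colon H\twoheadrightarrow\mathbb{Z}$, choose signs with $\phi(f_i^{\epsilon_i})>0$ for the generators outside $\ker\phi$, handle the (fewer than $n$) generators inside $\ker\phi$ by induction on $n$, and observe that the closure operation $(g_1,g_2)\mapsto g_1^{-1}g_2g_1^{2}$ adds $\phi$-values, so no element of the resulting semigroup can be the identity. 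This induction on the number of elements sidesteps the need to order non-finitely-generated subgroups, which is exactly where your proposal stalls.
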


In the sequel, we prefer to use the terminology ``locally indicable'' rather than ``C-orderable.''

\begin{defn}\label{Upp definition}
A group $G$ is said to satisfy the \textbf{unique product product property (UPP)} if, for all pairs of finite subsets $X$ and $Y$ of $G$, there exists an element $g\in G$ such that:
\begin{itemize}
    \item $g= xy$ for some $x\in X$ and $y\in Y$, but
    \item $g\neq x'y'$ for all $x'\in X\setminus \{x\}$ and  $y'\in Y\setminus\{y\}$.
\end{itemize} 
\end{defn}
\begin{fact}
Every left orderable group satisfies the UPP. 
\end{fact}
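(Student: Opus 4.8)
The plan is to prove the well-known fact that every left orderable group satisfies the unique product property, directly from the definitions given above. Let $G$ be a left orderable group and fix a left-invariant total order $<$ on $G$. Given finite nonempty subsets $X, Y \subseteq G$, I want to produce an element $g$ that has a \emph{unique} representation as a product $xy$ with $x \in X$, $y \in Y$.

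First I would form the finite set of all products $P = \{xy : x \in X,\ y \in Y\} \subseteq G$ and consider the $<$-maximal element of $P$, call it $g = x_0 y_0$. (Since $P$ is finite and $<$ is a total order, a maximum exists.) The claim is that this maximal product is represented uniquely. Suppose toward a contradiction that $g = x_0 y_0 = x_1 y_1$ with $(x_0,y_0) \neq (x_1,y_1)$, $x_i \in X$, $y_i \in Y$. The key step is to compare the two factorizations by playing off left-invariance against the order. From $x_0 y_0 = x_1 y_1$ we get $x_1^{-1} x_0 = y_1 y_0^{-1}$; the strategy is to show that whichever of the two factors strictly increases must be counterbalanced by the other factor strictly decreasing, and then to exhibit a \emph{larger} product in $P$, contradicting maximality of $g$.

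Concretely, I would argue as follows. Since the two factorizations are distinct, we cannot have $x_0 = x_1$ and $y_0 = y_1$ simultaneously; and in fact $x_0 = x_1$ forces $y_0 = y_1$ (cancel on the left) and vice versa, so we may assume $x_0 \neq x_1$ and $y_0 \neq y_1$. By totality of $<$, say $x_0 < x_1$ (the other case is symmetric). Then left-invariance gives $x_0 y_0 < x_1 y_0$, so $x_1 y_0$ is strictly larger than $g$. But $x_1 \in X$ and $y_0 \in Y$, so $x_1 y_0 \in P$, contradicting that $g$ was the maximum of $P$. This contradiction shows the factorization of $g$ is unique, and since $g = x_0 y_0$ with $x_0 \in X$, $y_0 \in Y$, the element $g$ witnesses the UPP for the pair $(X,Y)$.

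I do not anticipate a serious obstacle here: the argument is essentially a one-line use of left-invariance applied to the maximum (or dually the minimum) of the product set. The only point requiring a little care is the reduction to the case $x_0 \neq x_1$ and $y_0 \neq y_1$, which follows immediately from left cancellation, and the observation that the left-invariant order suffices (no right-invariance is needed) because we only ever multiply on the right by a fixed element $y_0$. Thus the hardest part is merely bookkeeping, and the proof is short.
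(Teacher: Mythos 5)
Your overall strategy---take $g$ to be the $<$-maximum of the product set $P=\{xy : x\in X,\ y\in Y\}$ and derive a contradiction from a second factorization---is sound, and in fact the element you pick coincides with the one the paper picks. But the key step fails as written: from $x_0<x_1$ you conclude $x_0y_0<x_1y_0$, which is multiplication \emph{on the right} by the fixed element $y_0$. Left-invariance says $f<g\Rightarrow hf<hg$, i.e., it licenses multiplication on the \emph{left} by a fixed element; what you invoked is right-invariance. Your closing remark (``no right-invariance is needed because we only ever multiply on the right by a fixed element'') has it exactly backwards. This is not a cosmetic slip: a left order that is also right-invariant is a bi-order, so in any group that is left orderable but not biorderable (e.g.\ $BS(1,-1)=\langle f,g\mid f^g=f^{-1}\rangle$, which the paper cites as locally indicable but not biorderable), no left-invariant order is right-invariant, and one can find $x_0<x_1$ and $y_0$ with $x_0y_0>x_1y_0$. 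So the inference is genuinely invalid.

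The repair is one line: apply the trichotomy to the $y$'s instead of the $x$'s. Having reduced (by cancellation, as you correctly did) to the case $x_0\neq x_1$ and $y_0\neq y_1$, assume without loss of generality that $y_0<y_1$. Then left-invariance, multiplying on the left by the fixed element $x_0$, gives $g=x_0y_0<x_0y_1$, and $x_0y_1\in P$, contradicting maximality of $g$. With this change your argument is correct and is essentially the paper's proof run in a different order: the paper first takes $y_0=\max Y$ (so that $xy'<xy_0$ for every $x$, by left-invariance) and then maximizes $xy_0$ over $x\in X$; the resulting element $x_0y_0$ is precisely $\max P$, and uniqueness is verified by comparing the $Y$-coordinates first, which is exactly where left-invariance applies.
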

\begin{proof}
Let $G$ be a left-ordered group and consider a pair of finite subsets $X$ and $Y$ of $G$. Set $y_0\in Y$ to be the largest element in $Y$. Then, clearly, $xy_0 >xy'$ for all $y'\in Y\setminus\{y_0\}$. Now let $x_0\in X$ be such that $x_0y_0$ is the largest element of the set $\{xy_0\}_{x\in X}$. It is easy to see that $x_0y_0$ satisfies the definition above. 
\end{proof}

The following fact provides characterizations of the aforementioned orderability conditions on a group (see \cite{GOD} for a proof):

\begin{fact}\label{orderabilityopen}

Given a group $G$, we have (see \cite{GOD}):

\begin{enumerate}
\item $G$ is left orderable if and only if, for any finite subset $F=\{f_1,\hdots, f_n\}$ of $G$, there exists $E= (\epsilon_1,\hdots ,\epsilon_n)\in \{1,-1\}^n$ such that $id_G$ does not belong to the semigroup generated by $F^E=\{f_1^{\epsilon_1},\hdots, f_n^{\epsilon_n}\}$.

\item $G$ is locally indicable if and only if, for every finite subset $F=\{f_1,\hdots f_n\}$ of $G\setminus \{id_G\}$, there exists $E= (\epsilon_1,\hdots, \epsilon_n)\in \{1,-1\}^n$ such that the identity is not contained in $\langle\langle F^E\rangle\rangle$, which is the smallest semigroup that satisfies the following conditions: 
\begin{itemize}
     \item  $\{f_1^{\epsilon_1},...f_n^{\epsilon_n}\}\subseteq S$
     
    \item for all $g_1, g_2\in S$, the element $g_1^{-1}g_2g_1^{2}$ lies in $S$. 
 \end{itemize}
 
\item  $G$ is biorderable iff, for every finite subset $F=\{f_1,\hdots f_n\}$ of $G\setminus \{id_G\}$, there exists $\epsilon_1,\hdots, \epsilon_n\in \{1,-1\}$ such that the identity is not contained in the smallest semigroup $S$ that satisfies the following conditions: 
 \begin{itemize}
     \item  $\{f_1^{\epsilon_1},...f_n^{\epsilon_n}\}\subseteq S$
     
    \item for all $g_1, g_2\in S$, the elements $g_1g_2g_1^{-1}$ and $g_1^{-1}g_2g_1$ also lie in $S$. 
 \end{itemize} 
\end{enumerate}
\end{fact}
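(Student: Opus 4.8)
The plan is to treat the three equivalences in parallel, since each is an instance of the dictionary between invariant orders and \emph{positive cones}. Recall that a left-invariant total order on $G$ is the same datum as a subset $P\subseteq G$ (its positive cone, $P=\{g:g>\mathrm{id}_G\}$) satisfying $P\cdot P\subseteq P$ and $G=P\sqcup\{\mathrm{id}_G\}\sqcup P^{-1}$; the order is recovered by $g<h\iff g^{-1}h\in P$. Under this dictionary, bi-invariance corresponds to $P$ being moreover normal ($gPg^{-1}=P$ for all $g$), and the Conradian condition $fg^2>g$ of Fact \ref{C-orderability} corresponds to $P$ being closed under $(g_1,g_2)\mapsto g_1^{-1}g_2g_1^2$ (indeed $g_1^{-1}g_2g_1^2>\mathrm{id}_G\iff g_2g_1^2>g_1$). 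So in every case the task is to produce a subset $P$ that is a subsemigroup, closed under the relevant additional operations, and that partitions $G\setminus\{\mathrm{id}_G\}$ as $P\sqcup P^{-1}$.

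The forward implications are immediate. Given an order of the relevant type with positive cone $P$, and a finite set $F=\{f_1,\ldots,f_n\}$ of non-identity elements, choose $\epsilon_i$ so that $f_i^{\epsilon_i}\in P$ (possible since exactly one of $f_i,f_i^{-1}$ lies in $P$). The smallest (semigroup / Conradian / conjugation-closed) set containing $\{f_i^{\epsilon_i}\}$ is then contained in $P$, because $P$ itself is closed under the operations in question; since $\mathrm{id}_G\notin P$, the identity is not in that set.

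The substance lies in the converse, which I would establish by a single compactness argument on the space $X=\{+,-\}^{G\setminus\{\mathrm{id}_G\}}$, compact by Tychonoff. A point $\sigma\in X$ records which elements are positive, and I impose the clopen constraints: (antisymmetry) $\sigma(g)\neq\sigma(g^{-1})$; (semigroup) not $[\sigma(g)=\sigma(h)=+$ and $\sigma(gh)=-]$ whenever $g,h,gh\neq\mathrm{id}_G$; together with the analogous closure constraints for the Conradian operation (case (2)) or for conjugation (case (3)). A $\sigma$ meeting all constraints is exactly a positive cone of the desired type, so it suffices to show the family of constraints is finitely satisfiable. Any finite subfamily mentions only elements of some finite symmetric set $T$; choosing a set $F_0\subseteq T$ of representatives of the inverse-pairs in $T$ and applying the hypothesis to $F_0$ yields signs $\epsilon$ for which the relevant closed semigroup $S\supseteq\{f^{\epsilon_f}\}$ omits $\mathrm{id}_G$. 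I then set $\sigma_T(t)=+$ exactly when $t\in S$ (and extend arbitrarily off $T$). This is well defined and antisymmetric because $S\cap S^{-1}=\emptyset$ (if $s,s^{-1}\in S$ then $ss^{-1}=\mathrm{id}_G\in S$), and each $t\in T$ has $t$ or $t^{-1}$ among the chosen generators, so exactly one of $t,t^{-1}$ lies in $S$; the semigroup and closure constraints hold because membership of both an element and its inverse in $S$ would again force $\mathrm{id}_G\in S$.

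The main obstacle is the book-keeping in case (3): a bi-order cone must be invariant under conjugation by \emph{all} of $G$, whereas the hypothesis closes $S$ only under conjugation by its own elements. The point to get right is that closing under both $(g_1,g_2)\mapsto g_1g_2g_1^{-1}$ and $(g_1,g_2)\mapsto g_1^{-1}g_2g_1$ with $g_1\in S$ produces conjugation by each chosen generator $f^{\epsilon_f}$ \emph{and} by its inverse, hence (as these generate the ambient subgroup) the full normality required; arranging $T$ to be closed under the finitely many conjugations appearing in a given finite subfamily is precisely what makes the finite-satisfiability step go through. Case (2) requires the parallel, classical (Conrad) verification that a trichotomizing subsemigroup closed under $(g_1,g_2)\mapsto g_1^{-1}g_2g_1^2$ really does define a locally indicable order.
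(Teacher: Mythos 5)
Your proof is correct. The paper itself offers no proof of this statement --- it is quoted as a known fact with a pointer to \cite{GOD} --- and your positive-cone-plus-compactness argument on $\{+,-\}^{G\setminus\{\mathrm{id}_G\}}$ is exactly the standard argument from that reference, so there is no methodological divergence to report. You also correctly isolate the only delicate point: in case (3) the hypothesis closes $S$ under conjugation only by its own elements, but closure under both $(g_1,g_2)\mapsto g_1g_2g_1^{-1}$ and $(g_1,g_2)\mapsto g_1^{-1}g_2g_1$ for positive $g_1$, together with the trichotomy $G=P\sqcup\{\mathrm{id}_G\}\sqcup P^{-1}$, upgrades to invariance under conjugation by every $g\in G$ (use the first operation if $g\in P$, and the second with $g_1=g^{-1}$ if $g^{-1}\in P$); once this is observed, and if the conjugation constraints are imposed only for positive conjugators, even the extra bookkeeping on $T$ that you describe becomes unnecessary, since every constraint in a finite subfamily only mentions elements of $T$ and membership in $S$ already respects those constraints. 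Two harmless remarks: in case (2) you quote Fact \ref{C-orderability} (Conradian orderability is equivalent to local indicability), which the paper likewise takes as a black box, so this is legitimate; and your reading of item (1) silently corrects a typo in the statement, since the finite subsets there must be drawn from $G\setminus\{\mathrm{id}_G\}$ as in items (2) and (3) --- otherwise $F=\{\mathrm{id}_G\}$ falsifies the right-hand side in every group.
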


\begin{lem}\label{Lemma: OrderabilityClosedProperties}
The properties of being left orderable, locally indicable, biorderable , being torsion-free, and having unique products are all closed properties.
\end{lem}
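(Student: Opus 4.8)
The plan is to reduce everything to a single, uniform ``finite witness'' argument carried out marking-degree by marking-degree. Since the condition ``$f_{n+1}\in N$'' is clopen in $\{0,1\}^{\mathbf{F}_{n+1}}$, the natural copy of $\mathcal{M}_n$ (obtained by marking with a redundant trivial generator) sits as a clopen subspace of $\mathcal{M}_{n+1}$, and hence each $\mathcal{M}_n$ is clopen in $\mathcal{M}$. Because all five properties are marking-invariant, to show that such a property $P$ is closed in $\mathcal{M}$ it therefore suffices to show, for every $n$, that the set of $(G,S)\in\mathcal{M}_n$ \emph{failing} $P$ is open. Identifying $(G,S)$ with its kernel $N=\ker(\mathbf{F}_n\to G)\in\{0,1\}^{\mathbf{F}_n}$, a basic open neighborhood of $N$ is cut out by prescribing, for finitely many words $w\in\mathbf{F}_n$, whether $w\in N$ or $w\notin N$. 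The whole proof then rests on the observation that whenever $G$ fails one of these properties, the failure is certified by a finite list of conditions of the shape ``this word is trivial in $G$'' and ``this word is nontrivial in $G$'', and that any marked group agreeing with $(G,S)$ on those finitely many words fails $P$ for exactly the same reason.

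For left orderability, local indicability, and biorderability I would invoke the combinatorial characterizations of Fact \ref{orderabilityopen}. In each case the negation reads: there is a finite tuple $f_1,\dots,f_m\in G$ such that for every sign vector $E\in\{1,-1\}^m$ the identity lies in the appropriate semigroup generated by $F^E$ (closed under the operations specified in Fact \ref{orderabilityopen}). The crucial point is that ``$\mathrm{id}_G$ lies in this semigroup'' is a \emph{positive existential} statement: membership is witnessed by one concrete word $u_E(f_1,\dots,f_m)$ evaluating to the identity. Writing each $f_i$ as a word in $S$, we obtain for each of the $2^m$ sign vectors a single word $W_E\in\mathbf{F}_n$ lying in $N$. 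The set $\{N : W_E\in N \text{ for all } E\}$ is clopen, and by the same characterization every marked group in it fails the property, since the displayed elements and products transfer verbatim. Hence the failure set is a union of clopen sets, i.e.\ open.

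The torsion-free case is the transparent warm-up: $G$ has torsion exactly when some $w\in\mathbf{F}_n$ satisfies $w\notin N$ but $w^k\in N$ for some $k\ge 2$, a clopen condition on $N$; the set of $N$ with torsion is the union over all $(w,k)$ of these clopen sets, hence open. The main obstacle, and the only place needing genuine care, is the unique product property, whose failure quantifies over finite sets $X,Y$ and then asserts a boolean combination of equalities and inequalities among products. I would unwind it as follows: $G$ fails UPP iff there exist finite $X=\{x_1,\dots,x_p\}$, $Y=\{y_1,\dots,y_q\}$ such that every product $x_iy_j$ admits a second representation $x_{i'}y_{j'}$ with $i'\neq i$ and $j'\neq j$. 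Representing the $x_i,y_j$ as words, I would encode this by (a) distinctness constraints $x_ix_{i'}^{-1}\notin N$ and $y_jy_{j'}^{-1}\notin N$ for $i\neq i'$, $j\neq j'$, ensuring $X$ and $Y$ keep their sizes in nearby groups, and (b) for each $(i,j)$ a single fixed choice of $(i',j')$ with $i'\neq i$, $j'\neq j$ imposing $x_iy_j(x_{i'}y_{j'})^{-1}\in N$. There are only finitely many such choice-functions, so the UPP-failure set is a finite union, over these functions, of clopen sets, hence open; and the distinctness constraints are precisely what guarantees that $x_{i'}\in X\setminus\{x_i\}$ and $y_{j'}\in Y\setminus\{y_j\}$ persist in the perturbed group, so that no product is a unique product there. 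I expect this last bookkeeping — matching the perturbed distinctness to the ``$X\setminus\{x\}$, $Y\setminus\{y\}$'' clauses of the definition — to be the only delicate step.
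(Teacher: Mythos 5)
Your overall strategy is the same as the paper's (the paper's proof is a terse appeal to Fact \ref{orderabilityopen} and Definition \ref{Upp definition}: the negation of each property is open), and your treatments of torsion-freeness and of the unique product property are correct --- the distinctness constraints you add in the UPP case are exactly the right bookkeeping. However, your step for left orderability, local indicability and biorderability has a genuine gap: the clopen neighborhood you build records only the \emph{positive} conditions $W_E\in N$ and does not force the witnesses $f_1,\dots,f_m$ to remain nontrivial in nearby groups. The characterizations in Fact \ref{orderabilityopen} quantify over finite subsets of $G\setminus\{id_G\}$ (explicitly in parts (2) and (3), and necessarily implicitly in part (1), since any set containing $id_G$ generates a semigroup containing $id_G$), so a tuple whose entries collapse to the identity witnesses nothing. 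Concretely: take $G=\mathbf{Z}/2$ marked in $\mathcal{M}_2$, i.e.\ $N$ the normal closure of $\{x^2,y\}$ in $\mathbf{F}_2$, with witness $f_1=\bar x$; the two sign vectors give $W_{\pm 1}=x^{\pm 2}$, so your neighborhood is $\{N' : x^{2}\in N'\}$. This set contains the normal closure of $\{x\}$, whose quotient is $\mathbf{Z}$ --- a biorderable (hence locally indicable and left orderable) group. So the claimed statement ``every marked group in it fails the property, since the displayed elements and products transfer verbatim'' is false, and your union of clopen sets overshoots the failure set.

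The repair is the same move you already made for UPP: writing $v_i\in\mathbf{F}_n$ for a word representing $f_i$, add to the neighborhood the finitely many clopen conditions $v_i\notin N'$ for $1\le i\le m$. These hold at $N$ because the witnesses are nontrivial in $G$, and for any $N'$ satisfying all the conditions the images $f_1',\dots,f_m'$ form a finite subset of $G'\setminus\{id_{G'}\}$, so the words $W_E$ (and, for local indicability and biorderability, the terms built from the extra semigroup operations) transfer verbatim and certify failure in $G'$. Collisions $f_i'=f_j'$ for $i\neq j$ are harmless: any sign vector on the collapsed set lifts to a consistent sign vector on the indices, and the corresponding $W_E$ still witnesses that the identity lies in the relevant semigroup. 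With this one-line addition your argument is complete and matches the paper's intended proof.
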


\begin{proof}
It is clear that containing torsion is an open property. It follows from Fact \ref{orderabilityopen} that the negation of left orderability, local indicability and biorderability are open properties. It follows from Definition \ref{Upp definition} that the negation of the unique product property is open.
\end{proof}

\subsection{The relevant subspaces} 
We now introduce the various saturated subspaces of $\cal G$ which will be the focus of this paper.
First, we consider the following:
\begin{itemize}
\item $\Gsm:=\{\bd G\in \cal G \ : \ G \text{ does not contain $\mathbf{F}_2$ subgroups}\}$
\item $\Gam:=\{\bd G\in \cal G \ : \ G \text{ is amenable}\}$
\item $\Gll:=\{\bd G\in \cal G \ : \ G \text{ is lawless}\}$
\item $\Gsmll:=\Gsm\cap \Gll$
\item $\Gamll:=\Gam\cap \Gll$
\item $\cal G_w:=\{\bd G \ : \ G \text{ obeys the law }w=e\}$
\item $\Gamw:=\Gam\cap \cal G_w$
\end{itemize}

In the first item above, the subscript $\operatorname{sm}$ stands for ``small'' as groups not containing a nonabelian free subgroup are often given this name.

\begin{thm}
All seven subspaces from the previous list are $G_\delta$ subspaces of the space $\cal G$, whence Polish.  Moreover, $\cal G_w$ is actually closed.
\end{thm}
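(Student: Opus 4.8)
The plan is to prove each of the seven spaces is a $G_\delta$ subset of $\cal G$; since a subspace of a Polish space is Polish exactly when it is $G_\delta$, and $\cal G$ is Polish, this gives the theorem. I will use freely that a countable intersection of $G_\delta$ sets is $G_\delta$, that every closed set is $G_\delta$, and Corollary \ref{cor: top}, which guarantees that each $[w(\vec a)=e]$ and each $[w(\vec a)\neq e]$ is clopen in $\cal G$. Since $\Gsmll$, $\Gamll$, and $\Gamw$ are finite intersections of the remaining four spaces, it suffices to treat $\cal G_w$, $\Gll$, $\Gsm$, and $\Gam$ individually and then intersect.

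The space $\cal G_w$ is the easiest: a group obeys the law $w=e$ precisely when $w(\vec a)=e$ for every substitution, so
$$\cal G_w=\bigcap_{\vec a\in\mathbf{N}^{n_w}}[w(\vec a)=e]$$
is an intersection of clopen sets, hence closed, and in particular $G_\delta$. Dualizing gives $\Gll$ and $\Gsm$. A group is lawless iff for each nontrivial $w\in\mathbf{F}_\infty$ some substitution witnesses failure of the law $w=e$, so
$$\Gll=\bigcap_{w\neq e}\ \bigcup_{\vec a\in\mathbf{N}^{n_w}}[w(\vec a)\neq e],$$
in which each inner union is open and the outer intersection is over countably many words; hence $\Gll$ is $G_\delta$. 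Likewise, a group omits $\mathbf{F}_2$ iff no pair generates a free group, i.e. for every $(a,b)\in\mathbf{N}^2$ some nontrivial reduced word $w(x,y)$ satisfies $w(a,b)=e$, giving
$$\Gsm=\bigcap_{(a,b)\in\mathbf{N}^2}\ \bigcup_{w(x,y)\neq e}[w(a,b)=e],$$
which is again $G_\delta$.

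The main obstacle is $\Gam$, since amenability is not expressible by a single system; the key is to replace it with the F\o lner characterization, which holds for every countable group: $G$ is amenable iff for each finite $A\subseteq\mathbf{N}$ and each rational $\varepsilon>0$ there is a finite nonempty $B\subseteq\mathbf{N}$ with $|a\cdot_\bd G B \,\triangle\, B|\leq\varepsilon|B|$ for all $a\in A$, where $a\cdot_\bd G b=\mu_\bd G(a,b)$. For fixed $A$, $\varepsilon$, and $B$ this inequality depends only on the finitely many products $\mu_\bd G(a,b)$ with $a\in A$ and $b\in B$, so it cuts out a clopen subset of $\cal G$; letting $B$ vary over finite nonempty subsets yields an open set $U_{A,\varepsilon}$, and
$$\Gam=\bigcap_{A}\ \bigcap_{\varepsilon}\ U_{A,\varepsilon}$$
is then a countable intersection of open sets, hence $G_\delta$. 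The two points to watch are insisting $B\neq\emptyset$ (otherwise the inequality is vacuous) and confirming that each inner condition depends on only finitely many coordinates of $\cal X$; both are routine. Finally, $\Gsmll=\Gsm\cap\Gll$, $\Gamll=\Gam\cap\Gll$, and $\Gamw=\Gam\cap\cal G_w$ are intersections of two $G_\delta$ sets, hence $G_\delta$, which completes the plan.
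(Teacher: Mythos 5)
Your proposal is correct and follows essentially the same route as the paper: the identical decompositions $\cal G_w=\bigcap_{\vec a}[w(\vec a)=e]$, $\Gll=\bigcap_w\bigcup_{\vec a}[w(\vec a)\neq e]$, $\Gsm=\bigcap_{a,b}\bigcup_w[w(a,b)=e]$, and the same F\o lner-based argument for $\Gam$ (the paper phrases the finitary F\o lner condition with tuples of distinct elements and sets $I\subseteq_\epsilon[n]$, while you use finite subsets $B$ directly, but the open sets produced are the same). Your explicit reduction of the three intersection spaces to finite intersections of $G_\delta$ sets is left implicit in the paper but is exactly what is intended.
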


\begin{proof}
To see that $\Gsm$ is $G_\delta$, it suffices to notice that $$\Gsm=\bigcap_{a,b\in \bf N}\bigcup_{w(a,b)}[w(a,b)=e],$$ where the union ranges over all nontrivial words $w$.  

In order to show that $\Gam$ is $G_\delta$, we remind the reader that a group $G$ is amenable if and only if it satisfies the \textbf{Folner condition}.  More precisely, given a finite set $F\subseteq G$ and $\epsilon>0$, a nonempty finite set $K\subseteq G$ is called a $(F,\epsilon)$-Folner set if, for each $g\in F$, we have $|gK\triangle K|<\epsilon |K|$.  We then have that $G$ is amenable if and only if, for every finite $F\subseteq G$ and $\epsilon>0$, there is a finite $(F,\epsilon)$-Folner subset of $G$.  For any $\vec a\in \bf N^m$, $\vec b\in \bf N^n$, and $\epsilon>0$, we let $U_{\vec a,\vec b,\epsilon}$ denote the open set
$$\bigcap_{1\leq j<k\leq n}[b_i\not=b_j]\cap \bigcap_{i=1}^m\bigcup_{I\subseteq_\epsilon [n]}\bigcap_{j\in I}\bigcup_{k=1}^n[a_ib_j=b_k],$$ where the notation $I\subseteq_\epsilon [n]$ indicates that $|I|>(1-\epsilon)n$.  We then have that

% $$\exists y_1\cdots y_m\left(\bigwedge_{j\not=k}y_j\not=y_k\wedge \bigwedge_{i=1}^n \bigvee_{I\subseteq_\epsilon [m]}\bigwedge_{j\in I}\bigvee_{k=1}^m x_iy_j=y_k\right).$$

% For each $m,n$ and rational $\epsilon>0$, let $\varphi_{m,n,\epsilon}(\vec x,\vec y)$ denote the formula that says that $\vec y$ is a $(\vec x,\epsilon)$-Folner set.  Then
$$\Gam=\bigcap_{\vec a\in \bf N^{<\bf N}}\bigcap_{\epsilon\in \bf Q^{>0}}\bigcup_{\vec b\in \bf N^{<\bf N}}U_{\vec a,\vec b,\epsilon}.$$
To see that $\Gll$ is $G_\delta$, it suffices to show that $$\Gll=\bigcap_{w}\bigcup_{\vec a\in \bf N^{n_w}}[w(\vec a)\not=e]$$
Finally, we note that $$\cal G_w=\bigcap_{\vec a\in \bf N^{n_w}}[w(\vec a)=e].$$
\end{proof}

Before moving on to our second collection of properties, we first recall the definition of sofic groups in the context of graph approximations. Let $G$ be a finitely generated group with symmetric generating set $S$. Let $\Gamma$ be a finite directed graph such that each directed edge of $\Gamma$ is labeled by an element of $S$. We say that $\Gamma$ is an \textbf{$n$-approximation (for $n\geq 1$) of the Cayley graph $Cay(G,S)$ of $G$ with respect to $S$} if there exists a subset $W\subseteq V(\Gamma)$ such that the following holds: 
\begin{enumerate}
    \item $|W| > \left( 1- \frac{1}{n} \right)|V(\Gamma)|$ and,
    \item if $p\in W$, then the $n$-neighborhood of $p$ is rooted isomorphic to the $n$-neighborhood of a vertex of $Cay(G,S)$ as edge-labeled graphs.
\end{enumerate}

\begin{defn}\label{soficity via graph approximations}
A finitely generated group $G$ is \textbf{sofic} if there is some (equiv. any) finite generating set $S$ of $G$ for which, given any $n\in \mathbf{N}$, $n>1$, there exists an $n$-approximation of $Cay(G,S)$ by a finite graph. More generally, an arbitrary group is sofic if every finitely generated subgroup is sofic (in the sense of the first part of the definition).
\end{defn}

Next, we consider the following saturated spaces of $\cal G$:

\begin{itemize}

\item $\mc{G}_{lo}$: the space of left orderable enumerated groups.
\item $\mc{G}_{bo}$: the space of biorderable enumerated groups.
\item $\mc{G}_{li}$: the space of locally indicable enumerated groups.
\item $\mc{G}_{upp}$: the space of enumerated groups with the unique product property.
\item $\mc{G}_{sofic}$: the space of sofic enumerated groups.
\item $\mc{G}_{tf}$: the space of torsion-free enumerated groups.
\end{itemize}

Let $\cal P_1$ denote the set of properties appearing in the previous list.  The next general result will allow us to conclude that $\cal G_P$ is a closed subspace of $\cal G$ for each $P\in \cal P_1$:

\begin{prop}\label{closed properties}
Let $P$ be a closed property in $\mathcal{M}$ which is closed under subgroups and direct unions.
Then $\mathcal{G}_P$ is a closed subspace of $\cal G$.
\end{prop}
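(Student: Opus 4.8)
The plan is to reduce property $P$ for an enumerated group to property $P$ for its finitely generated subgroups, and then to exploit the continuous surjections $\tau_m\colon \mathcal{G}\to \mathcal{M}_m$ in order to pull back the closedness of $P$ in $\mathcal{M}$.

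First I would record the key algebraic reformulation: for any enumerated group $\bd G$, the group $G$ has property $P$ if and only if $\langle 1,\ldots,m\rangle_G$ has property $P$ for every $m\in \mathbf{N}$. The forward implication is immediate from closure under subgroups. For the converse, the subgroups $\langle 1,\ldots,m\rangle_G$ form an increasing chain whose union is all of $G$ (every element of $G$ is some natural number $n$, hence lies in $\langle 1,\ldots,n\rangle_G$); since $P$ is closed under directed unions, $G$ itself has $P$. Combined with closure under subgroups, this also shows that $G$ has $P$ precisely when every finitely generated subgroup of $G$ does, as every such subgroup is contained in some $\langle 1,\ldots,m\rangle_G$.

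Next, let $\mathcal{M}^P\subseteq \mathcal{M}$ denote the set of marked groups whose underlying group has property $P$, which is closed by hypothesis, and for each $m$ set $\mathcal{M}_m^P:=\mathcal{M}^P\cap \mathcal{M}_m$, which is then closed in $\mathcal{M}_m$. Using the reformulation above, I would verify the identity
$$\mathcal{G}_P=\bigcap_{m\in \mathbf{N}}\tau_m^{-1}(\mathcal{M}_m^P),$$
since $\bd G\in \mathcal{G}_P$ exactly when $\tau_m(\bd G)=\langle 1,\ldots,m\rangle_G$ has $P$ for all $m$. As each $\tau_m$ is continuous and each $\mathcal{M}_m^P$ is closed, every set $\tau_m^{-1}(\mathcal{M}_m^P)$ is closed in $\mathcal{G}$, and hence so is their intersection $\mathcal{G}_P$.

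The only real content lies in the algebraic reformulation of the first step: it is precisely here that both hypotheses on $P$ — closure under subgroups and closure under directed unions — are used, and it is what licenses the passage from the global property of $G$ to the finitary data recorded by the maps $\tau_m$. Once this reformulation is in hand, closedness of $\mathcal{G}_P$ is a formal consequence of the continuity of the $\tau_m$ together with the closedness of $P$ in $\mathcal{M}$. (Alternatively, one could route the argument through Proposition \ref{closeduniversal} by checking that $\mathfrak{G}_P$ is universally axiomatizable, but that would additionally require verifying closure under ultraproducts, so the direct topological argument above is preferable.)
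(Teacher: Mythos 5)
Your proof is correct, and it rests on exactly the same two pillars as the paper's: the reduction of $P$ for $G$ to $P$ for the finitely generated subgroups $\langle 1,\ldots,m\rangle_G$ (subgroup closure giving one direction, closure under directed unions the other), and the closedness of $P$ inside each $\mathcal{M}_m$. The difference is organizational. The paper argues pointwise on the complement: given $\bd G\notin \mathcal{G}_P$, it finds $n$ with $H=\langle 1,\ldots,n\rangle_G$ failing $P$, takes an $m$-ball in the Cayley graph of $(H,\{1,\ldots,n\})$ witnessing that the complement of $P$ is open in $\mathcal{M}_n$, and translates that ball by hand into a finite system $\Sigma$ with $[\Sigma(1,\ldots,n)]\cap \mathcal{G}_P=\emptyset$, invoking subgroup closure in contrapositive form (failure of $P$ passes to overgroups). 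You instead write $\mathcal{G}_P=\bigcap_m \tau_m^{-1}(\mathcal{M}_m^P)$ globally and quote the continuity of $\tau_m$, which the paper states (just after Proposition \ref{compare}) but, somewhat curiously, does not itself use in proving this proposition. Your version is more modular --- the ball-to-system translation is hidden inside the continuity of $\tau_m$, whose proof is precisely that translation --- while the paper's version is self-contained and produces the explicit basic clopen neighborhood separating $\bd G$ from $\mathcal{G}_P$, a construction it reuses in later arguments such as Theorems \ref{Theorem: main1} and \ref{Theorem: main3}. Your closing remark about avoiding Proposition \ref{closeduniversal} is also well taken, since closure under ultraproducts is not among the hypotheses here.
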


\begin{proof}
Let $G$ be an enumerated group which does not satisfy $P$. Since $G$ is an increasing union of 
the subgroups $\{\langle 1,...,n\rangle_G\ : \ n\in \mathbf{N}\}$, it follows that there is an $n\in \mathbf{N}$
such that $H=\langle 1,...,n\rangle_G$ does not satisfy $P$.
Since the negation of $P$ is an open property, there is an $m\in \mathbf{N}$ such that the $m$-ball centered around the identity of the Cayley graph of the marked group $(H,\{1,...,n\})$ determines an open subset of $\mathcal{M}_n$ which consists entirely of groups that do not satisfy $P$.
This $m$-ball determines a finite system of equations and inequations $\Sigma(\vec x)$ of arity $n$ such that
$G\models \Sigma(1,...,n)$ and such that, for each $K\in [\Sigma(1,...,n)]$, the marked group $(\langle 1,..., n\rangle_K,\{1,...,n\})$ lies in the aforementioned open subset of $\mathcal{M}_n$.
Consequently, $\langle 1,..., n\rangle_K$ does not have property $P$; since the negation of $P$ is closed under taking overgroups,  we have that $K$ does not have $P$. It follows that $G\in [\Sigma(1,...,n)]\subset \cal G\setminus \mathcal{G}_P$, establishing the desired conclusion.
\end{proof}

% Consider the properties $$\mathcal{P}_1=\{\text{ left orderable, locally indicable, biorderable, unique product property}\}$$ $$\bigcup\{\text{ soficity, torsion free}\}$$
% Note that all the properties in $\mathcal{P}_1$ are closed properties.
\begin{prop}\label{left orderable groups form a Gdelta set}
$\mc{G}_{P}$ is a closed subspace of $\mc{G}$, for each $P\in \mathcal{P}_1$, and hence inherits a Polish topology. 
\end{prop}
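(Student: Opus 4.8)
The plan is to derive all six cases as instances of Proposition \ref{closed properties}. For that proposition to apply to a property $P$, three conditions must be checked: that $P$ is a closed property in $\mathcal{M}$, that $P$ is inherited by subgroups, and that $P$ is preserved under directed unions. I would structure the proof so that the last two conditions are dispatched uniformly for all of $\mathcal{P}_1$, leaving only the first condition to be treated case by case.

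For closedness in $\mathcal{M}$, the properties of being left orderable, biorderable, locally indicable, torsion-free, and having the unique product property are handled immediately by Lemma \ref{Lemma: OrderabilityClosedProperties}. The only member of $\mathcal{P}_1$ not covered there is soficity, and showing that the sofic marked groups form a closed subset of $\mathcal{M}$ is the crux of the argument. Here I would argue directly from Definition \ref{soficity via graph approximations}. Suppose $(G_k,S)\to (G,S)$ in $\mathcal{M}_n$ with each $G_k$ sofic; I must produce, for every approximation parameter $m>1$, a finite $m$-approximation of $Cay(G,S)$. Convergence in $\mathcal{M}$ means that for each radius $r$ there is some index beyond which the $r$-balls about the identity in $Cay(G_k,S)$ and $Cay(G,S)$ are labeled-graph isomorphic. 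Fixing $m$ and choosing $k$ large enough that the $m$-balls agree, soficity of $G_k$ supplies a finite $m$-approximation $\Gamma$; since every $m$-neighborhood of a vertex in a Cayley graph is rooted-isomorphic to the $m$-ball about the identity (by vertex-transitivity), and these $m$-balls coincide for $G_k$ and $G$, the same $\Gamma$ is an $m$-approximation of $Cay(G,S)$. As $m$ was arbitrary, $G$ is sofic. This ball-chasing step is the part I expect to require the most care, precisely because $m$-approximation is phrased via $m$-neighborhoods and one must verify these are determined by finite ball data alone.

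For the remaining two conditions I would exploit that every property in $\mathcal{P}_1$ is \emph{local}: a group has the property if and only if each of its finitely generated subgroups does. This is the definition for local indicability; it is the standard reformulation (via Fact \ref{orderabilityopen}) for left orderability and biorderability, since whether the identity lies in a semigroup generated by finitely many elements is computed inside the finitely generated subgroup they generate; it is immediate from Definition \ref{Upp definition} for the unique product property, as any witnessing pair of finite sets lies in a finitely generated subgroup; and it is trivial for torsion-freeness and for soficity, the latter being defined through finitely generated subgroups. Granting locality, closure under subgroups is immediate, since a finitely generated subgroup of $H\leq G$ is a finitely generated subgroup of $G$. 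Closure under directed unions is equally immediate: a finitely generated subgroup of $\bigcup_i G_i$ is, by directedness together with finite generation, contained in a single $G_i$, and hence has $P$.

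With all three hypotheses verified for each $P\in\mathcal{P}_1$, Proposition \ref{closed properties} yields that $\mc{G}_P$ is a closed subspace of $\mc{G}$. Finally, since $\mc{G}$ is Polish and a closed subspace of a Polish space is itself Polish, each $\mc{G}_P$ inherits a Polish topology, completing the argument.
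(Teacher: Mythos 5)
Your proof is correct and takes essentially the same route as the paper: verify the hypotheses of Proposition \ref{closed properties} — closedness in $\mathcal{M}$ via Lemma \ref{Lemma: OrderabilityClosedProperties}, together with closure under subgroups and directed unions, which the paper dismisses as easy and you justify uniformly by locality — and then apply that proposition. Your one substantive addition is the direct ball-matching argument that soficity is a closed property of marked groups; this step is both correct and genuinely needed, since Lemma \ref{Lemma: OrderabilityClosedProperties} covers only the other five properties in $\mathcal{P}_1$, a point the paper's own two-line proof glosses over.
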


\begin{proof}
Lemma \ref{Lemma: OrderabilityClosedProperties} asserts that each of these properties is closed in $\cal M$. Furthermore, it is easy to see that they are closed under subgroups and direction unions. Therefore the result follows from a direct application of Proposition \ref{closed properties}.
\end{proof}

The following result, pointed out to us by Denis Osin, will also be relevant:

\begin{prop}
The set $\{G\in \cal G \ : \ G \text{ is simple}\}$ is a $G_\delta$ subspace of $\cal G$.
\end{prop}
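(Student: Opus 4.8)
The plan is to express simplicity through the normal-closure operation and to observe that membership in a normal closure is an open condition. Since the underlying set of every enumerated group is the infinite set $\bf N$, every $G\in\cal G$ is nontrivial; hence $G$ is simple precisely when, for every $a\in\bf N$ with $a\neq e_\bd G$, the normal closure of $a$ in $G$ equals all of $G$. Writing $\langle\langle a\rangle\rangle_G$ for this normal closure, this translates to
$$G \text{ is simple} \iff \forall a,b\in\bf N\colon \left(\, a=e_\bd G \ \text{ or } \ b\in\langle\langle a\rangle\rangle_G \,\right).$$
First I would fix a pair $(a,b)\in\bf N^2$ and let $S_{a,b}$ denote the set of $G\in\cal G$ satisfying the bracketed disjunction; the displayed equivalence then exhibits the simple enumerated groups as the countable intersection $\bigcap_{a,b\in\bf N}S_{a,b}$.

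Next I would argue that each $S_{a,b}$ is open. The set $\{G:a=e_\bd G\}$ is clopen, since the identity is recorded by the last coordinate of $\cal X=\bf N^{\bf N\times\bf N}\times\bf N^{\bf N}\times\bf N$, and the condition merely fixes that coordinate to equal $a$. For the second disjunct, recall that $\langle\langle a\rangle\rangle_G$ consists exactly of the elements expressible as finite products $\prod_{i=1}^{n}c_i a^{\epsilon_i}c_i^{-1}$ with $c_i\in\bf N$ and $\epsilon_i\in\{1,-1\}$. For each fixed $n\in\bf N$, conjugator tuple $(c_1,\ldots,c_n)\in\bf N^n$, and sign vector $(\epsilon_1,\ldots,\epsilon_n)$, the equation $b=\prod_{i=1}^{n}c_i a^{\epsilon_i}c_i^{-1}$ is a single word equation with coefficients among $a,b,c_1,\ldots,c_n$, and hence determines a clopen set $[\Sigma(a,b,c_1,\ldots,c_n)]$ by Corollary \ref{cor: top}. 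Taking the union of these clopen sets over all $n$, all conjugator tuples, and all sign vectors yields exactly $\{G:b\in\langle\langle a\rangle\rangle_G\}$, which is therefore open. Consequently $S_{a,b}$, being the union of a clopen set and an open set, is open.

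Finally, assembling the pieces, $\{G\in\cal G:G\text{ is simple}\}=\bigcap_{a,b\in\bf N}S_{a,b}$ is a countable intersection of open subsets of $\cal G$, hence $G_\delta$. There is no serious obstacle here: the only points requiring care are the correct combinatorial description of the normal closure as a countable union of word-equation conditions (so that normal-closure membership is genuinely an open, i.e.\ existential, condition), and the observation that every enumerated group is automatically nontrivial, so that no separate nontriviality clause is needed in the characterization.
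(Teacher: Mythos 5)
Your proof is correct and follows essentially the same route as the paper: the paper cites Minasyan--Osin--Witzel for the fact that simplicity is a countable conjunction of $\forall\exists$-conditions saying that any element lies in the normal closure of any nonidentity element, and then concludes by the same quantifier-counting argument (universal quantifiers over $\bf N$ become countable intersections, the normal-closure membership is a countable union of clopen word-equation sets). You have simply made explicit the details that the paper delegates to the citation and to the phrase ``arguing as previously.''
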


\begin{proof}
It was shown in \cite{minasyanosinwitzel} that simplicity can be expressed by a countable conjunction of $\forall\exists$-sentences, essentially expressing that, given any two elements in the group, one is in the normal closure of the subgroup generated by the other.  The result follows by arguing as previously.
\end{proof}

\begin{remark}\label{Remark: Simple}
One can show that when $\cal C$ is saturated and every group in $\frak C$ can be embedded in a simple group in $\frak C$, we have that $\{G\in \cal C \ : \ G \text{ is simple}\}$ is comeager in $\cal C$.  For the class of amenable groups, this was proven in \cite{grigorchukkravchenko}.  As pointed out to us by Osin, the same can be shown for the class of groups without $\mathbf{F}_2$ subgroups using small cancellation theory. We do not include the proof here, but we can point out to the reader that the proof uses techniques from \cite{osinannals}, adapted to this setting. 
\end{remark}

\section{The proofs}

The goal of this section is to prove the main theorems and the corollaries presented in the introduction.  For the convenience of the reader, we restate the results here.\begin{customthm}{\ref{Theorem: main1}}
Let $P\in \mathcal{P}$ be a property. Then the following hold:
\begin{enumerate}
\item There is a comeager set $\mathcal{X}_P\subset \mathcal{G}_P$ such that, for every enumerated group $G\in \mathcal{X}_P$ and every $P$-near isolated group $H$, $G$ contains a subgroup isomorphic to $H$.
%In particular, $G$ contains every isolated group that satisfies $P$.
\item Let $Q$ be an open property of finitely generated groups for which there is a finitely generated group that satisfies both $Q$ and $P$. Then there is an open dense set $\mathcal{X}_P\subset \mathcal{G}_P$ such that every enumerated group $G\in \mathcal{X}_P$ contains a finitely generated subgroup satisfying $Q$.

\end{enumerate}
\end{customthm}
\begin{proof}
%It follows from the definition that if a group $G$ is $P$-near isolated, then there is an open subset $U\subset \mathcal{M}_m$ for some $m\in \mathbf{N}$ such that:
%\begin{enumerate}
%\item Each marked group in $U$ contains $G$ as a subgroup.
%\item There is a marked group $(H,S)\in U$ which embeds in a group satisfying $P$.
%\end{enumerate}
For (1), let $G$ be a $P$-near isolated group. 
Let $(H,S)\in \mathcal{M}_n$ be a finitely presented marked group and let $X
\subset H$ be the finite set that witnesses the definition of $P$-near isolated for $G$.
Let $H=\langle S\mid R\rangle$ be a finite presentation for $H$ and let $\Gamma_S(H)$ be the corresponding Cayley graph.
There is an $m\in \mathbf{N}$ such that the following holds: Let $B_m$ be the ball of radius $m$ centred at the identity in $\Gamma_S(H)$ which contains the set $X$ as a subset as well as a set of loops that represent all the relations in $R$.  
We denote the open set in $\mathcal{M}$ defined by this open ball also as $B_m$.
Note that by definition, any marked group in $B_m$ that satisfies $P$ contains an isomorphic copy of $G$ as a subgroup.
Moreover, there is at least one nontrivial group in $B_m$ satisfying $P$, (a certain quotient of $H$).  The $m$-ball determines a finite system of equations and inequations $\Sigma_G(\vec x)$ such that for any enumerated group $K$,
and any $\vec a\in \mathbf{N}^{n_{\Sigma_G}}$ such that $K\in [\Sigma_G(\vec a)]_P$, the marked group $(\langle \vec a\rangle_K, \vec a)$ belongs to $B_m$, whence the group $\langle \vec a\rangle_K$ contains a subgroup isomorphic to $G$.

Since there is at least one such $K$ satisfying $P$, we have that $\Sigma_G(\vec x)$ is a $P$-system.
Consequently, Lemma \ref{closed under products density} implies that the set 
$$\mathcal{X}_{\Sigma_G,P}=\bigcup_{\vec{a}\in \mathbf{N}^{n_{\Sigma}}}[\Sigma_G(\vec a)]_P$$ is an open dense set in $\mathcal{G}_P$, and each enumerated group in it contains a subgroup isomorphic to $G$.
Since the set $Y_P$ of $P$-near isolated groups is countable (as $P$-near isolated groups are finitely generated subgroups of finitely presented groups), it follows that $$\mathcal{X}=\bigcap_{G\in Y_P}\mathcal{X_P}_{\Sigma_G,P}$$
is the required comeager set.

%We define $$\mathcal{X}_{P,G}=\bigcup_{\vec a\in \mathbb{N}^{n_{\Sigma}}} [\Sigma(\vec a)]\qquad [\Sigma(\vec a)]=\{G\in \mathcal{G}_P\mid G\models \Sigma(\vec a)\}$$
%This set is clearly open, since it is a union of open sets.
%Moreover, each enumerated group in $\mathcal{X}_{P,G}$ contains a subgroup isomorphic to $G$.
%We claim that $\mathcal{X}_{P,G}$ is dense in $\mathcal{C}$.
%Let $$[\Xi(\vec a)]=\{G\in \mathcal{G}_P\mid G\models \Xi(\vec a)\}$$ be a basic open set where $\Xi(\vec a)$ is a finite system of equations and inequations for $\vec a\in \mathbb{N}^{n_{\Xi}}$.
%We claim that $\mathcal{X}_{P,G}\cap [\Xi(\vec a)]\neq \emptyset$.
%Let $H_1\in [\Xi(\vec a)]$ be an enumerated group.
%Let $\vec b\in \mathbb{N}^{n_{\Sigma}}$ be such that $\vec a\cap \vec b=\emptyset$.
%Let $H_2\in [\Sigma(\vec b)]$.
%Since $P$ is closed under direct sums, the group $H_3=H_1\oplus H_2$ has $P$.
%We conclude by finding an enumeration $H\in \mathcal{G}_P$ of the isomorphism type $H_3$ such that $$H\models \Sigma(\vec b)\qquad H\models \Xi(\vec a)\qquad H\in \mathcal{X}_{P,G}\cap [\Xi(\vec a)]$$
%is a comeager set.

%The proof is similar to the proof for part $(1)$.
For (2), let $G$ be a finitely generated group that satisfies both $P$ and $Q$.
Consider a marking $(G,S)$ of $G$ such that $(G,S)$ admits an open neighbourhood in $\mathcal{M}$ consisting entirely of marked groups that satisfy $Q$.
In particular, there is an $m\in \mathbf{N}$ such that the open set $B_n
\subset \mathcal{M}$ determined by the $m$-ball of the Cayley graph of $\Gamma_S(G)$ satisfies that each marked group in $B_n$ satisfies $Q$.
This determines a finite system of equations and inequations $\Sigma(\vec x)$ such that for any enumerated group $H$,
and any $\vec a\in \mathbf{N}^{n_{\Sigma}}$ such that $H\in [\Sigma(\vec a)]$, it follows that $(\langle \vec a\rangle, \vec a)\in B_n$.
Note that $\Sigma$ is a $P$-system, since for any $\vec a\in \mathbf{N}^{n_{\Sigma}}$, there is an enumeration of $G$ such that $G\in [\Sigma(\vec a)]_P$. So by Lemma \ref{closed under products density}, the desired open dense set is $\mathcal{X}_{\Sigma,P}$.
%We define $$\mathcal{X}=\bigcup_{\vec a\in \mathbb{N}^{n_{\Sigma}}} [\Sigma(\vec a)]\qquad [\Sigma(\vec a)]=\{G\in \mathcal{G}_P\mid G\models \Sigma(\vec a)\}$$
%This set is clearly open, since it is a union of open sets.
%Moreover, each enumerated group in $\mathcal{X}$ contains a subgroup which satisfies $Q$.
%We claim that $\mathcal{X}$ is dense in $\mathcal{G}_P$.
%Let $$[\Xi(\vec a)]=\{G\in \mathcal{C}\mid G\models \Xi(\vec a)\}$$ be a basic open set where $\Xi(\vec a)$ is a finite system of equations and inequations for $\vec a\in \mathbb{N}^{n_{\Xi}}$.
%We claim that $\mathcal{X}\cap [\Xi(\vec a)]\neq \emptyset$.
%Let $H_1\in [\Xi(\vec a)]$ be an enumerated group.
%Let $\vec b\in \mathbb{N}^{n_{\Sigma}}$ be such that $\vec a\cap \vec b=\emptyset$.
%Let $H_2\in [\Sigma(\vec b)]$.
%Since $P$ is closed under direct sums, it follows that $H_3=H_1\oplus H_2$ has $P$.
%We conclude by finding an enumeration $H\in \mathcal{C}$ of the isomorphism type $H_3$ such that $$H\models \Sigma(\vec b)\qquad H\models \Xi(\vec a)\qquad H\in \mathcal{X}\cap [\Xi(\vec a)]$$
\end{proof}

\begin{customthm}{\ref{Theorem: main2}}
Let $P\in \mathcal{P}$ be a Boone-Higman property. Then there is a comeager set $\mathcal{X}_P\subset \mathcal{G}_P$ such that every group in $\mathcal{X}_P$ contains an isomorphic copy of every finitely generated group satisfying $P$ with solvable word problem.
\end{customthm}

\begin{proof}
Let $G$ be a finitely generated group that satisfies $P$ and has solvable word problem. Since $P$ is a Boone-Higman property, $G$ embeds in a simple subgroup $H_1$ of a finitely presented group $H_2=\langle S\mid R\rangle$ satisfying property $P$.
It suffices to show that there is an open dense set $\mathcal{X}\subset \mathcal{G}_P$ such that, for each $H\in \mathcal{X}$, $G$ embeds in $H$.
Since the class of finitely generated groups with solvable word problem is countable,
we can conclude the statement of the theorem by taking an intersection of all such open dense sets. 

Take $n\in \mathbf{N}$ such that the $n$-ball $B_n$ of the Cayley graph of $\Gamma_S(H_2)$ determines an open set, also denoted as $B_n$, in $\mathcal{M}_{|S|}$ satisfying the following:
\begin{enumerate}

\item For any $K\in B_n$, $K$ is a quotient of $H_2$.

\item $H_1\cap B_n\neq \{id\}$.

\end{enumerate}

Since $H_1$ is simple and contains $G$ as a subgroup, it follows that any nontrivial quotient of $H_2$ whose restriction to $B_n$ is injective contains $G$ as a subgroup.
This determines a finite system of equations and inequations $\Sigma(\vec x)$
such that, for any $\vec a\in \mathbf{N}^{n_{\Sigma}}$ and $H\in [\Sigma(\vec a)]$, it holds that $(\langle \vec a\rangle_H, \vec a)\in B_n$ and thus $G\leq \langle \vec a\rangle_H\leq H$.
Note that, by our hypothesis, $\Sigma$ is a $P$-system, since for any $\vec a\in \mathbf{N}^{n_{\Sigma}}$, there is an enumeration of $H$ which lies in $[\Sigma(\vec a)]_P$. By Lemma \ref{closed under products density}, the desired open dense set is once again $$\mathcal{X}_{\Sigma,P}=\bigcup_{\vec a\in \mathbf{N}^{n_{\Sigma}}}[\Sigma(\vec a)]_P.$$
\end{proof}

We remark that for the special case when $P$ is the tautological property, this result follows from a result of Neumann (see \cite{NEUMANN1973553}, \cite{Neumann1952ANO}), namely that every existentially closed group contains a copy of every finitely generated group with solvable word problem. (The notion of existentially closed groups is introduced in Definition \ref{Definition: Existentially closed}, and it is shown in Lemma \ref{ecgen} that the set of existentially closed groups form a comeager set in $\mathcal{G}$).

\begin{customthm}{\ref{Theorem: main3}}
Let $P\in \mathcal{P}$ be a Boone-Higman property that is inherited by subgroups. Then exactly one of the following holds:
\begin{enumerate}

\item $P$ is not strongly undecidable.  In this case, there is a comeager set $\mathcal{X}_{P}\subset \mathcal{G}_{P}$ such that for each group $G\in \mathcal{X}_{P}$, the set of finitely generated subgroups of $G$ coincides with the set of finitely generated groups satisfying $P$ that also have a solvable word problem.

\item $P$ is strongly undecidable.  In this case, there is a comeager set $\mathcal{X}_P\subset \mathcal{G}_P$ such that, for each group $G\in \mathcal{X}_P$, the set of finitely generated subgroups of $G$ contains all finitely generated groups satisfying $P$ that also have a solvable word problem, but also contains a finitely generated subgroup with $P$ that has an unsolvable word problem.

\end{enumerate}

\end{customthm}
\begin{proof}

We first handle the case when $P$ is not strongly undecidable. 

{\textbf{Claim}}: For each $m\in \mathbf{N}$, there is a comeager subset $\cal Z_m$ of $\cal G_P$ satisfying $$\mathcal{Z}_m
\subseteq\{G\in \mathcal{G}_P\mid \langle 1,...,m\rangle_G\text{ has a solvable word problem}\}.$$

Note that once the claim is proved, we can finish the proof of part $(1)$ by setting $\mathcal{X}_P=\bigcap_{m\in \mathbf{N}}\mathcal{Z}_m$.

{\textbf{Proof of Claim}}:

For a fixed $m\in \mathbf{N}$, we consider an enumeration of all triples 
$$(G_n,\langle S_n\mid R_n\rangle ,\{W_1^{(n)},...,W_{m+1}^{(n)}\})_{n\in \mathbf{N}},$$
where:
\begin{enumerate}
\item $G_n$ is a finitely presented group endowed with a finite presentation $\langle S_n\mid R_n\rangle$ with ordered generating set $S_n=(s_{1,n},...,s_{k_n,n})$.
\item $\{W_1^{(n)},...,W_{m+1}^{(n)}\}$ is a set of words in the generating set $S$ with the property that there is a subgroup $H_n\leq G_n$ that is simple and $\{W_1^{(n)},...,W_{m+1}^{(n)}\}\subset H_n\setminus \{id\}$.
\end{enumerate}

Given $\vec x=(x_1,...,x_{k_n})$ and a relation $R\in R_n$, we let $R(\vec x)$ denote the word obtained by replacing every occurrence of $s_{i,n}^{\pm 1}$ by $x_i^{\pm 1}$. We define $W_1^{(n)}(\vec x),...,W_m^{(n)}(\vec x)$ in a similar fashion.

For each $n\in \mathbf{N}$, let $\Sigma_n(\vec x)$ (where $\vec x=(x_1,...,x_{k_n})$) be a finite system comprising of the following equations and inequations:
$$R(\vec x)=1\qquad \text{ for all }R\in R_n$$ $$W_1^{(n)}(\vec x)=1\qquad W_2^{(n)}(\vec x)=2\qquad ...\qquad W_m^{(n)}(\vec x)=m$$ 
$$ W_{m+1}^{(n)}(\vec x)\neq e.$$

We define the set $$\mathcal{Z}_m=\bigcup_{n\in \mathbf{N}} \mathcal{X}_{\Sigma_n, P}.$$
The set $\mathcal{Z}_m$ is clearly open. Recalling that finitely generated subgroups of simple subgroups of finitely presented groups have a solvable word problem, we claim that our construction ensures that $$\mathcal{Z}_m
\subseteq\{G\in \mathcal{G}_P\mid \langle 1,...,m\rangle_G\text{ has a solvable word problem}\}.$$ 

To see this, consider $G\in \mathcal{Z}_m$ such that $G\models \Sigma_n(\vec a)$ for some $n\in \mathbf{N}$ and $\vec a\in \mathbf{N}^{k_n}$.
It follows from the definition of the system that there is a finitely presented group $H_1$, a simple subgroup $K\leq H_1$, a finite subset $X\subset K\setminus \{id\}$, and a surjective homomorphism 
$\phi: H_1\to H_2$ such that:
\begin{enumerate}
\item $\phi$ is injective on $K$. (In particular, $\phi\restriction \langle X\rangle$ is injective.)
\item There is an isomorphism $\lambda: H_2\to \langle \vec a\rangle_G$ whose restriction induces isomorphisms 
$$\phi(K)\to \lambda(\phi(K))\qquad \text{ and }\qquad \phi(\langle X\rangle)\to  \langle 1,...,m\rangle_G.$$
\end{enumerate}
It follows that $\langle 1,...,m\rangle_G$ embeds in a simple subgroup of a finitely presented group, whence our conclusion follows.

It remains to show that $\mathcal{Z}_m$ is dense in $\mathcal{G}_P$.
Let $\Delta(\vec x)$ be a $P$-system and let $\vec a\in \mathbf{N}^{n_{\Delta}}$.
We would like to show that $[\Delta(\vec a)]_P\cap \mathcal{Z}_m\neq \emptyset.$

Towards that end, fix an arbitrary order on the set $S=\{\vec a\}\cup \{1,...,m\}$.
We consider the finitely presented group $H$ with presentation $\langle S\mid R\rangle$, where 
$S$ is as above and $R$ is determined by the set of equations satisfied by $\vec a$ in $\Delta(\vec a)$.
Let $X$ be the finite subset of $H$ that is determined by the set of inequations satisfied by $\vec a$
in $\Delta(\vec a)$. 

Note that while it may be the case that no enumeration of $H$ is in $[\Delta(\vec a)]_P$, there is at least one quotient $\phi:H\to H_1$, injective on $X$, such that there is an enumeration of $H_1$ in $[\Delta(\vec a)]_P$. (This enumeration of $H_1$ will satisfy that the ordered image $\phi(S)$ is in an order preserving bijection with the order we fixed on $\{\vec a\}\cup \{1,...,m\}$.)
Since $P$ is not strongly undecidable, we may choose an appropriate such quotient (and enumeration) so that $H_1$ has a solvable word problem.  Since $H_1$ has a solvable word problem and since $P$ is a Boone-Higman property, it embeds in a simple subgroup of a finitely presented group $H_2$ satisfying $P$.
Using this, it is easy to see that there is an $n\in \mathbf{N}$ and an enumeration of $H_2$ 
such that $H_2\in \mathcal{X}_{\Sigma_n,P}\cap [\Delta(\vec a)]_P$.

Now we treat the case when $P$ is strongly undecidable.
By definition, there is a finitely presented group $G=\langle S\mid R\rangle$ and a finite subset $X\subset G$ such that:
\begin{enumerate}
\item There is at least one group $H$ satisfying $P$ for which there is a surjective homomorphism $\phi:G\to H$ whose restriction to $X$ in injective.
\item Every surjective homomorphism $\phi:G\to H$ whose restriction to $X$ in injective and for which $H$ satisfies $P$ also satisfies that $H$ has an unsolvable word problem.
\end{enumerate}
Let $n\in \mathbf{N}$ be such that the ball of radius $n$ centred at the identity of the Cayley graph of $\Gamma_S(G)$ contains the set $X$ as a subset and also loops that witness the finite set of relations $R$. We denote the open set in $\mathcal{M}_{|S|}$ defined by this open ball by $B_n$.

This $n$-ball determines a finite system of equations and inequations $\Sigma_G(\vec x)$ such that, for any enumerated group $K$
and any $\vec a\in \mathbf{N}^{n_{\Sigma_G}}$ such that $K\in [\Sigma_G(\vec a)]_P$, the marked group $(\langle \vec a\rangle_K, \vec a)$ belongs to $B_n$. By our assumption, it follows that $[\Sigma_G(\vec a)]_P$ is nonempty for any distinct $\vec a\in \mathbf{N}^{n_{\Sigma_G}}$, whence $\Sigma_G(\vec x)$ is a $P$-system. Moreover, for each $H\in [\Sigma_G(\vec a)]_P$, $\langle \vec a\rangle_H$ has an unsolvable word problem.
By Lemma \ref{closed under products density}, the set 
$$\mathcal{X}_{\Sigma_G,P}=\bigcup_{\vec{a}\in \mathbf{N}^{n_{\Sigma}}}[\Sigma_G(\vec a)]_P$$ is an open dense set in $\mathcal{G}_P$, and each enumerated group in it contains a subgroup with an unsolvable word problem, yielding the desired conclusion.
%$P$-systems $\{\Sigma_n\mid n\in \mathbf{N}$ and for each $n\in \mathbf{N}$, we let $k_n$ be the arity of $\Sigma_n$ for ease of notation.
%For each $n\in \mathbf{N}$, we also define $(\vec a_n)=(1,...,k_n)$.
%Let $G\in [\Sigma_n(\vec a_n)]$ be an enumerated group. Let $H$ be the marked group 
%$(\langle \vec a_n\rangle_G,\vec a_n)$.
%Let $m\in \mathbf{N}$ be a number such that 
%By our hypothesis, $H$ has a quotient 
\end{proof}

The proof of Theorem \ref{main theorem 4} will appear in Section 5 below.  We continue with the next theorem from the introduction:

\begin{customthm}{\ref{Theorem ConjOrd}}
Let $P\in \mathcal{P}$ be a property of torsion-free groups that is closed under amalgamation along infinite cyclic subgroups and HNN extensions with associated subgroups that are infinite cyclic. 
%For instance, this holds when $$P\in \{\text{left orderable},\text{locally indicable}, \text{torsion free}\}$$
Then there is a comeager set $\mathcal{X}_P\subset \mathcal{G}_P$ such that each $G\in \mathcal{X}_P$ has only one nontrivial conjugacy class (in particular, it is simple) and is verbally complete.
\end{customthm}

\begin{proof}

First we shall construct a comeager set $\mathcal{X}\subset \mathcal{G}_P$ 
such that each $G\in \mathcal{X}$ has only one nontrivial conjugacy class.
Consider the system $\Delta_1(\vec x)$ for $\vec x=(x_1,x_2,x_3)$ defined by $x_1^{-1} x_2 x_1=x_3$ and $\Delta_2(y)$ defined by $y=e$.
We claim that for each pair $i,j\in \mathbf{N}, i\neq j$, $$\mathcal{X}_{i,j}=(\bigcup_{k\in \mathbf{N}}[\Delta_1(i,j,k)]_P)\cup [\Delta_2(i)]_P\cup [\Delta_2(j)]_P$$
is an open dense set.

Fix a basic clopen set in the topology on $\mathcal{G}_P$ given by $[\Omega(\vec b)]_P$ for some fixed $\vec b\in \mathbf{N}^{n_{\Omega}}$, where $\Omega(\vec z)$ is a $P$-system. 
Let $H\in [\Omega(\vec b)]_P$.
If either $i=e$ or $j=e$ holds in $H$, then $H\in \mathcal{X}_{i,j}\cap [\Omega(\vec b)]_P$, and we are done.
Assume that this is not the case.
Let $\vec c=(i,j,\vec b)$.
Since $H$ is torsion-free (groups satisfying the property $P$ in the hypothesis of the theorem are always torsion-free),
we construct an HNN extension $K=\langle H, t\mid t^{-1} gt=h\rangle$ where $g,h$ are the elements in $H$ that correspond to the elements $i,j$ in the given enumeration.
By our assumption, $K$ satisfies property $P$, wence we can find an enumeration of $K$ in $\mathcal{G}_P$ such that $\langle \vec c\rangle_K=\langle \vec c\rangle_H$.
 Note in particular that $K\models \Omega(\vec b)$ and hence it follows that 
$K\in \mathcal{X}_{i,j}\cap [\Omega(\vec b)]_P$.
This proves the claim.  The required comeager set is then $$\mathcal{X}=\bigcap_{i,j\in \mathbf{N}, i\neq j}\mathcal{X}_{i,j}.$$

We finish the proof of the theorem by proving that every group in a certain comeager subset of $\mathcal{G}_P$ is verbally complete.
Let $W(x_1,...,x_n)$ be a nontrivial reduced word in the letters $x_1^{\pm},...,x_n^{\pm}$.
For $\vec x=(x_1,...,x_n,x_{n+1})$, consider the system 
$\Sigma(\vec x)$ defined by $$W(x_1,...,x_n)=x_{n+1}.$$
Let $$\mathcal{X}_{W,i}=
\bigcup_{\vec a\in \mathbf{N}^n}\{G\in \mathcal{G}_P\mid G\models \Sigma(\vec b)\text{ for }
\vec b=(\vec a, i)\}=\bigcup_{\vec a\in \mathbf{N}^n}[\Sigma(\vec a,i)]_P.$$
%\cup \{G\in \mathcal{G}_P\mid G\models \{i=id\}\}$$
Clearly, this is an open set.
We claim that it is dense in $\mathcal{G}_P$.
Consider a $P$-system $\Omega(\vec z)$ and the corresponding nonempty open set $[\Omega(\vec c)]_P$ for some fixed 
$\vec c\in \mathbf{N}^{n_{\Omega}}$.
Fix $G\in [\Omega(\vec c)]_P$.
If $i=e$ in $G$, then $$G\in  [\Omega(\vec c)]_P\cap \mathcal{X}_{W,i}$$ 
for trivial reasons and we are done. We may thus assume otherwise.  Since $G$ is torsion free, we can construct the amalgamated free product $$H=G*_{i=W}\mathbf{F}_n,$$ where 
$W=W(x_1,...,x_n)\in \mathbf{F}_n$, with $\mathbf{F}_n$ being the free group of rank $n$ freely generated by $x_1,...,x_n$.
We conclude by finding an enumeration of $H$ for which
$\langle \vec c\rangle_H=\langle \vec c\rangle_G$ and hence 
$H\in  [\Omega(\vec c)]_P\cap \mathcal{X}_{W,i}$, as desired.
\end{proof}

Before proving Theorem \ref{locunivcomeager}, we need the following lemma:

\begin{lem}\label{locunivequiv}
For any $H\in \mathcal{G}_P$, the following are equivalent:
\begin{enumerate}
    \item The isomorphism type of $H$ is locally universal for $\mathcal{G}_P$.
    \item For any $P$-system $\Sigma(\vec x)$, $H\in \mathcal{X}_{\Sigma,P}$.
\end{enumerate}
\end{lem}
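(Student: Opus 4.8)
The plan is to prove the two directions of the equivalence separately, exploiting the definition of local universality (every group in $\mathcal{G}_P$ embeds into an ultrapower of $H$) together with the connection between embeddings and satisfiability of $P$-systems, using \L o\'s' theorem as the main bridge.

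For the direction $(1)\Rightarrow(2)$, suppose $H$ is locally universal and let $\Sigma(\vec x)$ be an arbitrary $P$-system. By definition of a $P$-system, there is some group $K\in\mathcal{G}_P$ and $\vec a\in\mathbf{N}^{n_\Sigma}$ with $K\models\Sigma(\vec a)$; in particular, $K$ contains a tuple satisfying the system $\Sigma$. Since $H$ is locally universal, $K$ embeds into an ultrapower $H^{\u}$, so the image of $\vec a$ witnesses that $H^{\u}\models\exists\vec x\,\Sigma(\vec x)$ (note $\exists\vec x\,\Sigma(\vec x)$ is a first-order sentence, a single existential quantifier over a finite conjunction of equations and inequations). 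Since $H$ is elementarily equivalent to its ultrapower by \L o\'s' theorem, $H\models\exists\vec x\,\Sigma(\vec x)$ as well, so there is $\vec b\in\mathbf{N}^{n_\Sigma}$ with $H\models\Sigma(\vec b)$. This means $H\in[\Sigma(\vec b)]_P\subseteq\mathcal{X}_{\Sigma,P}$, as desired.

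For the direction $(2)\Rightarrow(1)$, assume $H$ lies in $\mathcal{X}_{\Sigma,P}$ for every $P$-system $\Sigma$, and let $K\in\mathcal{G}_P$ be arbitrary. The plan is to embed $K$ into an ultrapower of $H$ by a standard compactness-style construction. Fix an enumeration of $K$ and, for each finite tuple of elements, record the full (finite) collection of equations and inequations that this tuple satisfies in $K$; each such collection is a $P$-system (witnessed by $K$ itself). For each $P$-system $\Sigma(\vec x)$ realized in $K$, hypothesis $(2)$ guarantees $H\models\exists\vec x\,\Sigma(\vec x)$. One then assembles these finitary realizations across an index set built from the finite subsystems, and takes an ultraproduct along a suitable ultrafilter so that the resulting diagonal-type map from $K$ into $H^{\u}$ preserves all equations and inequations simultaneously; \L o\'s' theorem ensures the limiting map is a well-defined injective homomorphism, i.e.\ an embedding $K\hookrightarrow H^{\u}$.

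The hard part will be the bookkeeping in $(2)\Rightarrow(1)$: arranging the index set and ultrafilter so that a single ultrapower $H^{\u}$ simultaneously realizes the (infinitely many) quantifier-free types of all tuples from $K$ in a coherent way, yielding an actual homomorphism rather than just piecewise realizations. This is the usual passage from ``finitely satisfiable'' to ``realized in one ultrapower,'' and the cleanest route is to take $I$ to be the set of finite subsets of $K$ (or of finite subsystems), let $H_i=H$ for each $i$, choose for each $i$ a tuple in $H$ witnessing the corresponding finite system, and push these through a nonprincipal ultrafilter refining the Fr\'echet filter on $I$; the map sending each $k\in K$ to the coset of its coordinatewise witnesses is then checked, via \L o\'s' theorem, to be an injective homomorphism into $H^{\u}$. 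Direction $(1)\Rightarrow(2)$ is comparatively routine once the $P$-system condition is translated into the existential sentence $\exists\vec x\,\Sigma(\vec x)$.
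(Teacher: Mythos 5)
Your strategy is the same as the paper's: both directions reduce to the claim that a countable group $K$ embeds into an ultrapower of $H$ if and only if every finite system with a solution in $K$ has a solution in $H$, with \L os' theorem handling one direction and a finite-satisfiability-to-ultraproduct construction handling the other. Your $(1)\Rightarrow(2)$ is correct; the paper pushes the given solution through the embedding and reads off coordinates rather than invoking $H\equiv H^{\mathcal U}$, but this is the same use of \L os' theorem.

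There is, however, a concrete misstep in your $(2)\Rightarrow(1)$. With $I$ the set of finite subsets of $K$, a nonprincipal ultrafilter refining the Fr\'echet filter on $I$ is not enough. The sets that must have measure one are the cones $\{i\in I : k\in i\}$ for $k\in K$, and these are not cofinite: the finite subsets omitting a fixed $k\in K$ form an infinite subset of $I$, so some nonprincipal ultrafilter $\mathcal U$ gives that set measure one, and then your map sends $k$ to the coset of the sequence that is constantly the default value $e_H$, destroying both injectivity and the homomorphism property on products involving $k$. The fix is standard: either require $\mathcal U$ to extend the filter generated by the cones (which has the finite intersection property), or do as the paper does --- enumerate $K=\{g_1,g_2,\ldots\}$, index by $\mathbf{N}$, and at index $i$ pick a witness in $H$ for the finite system recording the multiplication table (and, for injectivity, the distinctness inequations) of $g_1,\ldots,g_i$; then the relevant index sets are tails of $\mathbf{N}$, hence cofinite, and any nonprincipal ultrafilter on $\mathbf{N}$ works. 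Relatedly, ``the full (finite) collection of equations and inequations that this tuple satisfies in $K$'' is infinite if read literally (e.g. $[x_1,x_1]=e$, $[x_1,x_1]^2=e$, and so on); what you want is precisely the finite system of multiplication facts $x_jx_k=x_l$ together with the inequations $x_j\neq x_k$ for distinct entries, the latter being exactly what forces the limit map to be injective.
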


\begin{proof}
We begin the proof with a

{\textbf{Claim}}: Suppose $G,H$ are countable groups and $\u$ is a nonprincipal ultrafilter on $\bf N$.  Then $G$ embeds into $H^\u$ if and only if any system with a solution in $G$ also has a solution in $H$. 

{\textbf{Proof of claim}}: First suppose that $\alpha:G\hookrightarrow H^\u$ is an embedding and $\Sigma(\vec x)$ is a system with a solution $\vec a\in G$.  Since $\vec h=\alpha(\vec a)\in H^\u$ is a solution of $\Sigma$ in $H^{\u}$, it follows that $\vec h(i)\in H$ is a solution to $\Sigma$ for $\u$-almost all $i\in \mathbf{N}$. 

We now prove the converse.  Enumerate $G=\{g_n \ : \ n\in \bf N\}$ and for each $m\in \mathbf{N}$, let $\Sigma_m(\vec x)$ denote the system $$\{x_j\cdot x_k=x_l \ : \ 1\leq j,k\leq m, \ g_j\cdot g_k=g_l\}.$$  By assumption, $\Sigma_m(\vec x)$ has a solution $(h_{1}^{(m)},...,h_{m}^{(m)})\in \mathbf{N}^{m}$ in $H$.  It follows that the map $G\to H^\u$ given by
$g_n\mapsto f_n$, where $f_n$ is defined by
$$f_n(i)= \left\{\begin{array}{lr}
        e_H & \text{for } 0\leq i< n\\
        h_n^{(i)}, & \text{for } n\geq i\\
        \end{array}\right\}$$

%(\prod_{1\leq i<n}id_H\times \prod_{i\geq n}h_n^{(i)})_{\u}$$ 
is an injective group homomorphism. This finishes the proof of the claim.

Now we show that the claim implies the conclusion of the lemma.
If the isomorphism type of $H$ is locally universal for $\mathcal{G}_P$, then by definition, any $G\in \mathcal{G}_P$ embeds in $H^{\u}$. Since this holds for any $G\in \mathcal{G}_P$, part $(2)$ follows from the claim. Similarly, if part $(2)$ holds for some $H\in \mathcal{G}_P$, then $(1)$ follows from the claim since it implies that any group $G\in \mathcal{G}_P$ embeds in an ultrapower of $H$.
\end{proof}

\begin{customthm}{\ref{locunivcomeager}}
For each $P\in \mathcal{P}$, $\mathcal{G}_{lu,P}$ is a comeager subset of $\mathcal{G}_P$.
\end{customthm}

\begin{proof}
Using part $(2)$ of the characterisation of locally universal groups in $\mathcal{G}_P$ given by Lemma \ref{locunivequiv} together with Lemma \ref{closed under products density}, it follows that
$$\mathcal{G}_{lu,P}=\bigcap_{\Sigma(\vec x)\text{ a $P$-system}}\mathcal{X}_{\Sigma,P}$$ 
is a comeager set.
\end{proof}

The following is a special case of the \textbf{Downward L\"owenheim-Skolem theorem} and will be used in the proof of Theorem \ref{firstorder}.

\begin{fact}\label{Fact: ElemEquiv}
Given any group $G$ and an infinite subset $X\subseteq G$, there is a subgroup $H$ of $G$ such that $X\subseteq H$, $|H|=|X|$, and so that $G$ and $H$ are elementarily equivalent.
\end{fact}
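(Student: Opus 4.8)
The plan is to establish the Downward L\"owenheim--Skolem theorem in the special case stated, namely that one can find an elementarily equivalent subgroup $H$ containing the prescribed infinite set $X$ with $|H|=|X|$. First I would recall the general principle behind the theorem: to guarantee that $H\preceq G$ (elementary submodel), it suffices, by the Tarski--Vaught test, to ensure that whenever $G\models \exists y\, \varphi(\vec a,y)$ for a tuple $\vec a$ from $H$, there is already a witness $b\in H$ with $G\models \varphi(\vec a,b)$. Since elementary equivalence is weaker than being an elementary submodel, and since an elementary submodel is in particular elementarily equivalent, it is enough to build an elementary submodel $H$ of $G$ with $X\subseteq H$ and $|H|=|X|$.

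The key construction is a Skolem-hull argument. First I would fix, for each first-order formula $\varphi(\vec x,y)$ in the language of groups and each tuple $\vec a$ from $G$ for which $G\models \exists y\,\varphi(\vec a,y)$, a single witness $b_{\varphi,\vec a}\in G$ by the axiom of choice (a Skolem function). Then I would build an increasing chain $X=X_0\subseteq X_1\subseteq X_2\subseteq\cdots$ of subsets of $G$, where at stage $n+1$ one throws into $X_{n+1}$ all such chosen witnesses $b_{\varphi,\vec a}$ as $\varphi$ ranges over the (countably many) formulae and $\vec a$ ranges over finite tuples from $X_n$, together with the images of $X_n$ under the group operations (multiplication, inversion) and the identity so that the final union is a subgroup. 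The cardinality bookkeeping is routine: since the language is countable and $X$ is infinite, each stage adds at most $|X_n|\cdot\aleph_0=|X_n|$ new elements, so $|X_n|=|X|$ for all $n$, and hence $H:=\bigcup_n X_n$ satisfies $|H|=|X|$.

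Finally I would verify that $H$ is a subgroup and that $H\preceq G$. Closure under the group operations holds because at each stage we closed off under multiplication and inversion, so the directed union $H$ is a subgroup containing $X$. For the Tarski--Vaught test: given a tuple $\vec a$ from $H$ and a formula $\varphi(\vec x,y)$ with $G\models\exists y\,\varphi(\vec a,y)$, the tuple $\vec a$ already lies in some $X_n$, so the witness $b_{\varphi,\vec a}$ was placed into $X_{n+1}\subseteq H$; thus a witness exists inside $H$. By the Tarski--Vaught criterion, $H$ is an elementary submodel of $G$, and in particular $G\equiv H$, which is exactly the conclusion we need.

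I expect the only genuinely delicate point to be the cardinality control, i.e.\ confirming that closing off under countably many Skolem functions at each of countably many stages does not inflate the cardinality beyond $|X|$; this uses that $X$ is infinite (so that $|X|\cdot\aleph_0=|X|$) and that the language of groups is countable. Everything else---the Tarski--Vaught test and closure under operations---is the standard mechanism, and since the statement is flagged in the excerpt as a special case of a known theorem, the proof amounts to specializing this standard Skolem-hull argument to the language of groups.
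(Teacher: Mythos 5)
Your proof is correct. The paper itself gives no proof of this fact --- it simply cites it as a special case of the Downward L\"owenheim--Skolem theorem --- and your argument is exactly the standard Skolem-hull/Tarski--Vaught proof of that theorem specialized to the language of groups (with the cardinality bookkeeping $|X_n|\cdot\aleph_0=|X_n|$ and the closure under group operations both handled correctly), so there is nothing to reconcile between the two.
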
 

The proof of Theorem \ref{firstorder} will also require the following lemma and proposition.

\begin{lem}\label{ultraargument}
Suppose that $H$ is finitely presented and embeds into an ultrapower of $G$.  Then $H$ is fully residually $G$.
\end{lem}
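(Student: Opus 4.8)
The plan is to exploit the finite presentation of $H$ to extract genuine homomorphisms $H\to G$ from the coordinates of the ultrapower embedding, and then to use injectivity to control finitely many elements simultaneously. Recall that $H$ being \emph{fully residually $G$} means that for every finite set $\{h_1,\ldots,h_n\}$ of nontrivial elements of $H$ there is a homomorphism $\phi\colon H\to G$ with $\phi(h_p)\neq e$ for all $p$. Fix a finite presentation $H=\langle s_1,\ldots,s_k\mid r_1,\ldots,r_m\rangle$ and an embedding $\alpha\colon H\hookrightarrow G^\u$, where $\u$ is a nonprincipal ultrafilter on $\bf N$. Write $\alpha(s_j)=(g_j(i))_\u$ for sequences $g_j\in\prod_i G$, so that each coordinate $i$ determines a tuple $(g_1(i),\ldots,g_k(i))\in G^k$, i.e.\ a candidate assignment of the generators.

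First I would show that a $\u$-large set of coordinates yields honest homomorphisms. Since $\alpha$ is a homomorphism, each relator satisfies $r_l(\alpha(s_1),\ldots,\alpha(s_k))=e$ in $G^\u$; by the basic coordinatewise computation for ultraproducts recalled in Section 2, this means $A_l:=\{i : r_l(g_1(i),\ldots,g_k(i))=e\}\in\u$. As there are only finitely many relators, $A:=\bigcap_{l=1}^m A_l\in\u$. For every $i\in A$, all defining relations of $H$ hold under the assignment $s_j\mapsto g_j(i)$, so by the universal property of the presentation this assignment extends to a homomorphism $\phi_i\colon H\to G$. This is precisely the step that requires finite presentability: with infinitely many relators the intersection of the corresponding $\u$-large sets need not belong to $\u$.

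Next I would invoke injectivity of $\alpha$ to treat the given elements at once. Writing each nontrivial $h_p$ as a word $w_p(s_1,\ldots,s_k)$, injectivity gives $\alpha(h_p)\neq e$ in $G^\u$, i.e.\ $w_p(\alpha(s_1),\ldots,\alpha(s_k))\neq e$, which by the same computation means $B_p:=\{i : w_p(g_1(i),\ldots,g_k(i))\neq e\}\in\u$. For $i\in A$ one has $\phi_i(h_p)=w_p(g_1(i),\ldots,g_k(i))$, so $A\cap B_p=\{i\in A : \phi_i(h_p)\neq e\}$. Then $A\cap B_1\cap\cdots\cap B_n\in\u$ and is therefore nonempty; choosing any $i$ in it produces a homomorphism $\phi_i\colon H\to G$ with $\phi_i(h_p)\neq e$ for all $p$, which is exactly what fully residually $G$ demands.

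I do not expect a serious obstacle: the argument is a clean application of Łoś-type reasoning combined with closure of $\u$ under finite intersections. The only place that genuinely uses a hypothesis is the passage from ``all relators vanish $\u$-almost everywhere'' to ``a $\u$-large set of coordinates give homomorphisms,'' and this is exactly where the finite presentability of $H$ enters; everything else is bookkeeping.
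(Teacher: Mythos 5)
Your proof is correct and is essentially the paper's argument: the paper packages the finitely many relators together with the finitely many inequations $w_p\neq e$ into a single finite system $\Sigma(\vec x)$, notes it has a solution in $H$ and hence (via the embedding into $G^\u$ and the coordinatewise characterization of equations in ultraproducts) a solution in $G$, and reads off the homomorphism from that solution — exactly your coordinate/ultrafilter bookkeeping in compressed form. Both proofs use finite presentability in the same place, namely to ensure the relevant intersection of $\u$-large sets is finite and hence $\u$-large.
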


\begin{proof}
Suppose $H=\langle a_1,\ldots,a_m \ | \ w_1,\ldots,w_n\rangle$ and take words $w_1',\ldots,w_p'$ such that $w'_i(\vec a)\not=id$ for all $i=1,\ldots,p$.  Then the system $$\Sigma(\vec x):=\bigwedge_{i=1}^n w_i(\vec x)=id\wedge \bigwedge_{j=1}^p w_j'(\vec x)\not=id$$ has a solution in $H$, whence it also has a solution in $G$, say $\vec b=b_1,\ldots,b_n$.  It follows that the map $a_i\mapsto b_i$ yields a group homormorphism $f:H\to G$ for which $f(w_i'(\vec a))\not=id$ for $i=1,\ldots,p$, as desired.
\end{proof}

\begin{prop}\label{Tprop}
Suppose that $G$ is residually amenable, $H$ is a finitely presented group with property (T), and $G\equiv H$.  Then $G$ and $H$ are both residually finite.
\end{prop}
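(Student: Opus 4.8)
The plan is to reduce both claims to the following principle: a residually amenable group all of whose amenable quotients are finite is residually finite. This is immediate, since for such a group any nonidentity element survives in some amenable, hence finite, quotient. The leverage comes from property (T): every quotient of a property (T) group again has property (T), and a discrete amenable group with property (T) is finite; consequently a group with property (T) has only finite amenable quotients. Thus the proposition will follow once we show that both $G$ and $H$ are residually amenable and that each has only finite amenable quotients.

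For $H$ this is quick. Since $G\equiv H$, $H$ embeds into an ultrapower of $G$ (e.g.\ by the Keisler--Shelah theorem quoted above). As $H$ is finitely presented, Lemma~\ref{ultraargument} yields that $H$ is fully residually $G$: for each $h\in H\setminus\{e\}$ there is a homomorphism $\phi\colon H\to G$ with $\phi(h)\neq e$. Post-composing with a homomorphism from $G$ onto an amenable group that does not kill $\phi(h)$ exhibits $h$ in an amenable quotient of $H$, so $H$ is residually amenable. Since $H$ has property (T), all of its amenable quotients are finite, and the principle above gives that $H$ is residually finite.

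The substantive half is $G$, and I expect it to be the main obstacle. Here the asymmetry of the hypotheses bites: elementary equivalence with the finitely presented $H$ produces homomorphisms $H\to G$, but no homomorphisms $G\to H$, and $G$ need not be finitely generated, so the element-by-element argument used for $H$ is unavailable. Since $G$ is already residually amenable, by the principle it suffices to prove that $G$ has no infinite amenable quotient, and the natural route is to transfer property (T) (or at least this weaker consequence of it) from $H$ to $G$ across $\equiv$. The difficulty is that property (T) is not a first-order invariant in general. The tool I would deploy is Ozawa's sum-of-squares characterization: for a group generated by a finite symmetric set $S$, with $\Delta=\lvert S\rvert-\sum_{s\in S}s$ in the real group ring, property (T) is equivalent to the existence of $\varepsilon>0$ and finitely many $\eta_1,\dots,\eta_k$ with
\[
\Delta^2-\varepsilon\Delta=\sum_{i=1}^{k}\eta_i^{*}\eta_i .
\]
Such a certificate is supported on a finite part of the multiplication table, so its existence in $H$ is recorded by an existential sentence true in $H$, and therefore in $G$. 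The crux — and the step I anticipate requires the most care — is to convert the certificate that $G$ thereby inherits into the global statement that $G$ admits no infinite amenable quotient, exploiting residual amenability of $G$ to rule out the possibility that the inherited property (T) is confined to a proper subgroup. Granting this, residual amenability upgrades to residual finiteness exactly as for $H$.
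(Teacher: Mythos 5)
Your first half --- the proof that $H$ is residually finite --- is correct and coincides with the paper's argument: $H$ embeds in an ultrapower of $G$, Lemma~\ref{ultraargument} makes $H$ fully residually $G$ and hence residually amenable, and then property (T) (quotients of (T) groups have (T); amenable discrete (T) groups are finite) upgrades this to residual finiteness.

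The second half contains a genuine gap, and you flag it yourself: everything after ``Granting this'' is not a technical refinement but the entire difficulty, and the route you sketch cannot supply it. A first-order (in particular existential) sentence can assert that some finite tuple in $G$ satisfies a finite pattern of equations and inequations; it cannot assert that the tuple generates $G$, since generation is not first-order. So what $G$ inherits from the Ozawa certificate is at best a \emph{finitely generated subgroup} of $G$ with property (T) --- and for that conclusion the sum-of-squares machinery is superfluous: any tuple in $G$ satisfying the defining relations of the finite presentation of $H$ generates a quotient of $H$, which has (T) automatically. The real obstruction is that ``$G$ is residually amenable and contains a f.g.\ subgroup with (T)'' does not imply ``$G$ has no infinite amenable quotient'': the group $SL_3(\mathbf{Z})\times \mathbf{Z}$ is residually finite (hence residually amenable), contains the (T) subgroup $SL_3(\mathbf{Z})$, and surjects onto $\mathbf{Z}$. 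Thus the granted step cannot be closed by any argument whose only inputs are residual amenability of $G$ and an inherited (T) subgroup; whatever closes it must use the elementary equivalence with $H$ in an essentially different way.

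The paper's treatment of this half goes in precisely the direction you declared unavailable. From $G\equiv H$ one also gets an embedding of $G$ into an ultrapower of $H$, and the paper deduces from this that $G$ is residually $H$ --- that is, there are enough homomorphisms from $G$ to $H$ to separate points --- whence $G$ is residually finite because $H$ was already shown to be residually finite. So the asymmetry you worried about (``homomorphisms $H\to G$, but no homomorphisms $G\to H$'') is exactly what the paper's argument addresses via the reverse ultrapower embedding; no transfer of property (T) to $G$ itself is attempted, and none is needed.
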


\begin{proof}
Since $H$ embeds into an ultrapower of $G$, by Lemma \ref{ultraargument} it follows that $H$ is fully residually $G$, so residually amenable, and thus residually finite since $H$ has property (T).  Since $G$ embeds into an ultrapower of $H$, $G$ is residually $H$, and thus also residually finite. 
\end{proof}

\begin{remark}
The proof of the previous proposition shows that one does not need the full strength of the assumption that $G\equiv H$ but rather that each embeds into the ultrapower of the other, or rather, that they have the same universal theory.
\end{remark}

\begin{customthm}{\ref{firstorder}}
There is a comeager set $\mathcal{X}\subset \mathcal{G}_{am}$ such that, for each $G\in \mathcal{X}$, the following holds:
%The generic amenable group $G$ satisfies the following:
\begin{enumerate}
\item There are continuum many pairwise nonisomorphic countable nonamenable groups with the same first-order theory as $G$.
\item $G$ cannot have the same first-order theory as a finitely presented group with property (T).
\end{enumerate}
\end{customthm}

%We shall need the following Lemma.

%\begin{lem}
%If $G$ and $H$ are simple groups and $\bf F\times G\cong \bf F \times H$, then $G\cong H$.
%\end{lem}

%\begin{proof}
%Suppose $f:\bf F\times G\to \bf F \times H$ is an isomorphism.  Set $K:=f(\bf F\times \{e\})$ and let $\pi:K\to \bf F$ be the canonical surjection.  Then $N:=\operatorname{ker}(\pi)$ is a normal subgroup of $\{e\}\times H$.  Since $H$ is simple, $N=\{(e,e)\}$ or $N=\{e\}\times H$.  If $N=\{e\}\times H$, then $K\leq \{e\}\times H$, which is a contradiction to the fact that $H$ is simple.  Thus, $N=\{(e,e)\}$, whence $K\leq \bf F\times \{e\}$.  Thus $$G\cong (\bf F\times G)/(\bf F\times \{e\})\cong (\bf F\times H)/K\cong (\bf F/\pi(K))\times H.$$  Since $G$ is simple, this implies that $\bf F/\pi(K)$ is trivial, whence $G\cong H$.
%\end{proof}

%\begin{thm}
 %The generic amenable group has continuum many nonisomorphic countable nonamenable models.
%\end{thm}

\begin{proof}
(1)  By \cite{juschenkomonod}, for each $r\in \mathbf R$, there is a group $K_r$ such that the set $\{K_r\ \ : r\in \mathbf{R}\}$ contains continuum many nonisomorphic finitely generated, infinite, simple, amenable groups.  In particular, 
the set of groups $\{\mathbf{F_2}\times K_r\ : \  r\in \mathbf{R}\}$
 contains continuum many pairwise nonisomorphic  finitely generated nonamenable groups.  

Fix a nonprincipal ultrafilter $\u$ on $\bf N$. For any locally universal amenable group G,  since $G$ is lawless, we have that $\mathbf{F}_2\times K_r$ embeds into $G^\u\times G^\u$ (see Fact \ref{lawlessfact} below), which in turn embeds into $(G\times G)^\u$.  Since $G$ is a locally universal element of $\mathcal{G}_{am}$, $(G\times G)^\u$ in turn embeds into $G^\u$.  In summary:  each $\mathbf{F}_2\times K_r$ embeds into $G^\u$.  Using fact \ref{Fact: ElemEquiv}, for each $r\in \bf R$, let $H_r\preceq G^\u$ be a countable subgroup containing $\mathbf{F}_2\times K_r$ which is elementarily equivalent to $G^\u$ (and hence to $G$).  
It follows that the class $\{H_r\ : \  r\in \mathbf{R}\}$ has continuum many isomorphism types and each group in it has the same first order theory as $G$.    

(2)  By Theorem \ref{locunivcomeager} and Proposition \ref{Tprop}, it suffices to show that no group locally universal for $\mathcal{G}_{am}$ can be residually finite.  Recall that Grigorchuk's construction $\ggr$ (of a finitely presented amenable group which is not elementary amenable), is in fact not residually elementary amenable.
It was shown in \cite{grigorchuk98} that every proper quotient of $G_{GR}$ is metabelian. It follows that $G_{GR}$ is an isolated group. Combining this with Theorem \ref{Theorem: main1}, we conclude it embeds in every group in a comeager subset of $\mathcal{G}_{am}$. Using the Baire category theorem, we can assume that every group in this comeager set is locally universal for $\mathcal{G}_{am}$.
This is a contradiction.
% Let $G$ be an infinite, finitely presented, simple amenable group (e.g...).  Let $\Sigma(\vec x)$ describe the presentation of $G$.  Since $G$ is not residually finite, we see that there is no $H\equiv G$ with $H$ having property (T).  It follows that the set displayed in the statement of the corollary contains the dense open set $S_{\Sigma,\Gam}$.
\end{proof}

\subsection{Proofs of the applications}
We now provide the proofs of the applications of our main results, as outlined in the introduction.

\begin{customcor}{\ref{Corollary VNDay}}
The following holds:
\begin{enumerate}
\item The generic enumerated group without $\mathbf{F}_2$ subgroups is nonamenable.
\item The generic left orderable enumerated group without $\mathbf{F}_2$ subgroups is nonamenable. 
\item The generic enumerated amenable group is not elementary amenable.
\end{enumerate}
In fact, we can choose these comeager sets to be open dense sets.
\end{customcor}

\begin{proof}
First, we observe the following fact:
 if $G$ is a finitely presented nonsolvable group for which there is an $n\in \mathbf{N}$ such that every proper quotient of $G$ is solvable of length $n$,
then $G$ is isolated.
This follows from the observation that for a fixed finite presentation $\langle S\mid R\rangle$ of $G$ and a sufficiently large $m\in \mathbf{N}$, the $m$-ball centered at the identity of the corresponding Cayley graph $\Gamma_S(G)$ satisfies the following:
\begin{enumerate}
\item It contains loops that represent all the relations in $R$.
\item It contains a nontrivial element of the $n$'th derived subgroup of $G$.
\end{enumerate}
This $m$-ball provides an open subset in $\mathcal{M}_{|S|}$ that witnesses that $G$ is an isolated point.

In \cite{lodhamoore}, the third author with Moore constructed a finitely presented nonamenable, left orderable group without free subgroups, denoted $G_0$.
It was shown in \cite{burilllodhareeves} that $G_0'$ is simple and that every proper quotient of $G_0$ is abelian. It follows that $G_0$ is an isolated group. Combining this with Theorem \ref{Theorem: main1}, we conclude the first two parts of the Corollary.

As mentioned above, in \cite{grigorchuk98}, Grigorchuk constructed the first example of a finitely presented amenable group $G_{GR}$ which is not elementary amenable.
It was shown in \cite{grigorchuk98} that every proper quotient of $G_{GR}$ is metabelian, whence it follows that $G_{GR}$ is an isolated group. Combining this with Theorem \ref{Theorem: main1}, we conclude the last part of the Corollary.
\end{proof}

%The following is an elementary consequence of the Baire alternative.

%\begin{cor}\label{genericsentence}
%Let $P\in \mathcal{P}$ and $\varphi$ be an $L_{\omega_1,\omega}$-sentence.  Then exactly one of $\{\bd G\in \mathcal{G}_P \ : \ \bd G\models \varphi\}$ or $\{\bd G\in \mathcal{G}_P \ : \ \bd G\models \neg \varphi\}$ is comeager in $\cal C$.
%\end{cor}

%\begin{proof}[Proof of Proposition \ref{remark VNDay}]
%For each $n$, let $\sigma_n$ be the sentence $$\forall x_1\cdots\forall x_n\exists y (\bigwedge_{i=1}^n x_iy=yx_i \wedge y\not=e)$$  By Corollary \ref{genericsentence}, $\{\bd G \in \cal \Gsm \ : \ \bd G\models \bigwedge_n\sigma_n\}$ is either meager or comeager in $\Gsm$.  However, this set is clearly dense in $\Gsm$, whence it must be comeager.  It remains to note that all elements $G$ of this set are inner amenable. Indeed, write $G$ as an increasing union of finite subsets $F_n$ and let $g_i\in \bd G\setminus\{e\}$ commute with $F_i$. Now consider $h_i= \delta_{g_i}\in \ell^1(G)$, the characteristic function of $g_i$. Any weak* limit of the $h_i$ is a conjugation invariant mean.
%\end{proof}

Before moving on, let us mention that Proposition \ref{remark VNDay} will be proven in the next section.

For the next set of proofs, we recall the following well known examples.
Thompson's group $T$ is the group of piecewise linear orientation-preserving homeomorphisms of the circle $\mathbf{S}^1=\mathbf{R}/\mathbf{Z}$ such that:
\begin{enumerate}
\item Each linear part is of the form $2^n+d$, where $n\in \mathbf{Z}$ and $d\in \mathbf{Z}[\frac{1}{2}]/\mathbf{Z}$.
\item There are only finitely many points where the slopes do not exist and these points lie in $\mathbf{Z}[\frac{1}{2}]$.
\end{enumerate} 

The group $\overline{T}<\textup{Homeo}^+(\mathbf{R})$ is the "lift" of this action to the real line.
In particular, there is a short exact sequence $$1\to \mathbf{Z}\to \overline{T}\to T\to 1$$
where the group $\mathbf{Z}$ is the group of integer translations of the real line and coincides with the center of $\overline{T}$.
Since $T$ is finitely presented, it follows that $\overline{T}$ is also finitely presented.
The group $\overline{T}$ was first studied by Ghys and Sergiescu in \cite{GhysSergiescu} and it has several remarkable features.
This group shall play an important role in the next proof.

Next, we recall that, for a free subgroup $\mathbf{F}_2$ of $SL(2,\mathbf{Z})$,
acting linearly on $\mathbf{Z}^2$, the resulting semidirect product 
$\mathbf{F}_2\ltimes \mathbf{Z}^2$ is locally indicable (and therefore also left orderable) and the pair $(\mathbf{F}_2\ltimes \mathbf{Z}^2,\mathbf{Z}^2)$ has relative property $(T)$.  (We refer to \cite{NavasSmooth} for details, and to \cite{bekkadelaharpevalette2008} for the definition of relative property $(T)$). It follows that $\mathbf{F}_2\ltimes \mathbf{Z}^2$ does not have the Haagerup property (see \cite{bekkadelaharpevalette2008}). Note that this group is finitely presented.

\begin{customcor}{\ref{Corollary LeftOrderable}}
There is a comeager set $\mathcal{X}_{lo}\subset \mathcal{G}_{lo}$ such that each $G\in \mathcal{X}_{lo}$ satisfies:
\begin{enumerate}
\item It is not locally indicable.
\item It does not have the Haagerup property. 
\item It does not admit nontrivial actions by $C^1$-diffeomorphisms on the closed interval or the circle.
\item Contains an isomorphic copy of every finitely generated left orderable group with a solvable word problem.
\end{enumerate}
\end{customcor}

\begin{customcor}{\ref{Corollary LocallyIndicable}}
There is a comeager set $\mathcal{X}_{li}\subset \mathcal{G}_{li}$ such that each $G\in \mathcal{X}_{li}$ satisfies:
\begin{enumerate}
\item It is not biorderable.
%\item It does not have the Haagerup property.
\item It does not admit nontrivial actions by $C^1$-diffeomorphisms on the closed interval, $[0,1)$ or the circle.
\end{enumerate}
\end{customcor}
\begin{proof}[Proofs of Corollaries \ref{Corollary LeftOrderable} and \ref{Corollary LocallyIndicable}]
The group $\overline{T}$ is finitely presented, perfect (that is, $\overline{T}=[\overline{T},\overline{T}]$), and left orderable.
It is easy to show that the set of normal subgroups of $\overline{T}$ coincides 
with subgroups that lie in the infinite cyclic center.
It follows that the only nontrivial left orderable quotient of $\overline{T}$ is $\overline{T}$ itself, whence $\overline{T}$ is $lo$-near isolated.
We conclude from Theorem \ref{Theorem: main1} that there is a comeager subset
$\mathcal{X}\subset \mathcal{G}_{lo}$ such that each group $G\in \mathcal{X}$ contains $\overline{T}$ as a subgroup. It follows that such $G$ is not locally indicable, finishing the proof of Corollary \ref{Corollary LeftOrderable}$(1)$.

Set $H=\mathbf{F}_2\ltimes \mathbf{Z}^2$. 
%We fix a finite presentation $\langle S\,id R\rangle$ for this group.
Since $H$ has a solvable word problem and is left orderable, it embeds in a simple subgroup of a finitely presented left orderable group. Therefore, $H$ is $lo$-near isolated and hence embeds in every enumerated group belonging to a certain comeager subset of $\mathcal{G}_{lo}$ by Theorem \ref{Theorem: main1}.
It follows that no group in this comeager subset has the Haagerup property, proving Corollary \ref{Corollary LeftOrderable}$(2)$.

The group $G_0$ is locally indicable (as it is a subgroup of the group of piecewise projective homeomorphisms of the real line).
In \cite{BLT}, it was shown that $G_0$ does non admit a nonabelian action by $C^1$-diffeomorphisms on the closed interval or the circle.
As mentioned above, $G_0$ is an isolated group; the conclusions of Corollary \ref{Corollary LeftOrderable}$(3)$ and Corollary \ref{Corollary LocallyIndicable}$(2)$ follow from Theorem \ref{Theorem: main1}, arguing as before.

The group $BS(1,-1)=\langle f,g\mid f^g=f^{-1}\rangle$ is an example of a locally indicable group which is not biorderable. Recall that biorderability is a closed property. Therefore, Corollary \ref{Corollary LocallyIndicable}$(1)$ follows from  Theorem 
\ref{Theorem: main1}$(2)$. 

Theorem \ref{bludovglass2} asserts that left orderability is a Boone-Higman property. Therefore, part $(4)$ of Corollary \ref{Corollary LeftOrderable} follows from Theorem \ref{Theorem: main2}. 
\end{proof}

\begin{customcor}{\ref{Corollary Kaplansky}}
The following holds:
\begin{enumerate}
\item There is an open dense set $\mathcal{X}\subset \mathcal{G}_{tf}$ such that each enumerated group $G\in \mathcal{X}$ is a counterexample to the unit conjecture.
\item The generic torsion-free enumerated group does not have the unique product property.
\item The generic group with the unique product property is not left orderable.
\item Either the Kaplansky zero divisor conjecture holds or else the generic torsion-free group does not satisfy the zero divisor conjecture.
\item Either the Kaplansky idempotent conjecture holds or else the generic torsion-free group does not satisfy the idempotent conjecture.
\end{enumerate}
\end{customcor}

 \begin{proof}
Gardam in \cite{gardam} proved that the Promislaw group is a counterexample to the Kaplansky unit conjecture.
Since the group is finitely presented and has a solvable word problem, by Theorem \ref{Theorem: main3}, it embeds in every group belonging to a comeager subset of $\mathcal{G}_{tf}$. (We use here the fact that torsion freeness is a Boone Higman property.) Part $(1)$ of the Corollary follows.

Recall that a construction of Rips and Sageev provides finitely generated torsion-free groups without the unique product property (see \cite{Steenbock}). Also, Dunfield discovered examples of finitely generated groups with the unique product property that are not left orderable (see the appendix in \cite{Dunfield}). Finally, recall that the failure of the unique product property, as well as the failure of left orderability, are both open properties. It follows from Theorem \ref{Theorem: main1}$(2)$ that there are comeager sets 
$$\mathcal{X}_1\subset \mathcal{G}_{tf}\qquad 
\mathcal{X}_2\subset \mathcal{G}_{upp}$$ 
such that no group in $\mathcal{X}_1$ has the unique product property and no group in $\mathcal{X}_2$ is left orderable. This proves parts $(2)$ and $(3)$ of the Corollary.
Parts $(4)$ and $(5)$ follow from a very similar argument using Theorem \ref{Theorem: main1}$(2)$.
\end{proof}

\begin{customcor}{\ref{Corollary Sofic}}
For any $P\in \mathcal{P}$, either all groups in $\mathcal{G}_P$ are sofic or else the generic group in $\mathcal{G}_P$ is nonsofic.
\end{customcor}

 \begin{proof}
 The proof follows from the fact that soficity is an open property and a direct application of Theorem \ref{Theorem: main1}$(2)$.
\end{proof}

\section{More about genericity}
In this section, we reiterate the use the following convention to make the distinction between an isomorphism type and its enumeration more precise. We shall denote an isomorphism type with letters such as $G$ and $H$ and chosen enumerations for the respective groups as $\bd G$ and $\bd H$ respectively.
\subsection{Applications of the Baire alternative}

Recall that a subset of a topological space is said to be \textbf{Baire measurable} if it belongs to the smallest $\sigma$-algebra containing the open sets and the meager sets.  The \textbf{Baire alternative} states that a Baire measurable subset of a topological space is either meager or there is a nonempty open set where it is comeager; if the topological space is a \textbf{Baire space} (that is, a topological space for which the conclusion of the Baire category theorem holds, e.g. Polish spaces), then exactly one of the two alternatives hold.  We investigate consequences of this fact in our context. 

\begin{prop}
Suppose that $\cal C$ is a saturated subspace of $\cal G$ such that the set of isomorphism types $\frak C$ of $\cal C$ is closed under direct sums.  Further suppose that $\cal D$ is a saturated, Baire measurable subset of $\cal D$.  Then either $\cal D$ is meager in $\cal C$ or comeager in $\cal C$.     
\end{prop}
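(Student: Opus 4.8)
The plan is to prove a topological zero--one law for the action on $\cal C$ of the group of permutations of $\mathbf N$: saturation of $\cal D$ is precisely invariance under the homeomorphisms $\sigma^\#$, and the assertion is that this action is generically ergodic. First I would record the ingredients. By the proposition that each permutation $\sigma$ of $\mathbf N$ induces a homeomorphism $\sigma^\#$ with $\sigma^\#[\Sigma(\vec a)]=[\Sigma(\sigma(\vec a))]$, together with the fact that $\cal C$ is saturated (hence $\sigma^\#$-invariant), the sets $[\Sigma(\vec a)]_{\cal C}:=[\Sigma(\vec a)]\cap\cal C$ form a \emph{countable} clopen base for $\cal C$ that is permuted by every $\sigma^\#$, via $\sigma^\#[\Sigma(\vec a)]_{\cal C}=[\Sigma(\sigma(\vec a))]_{\cal C}$. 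Since $\cal D$ is saturated, $\sigma^\#(\cal D)=\cal D$ for all $\sigma$, and as $\sigma^\#$ preserves category, $\cal D$ is comeager in a basic set $B$ if and only if it is comeager in $\sigma^\#(B)$.

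The heart of the argument is the following \textbf{transitivity claim}: for any two \emph{nonempty} basic clopen sets $U=[\Sigma(\vec a)]_{\cal C}$ and $V=[\Delta(\vec b)]_{\cal C}$ there is a permutation $\sigma$ with $\sigma^\#(U)\cap V\neq\emptyset$. To see this, choose $\sigma$ so that the tuple $\sigma(\vec a)$ has range disjoint from the range of $\vec b$ (possible as $\mathbf N$ is infinite); then $\sigma^\#(U)=[\Sigma(\sigma(\vec a))]_{\cal C}$, and $\sigma^\#(U)\cap V\neq\emptyset$ amounts to $[\Sigma(\sigma(\vec a))]_{\cal C}\cap[\Delta(\vec b)]_{\cal C}\neq\emptyset$ for disjoint parameter tuples. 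Because $U$ and $V$ are nonempty and $\cal C$ is saturated, $\Sigma$ and $\Delta$ are realized in $\cal C$ for \emph{every} choice of parameters, so this intersection is nonempty by exactly the direct-sum construction of Lemma \ref{closed under products density}: pick $G\models\Sigma(\sigma(\vec a))$ and $H\models\Delta(\vec b)$ in $\cal C$ and enumerate $G\oplus H$, which lies in $\frak C$ by the direct-sum hypothesis, so that $\sigma(\vec a)$ is read off the first summand and $\vec b$ off the second; this yields $K\in\cal C$ with $K\models\Sigma(\sigma(\vec a))\wedge\Delta(\vec b)$. I expect this transitivity claim --- equivalently, that the relabeling-orbit of every nonempty open set is dense --- to be the main obstacle, and it is here that closure under direct sums is indispensable.

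With transitivity in hand I would finish by a standard localization argument adapted to the countable base. Suppose $\cal D$ is not meager in $\cal C$; I must show it is comeager. Since $\cal D$ has the Baire property, the Baire alternative yields a nonempty open set, and hence a nonempty basic $[\Sigma(\vec a)]_{\cal C}$, in which $\cal D$ is comeager. Let $\cal U$ be the union of all basic clopen sets in which $\cal D$ is comeager; being a countable union of such sets, $\cal U$ satisfies that $\cal C\setminus\cal D$ is meager in $\cal U$, and $\cal U\neq\emptyset$. By the equivalence noted in the first paragraph, $\cal U$ is invariant under every $\sigma^\#$. An invariant nonempty open set is dense: given a nonempty basic $W$, choose a nonempty basic $B\subseteq\cal U$ and apply the transitivity claim to obtain $\sigma$ with $\sigma^\#(B)\cap W\neq\emptyset$; since $\sigma^\#(B)\subseteq\sigma^\#(\cal U)=\cal U$, we get $\cal U\cap W\neq\emptyset$. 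Thus $\cal U$ is a dense open set, so $\cal C\setminus\cal U$ is nowhere dense and hence meager; combining this with the fact that $\cal C\setminus\cal D$ is meager in $\cal U$ shows that $\cal C\setminus\cal D$ is meager, i.e. $\cal D$ is comeager in $\cal C$. One checks directly that none of these steps requires $\cal C$ to be a Baire space; for the spaces $\cal G_P$, which are Polish, the two alternatives are moreover mutually exclusive.
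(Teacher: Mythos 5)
Your proof is correct and runs on the same engine as the paper's --- the Baire alternative, saturation exploited through the relabeling homeomorphisms $\sigma^{\#}$, and the direct-sum construction --- but it is packaged quite differently. The paper's proof is three lines: the Baire alternative gives a nonempty basic set $[\Sigma(\vec a)]_{\cal C}$ in which $\cal D$ is comeager; saturation of $\cal D$ transfers comeagerness to every $[\Sigma(\vec b)]_{\cal C}$ and hence to $\bigcup_{\vec b}[\Sigma(\vec b)]_{\cal C}$; and this union is open and dense by Lemma \ref{closed under products density} (the only place closure under direct sums is used), so $\cal D$ is comeager. In other words, the paper only needs the orbit of the \emph{single} basic set produced by the Baire alternative, whose density is a lemma already in hand; your auxiliary invariant open set $\cal U$, the invariance argument, and the ``invariant open sets are dense'' step are superfluous on that route. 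What you prove instead is the general topological zero-one law for the relabeling action of the permutation group of $\mathbf{N}$: your transitivity claim is precisely the density of orbits of basic open sets, established by the same direct-sum trick. Your formulation buys a systematic viewpoint (saturation as generic ergodicity of a group of homeomorphisms) and makes explicit that $\cal C$ need not be a Baire space; the paper's buys brevity. One caveat, which you inherit from the paper's own proof of Lemma \ref{closed under products density}: the disjoint-parameters step can fail in the degenerate case where the chosen witnesses $G$ and $H$ both interpret some coordinate of $\sigma(\vec a)$, respectively $\vec b$, as the identity --- then no enumeration of $G\oplus H$ satisfies both systems, since the identity cannot carry two distinct labels. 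In both your argument and the paper's this is repaired by letting the identity-naming coordinates coincide rather than insisting on disjointness.
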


\begin{proof}
Suppose that $\cal D$ is not meager in $\cal C$, whence $\cal D$ is comeager in a nonempty open set $[\Sigma(\vec a)]_\cal C$.  Since $\cal D$ is saturated, $\cal D$ is comeager in $\bigcup_{\vec b\in \mathbf{N}^{n_{\Sigma}}}[\Sigma(\vec b)]_{\cal C}$, which is itself comeager in $\cal C$ since $\frak C$ is closed under direct sums.  It follows that $\cal D$ is comeager in $\cal C$, as desired.
\end{proof}

\begin{prop}
If $\varphi(\vec x)$ is an $L_{\omega_1,\omega}$-formula and $\vec a\in \mathbf{N}^{n_\varphi}$, then $$\{\bd G \in \cal G \ : \ \bd G\models \varphi(\vec a)\}$$ is a Borel subset of $\cal G$.
\end{prop}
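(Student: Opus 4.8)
The plan is to prove this by induction on the structure of the $L_{\omega_1,\omega}$-formula $\varphi$, showing that for every such formula and every tuple $\vec a$, the set $[\varphi(\vec a)] := \{\bd G \in \cal G : \bd G \models \varphi(\vec a)\}$ is Borel. The base case concerns atomic formulas, i.e. equations $w(\vec a) = e$. By Corollary \ref{cor: top}, the sets $[w(\vec a) = e]$ are in fact clopen, so atomic formulas certainly yield Borel sets. This is the foundation on which everything else rests, and it is precisely where the continuity of the map $\Psi$ from the earlier lemma does the work.

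Next I would handle the propositional connectives. Negation is immediate: $[\neg\psi(\vec a)] = \cal G \setminus [\psi(\vec a)]$, and the complement of a Borel set is Borel. The crucial feature of $L_{\omega_1,\omega}$ is that we allow \emph{countable} conjunctions and disjunctions. For a countable disjunction $\varphi = \bigvee_{n} \psi_n$, we have $[\varphi(\vec a)] = \bigcup_n [\psi_n(\vec a)]$, and since the Borel $\sigma$-algebra is closed under countable unions, this stays Borel provided each $[\psi_n(\vec a)]$ is; the countable conjunction case is dual, using closure under countable intersections. It is exactly here that the choice of $\sigma$-algebra (rather than merely an algebra) is essential, and this is the conceptual heart of why infinitary formulas still land inside the Borel hierarchy.

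The step I expect to be the main obstacle is the treatment of quantifiers, specifically the existential quantifier $\exists y\, \psi(\vec a, y)$. Naively, $[\exists y\, \psi(\vec a, y)] = \bigcup_{b \in \mathbf{N}} [\psi(\vec a, b)]$, which is a countable union over the witnesses $b \in \bf N$ — and this is precisely where the enumerated-group setup pays off. Because the underlying set of every group in $\cal G$ is literally $\bf N$, the range of the quantifier is a fixed countable set, so the existential amounts to a countable union of Borel sets and hence is Borel. The universal quantifier is handled dually as a countable intersection $\bigcap_{b \in \mathbf{N}} [\psi(\vec a, b)]$. I would be careful to note that the inductive hypothesis must be stated uniformly over all choices of the parameter tuple (so that, for instance, $[\psi(\vec a, b)]$ is known to be Borel for every $b$), since the quantifier step consumes one such instance for each $b \in \bf N$.

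Finally, I would observe that every $L_{\omega_1,\omega}$-formula is built from atomic formulas by finitely iterating these operations, with each subformula having countable (possibly infinite) branching and the whole formula being well-founded. A straightforward induction on the well-founded construction tree of $\varphi$ then yields that $[\varphi(\vec a)]$ is Borel for every $\vec a$, completing the proof.
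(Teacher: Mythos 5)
Your proof is correct and is exactly the argument the paper has in mind: the paper's entire proof reads ``a straightforward induction on the complexity of formulae,'' and your write-up simply fills in the details of that induction (atomic formulas are clopen by the basis result, connectives use closure of the Borel $\sigma$-algebra under complements and countable unions/intersections, and quantifiers become countable unions/intersections over $\mathbf{N}$ because every enumerated group has underlying set $\mathbf{N}$).
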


\begin{proof}
A straightforward induction on the complexity of formulae.
\end{proof}

\begin{cor}\label{genericsentence}
Suppose that $\cal C$ is a saturated, Baire subspace of $\cal G$ such that its set of isomorphism types $\frak C$ is closed under direct sums and $\varphi$ is an $L_{\omega_1,\omega}$-sentence.  Then exactly one of $\{\bd G\in \cal C \ : \ \bd G\models \varphi\}$ or $\{\bd G\in \cal C \ : \ \bd G\models \neg \varphi\}$ is comeager in $\cal C$.
\end{cor}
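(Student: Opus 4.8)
The plan is to recognize the truth set of $\varphi$ as a saturated, Baire measurable subset of $\cal C$ and then feed it into the two propositions immediately preceding the corollary. First I would set $\mathcal{D}:=\{\bd G\in\cal C \ : \ \bd G\models\varphi\}$. The preceding proposition, which shows that the satisfaction set of any $L_{\omega_1,\omega}$-formula is Borel, applies to the sentence $\varphi$ and gives that $\{\bd G\in\cal G : \bd G\models\varphi\}$ is Borel; intersecting with $\cal C$ shows that $\mathcal{D}$ is Borel, hence Baire measurable, in $\cal C$. The one genuinely structural point is that $\mathcal{D}$ is saturated: satisfaction of an $L_{\omega_1,\omega}$-sentence is invariant under isomorphism, so whether $\bd G\models\varphi$ depends only on the isomorphism type $\rho(\bd G)$, and therefore $\rho^{-1}(\rho(\mathcal{D}))=\mathcal{D}$.

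With these two properties in hand, I would invoke the meager/comeager dichotomy proposition --- which applies because $\frak C$ is closed under direct sums --- to conclude that $\mathcal{D}$ is either meager or comeager in $\cal C$. The complementary set $\cal C\setminus\mathcal{D}=\{\bd G\in\cal C : \bd G\models\neg\varphi\}$ is, by the same reasoning, saturated and Baire measurable, so the dichotomy proposition gives that it too is either meager or comeager in $\cal C$.

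It remains to promote ``each of the two sets is meager or comeager'' to ``exactly one is comeager,'' which I would do using that $\cal C$ is a nonempty Baire space together with the fact that $\mathcal{D}$ and $\cal C\setminus\mathcal{D}$ partition $\cal C$. They cannot both be meager: their union is all of $\cal C$, and a nonempty Baire space is not meager in itself. They cannot both be comeager: comeager sets are closed under finite intersection, whereas $\mathcal{D}\cap(\cal C\setminus\mathcal{D})=\emptyset$ is not comeager in a nonempty Baire space. This eliminates two of the four a priori cases and leaves exactly the two in which one of the sets is comeager and the other meager, which is the assertion.

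I do not anticipate a real obstacle here, since the substantive work has already been absorbed into the Borel-definability proposition and the meager/comeager dichotomy proposition. The only step requiring any care is the verification that the truth set of a sentence is saturated, i.e.\ the appeal to isomorphism-invariance of satisfaction for $L_{\omega_1,\omega}$; the remainder is a short computation in the Boolean algebra of subsets of $\cal C$ modulo meager sets.
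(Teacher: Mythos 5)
Your proof is correct and follows exactly the route the paper intends: the corollary is stated there without proof, as an immediate consequence of the two preceding propositions (Borel definability of $L_{\omega_1,\omega}$ truth sets and the meager/comeager dichotomy for saturated Baire measurable sets), which is precisely the derivation you spell out. Your verification of saturation via isomorphism-invariance of satisfaction, and the final Baire-space argument ruling out the ``both meager'' and ``both comeager'' cases, are exactly the details the paper leaves implicit.
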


In the rest of this subsection, we give some examples of the utility of the previous ideas.
Recall that a group $G$ is \textbf{inner amenable} if it admits a conjugation-invariant finitely additive probability measure not concentrating on the identity.

We can apply the above the conclude the proof of Proposition \ref{remark VNDay}.  We remind the reader of the statement:

\begin{customprop}{\ref{remark VNDay}}

\

\begin{enumerate}
\item The generic enumerated group without $\mathbf{F}_2$ subgroups is inner amenable.
\item The generic left orderable enumerated group without $\mathbf{F}_2$ subgroups is inner amenable. 
\end{enumerate}
\end{customprop}

\begin{proof}
For each $n$, let $\sigma_n$ be the sentence $$\forall x_1\cdots\forall x_n\exists y (\bigwedge_{i=1}^n x_iy=yx_i \wedge y\not=e).$$  By Corollary \ref{genericsentence}, $\{\bd G \in \cal \Gsm \ : \ \bd G\models \bigwedge_n\sigma_n\}$ is either meager or comeager in $\Gsm$.  However, this set is clearly dense in $\Gsm$, whence it must be comeager.  It remains to note that all elements $G$ of this set are inner amenable. Indeed, write $G$ as an increasing union of finite subsets $F_n$ and let $g_i\in \bd G\setminus\{e\}$ be an element that commutes with each element in $F_i$. Now consider $h_i= \delta_{g_i}\in \ell^1(G)$, the characteristic function of $g_i$. Any weak* limit of the $h_i$ is a conjugation invariant mean.
\end{proof}

Let $\cal G_{fg}$ denote the saturated subspace of $\cal G$ consisting of finitely generated enumerated groups.

\begin{prop}\label{fgmeager}
$\cal G_{fg}$ is a meager Borel (in fact, $\Sigma_3^0$) subset of $\cal G$.
\end{prop}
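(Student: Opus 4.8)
The plan is to reduce finite generation to a statement about the fixed elements $1,\ldots,k$, read off the Borel complexity from that reduction, and then obtain meagerness not by analyzing $\cal G_{fg}$ directly but by intersecting with a comeager class on which finite generation provably fails.

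\emph{Reformulation and complexity.} First I would note that an enumerated group $\bd G$ is finitely generated if and only if $\bf N=\langle 1,\ldots,k\rangle_{\bd G}$ for some $k$: any finite generating set is contained in $\{1,\ldots,k\}$ for some $k$, and any superset of a generating set still generates. Hence
$$\cal G_{fg}=\bigcup_{k\in\bf N}E_k,\qquad E_k:=\{\bd G\in\cal G\ :\ \langle 1,\ldots,k\rangle_{\bd G}=\bf N\}.$$
For fixed $k,m$ and a word $w$, the basic set $[\,w(1,\ldots,k)\cdot m^{-1}=e\,]$ is clopen by Corollary \ref{cor: top}, and $m\in\langle 1,\ldots,k\rangle_{\bd G}$ holds precisely when $w(1,\ldots,k)\cdot m^{-1}=e$ for some $w$; thus $\{\bd G:m\in\langle 1,\ldots,k\rangle_{\bd G}\}$ is open and
$$E_k=\bigcap_{m\in\bf N}\ \bigcup_{w}\ [\,w(1,\ldots,k)\cdot m^{-1}=e\,]$$
is a $G_\delta$ (that is, $\Pi^0_2$) set. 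Consequently $\cal G_{fg}=\bigcup_k E_k$ is $\Sigma^0_3$, in particular Borel, which gives the stated complexity.

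\emph{Meagerness.} For the meagerness I would use that the existentially closed groups form a comeager subset $\cal E\subseteq\cal G$ (Lemma \ref{ecgen}) together with the claim that \emph{no existentially closed group is finitely generated}; this gives $\cal G_{fg}\cap\cal E=\varnothing$, so $\cal G_{fg}\subseteq\cal G\setminus\cal E$ is meager. To prove the claim, suppose $G$ is existentially closed with $G=\langle a_1,\ldots,a_n\rangle$. On one hand the finite system (with coefficients $a_i$)
$$\textstyle\bigwedge_{i=1}^n xa_i=a_ix\ \wedge\ x\neq e$$
has a solution in the overgroup $G\times\bf Z$, namely a generator of the $\bf Z$-factor; by existential closedness $G$ already contains a solution $z$, and since $z$ commutes with every $a_i$ it commutes with all of $G$, so $Z(G)\neq\{e\}$. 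On the other hand, for any $z\neq e$ the inequation $xz\neq zx$ has a solution in $G*\bf Z$ (a free generator does not commute with $z$), hence by existential closedness it has one in $G$; so no nontrivial element is central, i.e.\ $Z(G)=\{e\}$. These two conclusions contradict one another, proving the claim and hence the proposition.

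\emph{Main obstacle.} The real difficulty is exactly the meagerness, and it is worth saying why I route it through existentially closed groups rather than a direct nowhere-density argument. Since $\cal G_{fg}$ is saturated and Baire measurable, the Baire alternative for saturated sets proved above already forces it to be meager or comeager, so one would only need each $E_k$ to be nowhere dense, i.e.\ that inside every nonempty basic set $[\Delta(\vec a)]$ one can force $\langle 1,\ldots,k\rangle$ to be a \emph{proper} subgroup using finitely many equations and inequations. When $\Delta$ allows it, one can prescribe a finite multiplication table on a set containing $1,\ldots,k$ and thereby force $\langle 1,\ldots,k\rangle$ to be finite; but when $\Delta$ pins $1,\ldots,k$ to the generators of a non-residually-finite finitely presented group, together with an inequation witnessing an element of its finite residual, no finite system can collapse $\langle 1,\ldots,k\rangle$ to a finite, or otherwise provably proper, subgroup—non-membership in a subgroup is a closed, not open, condition, so it admits no finite certificate. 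This is precisely the obstruction that the existentially closed argument avoids, since there the failure of finite generation is extracted from a single pair of existential conditions rather than from a local combinatorial certificate.
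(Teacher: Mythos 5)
Your proof is correct, and the meagerness half takes a genuinely different route from the paper's. The complexity computation is essentially the paper's own (the paper writes $\cal G_{fg}=\bigcup_{\vec a}\bigcap_b\bigcup_w[w(\vec a)=b]$, quantifying over arbitrary finite tuples rather than initial segments, but the count is the same: $\Pi^0_2$ inside, union over tuples, hence $\Sigma^0_3$). For meagerness, however, the paper does not go through existentially closed groups: it intersects two comeager sets, namely $\{\bd G : \bd G\models\bigwedge_n\sigma_n\}$ where $\sigma_n$ says any $n$ elements admit a nontrivial element commuting with all of them (comeager by the density argument in the proof of Proposition \ref{remark VNDay}), and the set of simple groups (comeager by Remark \ref{Remark: Simple}), and then observes that a finitely generated group in the intersection would have nontrivial center while being infinite and simple, which is absurd. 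Your argument replaces this with Lemma \ref{ecgen} plus the classical fact that no e.c.\ group is finitely generated, obtained by playing $G\times\mathbf{Z}$ (nontrivial center) against $G*\mathbf{Z}$ (trivial center). The two proofs share the same germ --- force a nontrivial element commuting with the would-be generators, then contradict centrality --- but your contradiction comes from the free-product step, whereas the paper's comes from simplicity. What your route buys is self-containedness: Remark \ref{Remark: Simple} is only sketched in the paper (its proof is deferred to small cancellation techniques of Osin), while Lemma \ref{ecgen} is proved in full; so your argument is verifiable entirely within the paper's toolkit. What the paper's route buys is reuse of comeager sets it has already established for other purposes, giving additional structural information about the generic group along the way.

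One remark on your ``main obstacle'' paragraph: it overstates the difficulty of a direct nowhere-density argument. Non-membership in $\langle 1,\ldots,k\rangle$ indeed has no finite certificate, but properness of $\langle 1,\ldots,k\rangle$ does: given a nonempty $[\Delta(\vec a)]$ with $G\in[\Delta(\vec a)]$, extend $\Delta$ by equations asserting that a fresh index $m$ commutes with each of $1,\ldots,k$ and an inequation asserting that $m$ does not commute with another fresh index $m'$; this is consistent (interpret the new indices in $G\times S$ for any nonabelian $S$), and any group satisfying it has $\langle 1,\ldots,k\rangle$ inside the centralizer of $m$, hence proper. So each $E_k$ is nowhere dense by a direct finite-certificate argument --- which is, in effect, exactly what the paper's sentences $\sigma_n$ encode.
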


\begin{proof}
First note that $$\cal G_{fg}=\bigcup_{\vec a\in \bf N^{<\bf N}}\bigcap_{b\in \bf N}\bigcup_{w(\vec x)}[w(\vec a)=b]$$ which is a $\Sigma_3^0$ subset of $\cal G$.  Let $\sigma_n$ be the sentence $$\forall x_1\cdots\forall x_n\exists y (\bigwedge_{i=1}^n x_iy=yx_i \wedge y\not=e)$$  

We have already seen in the proof above that $\{\bd G\in \cal G \ : \ \bd G\models \bigwedge_n \sigma_n\}$ is a comeager set in $\cal G$, and from Remark \ref{Remark: Simple} we know that $\{\bd G\in \cal G \ : \ G \text{ is simple}\}$ is also a comeager set in $\cal G$. It remains to notice that the comeager set
$$\{\bd G\in \cal G \ : \ \bd G\models \bigwedge_n \sigma_n\}\cap \{\bd G\in \cal G \ : \ G \text{ is simple}\}$$ consists of groups that are not finitely generated.
\end{proof}

\subsection{Generic sets and model-theoretic forcing}

%After seeing an initial draft of this paper, Osin asked us the following question, which is a special case of Question \ref{Question: main}(2) from the introduction:
%\begin{question}
%If $\frak C=\frak G_{\operatorname{sm}}$ or $\frak G_{\operatorname{am}}$,
%Let $P\in \{\text{amenable, no $\mathbf{F}_2$ subgroups}\}$.
%Is there an isomorphism type $G$ such that $\{H \in \mathcal{G}_{P} \ : \ H\cong G\}$ is comeager in $\mathcal{G}_{P}$?
%\end{question}
In this section, we provide a connection between Question \ref{Question: main}(2), and model theoretic forcing. First, we observe the following remark:
\begin{remark}\label{scottsentence}
By Corollary \ref{genericsentence}, letting $\varphi$ be a Scott sentence\footnote{Given a countable group $G$, a Scott sentence for $G$ is a $L_{\omega_1,\omega}$-sentence $\sigma_G$ such that, for any countable group $H$, $H\models \sigma_G$ if and only if $G\cong H$.}, we see that the set in the previous question is either meager or comeager.
\end{remark}

We now explain why the answer to \ref{Question: main}$(2)$ is negative when we consider the space $\cal G$ itself.  In order to do so, it will help us to rephrase this in the language of model-theoretic forcing via the presentation in \cite{hodges}.  

The connection we now describe is in fact hinted at in \cite{hodges} (see Exercises 4-6 from Section 2.2).

For the rest of this section, we fix a saturated subspace $\cal C$ of $\cal G$.  We consider a two-player game where the players take turns playing $\cal C$-systems with the requirement that each system played extends the previous players turn.  The players play countably many rounds.  When the game is over, the players have constructed an infinite system of equations and inequations. We call a play of the game \textbf{definitive} if, for all $m,n\in \bf N$, there is $k\in \bf N$ such that the equation $x_m\cdot x_n=x_k$ appears in the final system.  In what follows, we always assume that the play of the game is definitive.\footnote{In \cite{hodges}, the definitiveness requirement is not present.  However, in the terminology used there, being definitive is an enforceable property and thus, for our purposes, there is no loss of generality in assuming that the plays are definitive.}  In this case, at the end of the game, the players have described an enumerated group, called the \textbf{compiled group}.\footnote{Without the definitive requirement, the compiled group would merely be the group generated by $\bf N$ subject to the relations given by the equations of the final system.}

We call a property $P$ of enumerated groups \textbf{$\cal C$-enforceable} if there is a strategy for player II that ensures that the compiled group always has property $P$.  A useful fact is the \textbf{Conjunction Lemma} (see \cite[Lemma 2.3.3(e)]{hodges}, which states that a countable conjunction of $\cal C$-enforceable properties is $\cal C$-enforceable.

\begin{prop}
Suppose that $\cal C$ is a saturated Polish subspace of $\cal G$.  Then the property of being in $\cal C$ is a $\cal C$-enforceable property.
\end{prop}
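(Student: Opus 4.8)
The plan is to exploit the fact that a subspace of a Polish space is Polish precisely when it is $G_\delta$, together with the Conjunction Lemma. Since $\cal C$ is a Polish subspace of the Polish space $\cal G$, a classical theorem of Alexandrov guarantees that $\cal C$ is a $G_\delta$ subset of $\cal G$; I would write $\cal C=\bigcap_{n\in\bf N}U_n$ with each $U_n$ open in $\cal G$. Because the Conjunction Lemma \cite[Lemma 2.3.3(e)]{hodges} shows that a countable conjunction of $\cal C$-enforceable properties is $\cal C$-enforceable, and ``being in $\cal C$'' is exactly the conjunction over $n$ of ``being in $U_n$'', it suffices to prove that, for each fixed $n$, membership of the compiled group in $U_n$ is $\cal C$-enforceable.

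So I would fix $n$ and exhibit a one-move strategy for player II. Suppose it is II's turn and the current (most recently played) system is a $\cal C$-system $\Delta(\vec a)$; by definition of a $\cal C$-system, $[\Delta(\vec a)]_{\cal C}\neq\emptyset$, so II may choose some $\bd H\in[\Delta(\vec a)]_{\cal C}$. Since $\bd H\in\cal C\subseteq U_n$ and the clopen sets $[\Sigma(\vec b)]$ form a basis for $\cal G$ by Corollary \ref{cor: top}, there is a basic clopen neighborhood $[\Sigma(\vec b)]$ of $\bd H$ with $[\Sigma(\vec b)]\subseteq U_n$. Player II then plays the system $\Delta(\vec a)\cup\Sigma(\vec b)$. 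This is a legal move: it extends $\Delta(\vec a)$, and it is a $\cal C$-system because $\bd H$ satisfies both $\Delta(\vec a)$ and $\Sigma(\vec b)$ and lies in $\cal C$, so $\bd H\in[\Delta(\vec a)\cup\Sigma(\vec b)]_{\cal C}$.

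The key point is that $[\Sigma(\vec b)]$ is clopen, hence membership in it is decided by the finitely many equations and inequations of $\Sigma(\vec b)$ among the fixed coordinates $\vec b$. Once those constraints have been committed to the play, every further system (being consistent and extending) preserves them, so the compiled group necessarily lies in $[\Sigma(\vec b)]\subseteq U_n$. This shows membership in $U_n$ is $\cal C$-enforceable, and the proposition follows from the Conjunction Lemma. The only delicate points I anticipate are the invocation of Alexandrov's characterization of Polish subspaces, which is what licenses the reduction to the open sets $U_n$, and the observation that a clopen constraint, once played, can never be undone; the definitiveness hypothesis on the play is used only to guarantee that the compiled object is a genuine enumerated group in $\cal G$ in the first place.
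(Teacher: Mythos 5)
Your proof is correct and follows essentially the same route as the paper's: write $\cal C=\bigcap_n U_n$ as a $G_\delta$ set, invoke the Conjunction Lemma to reduce to enforcing membership in a single open $U_n$, and have player II extend the current $\cal C$-system by a basic (clopen) system contained in $U_n$, witnessed by any group in the current nonempty $[\Delta(\vec a)]_{\cal C}$. If anything, you are slightly more explicit than the paper on two points it leaves implicit: the Alexandrov characterization of Polish subspaces as $G_\delta$ sets, and the fact that the chosen basic neighborhood should satisfy $[\Sigma(\vec b)]\subseteq U_n$ as subsets of $\cal G$ (not merely $[\Sigma(\vec b)]_{\cal C}\subseteq U_n$), which is what guarantees the compiled group lands in $U_n$.
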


\begin{proof}
Since $\cal C$ is a Polish subspace of $\cal G$, there are open subsets $U_n$ of $\cal G$ such that $\cal C=\bigcap_n U_n$.  By the Conjunction Lemma, it suffices to show, for each $n$, that the property of belonging to $U_n$ is $\cal C$-enforceable.  Suppose player I opens with the $\cal C$-system $\Sigma$.  Fix $\bd G\in [\Sigma]_\cal C$.  Since $\bd G\in U_n$ and $U_n$ is open, there is a $\cal C$-system $\Delta$ such that $\bd G\in [\Delta]_\cal C\subseteq U_n$.  Then player II responds with the $\cal C$-system $\Sigma\cup \Delta$.  It follows that the compiled group belongs to $[\Delta]_\cal C$ and thus to $U_n$, as desired.
\end{proof}

Given a property $P$ of enumerated groups and $\mathcal{C}\subseteq \mathcal{G}$ a saturated, Baire measurable subset, 
we define $$\mathcal{C}_P=\{G\in \mathcal{C}\mid G\text{ has the property }P\}.$$
we say that $P$ is \textbf{invariant} if $\mathcal{C}_P$ is saturated and we say that $P$ is \textbf{Baire measurable} if $\mathcal{C}_P$ is a Baire measurable subset of $\cal C$. Note that any property of countable groups will be invariant (since it is a property of an isomorphism type, not a specific enumeration.)

Here is the connection between Baire category and enforceability:

\begin{thm}\label{baireenforceable}
Suppose that $\cal C$ is a saturated, Baire subspace of $\cal G$ and that $P$ is an invariant Baire measurable property.  Then $\mathcal{C}_P$ is a comeager subset of $\cal C$ if and only if $P$ is a $\cal C$-enforceable property.
% Suppose that $\cal C$ is a saturated Baire subspace of $\cal G$ and $\bd G\in \cal C$ is such that $X_{\bd G,\cal C}:=\{\bd H\in \bd C \ : \ G\cong H\}$ is comeager in $\cal C$.  Then $G$ is the enforceable element of $\frak C$. IFF
\end{thm}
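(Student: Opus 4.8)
The plan is to recognize the two-player game described above as the Banach--Mazur game played on the Polish space $\cal C$, whose allowed moves are the nonempty basic clopen sets $[\Sigma]_\cal C$ indexed by $\cal C$-systems $\Sigma$, and to extract the theorem from the Banach--Mazur--Oxtoby correspondence between winning strategies and comeagerness. The structural feature that drives everything is definitiveness: a play $\Sigma_0\subseteq\Sigma_1\subseteq\cdots$ pins down a single enumerated group, the compiled group $\bd H$, which is the unique point of $\bigcap_n[\Sigma_n]$ computed in $\cal G$; hence $\bigcap_n[\Sigma_n]_\cal C$ equals $\{\bd H\}$ when $\bd H\in\cal C$ and is empty otherwise. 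My strategy is to isolate a single \emph{forcing lemma} for comeager targets and deduce both implications from it, using the Baire alternative to interchange the roles of the two players rather than running Oxtoby's harder fusion argument.

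The forcing lemma I would prove is: if $A\subseteq\cal C$ is comeager in $\cal C$, then the player who \emph{responds} (i.e. moves second) has a strategy guaranteeing that the compiled group satisfies $\bd H\in A$. To see this, write $A\supseteq\bigcap_n D_n$ with each $D_n=\cal C\cap\tilde D_n$ dense open in $\cal C$ and $\tilde D_n$ open in $\cal G$, and write $\cal C=\bigcap_m U_m$ with each $U_m$ open in $\cal G$ --- this last step is exactly the content of the preceding proposition that membership in $\cal C$ is $\cal C$-enforceable, and is where I use that $\cal C$ is Polish. At each of the responder's turns, faced with a $\cal C$-system $\Sigma$, fix $\bd G\in[\Sigma]_\cal C$ lying in the relevant $\tilde D_n$ (possible by density) and shrink to a basic clopen set $[\Delta]$ with $\bd G\in[\Delta]\subseteq[\Sigma]\cap\tilde D_n\cap U_m$, simultaneously adjoining the equation $x_j\cdot x_k=x_l$ witnessed by $\bd G$ that is needed to push the play toward definitiveness. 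A routine bookkeeping addresses every $\tilde D_n$, every $U_m$, and every product requirement at some stage; since the clopen sets only shrink thereafter, the compiled group lands in $\bigcap_m U_m=\cal C$ and in every $\tilde D_n$, hence in $\bigcap_n D_n\subseteq A$.

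Granting the lemma, the direction ``$\mathcal{C}_P$ comeager $\Rightarrow P$ enforceable'' is immediate: apply it with $A=\mathcal{C}_P$; the responder is player II, who thereby forces $\bd H\in\mathcal{C}_P$, so in particular $\bd H$ has $P$, i.e. $P$ is $\cal C$-enforceable. The converse I would argue by contraposition. Suppose $\mathcal{C}_P$ is not comeager in $\cal C$. Since $P$ is Baire measurable, $\mathcal{C}_P$ is a Baire measurable subset of the Baire space $\cal C$, so its complement $\cal C\setminus\mathcal{C}_P$ is non-meager; by the Baire alternative there is a nonempty basic clopen set $[\Sigma_0]_\cal C$ in which $\cal C\setminus\mathcal{C}_P$ is comeager. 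Now let player I open with $\Sigma_0$. In the resulting game the move order becomes II, I, II, I, $\ldots$, so within the clopen (hence Polish) subspace $[\Sigma_0]_\cal C$ it is player I who responds. Applying the forcing lemma on $[\Sigma_0]_\cal C$ with the comeager target $\cal C\setminus\mathcal{C}_P$, player I obtains a strategy forcing $\bd H\in\cal C\setminus\mathcal{C}_P$; in particular $\bd H\in\cal C$ while $\bd H\notin\mathcal{C}_P$, so $\bd H$ fails $P$ no matter how II plays. Hence $P$ is not $\cal C$-enforceable.

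I expect the main obstacle to be twofold. The technical heart is the bookkeeping in the forcing lemma, where the responder must interleave three countable families of requirements --- driving $\bd H$ into $\cal C$, into each $D_n$, and toward definitiveness --- while only moving on alternate rounds; this is routine Baire-category forcing but must be set up carefully so that membership in $\cal C$ is genuinely secured (it is essential in the converse, since forcing merely $\bd H\notin\mathcal{C}_P$ would leave open the possibility $\bd H\notin\cal C$ rather than $\neg P(\bd H)$). The more conceptual point is the parity reversal in the converse: the Baire alternative reduces the hard ``strategy $\Rightarrow$ comeager'' implication to forcing a comeager target, a task the second mover can always perform, and the only delicate verification is that after player I opens with $\Sigma_0$ the second mover in the subgame is indeed player I.
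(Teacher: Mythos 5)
Your proposal is correct and, in the converse direction, takes a genuinely different route that is actually more complete than the paper's own argument. Your forward direction is essentially the paper's density-plus-openness argument (the responder secures one requirement per turn, and each requirement persists because later moves only shrink the current clopen set), but your forcing lemma builds in two things the paper elides: the dense open sets must be taken as traces on $\cal C$ of open subsets of $\cal G$ (the paper takes them dense open in $\cal G$ with $\bigcap_n U_n\subseteq \cal C_P$, which is impossible unless $\cal C$ is comeager in $\cal G$), and membership in $\cal C$ itself must be enforced by interleaving $\cal C=\bigcap_m U_m$ into the bookkeeping --- which is why Polishness, rather than the stated Baire hypothesis, is what both proofs actually consume; this mismatch is inherited from the paper and is harmless, since the theorem is only ever applied to Polish $\cal C$. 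The real divergence is the converse: the paper supposes $\cal C_P$ is \emph{meager}, applies the forward direction to $\neg P$, and derives a contradiction; as written this excludes only the meager case, and the case where $\cal C_P$ is neither meager nor comeager is silently dropped (closing it via the paper's earlier zero-one law would require $\frak C$ to be closed under direct sums, a hypothesis the theorem does not carry). Your Baire-alternative-plus-role-reversal argument --- player I opens with a basic clopen set $[\Sigma_0]_{\cal C}$ on which $\cal C\setminus\cal C_P$ is comeager and then plays the responder's strategy in the tail game --- is the standard Banach--Mazur/Oxtoby unraveling; it covers all cases, and as a bonus it never uses invariance of $P$. Your insistence that player I force $\bd H\in\cal C\setminus\cal C_P$, not merely $\bd H\notin\cal C_P$, is exactly the right precaution. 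One caveat, shared with the paper's setup rather than particular to you: definitiveness as defined decides only products, so to justify identifying the compiled group with the unique point of $\bigcap_n[\Sigma_n]$, the responder's bookkeeping should also decide the identity and inverses (otherwise these can escape to infinity); this is a one-line addition to your strategy.
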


\begin{proof}
First suppose that $\cal C_P$ is a comeager subset of $\cal C$.  Since $\cal C$ is a Baire space, there is a countable collection of dense open sets $U_n\subseteq \cal G$ such that $\bigcap_n U_n\subseteq \cal C_P$.  In order to show that $P$ is $\cal C$-enforceable, it suffices, for every $n$, to show that the property "$\bd H\in U_n$'' is a $\cal C$-enforceable property.  Towards this end, suppose that player I opens with the system $\Sigma$.  Since $U_n$ is dense, there is a group $\bd H\in [\Sigma]_\cal C\cap U_n$.  Since $U_n$ is open, there is a system $\Delta$ such that $\bd H\in [\Delta]_\cal C\subseteq U_n$.  Let player II respond with $\Sigma\cup \Delta$.  Then the compiled group will belong to $U_n$, as desired.

Now suppose that $P$ is a $\cal C$-enforceable property.  If $\cal C_P$ were meager, then $\cal C\setminus \cal C_{P}$ is comeager; since $P$ is invariant, it follows that the negation of property $P$ is $\cal C$-enforceable, which is a contradiction.  
\end{proof}

%To proceed with our analysis of Osin's question, 
We shall need the following notion.

\begin{defn}\label{Definition: Existentially closed}
If $G$ is a subgroup of $H$, we say that $G$ is \textbf{existentially closed (or, e.c., for short) in $H$} if, for any finite system $\Sigma(\vec x,\vec y)$ and any $\vec a\in G$, if there is a solution to $\Sigma(\vec a,\vec y)$ in $H$, then there is a solution to $\Sigma(\vec a,\vec y)$ in $G$. If $\frak C$ is the set of isomorphism types of a class $\mathcal{C}\subseteq \mathcal{G}$, we say that $G\in \frak C$ is \textbf{existentially closed for $\frak C$} if $G$ is e.c. in $H$ for every $H\in \frak C$ containing $G$ as a subgroup.
\end{defn}

We now present some elementary facts about this notion.

\begin{prop}\label{Prop: EC}
Let $\frak C$ be a class of isomorphism types.
Then the following holds.
\begin{enumerate}

\item If $\frak C$ is closed under direct limits, then any element of $\frak C$ is a subgroup of a group that is e.c. for $\frak C$.  
\item Suppose that any two elements of $\frak C$ can be embedded into a common element of $\frak C$ (e.g. when $\frak C$ is closed under direct products).  Then any e.c. element of $\frak C$ is locally universal for $\frak C$.
\end{enumerate}
More generally, if $G$ is a subgroup of $H$, then $G$ is e.c. in $H$ if and only if $H$ embeds into an ultrapower of $G$ in such a way that the restriction to $G$ is the diagonal embedding of $G$ into its ultrapower.
\end{prop}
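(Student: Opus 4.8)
The plan is to treat the three assertions separately, saving the ultrapower characterization (the "more generally" clause) for last since it subsumes the mechanism behind part (2). For part (1), since $\frak C$ is closed under direct limits, I would construct, for a given $G_0\in\frak C$, an increasing $\omega$-chain $G_0\subseteq G_1\subseteq G_2\subseteq\cdots$ inside $\frak C$ by a bookkeeping argument: at each stage enumerate all pairs $(\Sigma(\vec x,\vec y),\vec a)$ consisting of a finite system and a tuple of parameters from the current group, and whenever $\Sigma(\vec a,\vec y)$ has a solution in some overgroup in $\frak C$, adjoin such a solution by passing to that overgroup (using that $\frak C$ is closed under direct limits to keep the construction inside $\frak C$ and to take the union $G_\omega=\varinjlim G_n$ at the end). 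A standard dovetailing over the countably many systems and the countably many parameter tuples encountered ensures that every finite system with parameters in $G_\omega$ that is solvable in an extension within $\frak C$ is already solved at some finite stage, hence in $G_\omega$ itself. Thus $G_\omega$ is e.c.\ for $\frak C$ and contains $G_0$.

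For part (2), let $G$ be e.c.\ for $\frak C$ and let $H\in\frak C$ be arbitrary; I must embed $H$ into an ultrapower of $G$. By hypothesis there is $K\in\frak C$ containing both $G$ and $H$ as subgroups. Since $G$ is e.c.\ in $K$, every finite system with parameters in $G$ solvable in $K$ is solvable in $G$; but to capture $H$ I instead invoke the ultrapower criterion from the Claim in the proof of Lemma \ref{locunivequiv}, namely that $H$ embeds into $G^\u$ if and only if every system with a solution in $H$ has a solution in $G$. Since $H\leq K$ and $G$ is e.c.\ in $K$, any system solvable in $H$ is solvable in $K$ and hence (being a system over $G$ only when its parameters lie in $G$) --- here I would argue more carefully that a system solvable in $H$, viewed with no coefficients, is solvable in $K\supseteq H$ and therefore in $G$ by existential closedness applied with empty parameter tuple. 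This yields the embedding $H\hookrightarrow G^\u$, so $G$ is locally universal for $\frak C$.

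The final, and most delicate, assertion is the ultrapower characterization of being e.c.\ in a fixed overgroup $H\supseteq G$. The backward direction is routine: if $H$ embeds into $G^\u$ extending the diagonal embedding of $G$, and $\Sigma(\vec a,\vec y)$ with $\vec a\in G$ has a solution $\vec b\in H$, then the image of $\vec b$ in $G^\u$ solves $\Sigma(\vec a,\vec y)$ there (the parameters $\vec a$ sit diagonally), and by \L o\'s' theorem the coordinatewise values give a solution in $G$ for $\u$-almost all coordinates. The forward direction is the main obstacle: assuming $G$ is e.c.\ in $H$, I must produce an ultrapower embedding of $H$ over $G$. The strategy is to enumerate $H=\{h_n:n\in\bf N\}$ and, for each finite $m$, consider the finite system $\Sigma_m(\vec a_m,\vec y)$ recording all multiplicative relations and non-relations among $h_1,\dots,h_m$ together with the parameters $\vec a_m$ from $G$ that appear; existential closedness guarantees each such system has a solution in $G$ itself, and a coherent assembly of these solutions (indexed so that larger $m$ refines smaller $m$, then pushed through a nonprincipal ultrafilter $\u$ on $\bf N$ exactly as in the converse direction of the Claim in Lemma \ref{locunivequiv}) defines a map $H\to G^\u$ that is a homomorphism, is injective because the inequations $h_ih_j\neq h_k$ are preserved cofinally, and restricts to the diagonal embedding on $G$. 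The care needed is precisely in verifying that the parameters from $G$ are sent diagonally while the new elements of $H$ are sent to genuinely varying sequences, which is what makes the restriction-to-$G$ condition hold.
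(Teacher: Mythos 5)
The paper never proves Proposition \ref{Prop: EC}: it is introduced with the phrase ``some elementary facts'' and stated bare, so there is no argument of the authors' to compare yours against --- your proposal supplies the missing proof, and it does so correctly, by the standard routes. For (1) you run the dovetailed $\omega$-chain construction; the bookkeeping point you flag is exactly right, since any $H\in\frak C$ extending the union $G_\omega$ also extends each finite stage $G_m$, so a system over $G_\omega$ solvable in $H$ was already seen to be solvable in an extension of $G_m$ at the stage where that pair was processed (closure under direct limits is needed only to keep $G_\omega$ itself in $\frak C$, not to pass to overgroups along the way). For (2), your key step --- applying existential closedness of $G$ in the common overgroup $K$ with the \emph{empty} coefficient tuple, so that coefficient-free systems solvable in $H\leq K$ become solvable in $G$ --- is precisely what reduces the statement to the Claim proved inside Lemma \ref{locunivequiv}. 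For the ultrapower characterization, you rerun that Claim's finite-approximation argument with the two necessary upgrades: inequations among the variables (to force injectivity) and coefficient equations $y_i=a$ whenever $h_i=a\in G$ (to force the embedding to restrict to the diagonal); both are present in your sketch, and your closing sentence shows you see why the second one is the crux. One clarification you should make: drop the demand that the solutions for different $m$ be ``coherent'' or that larger $m$ ``refine'' smaller $m$ at the level of solutions. That is neither needed nor generally available (a solution of $\Sigma_m$ in $G$ need not extend to a solution of $\Sigma_{m+1}$); what is needed is only that the \emph{systems} are nested, so that each fixed equation or inequation is imposed for all but finitely many $m$, hence holds $\u$-almost everywhere once the independently chosen solutions are assembled through the nonprincipal ultrafilter. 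Incidentally, your insistence on recording non-relations also repairs a small oversight in the paper's own proof of the Claim in Lemma \ref{locunivequiv}, where the systems $\Sigma_m$ consist of equations only and therefore do not by themselves exclude the trivial, non-injective solution.
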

Let $\mathcal{C}\subseteq \mathcal{G}$ be a saturated subset and let $\frak C$ be its underlying class of isomorphism types. We let $\frak C_{ec}$ denote the collection of e.c. objects in $\frak C$ and we set $\mathcal{C}_{\ec}\subseteq \mathcal{C}$ denote the set of enumerations of groups in $\frak C$.

\begin{lem}\label{Lemma: DirectLimitsDense}
Suppose that $\frak C$ is closed under direct limits.  Then $\cal C_{\ec}$ is dense in $\cal C$.
\end{lem}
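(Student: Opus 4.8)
The plan is to establish density by meeting an arbitrary nonempty basic clopen set. So I would fix a nonempty basic clopen set $[\Sigma(\vec a)]_{\cal C}$, where $\Sigma(\vec x)$ is a system and $\vec a\in\mathbf{N}^{n_\Sigma}$, and choose some enumerated group $\bd G\in[\Sigma(\vec a)]_{\cal C}$; the goal is to produce an enumeration of an existentially closed group in $\frak C$ that again lies in this set. Write $g_1,\dots,g_{n_\Sigma}$ for the elements of $G$ carrying the labels $a_1,\dots,a_{n_\Sigma}$ under $\bd G$, so that these elements witness $G\models\Sigma(\vec a)$.

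The key input is Proposition \ref{Prop: EC}(1): since $\frak C$ is closed under direct limits, $G$ embeds, say via $\iota\colon G\hookrightarrow E$, into a group $E$ that is existentially closed for $\frak C$. Because $E$ belongs to $\frak C$ and every member of $\frak C$ is the isomorphism type of an enumerated group, $E$ is countably infinite. An injective homomorphism preserves equations and inequations among elements --- if $w(\vec x)$ is a word, then $w(\vec g)=e$ in $G$ gives $w(\iota(\vec g))=e$ in $E$, and similarly for inequations --- so the tuple $(\iota(g_1),\dots,\iota(g_{n_\Sigma}))$ satisfies $\Sigma$ in $E$.

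It remains to re-enumerate $E$ so as to land back in $[\Sigma(\vec a)]_{\cal C}$. The assignment $a_i\mapsto\iota(g_i)$ is a well-defined injection of the finite set $\{a_1,\dots,a_{n_\Sigma}\}$ into $E$ (well-defined and injective because $\bd G$ is a bijection and $\iota$ is injective); since $E$ is countably infinite I can extend it to a bijection $\mathbf{N}\to E$, yielding an enumeration $\bd E$ of $E$ under which each $a_i$ labels $\iota(g_i)$. By the previous paragraph $\bd E\models\Sigma(\vec a)$, so $\bd E\in[\Sigma(\vec a)]$; moreover $\bd E\in\cal C$ because $E\in\frak C$ and $\cal C$ is saturated, while $\bd E\in\cal C_{\ec}$ because $E$ is existentially closed for $\frak C$. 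Hence $\bd E\in[\Sigma(\vec a)]_{\cal C}\cap\cal C_{\ec}$, which is what density requires.

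I expect the only delicate points to be the two bookkeeping observations hidden in the argument: that the existentially closed overgroup supplied by Proposition \ref{Prop: EC}(1) may be taken countable (automatic here, since membership in $\frak C$ already forces countability) and that it can be re-enumerated to sit inside the prescribed basic open set (which uses saturation of $\cal C$ together with the finiteness of the constraint $\Sigma(\vec a)$). Neither is a genuine obstacle; all the real content of the lemma is carried by Proposition \ref{Prop: EC}(1).
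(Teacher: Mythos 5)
Your proof is correct and follows essentially the same route as the paper's: fix a nonempty basic open set $[\Sigma(\vec a)]_{\cal C}$, use Proposition \ref{Prop: EC}(1) to embed a witness $G$ into an e.c.\ element of $\frak C$, and re-enumerate that overgroup so that it agrees with $\bd G$ on $\vec a$, hence still satisfies $\Sigma(\vec a)$. The paper compresses your careful bookkeeping (preservation of equations and inequations under the embedding, countability of the e.c.\ overgroup, extending the finite partial labeling to a full enumeration) into the single sentence ``Fix an enumeration of $\bd H$ that agrees with $\bd G$ on $\vec a$,'' but the argument is the same.
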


\begin{proof}
Suppose that $[\Sigma(\vec a)]_\cal C$ is a nonempty basic open subset of $\cal C$ and $\bd G\in [\Sigma(\vec a)]_\cal C$. By Proposition \ref{Prop: EC}, we can find a group $H\supseteq G$ which is an e.c. element of $\frak C$.  Fix an enumeration of $\bd H$ that agrees with $\bd G$ on $\vec a$.  Then $\bd H\in [\Sigma(\vec a)]_\cal C\cap \cal C_{\ec}$.
\end{proof}

Unlike the case of locally universal groups, we do not know if the set of e.c. elements of a given class is comeager.  However, we do have such a result in the following context:

\begin{lem}\label{ecgen}
For any universal theory $T$ extending the theory of groups, if we set $\frak C:=\frak C_T$, then $\cal C_{\ec}$ is comeager in $\cal C$.
\end{lem}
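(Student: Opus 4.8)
The plan is to exhibit $\cal C_{\ec}$ as a countable intersection of comeager sets, using that $\cal C=\cal C_T$ is a Baire space. First I would record that, since $T$ is universal, $\frak C=\frak C_T$ is closed under subgroups, so by Proposition \ref{closeduniversal} the set $\cal C=\cal C_T$ is closed in $\cal G$ and hence is itself Polish, in particular Baire. Next I would reformulate existential closedness into the form best suited to a category argument: $\bd G\in\cal C$ is e.c.\ for $\frak C$ if and only if, for every finite system $\Sigma(\vec x,\vec y)$ and every tuple $\vec a$ from $\bf N$ of the same length as $\vec x$, whenever $\Sigma(\vec a,\vec y)$ has a solution in some $H\in\frak C$ with $G\le H$, it already has a solution in $G$. (This is equivalent to Definition \ref{Definition: Existentially closed}: quantifying over all extensions $H$ and over ``some extension $H$'' give the same condition.) Since there are only countably many pairs $(\Sigma,\vec a)$, this writes $\cal C_{\ec}=\bigcap_{\Sigma,\vec a}W_{\Sigma,\vec a}$, where $W_{\Sigma,\vec a}$ consists of the $\bd G\in\cal C$ satisfying this implication for the single pair $(\Sigma,\vec a)$. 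As comeager sets are closed under countable intersection, it suffices to prove that each $W_{\Sigma,\vec a}$ is comeager.

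Fixing $(\Sigma,\vec a)$, I would set $S:=\{\bd G\in\cal C:\Sigma(\vec a,\vec y)\text{ has a solution in }G\}$ and $E:=\{\bd G\in\cal C:\Sigma(\vec a,\vec y)\text{ has a solution in some }H\in\frak C\text{ with }G\le H\}$. Then $S=\bigcup_{\vec d}[\Sigma(\vec a,\vec d)]_\cal C$ is open, $S\subseteq E$, and $W_{\Sigma,\vec a}=\cal C\setminus(E\setminus S)$, so the problem reduces to showing that $E\setminus S$ is nowhere dense; indeed $\cal C\setminus\overline{E\setminus S}$ is then a dense open subset of $W_{\Sigma,\vec a}$. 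To this end I would fix an arbitrary nonempty basic clopen set $[\Delta(\vec c)]_\cal C$ and split into two cases. Either there is a tuple $\vec d\in\bf N^{|\vec y|}$ with $[\Delta(\vec c)\cup\Sigma(\vec a,\vec d)]_\cal C\ne\emptyset$, in which case that set is a nonempty basic clopen subset of $[\Delta(\vec c)]_\cal C$ contained in $S$ and hence disjoint from $E\setminus S$; or no such $\vec d$ exists.

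The crux, and the step I expect to be the main obstacle, is the second case: I must show that if no finite augmentation $\Delta(\vec c)\cup\Sigma(\vec a,\vec d)$ is realizable in $\cal C$, then $[\Delta(\vec c)]_\cal C$ is disjoint from $E$, hence a fortiori from $E\setminus S$. Suppose not, so some $\bd G\in[\Delta(\vec c)]_\cal C$ has an extension $H\in\frak C$ with $G\le H$ and a solution $\vec e$ of $\Sigma(\vec a,\vec y)$ in $H$. The (in)equations of $\Delta$ involve only the elements labelled $\vec c$, which lie in $G\le H$ and satisfy $\Delta$ there, so $H\models\Delta(\vec c)\wedge\Sigma(\vec a,\vec e)$; since $H\in\frak C$ is countably infinite (it contains the infinite group $G$) and $\cal C$ is saturated, I can re-enumerate $H$ as some $\bd H\in\cal C$ in which $\vec c,\vec a$ keep their labels and $\vec e$ receives labels $\vec d$, giving $\bd H\in[\Delta(\vec c)\cup\Sigma(\vec a,\vec d)]_\cal C$ and contradicting the case hypothesis. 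This shows $E\setminus S$ is nowhere dense, so each $W_{\Sigma,\vec a}$, and therefore $\cal C_{\ec}$, is comeager. (The density of $\cal C_{\ec}$, already supplied by Lemma \ref{Lemma: DirectLimitsDense} because universal classes are closed under unions of chains, is recovered here as a consequence of comeagerness in the Baire space $\cal C$; alternatively, the whole statement can be packaged through Theorem \ref{baireenforceable} by checking that being e.c.\ for $\frak C$ is a $\cal C$-enforceable property, the enforcing move being exactly the ``solve it if you consistently can'' strategy embodied in the above dichotomy.)
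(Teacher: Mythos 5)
Your proof is correct, and it takes a genuinely different route from the paper's, even though it isolates exactly the same countable family of conditions: your $W_{\Sigma,\vec a}=S\cup(\cal C\setminus E)$ is precisely the paper's $\cal Y_{\Sigma,\vec a,\cal C}\cup\cal Z_{\Sigma,\vec a,\cal C}$. The paper proves the lemma by showing (a) $\cal C_{\ec}$ is dense, which it gets from Lemma \ref{Lemma: DirectLimitsDense} (every group embeds in an e.c.\ one, via closure of $\frak C_T$ under direct limits), and (b) $\cal C_{\ec}$ is $G_\delta$, for which the real work is proving that $\cal Z$ (your $\cal C\setminus E$) is \emph{open}; that step is done by a compactness-style argument, embedding a putative bad $\bd G$ into an ultraproduct $\prod_\u H_n$ of witnesses taken from a neighborhood base. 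You avoid both ingredients: you never need the existence of e.c.\ extensions, and you never need $\cal C\setminus E$ to be open, only that $E\setminus S$ is nowhere dense, which you establish by the elementary dichotomy (either some finite augmentation $\Delta(\vec c)\cup\Sigma(\vec a,\vec d)$ is realizable in $\cal C$, giving a basic open subset inside $S$, or else no extension in $\frak C$ can solve $\Sigma(\vec a,\vec y)$ over $[\Delta(\vec c)]_\cal C$, by the re-enumeration/saturation argument). Your re-enumeration step is sound: since $H$ is countably infinite and $\cal C$ is saturated, any finite partial labelling fixing $\vec c,\vec a$ and assigning labels $\vec d$ to the solution (choosing $d_i$ equal to an existing label when $e_i$ happens to be one of those elements) extends to a full enumeration in $\cal C$. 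The trade-off: the paper's argument yields the stronger topological conclusion that $\cal C_{\ec}$ is itself a dense $G_\delta$ subset of $\cal C$ (hence Polish in its own right), and its ultraproduct lemma is of a piece with techniques reused elsewhere in the paper (e.g.\ Lemma \ref{locunivequiv}); your argument is more self-contained and elementary, uses no ultraproducts or chain constructions, and recovers density of $\cal C_{\ec}$ as a corollary of comeagerness rather than needing it as an input.
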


\begin{proof}
By an application of Proposition \ref{closeduniversal}, since $\cal C$ is closed, we can deduce that $\frak C$ is closed under direct limits. Hence by Lemma \ref{Lemma: DirectLimitsDense}, $\mathcal{C}_{\ec}$ is dense in $\cal{C}$. So we only need to show that $\cal C_{\ec}$ is $G_\delta$ in $\cal C$.  Fix a system $\Sigma(\vec x,\vec y)$ and $\vec a\in \bf N$.  Set $$\cal Y:=\cal Y_{\Sigma,\vec a,\cal C}:=\{\bd G\in \cal C \ : \ \bd G\models \exists \vec y \Sigma(\vec a,\vec y)\}$$ and note that $\cal Y=\bigcup_{\vec b\in \bf N}[\varphi(\vec a,\vec b)]_\cal C$, whence is open.  We claim that $$\cal Z:=\cal Z_{\Sigma,\vec a,\cal C}=\{\bd G \in \cal C \ : \ \text{ for all }H\supseteq G \text{ with }H\in \cal C, \ H\models \forall \vec y\neg\Sigma(\vec a,\vec y)\}$$ is also an open subset of $\cal C$. 

Indeed, suppose that $\bd G\in \cal Z$ and let $\{[\Sigma_n(\vec b_n)]_\cal C\}_{n\in \mathbf{N}}$ denote a countable neighborhood base of $\bd G$.  Suppose, towards, a contradiction, that for each $n$, there is $$\bd H_n\in [\Sigma_n(\vec b_n)]_\cal C\cap \bigcup_{\vec c\in \bf N}[\Sigma(\vec a,\vec c)]_\cal C$$  Fixing a nonprincipal ultrafilter $\u$ on $\bf N$, an argument similar to (but slightly more elaborate than) that occurring in the proof of Lemma \ref{locunivequiv} shows that $G$ embeds into $\prod_\u H_n$.  Since $$\prod_\u H_n\models T\cup \{\exists \vec y\Sigma(\vec a,\vec y)\}$$ this contradicts the fact that $\bd G\in \cal Z$.  Consequently, for some $n$, we have that $$[\Sigma_n(\vec b_n)]_\cal C\subseteq \bigcap_{\vec c\in \bf N}[\neg\Sigma(\vec a,\vec c)]_\cal C$$  Finally, we note that $$[\Sigma_n(\vec b_n)]_\cal C\subseteq \cal Z$$  Indeed, if $\bd H\in [\Sigma_n(\vec b_n)]_\cal C$ and $K\supseteq H$ belongs to $\cal C$, then by fixing an enumeration $\bd K$ of $K$ for which $\bd K\in [\Sigma_n(\vec b_n)]_\cal C$, we have that $\bd K\models \forall \vec y\neg\Sigma(\vec a,\vec y)$.

 It remains to note that
$$\cal C_{\ec}=\bigcap_{\Sigma,\vec a}\left(Y_{\Sigma,\vec a,\cal C}\cup \cal Z_{\Sigma,\vec a,\cal C}\right).$$
% Then by the Compactness Theorem, there is some $\theta(\vec a,\vec b)$ in the atomic diagram of $\bd G$ such that, for any $\bd H\in [\theta(\vec a,\vec b)]_\cal C$,  we have that $H\models\forall \vec y\neg\Sigma(\vec a,\vec y)$.  It follows that $\bd G\in [\theta(\vec a,\vec b)]_\cal C$ and $[\theta(\vec a,\vec b)]_\cal C\subseteq \cal Z$. 
\end{proof}

The following corollary of Lemma \ref{ecgen} is an immediate generalization of an argument originally due to Macintyre in the case that $T$ is the theory of groups itself (see, for example, \cite[Theorem 3.4.6]{hodges}):

\begin{cor}\label{ecsolv}
Suppose that $T$ is a recursively enumerable universal theory extending the theory of groups and $H$ is a finitely generated group without solvable word problem.  Then there is an e.c. model $G$ of $T$ into which $H$ does not embed.
\end{cor}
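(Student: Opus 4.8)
I would prove the contrapositive of the statement whose contrapositive is Macintyre's theorem, namely: \emph{if a finitely generated group $H$ embeds into every e.c. model of $T$, then $H$ has solvable word problem.} First I recast this in the Baire-category language of the preceding subsection. Set $\cal C:=\cal C_T$; since $T$ is universal, Proposition \ref{closeduniversal} shows that $\cal C$ is a closed, hence Polish (hence Baire), saturated subspace of $\cal G$, and Lemma \ref{ecgen} shows that $\cal C_{\ec}$ is comeager in $\cal C$. The property $P_H$ = ``the compiled group contains an isomorphic copy of $H$'' is invariant, and since embeddability of the fixed countable group $H$ is expressible by an $L_{\omega_1,\omega}$-sentence (an existential block asserting the existence of a tuple realizing exactly the relations, and exactly the non-relations, of $H$), $P_H$ is Baire measurable. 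Hence Theorem \ref{baireenforceable} applies: $P_H$ holds on a comeager subset of $\cal C$ if and only if $P_H$ is $\cal C$-enforceable. My plan is to show that $P_H$ is \emph{not} $\cal C$-enforceable whenever $H$ has unsolvable word problem. Granting this, $\{\bd G\in\cal C : H\not\hookrightarrow G\}$ is non-meager, so its intersection with the comeager set $\cal C_{\ec}$ is nonempty (a non-meager set meets every comeager set), and any enumerated group in that intersection is an e.c. model of $T$ into which $H$ does not embed.

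\textbf{The enforceability obstruction.} Assume toward a contradiction that $P_H$ is $\cal C$-enforceable, and work with the enforced copy of $H$ inside a generic model, whose generators are realized by some tuple $\vec x$, so that $w(\vec x)=e$ holds precisely when $w=_H 1$. I would recover an algorithm for the word problem of $H$, contradicting the hypothesis. The engine is the interaction between the finite conditions played in the game (the $\cal C$-systems) and the recursive enumerability of $T$: because $T$ is r.e., first-order provability from $T$ is r.e., so ``a finite system $\Sigma$ is inconsistent with $T$'' is r.e., and consequently the set of pairs $(p,\exists\vec y\,\theta)$ for which the condition $p$ forces the existential statement $\exists\vec y\,\theta(\vec y)$ positively is recursively enumerable. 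The substantive half is that $\{w : w\neq_H 1\}$ is r.e., since $w\neq_H 1$ is witnessed by extending a base condition (pinning down a partial copy of $H$) to force $w(\vec x)\neq e$. Using the hypothesis that the embedding persists across \emph{all} e.c. models, one obtains symmetrically that $\{w : w=_H 1\}$ is r.e. as well. Both the set of trivial words and its complement being recursively enumerable forces the word problem of $H$ to be decidable, which is the desired contradiction.

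\textbf{Main obstacle.} The delicate point is the complexity bookkeeping of the second step, and this is precisely where Macintyre's argument (cf.\ \cite[Theorem 3.4.6]{hodges}) must be adapted to the recursively axiomatizable theory $T$. Two things require care: first, that the finite-forcing relation for $\cal C_T$ really does place forced existential statements at the recursively enumerable level relative to $T$, which rests on channeling the r.e.-ness of $T$ through provability and through consistency of finite systems; and second — the genuinely subtle part — that \emph{both} halves of the word problem, and not merely the complement $\{w:w\neq_H 1\}$, are captured as recursively enumerable sets. The obstruction here is that $H$ need not be finitely presented, so its full relation set is infinite and the copy of $H$ is pinned down only through a density argument over finite approximations rather than by a single condition; verifying that $\{w : w=_H 1\}$ remains r.e. under these circumstances is exactly the content that the hypothesis ``$H$ embeds into every e.c.\ model of $T$'' is designed to supply.
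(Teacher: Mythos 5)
Your outer skeleton is sound, and in fact it is the paper's own argument transported through the forcing dictionary: the property $P_H$ of containing a copy of $H$ is invariant and Borel (indeed $\cal C_{P_H}=\bigcup_{\vec a\in\bf N^n}\bigcap_{\varphi\in\Phi}[\varphi(\vec a)]_{\cal C_T}$, a countable union of closed sets, where $\Phi=\Phi_+\cup\Phi_-$ is the set of equations and inequations satisfied by a fixed generating tuple of $H$), so by Theorem \ref{baireenforceable} non-enforceability of $P_H$ makes its complement non-meager, and a non-meager set meets the comeager set $\cal C_{\ec}$ of Lemma \ref{ecgen}. The genuine gap is in the only substantive step: deriving solvability of the word problem of $H$ from enforceability of $P_H$. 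You begin by fixing ``the enforced copy of $H$\ldots realized by some tuple $\vec x$'' with the dictionary ``$w(\vec x)=e$ iff $w=_H e$.'' Enforceability supplies no such object: it asserts the existence of a strategy in an infinite game, and the copy of $H$ is assembled over infinitely many rounds, at tuples that vary from play to play; no fixed tuple and no single finite condition come attached to it. Your proposed enumerations then fail. A ``base condition pinning down a partial copy of $H$'' is a finite subset $p_0\subseteq\Phi$, and for such a $p_0$ the set $\{w:\text{some condition }q\supseteq p_0\text{ consistent with }T\text{ contains }w(\vec x)\neq e\}$ is in general strictly larger than $\{w:w\neq_H e\}$ (it equals $\{w: T\cup p_0\not\vdash w(\vec x)=e\}$, and $T\cup p_0$ need not entail the infinitely many remaining relations of $H$); moreover ``$q$ is consistent with $T$'' is co-r.e., not r.e., so the quantifier bookkeeping does not land at the r.e. level either. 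For $\{w:w=_H e\}$ your final paragraph concedes the argument is missing — saying the verification ``is exactly the content that the hypothesis is designed to supply'' is circular.

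What is missing is the extraction of a single finite condition that decides the embedding at a fixed tuple: a $\cal C_T$-system $\Sigma$ and tuple $\vec a$ with $[\Sigma(\vec a)]_{\cal C_T}\subseteq\bigcap_{\varphi\in\Phi}[\varphi(\vec a)]_{\cal C_T}$. Once this is in hand, for every equation or inequation $\varphi$ one has $\varphi\in\Phi$ if and only if $T\models\forall\vec x\,(\Sigma(\vec x)\rightarrow\varphi(\vec x))$ (right-to-left uses a group in $[\Sigma(\vec a)]_{\cal C_T}$, in which $\vec a$ generates a copy of $H$; left-to-right uses the displayed inclusion together with the fact that subgroups of models of the universal theory $T$ are models), and since $T$ is recursively enumerable this makes \emph{both} $\Phi_+$ and $\Phi_-$ r.e., solving the word problem. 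You could repair your route by producing $\Sigma$ from your own hypotheses: enforceability of $P_H$ gives comeagerness of $\cal C_{P_H}$, hence by Baire some closed piece $\bigcap_{\varphi\in\Phi}[\varphi(\vec a)]_{\cal C_T}$ is non-meager, and a non-meager closed set has nonempty interior, which contains the required basic clopen set. The paper reaches the same condition more directly, bypassing forcing altogether: for each $\vec a$ the set $\bigcup_{\varphi\in\Phi}[\neg\varphi(\vec a)]_{\cal C_T}$ is open, and either all of these are dense (then the Baire category theorem and Lemma \ref{ecgen} already yield an e.c. model omitting $H$), or one of them is not dense, which hands you $\Sigma$ immediately and produces the contradiction with unsolvability. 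Without this extraction step, your proposal does not prove the corollary; with it, your argument and the paper's coincide.
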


\begin{proof}
Let $\vec h$ denote a finite generating set for $H$ and let $\Phi_+(\vec x)$ and $\Phi_-(\vec x)$ denote the set of equations and inequations satisfied by $\vec h$ in $H$.  Set $\Phi(\vec x)=\Phi_+(\vec x)\cup \Phi_-(\vec x)$.  Letting $n$ denote the length of $\vec h$, by Lemma \ref{ecgen} and the Baire Category Theorem, it suffices to show that, for any $\vec a\in \bf N^n$, the set $\bigcup_{\varphi\in\Phi}[\neg \varphi(\vec a)]_{\cal C_T}$ is dense, for then any $G$ belonging to the comeager set $\cal C_{ec}\cap \bigcap_{\vec a\in \bf N^n}\bigcup_{\varphi\in\Phi}[\neg \varphi(\vec a)]_{\cal C_T}$ is as desired.  Suppose, towards a contradiction, that $\bigcup_{\varphi\in\Phi}[\neg \varphi(\vec a)]_{\cal C_T}$ is not dense, whence there is some $\cal C_T$-system $\Sigma(\vec x)$ for which $[\Sigma(\vec a)]_{\cal C_T}\subseteq \bigcap_{\varphi\in\Phi}[\varphi(\vec a)]_{\cal C_T}$.  In other words, for all equations $\varphi$, we have $\varphi\in \Phi_+(\vec x)$ if and only if $T\models \forall \vec x(\Sigma(\vec x)\rightarrow \varphi(\vec x))$, implying that the set of all equations true of $\vec h$ in $H$ is recursively enumerable.  The same argument shows that the set of inequations true of $\vec h$ in $H$ is also recursively enumerable, whence $H$ has solvable word problem, leading to a contradiction. 
\end{proof}
The following is \cite[Corollary 3.4.3]{hodges}:
\begin{prop}\label{ecenforceable}
Suppose that $\cal C$ is a closed saturated subset of $\cal G$.  Then the property of being e.c. for $\cal C$ is $\cal C$-enforceable.
\end{prop}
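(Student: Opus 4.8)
The plan is to build the strategy for player II directly, one requirement at a time, via the Conjunction Lemma, rather than routing through Baire category. Recall from Definition~\ref{Definition: Existentially closed} that being e.c.\ for $\frak C$ presupposes membership in $\frak C$, so I would write ``the compiled group $G$ is e.c.\ for $\cal C$'' as the conjunction of the single property ``$G$ lies in $\cal C$'' together with, for each finite system $\Sigma(\vec x,\vec y)$ and each $\vec a\in \bf N^{|\vec x|}$, the requirement
$$P_{\Sigma,\vec a}:\quad\text{if some }H\in\frak C\text{ with }G\leq H\text{ satisfies }\exists\vec y\,\Sigma(\vec a,\vec y),\text{ then }G\text{ does too.}$$
There are only countably many pairs $(\Sigma,\vec a)$. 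Since $\cal C$ is closed it is Polish, so the earlier proposition asserting that membership in $\cal C$ is $\cal C$-enforceable already handles the first conjunct. Hence by the Conjunction Lemma it suffices to prove that each individual $P_{\Sigma,\vec a}$ is $\cal C$-enforceable.

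To enforce $P_{\Sigma,\vec a}$, I would have player II act at one designated turn: with current state the $\cal C$-system $\Delta$, check whether there is $\bd K\in[\Delta]_{\cal C}$ with $\bd K\models\exists\vec y\,\Sigma(\vec a,\vec y)$. In the affirmative case, II plays $\Delta\cup\Sigma(\vec a,\vec c)$ for a tuple $\vec c$ of fresh natural numbers, obtained by relabeling (via saturation) a solution of $\Sigma(\vec a,\vec y)$ in $\bd K$ onto $\vec c$ so that the enlarged system remains $\cal C$-satisfiable and extends player I's move; every continuation then forces $G\models\Sigma(\vec a,\vec c)$, so the conclusion of $P_{\Sigma,\vec a}$ holds outright. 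In the negative case II plays $\Delta$ unchanged, and the key observation is that $P_{\Sigma,\vec a}$ then holds vacuously: if some $H\in\frak C$ extending the compiled group $G$ satisfied $\exists\vec y\,\Sigma(\vec a,\vec y)$, then choosing an enumeration $\bd H$ of $H$ that agrees with $G$ on the finitely many coefficients occurring in $\Delta$ would place $\bd H\in[\Delta]_{\cal C}$ (using that $\cal C$ is saturated, so every enumeration of an isomorphism type in $\frak C$ lies in $\cal C$) with $\bd H\models\exists\vec y\,\Sigma(\vec a,\vec y)$, contradicting the negative case. Either way the requirement is permanently secured at that turn.

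The main obstacle I expect is the bookkeeping in the affirmative case: one must verify that the relabeling genuinely yields a $\cal C$-system extending the previous move and that forcing a solution for one requirement does not spoil a solution already obtained for another. Both points are handled uniformly by the dovetailing in the proof of the Conjunction Lemma, together with the fact that once $\Sigma(\vec a,\vec c)$ enters the system it persists in every later extension. Finally, I would record a shorter alternative under the extra hypothesis that $\frak C$ is closed under subgroups: then Proposition~\ref{closeduniversal} expresses $\cal C=\cal C_T$ for a universal theory $T$, Lemma~\ref{ecgen} shows $\cal C_{\ec}$ is comeager (and $G_\delta$, hence Baire measurable), and since being e.c.\ is invariant, Theorem~\ref{baireenforceable} converts comeagerness directly into $\cal C$-enforceability.
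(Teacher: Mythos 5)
Your overall strategy is sound, and it is essentially the argument the paper never spells out: the paper proves this proposition purely by citation to \cite[Corollary 3.4.3]{hodges}, so your self-contained forcing proof is a genuinely different presentation, reconstructing what is presumably the cited argument. The decomposition into membership in $\cal C$ plus the countably many requirements $P_{\Sigma,\vec a}$, the one-designated-turn strategy for each requirement, and the appeal to the Conjunction Lemma is exactly the standard route, and your treatment of the negative case (the crux) is correct: saturation lets you re-enumerate any $\cal C$-extension $H$ of the compiled group so as to agree with it on the finitely many coefficients of $\Delta$ and $\vec a$, landing $\bd H$ in $[\Delta]_{\cal C}$ and contradicting the failure of the existential there. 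Two features of your write-up are worth keeping: your argument never uses closedness per se, only that $\cal C$ is saturated and Polish (enforceability of membership needs only $G_\delta$, and the vacuity argument needs only saturation), so you in fact prove something slightly more general than the stated proposition; and your closing alternative via Proposition \ref{closeduniversal}, Lemma \ref{ecgen} and Theorem \ref{baireenforceable} is correctly flagged as requiring the extra hypothesis that $\frak C$ be closed under subgroups, which the proposition does not assume.

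One step fails as literally written, though the repair is immediate. In the affirmative case you have player II play $\Delta\cup\Sigma(\vec a,\vec c)$ for a \emph{fresh} tuple $\vec c$, obtained by relabeling a solution in $\bd K$. This is impossible in general: if $\Sigma(\vec a,\vec y)$ contains, say, the equation $y_1a_1^{-1}=e$, then for any fresh $c_1$ the enlarged system contains $c_1a_1^{-1}=e$ with $c_1\neq a_1$ as natural numbers, and no enumerated group satisfies this, since distinct names denote distinct elements; so the enlarged system is not a $\cal C$-system at all, and no permutation trick can help, because any relabeling must fix the coefficients of $\Delta$ and $\vec a$ to preserve the position. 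The fix is to drop freshness entirely: since $\bd K\in[\Delta]_{\cal C}$ and $\bd K\models\Sigma(\vec a,\vec b)$ for some actual tuple $\vec b\in \bf N^{|\vec y|}$, have player II play $\Delta\cup\Sigma(\vec a,\vec b)$ with those names. This is a legal move (it is a $\cal C$-system, witnessed by $\bd K$ itself, extending $\Delta$), it persists in all later positions, and it forces the compiled group to satisfy $\exists\vec y\,\Sigma(\vec a,\vec y)$, which is all that $P_{\Sigma,\vec a}$ requires. Nothing else in your argument depends on the witnesses being fresh.
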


\begin{prop}\label{AVE}
Suppose that $\cal C$ and $\cal D$ are saturated Polish subspaces of $\cal G$ with $\cal D\subseteq \cal C$.  Further suppose that there is $\bd G\in \cal D$ such that $G$ is locally universal for $\frak C$.  Then the property of belonging to $\cal D$ is $\cal C$-enforceable.
\end{prop}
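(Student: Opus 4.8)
The plan is to mimic the proof that membership in a saturated Polish subspace is $\cal C$-enforceable, the extra ingredient being that local universality of $G$ lets us place a re-enumeration of $G$ inside every basic open set cut out by a $\cal C$-system. Since $\cal D$ is a Polish subspace of the Polish space $\cal G$, it is $G_\delta$ in $\cal G$; write $\cal D=\bigcap_{n}V_n$ with each $V_n$ open in $\cal G$. A definitive compiled group lies in $\cal D$ exactly when it lies in every $V_n$, so by the Conjunction Lemma it is enough to show, for each fixed $n$, that the property ``$\bd H\in V_n$'' is $\cal C$-enforceable.

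The crux is the following density statement: \emph{for every $\cal C$-system $\Sigma$ there is an enumeration of $G$ lying in $[\Sigma]\cap\cal D$.} To prove it, note that $\Sigma$ being a $\cal C$-system means $[\Sigma]_\cal C\neq\emptyset$, so the augmented system $\Sigma^+:=\Sigma\cup\{x_i\neq x_j : i\neq j\}$, whose extra inequations hold automatically for the distinct labels $1,\dots,n_\Sigma$, has a solution in some $K\in\frak C$. Since $G$ is locally universal for $\frak C$, the group $K$ embeds into an ultrapower $G^\u$; by the Claim established inside the proof of Lemma \ref{locunivequiv}, any system with a solution in $K$ has a solution in $G$, so $\Sigma^+$ has a solution $\vec a$ in $G$ with pairwise distinct entries. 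Re-enumerating $G$ so that $a_i$ is labelled $i$ produces an enumeration $\bd G'$ with $\bd G'\models\Sigma$, i.e.\ $\bd G'\in[\Sigma]$; moreover $\bd G'\in\cal D$ because $\cal D$ is saturated and contains an enumeration of $G$.

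Granting this, player II's strategy is the usual one. When player I opens a round with the $\cal C$-system $\Sigma$, the density statement provides $\bd G'\in[\Sigma]\cap\cal D\subseteq[\Sigma]\cap V_n$. As $V_n$ is open and the sets $[\Delta]$ form a basis for $\cal G$ (Corollary \ref{cor: top}), there is a system $\Delta$ with $\bd G'\in[\Delta]\subseteq V_n$; since $\bd G'\in\cal D\subseteq\cal C$ witnesses $[\Sigma\cup\Delta]_\cal C\neq\emptyset$, the move $\Sigma\cup\Delta$ is a legal $\cal C$-system extending $\Sigma$, and player II plays it. The compiled group then satisfies $\Delta$, hence lies in $[\Delta]\subseteq V_n$, as required.

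The one genuinely new point, and the expected main obstacle, is the density statement: that the single locally universal group $G$ can, after re-enumeration, be realized inside every basic open set $[\Sigma]_\cal C$. This is precisely where local universality enters, through the ultrapower characterization ``every $\cal C$-system has a solution in $G$'' recorded in Lemma \ref{locunivequiv}. Everything else, namely the reduction to the $V_n$ via the Conjunction Lemma and the basis manipulation, is routine and parallels the proof that membership in a Polish subspace is enforceable.
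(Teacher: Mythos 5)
Your proof is correct and follows essentially the same route as the paper's: both reduce membership in $\cal D$ to membership in each open set of a $G_\delta$ presentation via the Conjunction Lemma, use local universality of $G$ to place a (re-)enumeration of $G$ in $[\Sigma]\cap \cal D$, and have player II respond with $\Sigma\cup\Delta$ for a basic clopen neighborhood $[\Delta]$ of that enumeration contained in the open set. The only difference is that you spell out, via the augmented system $\Sigma^+$ and a permutation of $\bf N$, the re-enumeration step that the paper compresses into the assertion ``$\bd G\models \Sigma$''; this is a welcome clarification rather than a change of method.
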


\begin{proof}
Let $U_n$ be open subsets of $\cal G$ such that $\cal D=\bigcap_n U_n$.  It suffices to show, for each $n$, that belonging to $U_n$ is $\cal C$-enforceable.  Suppose that player I opens with the the $\cal C$-system $\Sigma$.  Since $\bd G$ is locally universal for $\frak C$, we have that $\bd G\models \Sigma$.  Since $\bd G\in U_n$ and $U_n$ is open, there is a $\cal D$-system $\Delta$ such that $\bd G\in [\Delta]_\cal C\subseteq U_n$.  If player II responds with the $\cal C$-system $\Sigma\cup \Delta$, we have that the compiled group belongs to $U_n$, as desired.
\end{proof}

%Returning to Osin's question (and Question \ref{Question: main}$(2)$ more generally), we have:

\begin{cor}
Suppose that $\cal C$ is a saturated Baire measurable subspace of $G$.  Given $G\in \frak C$, the set $\{\bd H\in \cal C \ : \ H\cong G\}$ is comeager in $\cal C$ if and only if the property of being isomorphic to $G$ is $\cal C$-enforceable.
\end{cor}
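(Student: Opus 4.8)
The plan is to recognize this corollary as the specialization of Theorem \ref{baireenforceable} to the property $P=$ ``isomorphic to $G$'', so that essentially all the work is already done and only the two standing hypotheses of that theorem remain to be checked for this particular $P$. Write $\cal C_P=\{\bd H\in\cal C \ : \ H\cong G\}$; the goal is exactly to show $\cal C_P$ is comeager in $\cal C$ if and only if $P$ is $\cal C$-enforceable.

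First I would observe that $P$ is \emph{invariant}: the set $\cal C_P$ depends only on the isomorphism types occurring in $\cal C$, not on the chosen enumerations, so it is saturated. This is the general remark recorded just before Theorem \ref{baireenforceable} that any property of countable groups is invariant. The only genuine verification is that $P$ is \emph{Baire measurable}, i.e. that $\cal C_P$ is a Baire measurable subset of $\cal C$. For this I invoke a Scott sentence $\sigma_G$ for the countable group $G$, namely an $L_{\omega_1,\omega}$-sentence with $H\models\sigma_G \iff H\cong G$ for every countable group $H$, as in the footnote to Remark \ref{scottsentence}. Then
$$\cal C_P=\cal C\cap\{\bd H\in\cal G \ : \ \bd H\models\sigma_G\},$$
and the earlier proposition asserting that $\{\bd H\in\cal G \ : \ \bd H\models\varphi\}$ is Borel for every $L_{\omega_1,\omega}$-sentence $\varphi$ shows that the second factor is Borel in $\cal G$. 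Intersecting with $\cal C$ keeps it Baire measurable (indeed relatively Borel) in $\cal C$, so $P$ is Baire measurable.

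With both hypotheses in place, Theorem \ref{baireenforceable} applies verbatim and yields the desired equivalence: $\cal C_P$ is comeager in $\cal C$ precisely when $P$ is $\cal C$-enforceable, which is exactly the statement that $\{\bd H\in\cal C \ : \ H\cong G\}$ is comeager in $\cal C$ if and only if the property of being isomorphic to $G$ is $\cal C$-enforceable. I do not expect any serious obstacle, since the corollary is a direct instance of Theorem \ref{baireenforceable}; the only points deserving a word of care are the Baire-measurability step (handled cleanly by the Scott sentence together with the Borel-ness of $L_{\omega_1,\omega}$-definable sets) and the fact that Theorem \ref{baireenforceable} is stated for a Baire subspace. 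For the latter I would simply note that in the intended setting $\cal C$ is to be taken as a Baire space—automatic whenever $\cal C$ is Polish, e.g. closed or $G_\delta$ in $\cal G$ as in all our applications—so that the Baire alternative underlying Theorem \ref{baireenforceable} is genuinely available.
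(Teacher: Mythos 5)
Your proposal is correct and is exactly the argument the paper intends: the corollary is stated without proof as a direct instance of Theorem \ref{baireenforceable}, with invariance being automatic (isomorphism type does not depend on the enumeration) and Baire measurability supplied by the Scott sentence together with the Borel-ness of $L_{\omega_1,\omega}$-definable sets, just as in Remark \ref{scottsentence}. Your side remark about the hypothesis mismatch (``Baire measurable subspace'' in the corollary versus ``Baire subspace'' in Theorem \ref{baireenforceable}) is a fair catch of a looseness in the paper's own wording, and your resolution---taking $\cal C$ to be a Baire space, as it is in all intended applications---is the right one.
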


When the equivalent conditions of the following corollary are satisfied, we call $G$ the \textbf{$\cal C$-enforceable group}.

The following fact follows from \cite[Theorem 4.2.6 and Exericse 4.2.2(a)]{hodges}:

\begin{fact}\label{embedsintoallec}
Suppose that $P$ is an axiomatizable property of groups that is closed under direct limits.  Further suppose that the $\cal G_P$-enforceable group $G$ exists.  Then $G$ embeds into every $\cal G_P$-e.c. group.
\end{fact}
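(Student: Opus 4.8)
The plan is to recast the statement in the forcing language developed above and reduce it, via the Conjunction Lemma, to a single enforceability claim. Write $\cal C=\cal G_P$. Since $P$ is axiomatizable and closed under direct limits, $\cal C$ is an \emph{inductive} class (its isomorphism types are closed under unions of chains), so the forcing framework of \cite{hodges} applies. Recall that ``$G$ is the $\cal C$-enforceable group'' means exactly that the property ``$\cong G$'' is $\cal C$-enforceable. Thus, fixing an arbitrary $\cal C$-e.c.\ group $H$, it suffices to prove that the property $P_H$, namely that ``the compiled group embeds into $H$'', is $\cal C$-enforceable: granting this, the Conjunction Lemma makes ``$\cong G$''\,$\wedge\,P_H$ enforceable, so some play produces a compiled group that is simultaneously isomorphic to $G$ and embeddable in $H$, yielding $G\hookrightarrow H$.

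To enforce $P_H$, player II would build a coherent embedding of the compiled group into $H$ along the play. At each of her turns she maintains a tuple $\vec d\in H$ realizing the current system, i.e.\ $H\models\Sigma(\vec d)$, together with the bookkeeping that the named generators $1,2,\dots$ are sent to the corresponding coordinates of $\vec d$; she uses her moves both to enforce definitiveness (naming all products) and to force all requisite inequations $i\neq j$, so that the limiting map $i\mapsto d(i)$ is a well-defined injective homomorphism $G\to H$. The decisive move is the \emph{realization step}: when player I extends the current system $\Sigma(\vec x)$ to a $\cal C$-system $\Sigma'(\vec x,\vec y)$, player II must find an extension $\vec d'\supseteq\vec d$ with $H\models\Sigma'(\vec d,\vec d')$, that is, she must solve $\Sigma'(\vec d,\vec y)$ over the already-placed parameters $\vec d$ inside $H$. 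Here is where existential closedness enters: because $H$ is e.c.\ for $\cal C$ (Definition~\ref{Definition: Existentially closed}), it is enough to exhibit \emph{some} overgroup $H'\supseteq H$ with $H'\in\cal C$ in which $\Sigma'(\vec d,\vec y)$ is solvable, for the solution then reflects down into $H$.

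The hard part is precisely this realization step, and it is where the two cited results of \cite{hodges} carry the weight. The difficulty is twofold: a finite condition only partially determines the finitely generated subgroup it describes, and $\cal C$ is not assumed to admit amalgamation, so one cannot simply take $H'$ to be an amalgamated free product of $H$ with a witness of $\Sigma'$. What rescues the argument is that the existential requirements player II must meet are not arbitrary but \emph{generic} (enforceable): the content of \cite[Theorem~4.2.6]{hodges} is that the enforceable structure is e.c., while \cite[Exercise~4.2.2(a)]{hodges} identifies the generic existential types as exactly those that are realized over finite tuples in \emph{every} e.c.\ model of an inductive theory. Invoking these, every requirement arising in player II's strategy is realizable in an element of $\cal C$ extending $H$, hence, by existential closedness, already in $H$ itself. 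This is the step I expect to be genuinely resistant to a bare-hands treatment, and the reason the statement is best imported from the forcing machinery rather than proved by direct amalgamation.

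As a consistency check and an orienting remark, I would also note that $G$ is itself e.c.\ for $\cal C$: the set $\cal C_{\ec}$ is dense by Lemma~\ref{Lemma: DirectLimitsDense} (using closure under direct limits) and $G_\delta$ by the ultraproduct argument in the proof of Lemma~\ref{ecgen} (using only that $\cal C$, being axiomatizable, is closed under ultraproducts), hence comeager; since the isomorphism class of $G$ is comeager by hypothesis, the two meet, so $G$ is e.c. The theorem then says that, among the countable e.c.\ members of $\cal C$, the enforceable group is the generic, ``smallest'' one that embeds into all the others, which is exactly the qualitative behaviour one should expect of a forcing-generic object.
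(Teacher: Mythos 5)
Your reduction is sound as far as it goes: by the Conjunction Lemma it does suffice to show that, for a fixed e.c.\ group $H\in\cal G_P$, the property $P_H$ (``the compiled group embeds into $H$'') is $\cal G_P$-enforceable. The genuine gap is in the realization step, and it cannot be papered over by the citations as you deploy them. Once player II commits to witnesses $\vec d$ in $H$, the quantifier-free type of $\vec d$ in $H$ is complete: every word $w$ either is or is not a relation among the coordinates of $\vec d$, and these facts persist in \emph{every} overgroup of $H$. Since the current condition $\Sigma$ is finite, player I can typically find a word $w$ (introducing a fresh variable in his move if necessary) such that both $\Sigma\cup\{w=e\}$ and $\Sigma\cup\{w\neq e\}$ are $\cal C$-systems, and then play whichever one $\vec d$ violates. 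At that point $\Sigma'(\vec d,\vec y)$ has no solution in any extension of $H$ inside $\cal C$, so existential closedness gives nothing, and player II must discard $\vec d$; repeated forever, her witnesses never stabilize and no limiting embedding exists. The missing mechanism is that player II must \emph{herself} extend the condition so as to pin down (isolate) the quantifier-free type of her chosen witnesses against all future consistent extensions, and the possibility of always doing so is exactly where the hypothesis that the $\cal G_P$-enforceable group exists has to be used. Your realization step invokes only e.c.-ness of $H$; the hypothesis about $G$ never enters it. (Note also that your glosses of the cited results are off: ``the enforceable structure is e.c.'' is the content of Proposition \ref{ecenforceable}, already in the paper, and is not what makes Fact \ref{embedsintoallec} work. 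The paper itself offers no argument here beyond the citation, so a correct ``proof by citation'' was available to you, but your hybrid strategy is not one.)

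The omission is fatal rather than cosmetic, because the intermediate claim you attempt to prove from e.c.-ness alone is false. Take $P$ to be the tautologically true property (axiomatizable and closed under direct limits) and $H$ any countable e.c.\ group. Every e.c.\ group contains a finitely generated subgroup with unsolvable word problem: the finite system consisting of the relations of the Miller group of \cite{CFMiller} together with one inequation keeping a designated element nontrivial is realized in an extension of $H$, hence in $H$, and the subgroup generated by any solution is a nonidentity quotient of that group. On the other hand, by the proof of Corollary \ref{ecsolv} together with Theorem \ref{baireenforceable}, ``omits $B$'' is enforceable for each fixed finitely generated $B$ with unsolvable word problem. If $P_H$ were enforceable, the Conjunction Lemma (applied to $P_H$, to e.c.-ness via Proposition \ref{ecenforceable}, and to ``omits $B$'' for each of the countably many finitely generated subgroups $B$ of $H$ with unsolvable word problem) would yield an e.c.\ group that embeds into $H$ yet omits every such $B$ --- a contradiction. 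So in $\cal G$, where no enforceable group exists, $P_H$ fails to be enforceable for every e.c.\ $H$; hence any correct proof of the Fact must use the existence of the enforceable group inside the realization step, which your proposal never does.
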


%Osin's question thus becomes:  if $\cal C=\Gsm$ or $\Gam$, is there a $\cal C$-enforceable group?
%While we do not know the answer to Osin's question, we do have the following result, announced in the introduction:

We are now ready to prove:

\begin{customthm}{\ref{main theorem 4}}
If $P$ is a strongly undecidable, recursively axiomatizable, Boone-Higman property that is closed under subgroups, then there is no comeager isomorphism class in $\mathcal{G}_P$.  
\end{customthm}

\begin{proof}
Suppose, towards a contradiction, that there is a comeager isomorphism class in $\mathcal{G}_P$, that is, the $\cal G_P$-enforceable group $G$ exists.  Since $P$ is a strongly undecidable Boone-Higman property closed under subgroups, Theorem \ref{Theorem: main3} and the Baire category theorem imply that $G$ contains a finitely generated subgroup $H$ with property $P$ that has an unsolvable word problem.  By Corollary \ref{ecsolv}, there is an e.c. element of $\cal G_P$ into which $H$ does not embed.  On the other hand, $G$ embeds into all e.c. elements of $\cal G_P$ by Fact \ref{embedsintoallec}, leading to a contradiction.
\end{proof}

We document here A. Darbinyan's proof that left orderability is strongly undecidable. 

\begin{prop}\label{Arman's proof}
Left orderability is strongly undecidable.
\end{prop}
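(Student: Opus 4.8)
The plan is to exhibit a finitely presented group $G$ together with a finite subset $X \subseteq G$ witnessing the two clauses of strong undecidability for $P=$ left orderability: some left orderable quotient of $G$ is injective on $X$, and every such quotient has unsolvable word problem. The template is the tautological case from \cite{CFMiller}, where one takes a finitely presented group all of whose nontrivial quotients have unsolvable word problem and lets $X$ be a single nontrivial element. Note that the definition does not require $G$ itself to be left orderable, only that it admit a left orderable quotient injective on $X$; this extra freedom is important, since the undecidability must be carried by the left orderable quotients rather than by $G$.

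First I would record why the two usual routes to $lo$-near isolation are unavailable, which explains why a genuinely new construction is needed. If a finitely generated left orderable group $H$ with unsolvable word problem embedded in a simple subgroup of a finitely presented left orderable group, then by the Boone--Higman property for left orderability (Theorem \ref{bludovglass2}(2), due to \cite{bludovglass}) $H$ would have \emph{solvable} word problem, a contradiction. Likewise, if $H$ embedded in an isolated left orderable group $I$, then $I$ would itself have unsolvable word problem; but isolated groups always have solvable word problem, since, being finitely presented and finitely discriminable by a finite set $\{g_1,\dots,g_k\}$, one has $w \ne_I e$ exactly when some $g_i$ lies in the normal closure of the defining relators together with $w$, a recursively enumerable condition, so that both $w =_I e$ and $w \ne_I e$ are semidecidable. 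Hence neither ``natural situation'' for near isolation can produce a forced finitely generated subgroup with unsolvable word problem, and the undecidability must be forced directly.

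Accordingly, the approach I would take is to encode a recursively enumerable, non-recursive set $A \subseteq \mathbb{N}$ (say the halting set) directly into $G$ by a recursively indexed family of words $(w_n)_{n \in \mathbb{N}}$ in the generators of $G$, arranged so that $n \in A$ forces $w_n =_G e$ by fiat (one throws the corresponding relators into the presentation, using that $A$ is recursively enumerable and that $G$ may first be taken merely recursively presented before a finiteness reduction). Since relations are preserved under quotient maps, the implication $n \in A \Rightarrow \phi(w_n) = e$ then holds automatically in any quotient $\phi \colon G \to K$. To secure clause (1) and keep $G$ tied to left orderability, I would build the recursively presented group carrying this encoding to be left orderable and then embed it into a finitely presented left orderable host via the Bludov--Glass embedding theorem (Theorem \ref{bludovglass2}(1)), together with the order-extension result of Theorem \ref{bludovglass}; this produces at least one left orderable quotient of $G$ injective on $X$.

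The hard part, and the heart of Darbinyan's argument, is the converse encoding: showing that for $n \notin A$ the word $w_n$ stays nontrivial in \emph{every} left orderable quotient $\phi \colon G \to K$ injective on $X$. Equivalently, each such $K$ must contain a finitely generated (hence, as a subgroup of a left orderable group, left orderable) subgroup whose word problem computes $A$, so that $K$ has unsolvable word problem; otherwise deciding triviality of $\phi(w_n)$ would decide membership in $A$. The mechanism I would pursue is to make the nontriviality of $w_n$ a consequence of left invariance of the order rather than of the global structure of $G$: one chooses the elements of $X$ and the amalgamated and HNN building blocks (all left orderable by Theorem \ref{bludovglass}) so that, in any left order on $K$ compatible with the constraints that injectivity on $X$ imposes, each $w_n$ with $n \notin A$ is pinned strictly above the identity by a chain of order inequalities that survives passage to the quotient. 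Making this robustness uniform across \emph{all} left orderable quotients, rather than just the constructed one, is the delicate obstacle, and it is precisely where the combinatorics of the Higman-style embedding must be matched to the dynamics of left invariant orders. Once this is arranged, clauses (1) and (2) of strong undecidability hold for this $G$ and $X$, which (via Theorem \ref{Theorem: main3}) is exactly what the subsequent results consume.
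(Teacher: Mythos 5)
Your proposal correctly identifies the general framework (encode an undecidable problem into a recursively presented left orderable group, then pass to a finitely presented left orderable host via the Bludov--Glass embedding theorem), but it has a genuine gap at its core, one you yourself flag as "the delicate obstacle": you need every word $w_n$ with $n \notin A$ to remain nontrivial in \emph{every} left orderable quotient injective on the finite set $X$. This is precisely the step that cannot be carried out as stated. Injectivity on a \emph{finite} set $X$ gives you control over only finitely many inequations, while your single-set encoding demands that infinitely many inequations (one for each $n \notin A$) survive arbitrary quotients; no choice of order-theoretic "pinning" can propagate a finite injectivity hypothesis to infinitely many independent nontriviality constraints, since a quotient is free to collapse any element not forced apart by $X$.

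The paper's proof (due to Darbinyan) sidesteps exactly this difficulty with a different recursion-theoretic device: instead of one r.e. non-recursive set, it fixes a pair of \emph{recursively inseparable} r.e. sets $M, N \subseteq \mathbf{N}$ and encodes them by \emph{equations only}, namely $a_1 = a_i$ for $i \in M$ and $a_2 = a_j$ for $j \in N$ in a countably generated abelian group $A$. Equations automatically persist in every quotient, so nothing needs to be "forced to survive" except the single inequation $\Phi(a_1) \neq \Phi(a_2)$, which is exactly what injectivity on $X = \{\Phi(a_1), \Phi(a_2)\}$ provides. The second key ingredient, which your sketch lacks, is Darbinyan's theorem that $A$ embeds into a $2$-generated, left orderable, \emph{recursively presented} group $G_0$ via a map $\Phi$ with $i \mapsto \Phi(a_i)$ computable; Bludov--Glass then places $G_0$ inside a finitely presented left orderable $G$. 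Given any surjection $\pi \colon G \to H$ onto a left orderable group, injective on $X$, a solvable word problem for $H$ would make $K = \{\,i : (\pi\circ\Phi\circ\pi_0)(a_i) = (\pi\circ\Phi\circ\pi_0)(a_1)\,\}$ a recursive set containing $M$ and disjoint from $N$, contradicting inseparability. Note also that in this construction clause (1) is witnessed by the identity map on $G$ itself (so $G$ is left orderable), whereas your plan leaves open how to produce even one left orderable quotient injective on $X$ compatible with your intended inequation constraints. If you want to repair your argument, the lesson is: trade "infinitely many inequations survive all quotients" for "a recursive separation would exist," which is what recursive inseparability is designed to forbid.
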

\begin{proof}
Fix recursively inseparable recursively enumerable subsets $M,N\subseteq \mathbf{N}$.  (Recall that this means that there is no recursive set $K$ such that $M\subset K$ and $N\cap K=\emptyset$; see \cite{rogers} Section 7.7.) Let $A_0$ be the free abelian group on countably many generators $a_i$. Consider the following abelian quotient $A$ of $A_0$: $$A=\langle  a_1,a_2,\hdots | a_1=a_i \text{ if $i\in M$ and $a_2=a_j$ if $j\in N$}  \rangle.$$ Let $\pi_0$ be the quotient map from $A_0$ to $A$. It was shown in \cite[Theorem 3 (a), (c), (f)]{arman2} that $A$ can be embedded into a $2$-generated, left orderable, recursively presented group $G_0$ by an embedding $\Phi$ such that the map $i\mapsto \Phi(a_i)$ is computable. (By computability, we mean the existence of an algorithm that takes as an input $i\in \mathbf{N}$, 
and produces as an output a word in the finite generating set of $G_0$ that represents the image of $a_i$ in $G_0$.)

Since every recursively enumerable left orderable group embeds in a finitely presented left orderable group (by Theorem \ref{bludovglass}), we can fix a finitely presented left-orderable group $G$ that contains a copy of $G_0$. Moreover, the inclusion $G_0\hookrightarrow G$ is computable as $G$ is finitely presented. Set $F=\{\Phi(a_1),\Phi(a_2)\}\subseteq G$. Then, for the canonical surjection $id_G$ on $G$ is injective when restricted to $F$. It remains to verify that, for any surjection $\pi$ from $G$ to a group $H$ that is injective when restricted to $F$, we have that $H$ has unsolvable word problem. Indeed, if the word problem of $H$ were solvable, then the set $K:=\{i\in \mathbf{N}: (\pi\circ\Phi\circ\pi_0)(a_i)=(\pi\circ\Phi\circ \pi_0)(a_1)\}$ is recursive, contains $M$, and is disjoint from $N$, contradicting the recursive inseparability of $M$ and $N$. 
\end{proof}

\begin{prop}\label{leftrec}
There is a recursively axiomatizable theory $T_{lo}$ whose models are precisely the left orderable groups.
\end{prop}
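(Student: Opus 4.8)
The plan is to use the local characterization of left orderability recorded in Fact~\ref{orderabilityopen}(1) and to convert its one genuinely infinitary clause into a uniformly computable list of universal first-order axioms. The only part of that characterization that is not immediately first-order is the clause ``$id_G$ does not belong to the semigroup generated by $F^E$'', which is an infinite conjunction of inequations, one for each nonempty product of the letters $f_i^{\epsilon_i}$. The whole point of the argument is to show that this clause can be replaced, model-by-model, by its bounded-length approximations, using only the finiteness of the space of sign vectors.

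First I would record a finitary reformulation. Fix $n$ and a tuple $\vec f=(f_1,\dots,f_n)\in G^n$. For a sign vector $E=(\epsilon_1,\dots,\epsilon_n)\in\{1,-1\}^n$ and $m\in\mathbf N$, call $E$ \emph{$m$-good for $\vec f$} if no nonempty product of length at most $m$ in the letters $f_1^{\epsilon_1},\dots,f_n^{\epsilon_n}$ equals $e$. Since $\{1,-1\}^n$ is finite, an infinite-pigeonhole argument shows that some $E$ satisfies $id_G\notin\langle F^E\rangle_{\mathrm{sgp}}$ if and only if for every $m$ there is an $m$-good sign vector for $\vec f$: if we pick an $m$-good $E_m$ for each $m$, then some fixed $E^\ast$ equals $E_m$ for infinitely many $m$, and such an $E^\ast$ is $m$-good for \emph{all} $m$, whence its letters generate a semigroup avoiding the identity. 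Combining this with Fact~\ref{orderabilityopen}(1) yields the clean statement: $G$ is left orderable if and only if, for every $n$ and every $m$, every tuple $\vec f\in G^n$ admits an $m$-good sign vector.

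For fixed $n$ and $m$ this condition is expressed by the single universal sentence
\[
\sigma_{n,m}:=\forall x_1\cdots\forall x_n\ \bigvee_{E\in\{1,-1\}^n}\ \bigwedge_{w}\ w\bigl(x_1^{\epsilon_1},\dots,x_n^{\epsilon_n}\bigr)\neq e,
\]
where the finite inner conjunction ranges over all nonempty positive words $w$ in $n$ symbols of length at most $m$, evaluated with the $i$-th symbol set to $x_i^{\epsilon_i}$. I would then let $T_{lo}$ consist of the (finite) axioms for the theory of groups together with $\{\sigma_{n,m}:n,m\in\mathbf N\}$. By the reformulation of the previous paragraph, a group models $T_{lo}$ precisely when it is left orderable, which is the desired axiomatization; since each $\sigma_{n,m}$ is universal, this also re-exhibits left orderability as a universally axiomatizable class, consistent with its being closed under isomorphism, subgroups, and ultraproducts.

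Finally, for recursiveness I would observe that $\sigma_{n,m}$ is produced by a uniform algorithm from the pair $(n,m)$: one mechanically enumerates the finitely many nonempty words of length at most $m$ in the symbols $x_1^{\pm 1},\dots,x_n^{\pm 1}$ and assembles the disjunction over the $2^n$ sign vectors of the corresponding finite conjunction of inequations. Hence $T_{lo}$ is a recursive (in particular recursively enumerable) set of sentences, as required. The main point to get right is exactly the passage from bounded-length to unbounded products, i.e. the clause that resists encoding as a single first-order sentence; the plan handles this not by a more clever formula but by exploiting the finiteness of $\{1,-1\}^n$ to transfer a family of bounded-length witnesses to one uniform witness, which is precisely where Fact~\ref{orderabilityopen}(1) is invoked.
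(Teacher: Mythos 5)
Your overall strategy is exactly the paper's: invoke Fact \ref{orderabilityopen}(1), pass from the unbounded semigroup condition to its bounded-length truncations via the pigeonhole principle on the finite set $\{1,-1\}^n$, and observe that the resulting universal sentences are uniformly computable. The pigeonhole step itself is carried out correctly. But there is a concrete error in the final formalization: your sentence $\sigma_{n,m}$ omits the hypothesis that $x_1,\ldots,x_n$ are nonidentity elements. As written, $\sigma_{n,m}$ is false in \emph{every} group, including every left orderable one: setting $x_1=\cdots=x_n=e$, the length-one word $x_i^{\epsilon_i}$ evaluates to $e$ for every sign vector $E$, so the inner conjunction fails for all $E$ and the disjunction fails. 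Consequently your theory $T_{lo}$ has no models at all, and in particular does not axiomatize the left orderable groups. The same omission already invalidates your intermediate ``clean statement'': for $G=\mathbf{Z}$ (left orderable) the tuple $\vec f=(e)$ admits no $m$-good sign vector for any $m\geq 1$.

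The source of the slip is a literal reading of Fact \ref{orderabilityopen}(1), which, unlike parts (2) and (3) of the same fact, does not display the restriction to subsets of $G\setminus\{id_G\}$; that restriction is nonetheless essential (a finite set containing the identity never witnesses the criterion), and the paper's proof restores it explicitly, quantifying over $g_1,\ldots,g_n\in G\setminus\{e\}$ and guarding each axiom with an antecedent. The fix for your proof is exactly this: replace $\sigma_{n,m}$ by
$$\forall x_1\cdots\forall x_n\left(\bigwedge_{i=1}^n x_i\neq e\rightarrow \bigvee_{E\in\{1,-1\}^n}\bigwedge_{w} w\bigl(x_1^{\epsilon_1},\ldots,x_n^{\epsilon_n}\bigr)\neq e\right),$$
where $w$ ranges over the nonempty positive words of length at most $m$, and restrict your reformulation (and the pigeonhole equivalence feeding into it) to tuples of nonidentity elements. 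With that change the argument is correct and coincides with the paper's proof.
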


\begin{proof}
The proof hinges on a simple reformulation of the algebraic criteria for left orderability given in Fact \ref{orderabilityopen}, namely the group $G$ is left orderable if and only if, given finitely many $g_1,\ldots,g_n\in G\setminus\{e\}$, there is $E=(\epsilon_1,\ldots,\epsilon_n)\in \{1,-1\}^n$ so that the semigroup generated by $g_1^{\epsilon_1},\ldots,g_n^{\epsilon_n}$ does not contain the identity.  Given $m\in \bf N$, set $S(g_1,\ldots,g_n,E,m)$ to be the set of words in $g_1^{\epsilon_1},\ldots,g_n^{\epsilon_n}$ of length at most $m$.  The Pigeonhole Principle then implies that a group $G$ is left orderable if and only if, given any finitely many $g_1\ldots,g_n\in G\setminus\{e\}$ and any $m\in \bf N$, there is $E=(\epsilon_1,\ldots,\epsilon_n)\in \{1,-1\}^n$ such that $e\notin S(g_1,\ldots,g_n,E,m)$.  Consequently, we can simply let $T_{lo}$ consist of the sentences $\sigma_{m,n}$ for $m,n\in \bf N$, where $\sigma_{m,n}$ is the sentence
$$\forall x_1\cdots \forall x_n\left(\bigwedge_{i=1}^n x_i\not=e\rightarrow \bigvee_{E\in \{1,-1\}^n}\bigwedge_{w\in S(x_1,\ldots,x_n,E,m)}w(x_1,\ldots,x_n)\not=e\right).$$
\end{proof}

%As mentioned in the introduction, the previous theorem implies that there is no comeager isomorphism class in $\cal G$ itself, that is, the enforceable group does not exist.  (This fact is probably well-known amongst the experts, but we were unable to locate a precise reference in the literature.)  We suspect that left orderability is a strongly undecidable property; if this is indeed the case, we would be able to conclude that $\cal G_{lo}$ does not have a comeager isomorphism class.

% When $\cal C=\cal G$, we simply speak of the enforceable group.  The following is probably well-known, but since we could not find it explicitly stated in the literature, we give a proof:

% \begin{thm}
% There is no enforceable group.
% \end{thm}

% \begin{proof}
% Suppose, towards a contradiction, that $G$ is the enforceable group.  Then by \cite[Theorem 4.2.6 and Exercise 4.2.2(a)]{hodges}, $G$ embeds into every e.c. group.  Thus, by a theorem of Macintyre (see, for example, \cite[Theorem 3.4.6]{hodges}), every finitely generated subgroup of $G$ has solvable word problem.  
% However, by Remark \ref{Remark: StronglyUndecidable}, we know that there is a comeager set $\mathcal{X}\subset\mathcal{G}$ such that every isomorphism type in $\mathcal{X}$ contains a finitely generated subgroup with an unsolvable word problem. This is a contradiction.
% %However, since $G$ is also e.c. (since being e.c. is enforceable), we have that $G$ has a finitely generated subgroup with nonsolvable word problem (see \cite[Corollary 3.3.8(b)]{hodges}), whence we have arrived at a contradiction.
% \end{proof}

With regard to Question \ref{Question: main}(2) in general, we point out the following dichotomy, which is immediate from the Baire category theorem:

\begin{prop}
For any property $P$ closed under direct sums, in $\mathcal{G}_P$, either there exists a comeager isomorphism class, or every comeager set contains uncountably many isomorphism classes. 
\end{prop}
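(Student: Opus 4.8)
The plan is to prove the stated dichotomy by contraposition: assuming that some comeager subset of $\mathcal{G}_P$ is represented by only countably many isomorphism classes, I would produce a comeager isomorphism class. The key input, which carries essentially all the content, is the observation already recorded in Remark \ref{scottsentence}: applying Corollary \ref{genericsentence} to the Scott sentence $\sigma_G$ of an arbitrary countable group $G$ shows that the isomorphism class $\{\bd H \in \mathcal{G}_P : H \cong G\} = \{\bd H \in \mathcal{G}_P : \bd H \models \sigma_G\}$ is \emph{either meager or comeager} in $\mathcal{G}_P$. (The hypotheses of Corollary \ref{genericsentence} are met because $\mathcal{G}_P$ is saturated, is a Baire space since it is Polish, and has its set of isomorphism types closed under direct sums by the assumption on $P$; the fact that $\sigma_G$ is an $L_{\omega_1,\omega}$-sentence is exactly what the corollary permits.)

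Granting this, the remainder is a one-line Baire category computation. Suppose $\mathcal{X} \subseteq \mathcal{G}_P$ is comeager and meets only countably many isomorphism classes, say those of $\{G_n : n \in \mathbf{N}\}$, so that $\mathcal{X} \subseteq \bigcup_{n} \{\bd H \in \mathcal{G}_P : H \cong G_n\}$. If every one of these classes were meager, then $\mathcal{X}$ would be contained in a countable union of meager sets and hence meager; but $\mathcal{G}_P$ is a nonempty Baire space, so no comeager subset of it can be meager. Therefore at least one class $\{\bd H : H \cong G_n\}$ is not meager, and by the dichotomy above it must be comeager. Thus a comeager isomorphism class exists, which establishes the contrapositive.

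I expect no genuine obstacle here beyond correctly invoking the Scott-sentence dichotomy; the one point deserving a moment's care is the logical bookkeeping of the contrapositive, namely that the negation of \emph{``every comeager set contains uncountably many isomorphism classes''} is precisely \emph{``there is a comeager set represented by at most countably many isomorphism classes,''} which is the hypothesis I begin from. Once the dichotomy of Remark \ref{scottsentence} is in hand, the conclusion follows immediately from the fact that a countable union of meager sets is meager together with the Baire property of $\mathcal{G}_P$, justifying the assertion that the proposition is immediate from the Baire category theorem.
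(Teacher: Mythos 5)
Your proof is correct and follows essentially the same route as the paper: both rest on the Scott-sentence dichotomy of Remark \ref{scottsentence} (via Corollary \ref{genericsentence}) together with the fact that a comeager subset of a Baire space cannot be covered by countably many meager sets. The only difference is presentational — you argue the contrapositive, while the paper assumes every class is meager and concludes directly — so the two arguments are logically identical.
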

\begin{proof}
We know from Remark \ref{scottsentence} that every isomorphism class is either meager or comeager. Suppose there exists no comeager isomorphism class, i.e, every isomorphism class is meager. Then by the Baire category theorem, no comeager set can be written as a union of countably many isomorphism classes (as they are all meager).
\end{proof}

\subsection{Amenable groups satisfying a law}
We end this section by investigating questions emerging in the context of groups satisfying a law.
Let $w(\vec x)$ be a freely reduced word in the letters $\{x_1^{\pm},...,x_n^{\pm}\}$ and $\vec x=(x_1,...,x_n)$.
Recall that we defined the Polish space of \emph{enumerated groups satisfying the law $w$}, or $\mathcal{G}_w$, as the set  
$$\cal G_w=\bigcap_{\vec a\in \mathbf{N}^{n}}[w(\vec a)=e].$$
We denote the set of isomorphism types in $\mathcal{G}_w$ as $\frak{G}_w$.
Moreover, we recall that $\mathcal{G}_{am,w}=\mathcal{G}_{am}\cap \mathcal{G}_{w}$ and $\frak{G}_{am,w}=\frak{G}_{am}\cap\frak{G}_{w}$.

Some laws imply amenability, e.g. groups satisfying the law $[x,y]=e$ are abelian and hence amenable.  We call a nontrival word $w$ \emph{amenable} if $\frak G_w$ consists only of amenable groups. 
Otherwise, the word $w$ is called \emph{nonamenable}.
The following question is a key consideration.

\begin{question}\label{genericlaw}
Suppose $w$ is a nonamenable law.  Is the generic group satisfying the law $w=e$ nonamenable?

% Are the nonamenable groups comeager in $\cal G_w$?  Are the nonamenable groups model-theoretically generic in $\cal G_w$?
\end{question}

The following is clear:

\begin{lem}
For any word $w$, $\frak G_w$ is closed under direct products.
\end{lem}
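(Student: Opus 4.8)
The plan is to reduce the claim to the elementary fact that word maps are evaluated coordinatewise in a direct product, which is the group-theoretic incarnation of Birkhoff's observation that a variety is closed under products. Recall that, by definition, an isomorphism type $G$ lies in $\frak G_w$ exactly when $G$ satisfies the law $w=e$; that is, $w(\vec a)=e$ for every tuple $\vec a\in G^{n_w}$.

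The key step would be to record the following computation. Suppose $G$ and $H$ both satisfy $w=e$, and consider an arbitrary tuple $\vec c=((a_1,b_1),\ldots,(a_{n_w},b_{n_w}))\in (G\times H)^{n_w}$, writing $\vec a=(a_1,\ldots,a_{n_w})\in G^{n_w}$ and $\vec b=(b_1,\ldots,b_{n_w})\in H^{n_w}$ for its two coordinate projections. Since multiplication and inversion in $G\times H$ are performed coordinatewise, a straightforward induction on the length of $w$ yields $w(\vec c)=(w(\vec a),w(\vec b))$. As $G$ satisfies the law we have $w(\vec a)=e_G$, and likewise $w(\vec b)=e_H$, so $w(\vec c)=(e_G,e_H)=e_{G\times H}$. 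Since $\vec c$ was arbitrary, $G\times H$ satisfies $w=e$, and because the direct product of two countable groups is again countable we conclude $G\times H\in\frak G_w$.

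Finally I would note that there is no real obstacle here beyond bookkeeping about countability: the binary case just sketched extends by induction to any finite direct product, and the same coordinatewise computation shows that the restricted direct sum $\bigoplus_{i} G_i$ of countably many members $G_i\in\frak G_w$ again satisfies the law and remains countable, hence also lies in $\frak G_w$. (The unrestricted product of infinitely many infinite groups would leave the countable setting, which is why one phrases the closure statement through finite products and countable direct sums.)
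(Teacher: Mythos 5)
Your proof is correct and matches the paper's (implicit) argument: the paper simply states this lemma as clear, and the intended justification is exactly your coordinatewise evaluation of word maps. Your added care about countability — interpreting closure via finite products and countable direct sums, since an unrestricted infinite product of countable groups is uncountable — is a reasonable clarification of what the paper uses in practice (embedding any two, or countably many, members into a common member of $\frak G_w$).
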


Since satisfying the law $w=e$ is clearly expressible by a single universal sentence, Lemma \ref{ecgen} immediately implies:

\begin{lem}
For any word $w$, $(\cal G_w)_{\ec}$ is comeager in $\cal G_w$.
\end{lem}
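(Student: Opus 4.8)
The plan is to recognize the lemma as a direct instance of Lemma~\ref{ecgen}, so that essentially no fresh work is required beyond identifying $\cal G_w$ with a space of the form $\cal C_T$ for a suitable universal theory $T$. First I would observe that the property of satisfying the law $w=e$ is axiomatized, over the theory of groups, by the single universal sentence $\forall x_1\cdots\forall x_{n_w}\,(w(x_1,\ldots,x_{n_w})=e)$. Letting $T$ denote the theory of groups together with this one sentence, $T$ is then a universal theory extending the theory of groups.

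Next I would match the two notations. By the very definition $\cal G_w=\bigcap_{\vec a\in \mathbf{N}^{n_w}}[w(\vec a)=e]$, an enumerated group lies in $\cal G_w$ precisely when it satisfies the above universal sentence, so the class of isomorphism types $\frak G_w$ is exactly $\frak C_T$. Since $\cal G_w$ is saturated (being defined by a relabeling-invariant intersection of the clopen sets $[w(\vec a)=e]$), this gives $\cal C_T=\rho^{-1}(\frak C_T)=\cal G_w$. With this identification in hand, I would simply invoke Lemma~\ref{ecgen} applied to $T$: as $T$ is a universal theory extending the theory of groups, that lemma yields that $(\cal C_T)_{\ec}$ is comeager in $\cal C_T$, which is exactly the assertion that $(\cal G_w)_{\ec}$ is comeager in $\cal G_w$.

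I do not expect any substantive obstacle here, since all the real content lives inside Lemma~\ref{ecgen} (density of the e.c. enumerations coming from closure of $\frak C_T$ under direct limits, together with the $G_\delta$ argument for the e.c. set). The only point meriting a moment's care is the bookkeeping that $\frak G_w$ genuinely is universally axiomatizable. This is transparent from the explicit single axiom displayed above, but if one preferred a more structural check it also follows from the ultraproduct/subgroup criterion for universal axiomatizability recorded earlier: obeying a law is inherited by subgroups and is preserved under ultraproducts by \L os' theorem, and it is of course invariant under isomorphism.
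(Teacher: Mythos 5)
Your proposal is correct and is precisely the paper's own argument: the paper states that since satisfying the law $w=e$ is expressible by a single universal sentence, Lemma~\ref{ecgen} immediately applies. Your additional bookkeeping (saturation of $\cal G_w$, the identification $\cal G_w=\cal C_T$, and the ultraproduct/subgroup check) just fills in details the paper leaves implicit.
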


% \begin{proof}
% Just as above, the e.c. elements are dense.  We now show that they are $G_\delta$.  Fix $\varphi(\vec a,x)$.  The set $\{G\in \cal G_{w} \ : \ G\models \exists x \varphi(\vec a,x)\}$ is open.  We claim that so is $$\{G \in \cal G_{w} \ : \ \text{ for all }H\supseteq G \text{ with }H\in \cal G_w, \ H\models \forall y\neg\varphi(\vec a,y)\}.$$ Indeed, suppose that $G$ is in that set.  Then by compactness, there is some $\theta$ in the diagram of $G$ such that, for any $H\in [\theta]$,  we have $H\models\forall y\neg\varphi(\vec a,y)$.  It follows that $G\in [\theta]$ and $[\theta]$ is contained in the displayed set, whence the displayed set is open.  The result follows by intersecting over all such $\varphi(\vec a,y)$.
% \end{proof}

Recall that a group $G$ is \emph{uniformly amenable} if there is a function $f:\bf N\to \bf N$ such that, for any finite $F\subseteq \bf N$ and any $n\geq |F|$, there is $K\subseteq G$ with $|K|\leq f(n)$ such that $K$ is a $(F,\frac{1}{n})$-Folner set for $F$.  The following straightforward fact was observed by Keller in \cite{keller}:

\begin{fact}
$G$ is uniformly amenable if and only some (equiv. every) ultrapower of $G$ is amenable.
\end{fact}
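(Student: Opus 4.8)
The plan is to establish the cycle of implications showing that uniform amenability of $G$, amenability of \emph{every} ultrapower, and amenability of \emph{some} ultrapower are all equivalent; the content is an exact correspondence between the uniform F\o lner bound $f$ and F\o lner sets in an ultrapower. Throughout I would use the F\o lner characterization of amenability recalled in the proof that $\Gam$ is $G_\delta$, {\L}o\'s' theorem, and the existence of nonprincipal ultrafilters. Note first that, since nonprincipal ultrafilters exist, ``every ultrapower is amenable'' trivially implies ``some ultrapower is amenable,'' so only two genuine implications remain.

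\textbf{Uniform amenability implies every ultrapower is amenable.} I would fix a witnessing function $f$ for $G$ and an arbitrary ultrapower $G^{\mathcal U}$, and verify the F\o lner condition directly. Given a finite $\mathcal F=\{(g_1)_{\mathcal U},\ldots,(g_m)_{\mathcal U}\}\subseteq G^{\mathcal U}$ and $\epsilon>0$, choose $n\geq m$ with $1/n<\epsilon$. For $\mathcal U$-almost every $i$ the set $F_i=\{g_1(i),\ldots,g_m(i)\}\subseteq G$ has size at most $n$, so uniform amenability yields an $(F_i,1/n)$-F\o lner set $K_i\subseteq G$ with $|K_i|\leq f(n)$. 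Since the cardinalities $|K_i|$ take only finitely many values, pigeonhole gives a single $s\leq f(n)$ with $|K_i|=s$ for $\mathcal U$-almost every $i$; enumerating $K_i=\{k_i^{(1)},\ldots,k_i^{(s)}\}$ with distinct entries and setting $\kappa_l=(k^{(l)})_{\mathcal U}$ produces $\mathcal K=\{\kappa_1,\ldots,\kappa_s\}\subseteq G^{\mathcal U}$ of size exactly $s$. The key computation, which I would isolate as a standalone sublemma, is that for each $j$ one has $|\mathcal K\setminus (g_j)_{\mathcal U}\mathcal K|=|K_i\setminus g_j(i)K_i|$ for $\mathcal U$-almost every $i$: an index $l$ contributes to the left-hand side exactly when $k_i^{(l)}\notin g_j(i)K_i$ for $\mathcal U$-almost every $i$, and intersecting over the finitely many $l$ shows the two sets of ``missing'' indices agree $\mathcal U$-almost everywhere. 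Since left translation is a bijection, $|(g_j)_{\mathcal U}\mathcal K\,\triangle\,\mathcal K|=|g_j(i)K_i\,\triangle\,K_i|<s/n<\epsilon s$ for $\mathcal U$-almost every $i$, so $\mathcal K$ is an $(\mathcal F,\epsilon)$-F\o lner set and $G^{\mathcal U}$ is amenable.

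\textbf{Some ultrapower amenable implies uniform amenability.} I would prove the contrapositive: if $G$ is \emph{not} uniformly amenable then \emph{no} ultrapower is amenable. Negating the definition (using that the bound $f(n)$ must depend on $n$ alone), there is a fixed $n_0$ such that for every $N\in\mathbf N$ some finite $F\subseteq G$ with $|F|\leq n_0$ admits no $(F,1/n_0)$-F\o lner set of size $\leq N$; fix such an $F_N$. Given any ultrafilter $\mathcal V$ on an index set $I$ — here I would restrict to the countably incomplete case, which covers every nonprincipal ultrafilter on $\mathbf N$ used in the paper — choose $h\colon I\to\mathbf N$ tending to infinity along $\mathcal V$ (from a decreasing sequence $I_n\in\mathcal V$ with empty intersection, set $h(i)=\max\{n : i\in I_n\}$), and put $F^{(i)}=F_{h(i)}=\{g_1^{(i)},\ldots,g_{n_0}^{(i)}\}$. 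Writing $\gamma_j=(g_j^{(i)})_{\mathcal V}$, I claim $\mathcal F=\{\gamma_1,\ldots,\gamma_{n_0}\}$ has no finite F\o lner set in $G^{\mathcal V}$: a putative $(\mathcal F,1/n_0)$-F\o lner set of size $s$ would, by the same symmetric-difference identity as above, yield for $\mathcal V$-almost every $i$ an $(F^{(i)},1/n_0)$-F\o lner set of size $s$ in $G$; but $h(i)\geq s$ for $\mathcal V$-almost every $i$, whence $F^{(i)}=F_{h(i)}$ admits no such set of size $\leq h(i)$, a contradiction. Thus $G^{\mathcal V}$ is nonamenable for every $\mathcal V$, closing the cycle.

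The main obstacle I anticipate is the bookkeeping in the identity $|\mathcal K\setminus (g_j)_{\mathcal U}\mathcal K|=|K_i\setminus g_j(i)K_i|$: it rests on the F\o lner set in the ultrapower having cardinality bounded by its own finite size $s$, so that only finitely many indices $l$ are in play and the ultrafilter may be intersected across them, together with distinctness of the chosen enumerations to keep all cardinalities honest. Once this identity is isolated, both directions are the same computation run forwards and backwards, and the only remaining technical point is the extraction of $h\colon I\to\mathbf N$, which I would either justify via countable incompleteness or sidestep by stating the result for ultrafilters on $\mathbf N$.
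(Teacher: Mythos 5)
The paper does not prove this statement at all: it is quoted as a ``straightforward fact'' observed by Keller, with a citation and no argument, so there is no in-paper proof to compare against. Your proposal is correct and supplies what is essentially the standard (and presumably Keller's) argument: a \L os-style transfer of F\o lner sets between $G$ and its ultrapowers, with the uniform bound $f(n)$ being exactly what makes a F\o lner set in the ultrapower representable coordinatewise. Both directions check out --- the cardinality bookkeeping via the identity $\ab{\mathcal K\setminus (g_j)_{\mathcal U}\mathcal K}=\ab{K_i\setminus g_j(i)K_i}$ ($\mathcal U$-a.e.) is sound, and your negation of uniform amenability (a single $n_0$ working against every candidate bound $N$) is the right one. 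You were also right to flag the one genuine subtlety the paper's phrasing glosses over: ``some ultrapower amenable $\Rightarrow$ uniformly amenable'' fails for principal (or countably complete) ultrafilters, since then $G^{\mathcal U}\cong G$; restricting to countably incomplete ultrafilters, which covers all nonprincipal ultrafilters on $\mathbf N$ used in the paper, is exactly the correct fix.
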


The following fact is also straightforward:

\begin{fact}\label{lawlessfact}
$G$ is lawless if and only if $\bf F_2$ embeds into some (equiv. every) nonprincipal ultrapower of $G$.  
\end{fact}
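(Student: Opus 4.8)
The plan is to prove the two implications around the middle condition ``$\mathbf{F}_2$ embeds into some nonprincipal ultrapower,'' and to extract the ``every'' version as a byproduct of the construction. The easy direction is $(\Leftarrow)$: if $\mathbf{F}_2\le G^\u$ for some nonprincipal ultrafilter $\u$ on $\bf N$, then $G^\u$ is lawless, since $\mathbf{F}_2$ is lawless and any law of $G^\u$ would restrict to a law of its subgroup $\mathbf{F}_2$; because a law is just the universal sentence $\forall\vec x\,(w(\vec x)=e)$ and $G\equiv G^\u$ by \L os' theorem, $G$ is lawless as well.

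For the forward direction I would enumerate the nontrivial reduced words of $\mathbf{F}_2$ as $w_1,w_2,\dots$ and reduce everything to the following \textbf{Key Lemma}: if $G$ is lawless then for every $k$ there is a single pair $(a,b)\in G\times G$ with $w_i(a,b)\neq e$ for all $i\le k$ (equivalently, $G$ satisfies no ``disjunctive law'' $\forall x,y\,\bigvee_{i\le k}(w_i(x,y)=e)$). Granting the lemma, pick for each $n$ a pair $(a_n,b_n)$ simultaneously witnessing $w_1,\dots,w_n$, and set $\alpha=(a_n)_\u$ and $\beta=(b_n)_\u$ in $G^\u$ for an arbitrary nonprincipal $\u$. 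For any fixed $m$, the set $\{n:w_m(a_n,b_n)\neq e\}$ contains every $n\ge m$, hence is cofinite and so belongs to $\u$; thus $w_m(\alpha,\beta)\neq e$. As this holds for all $m$, the pair $(\alpha,\beta)$ freely generates a copy of $\mathbf{F}_2$ inside $G^\u$. Since the argument uses only that $\u$ is nonprincipal, it simultaneously yields the ``every'' form of the statement.

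It remains to prove the Key Lemma, which is the main point. I would argue the contrapositive by induction on $k$: if $G$ satisfies $\forall x,y\,\bigvee_{i=1}^{k}(w_i(x,y)=e)$ with all $w_i$ nontrivial, then $G$ satisfies a nontrivial law. The base case $k=1$ is immediate. For the inductive step I would collapse the first two disjuncts into one. Writing $Z_i=\{(a,b):w_i(a,b)=e\}$, the goal is to find a single nontrivial word $W\in\mathbf{F}_2$ with $Z_1\cup Z_2\subseteq Z_W$, for then $G$ satisfies the shorter disjunction $\forall x,y\,(W=e\vee w_3=e\vee\cdots\vee w_k=e)$ and induction applies. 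Here a dichotomy from free group theory is used. If $w_1$ and $w_2$ do not commute in $\mathbf{F}_2$, take $W=[w_1,w_2]$, which is then nontrivial; whenever $w_1(a,b)=e$ or $w_2(a,b)=e$ the commutator $[w_1,w_2](a,b)$ collapses to $e$, so $Z_1\cup Z_2\subseteq Z_W$. If instead $w_1$ and $w_2$ commute, then (two elements of a free group commute iff they are powers of a common element) $w_1=u^{p}$ and $w_2=u^{q}$ for some $u\neq e$ and $p,q\neq 0$; taking $W=u^{pq}$, which is nontrivial, the pointwise fact that $g^{p}=e$ or $g^{q}=e$ forces $g^{pq}=e$, applied to $g=u(a,b)$, gives $Z_1\cup Z_2\subseteq Z_W$.

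The main obstacle is exactly this Key Lemma, and within it the only delicate point is guaranteeing that the replacement word $W$ is nontrivial in $\mathbf{F}_2$; the commute/non-commute dichotomy together with the description of centralizers (common roots) in free groups handles both cases, so the induction reduces $k$ by one at each step and terminates. Everything else—the easy direction, the elementary-equivalence step, and the diagonal ultrapower construction—is routine, and I would close by recording that the ``some'' and ``every'' formulations are equivalent, which is an immediate consequence of the uniform construction above paired with the easy direction.
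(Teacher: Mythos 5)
The paper records this fact without proof (it is labeled ``straightforward''), so there is no argument of the authors to compare against; judged on its own, your proof is correct and complete. You have correctly identified where the actual content lies: lawlessness hands you a separate witnessing tuple for each nontrivial word, whereas embedding $\mathbf{F}_2$ into $G^{\mathcal{U}}$ requires a \emph{single} pair in $G$ avoiding any prescribed finite set of words, i.e., the failure of every ``disjunctive law.'' Your Key Lemma---that a group satisfying a finite disjunction of laws already satisfies a genuine law---is exactly the classical fact needed to bridge this, and your inductive proof of it is sound: when $w_1,w_2$ do not commute in $\mathbf{F}_2$, the commutator $[w_1,w_2]$ is a nontrivial word vanishing on $Z_1\cup Z_2$; when they do commute, the description of centralizers in free groups gives common roots $w_1=u^p$, $w_2=u^q$, and $u^{pq}$ works, with nontriviality of the replacement word guaranteed in both cases (torsion-freeness of $\mathbf{F}_2$ in the second), so the induction terminates at a single law. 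The remaining steps---the backward direction via \L os' theorem together with the lawlessness of $\mathbf{F}_2$, and the diagonalization producing an embedding into \emph{every} nonprincipal ultrapower over $\mathbf{N}$ (using that cofinite sets lie in any such ultrafilter)---are routine and handled correctly, and your cycle of implications (lawless $\Rightarrow$ every $\Rightarrow$ some $\Rightarrow$ lawless) does establish the parenthetical ``some (equiv.\ every)'' claim in the statement.
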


In other words, $G^\u$ is small if and only if $G$ satisfies some nontrivial word.  It is unknown whether or not von Neumann's problem has a positive solution for ultrapowers, that is, the following question is open:

\begin{question}\label{vndultra}
If $G$ is an amenable group such that $G^\u$ is small, must $G^\u$ be amenable?  In other words, if $G$ is an amenable group that satisfies a nontrivial law, must $G$ be uniformly amenable?
\end{question}

 The following lemma is clear:

\begin{lem}
If $w$ is an amenable word, then every element of $\frak G_w$ is uniformly amenable.
\end{lem}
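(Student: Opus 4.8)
The plan is to derive uniform amenability from the criterion recorded just above, namely Keller's fact that a group is uniformly amenable if and only if some (equivalently, every) ultrapower of it is amenable. So I would fix a group $G\in \frak G_w$ together with a nonprincipal ultrafilter $\u$ on $\bf N$, and reduce the problem to showing that the ultrapower $G^\u$ is amenable; once that is established, Keller's fact immediately delivers the uniform amenability of $G$.

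The crucial point is that obeying the law $w=e$ is a universal property, and hence is transferred to ultrapowers and inherited by subgroups. Concretely, since $G\models \forall\vec x\,(w(\vec x)=e)$, \L os' theorem gives $G^\u\models\forall\vec x\,(w(\vec x)=e)$, so $G^\u$ again obeys the law $w=e$, and therefore so does every one of its subgroups.

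To conclude that the (typically uncountable) group $G^\u$ is amenable, I would invoke the standard fact that a group is amenable precisely when each of its finitely generated subgroups is amenable; this holds because amenability is inherited by subgroups and preserved under directed unions, and $G^\u$ is the directed union of its finitely generated subgroups. Now any finitely generated subgroup $K\leq G^\u$ is countable and obeys $w=e$, so its isomorphism type lies in $\frak G_w$; as $w$ is an amenable word, $K$ is amenable. Hence every finitely generated subgroup of $G^\u$ is amenable, so $G^\u$ itself is amenable, and Keller's fact then yields that $G$ is uniformly amenable.

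The one subtlety worth flagging is that the hypothesis that $w$ is amenable constrains only the \emph{countable} groups obeying $w=e$, whereas $G^\u$ is usually uncountable; this gap is bridged precisely by descending to finitely generated (hence countable) subgroups, where the hypothesis applies, and then reassembling via the locality of amenability. Beyond this bookkeeping there is no genuine obstacle, which is consistent with the lemma being labelled as clear.
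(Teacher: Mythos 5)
Your proof is correct, and it is precisely the argument the paper leaves implicit when it labels this lemma ``clear'': apply Keller's fact (stated immediately before the lemma), transfer the law $w=e$ to the ultrapower via \L os' theorem, and descend to finitely generated subgroups using the locality of amenability. The one pedantic caveat is that a \emph{finite} finitely generated subgroup of $G^{\mathcal U}$ does not literally have its isomorphism type in $\frak G_w$ (enumerated groups are infinite), but this is harmless since finite groups are amenable, so your argument stands as written.
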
  

Consequently, Question \ref{vndultra} is really only interesting when $G$ satisfies a nonamenable law. 

Following typical model-theoretic nomenclature, we call a group \emph{pseudoamenable} if it is elementarily equivalent to an ultraproduct of amenable groups.

\begin{prop}
For a given word $w$, the following are equivalent:
\begin{enumerate}
    \item If $G\in \frak G_{\operatorname{am},w}$, then $G^\u$ is amenable.
    \item $\frak G_{\operatorname{am},w}$ is an elementary class.
    \item $\Gamw$ is closed in $\cal G_w$.
    \item If $G\in \frak G_w$ is pseudoamenable, then $G$ is amenable.
\end{enumerate}
\end{prop}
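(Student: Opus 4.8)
The plan is to establish the cyclic chain $(1)\Rightarrow(3)\Rightarrow(2)\Rightarrow(4)\Rightarrow(1)$. Throughout I read (2) in the strong sense that the class of \emph{all} amenable groups obeying the law $w=e$ (of arbitrary cardinality) is first-order axiomatizable. I will use freely that both amenability and satisfaction of a law are inherited by subgroups and are \emph{local} (determined by finitely generated subgroups), that $\frak G_w$ is closed under direct products and hence under direct sums, and Keller's Fact identifying amenability of an ultrapower $G^\u$ with uniform amenability of $G$.

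For $(1)\Rightarrow(3)$: since $\cal G_w$ is closed in $\cal G$, it suffices to show $\Gamw$ is sequentially closed in $\cal G$, so I take a convergent sequence $\bd G_n\to\bd G$ with each $G_n$ amenable and must produce amenability of $G$. Convergence of multiplication tables yields an embedding $G\hookrightarrow\prod_\u G_n$ for a nonprincipal $\u$ on $\bf N$, exactly by the argument in the proof of Proposition \ref{closeduniversal} and the claim inside Lemma \ref{locunivequiv}. The device that makes this useful is a \textbf{direct-sum-into-ultrapower trick}: setting $B=\bigoplus_n G_n$, the group $B$ is a countable amenable group obeying $w=e$, and the inclusions $G_n\hookrightarrow B$ induce an embedding $\prod_\u G_n\hookrightarrow B^\u$. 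By (1) the ultrapower $B^\u$ is amenable, so $\prod_\u G_n$ and hence its subgroup $G$ is amenable, giving $\bd G\in\Gamw$. Reducing a genuine ultra\emph{product} of the varying groups $G_n$ to a single ultra\emph{power}, to which (1) applies, is the main obstacle, and this trick is what overcomes it.

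For $(3)\Rightarrow(2)$: Proposition \ref{closeduniversal} applies because $\Gamw$ is saturated and its isomorphism types are closed under subgroups; closedness in $\cal G_w$ (equivalently in $\cal G$) therefore yields a universal theory $T$ with $\frak G_{\operatorname{am},w}=\frak C_T$. I then upgrade this to the all-cardinality statement by checking that a group $M$ of any cardinality satisfies $M\models T$ iff $M$ is amenable and obeys $w=e$: universal sentences transfer between $M$ and its finitely generated (hence countable) subgroups, so $M\models T$ forces every such subgroup into $\frak C_T=\frak G_{\operatorname{am},w}$, whence $M$ is amenable and obeys $w$ by locality, and conversely an amenable $w$-group has all its countable subgroups in $\frak C_T$ and thus inherits every universal sentence of $T$.

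For $(2)\Rightarrow(4)$: let $T$ be a first-order axiomatization of the amenable $w$-groups and let $G\in\frak G_w$ be pseudoamenable, say $G\equiv M=\prod_\u A_i$ with each $A_i$ amenable. Since $M$ obeys $w=e$, \L os' theorem forces $\u$-almost every $A_i$ to obey $w=e$; discarding a null set, each $A_i$ is amenable and obeys $w$, hence $A_i\models T$, and \L os' theorem gives $M\models T$, so $G\equiv M$ yields $G\models T$ and $G$ is amenable. Finally, for $(4)\Rightarrow(1)$: given $G\in\frak G_{\operatorname{am},w}$, observe $G$ is pseudoamenable because $G\equiv G^\u$ with $G^\u$ an ultrapower of the amenable group $G$. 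Were $G^\u$ non-amenable it would contain a finitely generated non-amenable subgroup $K$; applying the Downward L\"owenheim--Skolem theorem (Fact \ref{Fact: ElemEquiv}) to $G^\u$ with $X=K$ produces a countable $H\le G^\u$ with $K\subseteq H$ and $H\equiv G^\u\equiv G$. Then $H$ obeys $w=e$ and is pseudoamenable, so (4) forces $H$ amenable, contradicting $K\le H$; hence $G^\u$ is amenable, which is (1).
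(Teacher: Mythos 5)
Your proof is correct, and while it uses the same basic toolkit as the paper (the direct-sum-into-ultrapower trick, Proposition \ref{closeduniversal}), it is organized along a genuinely different route. The paper works with (1) and (2) as a hub: it proves (1)$\Rightarrow$(2) by showing closure under ultraproducts via the direct-sum trick, notes (2)$\Rightarrow$(1) is clear, obtains (2)$\Leftrightarrow$(3) from Proposition \ref{closeduniversal}, proves (1)$\Rightarrow$(4) by embedding a pseudoamenable $G$ into an ultrapower of $\bigoplus_i G_i$ via Lemma \ref{locunivequiv}, and disposes of (4)$\Rightarrow$(1) in one line by declaring $G^\u$ a pseudoamenable member of $\frak G_w$. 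Your cycle (1)$\Rightarrow$(3)$\Rightarrow$(2)$\Rightarrow$(4)$\Rightarrow$(1) differs in three substantive ways. First, you pass from (1) to (3) directly by sequential closure, applying the direct-sum trick to the convergent sequence itself rather than to an abstract ultraproduct. Second, your (2)$\Rightarrow$(4) replaces the paper's embedding argument with a purely \L os-theoretic one; this requires reading (2) in the ``all cardinalities'' sense, which you justify by transferring universal sentences to and from finitely generated subgroups --- a point the paper leaves entirely implicit (the only nit is that finite finitely generated subgroups are not literally in $\frak C_T$, but they embed into their own countably infinite direct sums, which are, so the transfer survives). Third, and most valuably, your (4)$\Rightarrow$(1) repairs a real looseness in the paper: $G^\u$ is uncountable, hence not literally a member of $\frak G_w$, so (4) cannot be applied to it as stated; your use of downward L\"owenheim--Skolem (Fact \ref{Fact: ElemEquiv}) to extract a countable $H\equiv G^\u$ containing a finitely generated non-amenable witness is exactly the missing bridge, and the same device would also clean up the paper's (1)$\Rightarrow$(2), where $\bigoplus_{i\in I}G_i$ need not be countable when $I$ is uncountable. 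In short, the paper's proof is shorter, while yours is more careful about the countable/uncountable distinction that the statement, phrased for classes of countable groups, actually forces one to confront.
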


\begin{proof}
For (1) implies (2), fix a family $(G_i)_{i\in I}$ from $\mathfrak G_{\operatorname{am},w}$ and an ultrafilter $\u$ on $I$.  We must show that $\prod_\u G_i$ is also amenable.  However, setting $G:=\bigoplus_{i\in I}G_i$, we have that $G\in \frak G_{\operatorname{am},w}$, whence $G^\u$ is amenable by (1).  Since $\prod_\u G_i$ embeds into $G^\u$, we have that $\prod_\u G_i$ is amenable, as desired.

% it is enough to show that $\prod_\u G_k$ is amenable whenever $G_k$ are amenable groups and $\u$ is an ultrafilter on $\bf N$. (Check this.) However, setting $G:=\bigoplus_k G_k$, which is amenable, we see that $\prod_\u G_k$ embeds into $G^\u$, which is amenable by (1), whence so is $\prod_\u G_k$. 

(2) implies (1) is clear.  Since $\frak G_{\operatorname{am},w}$ is closed under subgroups, the equivalence of (2) and (3) follows from Corollary \ref{closeduniversal}.

(4) implies (1) follows from the fact that $G^\u$ is a pseudoamenable member of $\frak G_w$ whenever $G$ is an amenable member of $\frak G_w$.  Now suppose that (1) holds and that $G\in \frak G_w$ is pseudoamenable.  By assumption, there is a family $(G_i)_{i\in I}$ of amenable groups and an ultrafilter $\u$ on $I$ such that $G\equiv \prod_\u G_i$.  Since $G$ satisfies the law $w=e$, we have $G_i$ also satisfies the law $w=e$ for $\u$-almost all $i\in I$.  By replacing, for each $i\in \bf N\setminus I$, $G_i$ with an amenable group satisfying $w$, we may as well assume that each $G_i$ satisfies $w$.  Let $H=\bigoplus_{i\in \bf N}G_i$, an amenable group satisfying $w$.  Since $G\equiv \prod_\u G_i$, we have that every system with a solution in $G$ has a solution in $H$.  Thus, by Lemma \ref{locunivequiv}, we have that $G$ embeds into an ultrapower of $H$.  By (1), this ultrapower of $H$ is amenable, whence so is $G$.
\end{proof}

Motivated by item (3) in the previous proposition, we call a word for which the items in the previous proposition hold a \emph{closed word}.  Thus, Question \ref{vndultra} asks whether or not all words are closed.

\subsection{The generic element of $\cal G_{\operatorname{am},w}$}
Since groups satisfying a nontrivial law are automatically small, the results in the previous section motivate us to ask the following:

\begin{question}\label{genericlaw}
Suppose $w$ is a nonamenable law.  Is the generic group satisfying the law $w=e$ nonamenable?

% Are the nonamenable groups comeager in $\cal G_w$?  Are the nonamenable groups model-theoretically generic in $\cal G_w$?
\end{question}

We will need the following fact.
\begin{fact}\label{eclocuniv}
Let $\frak C$ be a set of isomorphism types of countable groups.
Suppose that any two elements of $\frak C$ can be embedded into a common element of $\frak C$ (e.g. when $\frak C$ is closed under direct products).  Then any e.c. element of $\frak C$ is locally universal for $\frak C$.
\end{fact}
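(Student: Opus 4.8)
The plan is to reduce local universality to the purely syntactic systems criterion established \emph{inside} the proof of Lemma \ref{locunivequiv}: for any two countable groups $A$ and $B$ (and a fixed nonprincipal ultrafilter $\u$ on $\bf N$) one has $A\hookrightarrow B^\u$ if and only if every finite system with a solution in $A$ also has a solution in $B$. Granting this, showing that an e.c.\ element $G\in\frak C$ is locally universal amounts to verifying that, for every $K\in\frak C$, each system solvable in $K$ is already solvable in $G$; the ultrafilter then disappears entirely from the argument.

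Concretely, I would fix an e.c.\ element $G\in\frak C$, an arbitrary $K\in\frak C$, and a system $\Sigma(\vec y)$ together with a solution $\vec b\in K$, the goal being to produce a solution of $\Sigma$ in $G$. Invoking the hypothesis that any two members of $\frak C$ embed into a common member, I would pick $M\in\frak C$ with embeddings $\iota\colon G\hookrightarrow M$ and $\kappa\colon K\hookrightarrow M$, and identify $G$ with $\iota(G)\leq M$. Pushing $\vec b$ forward along $\kappa$ shows that $\Sigma(\vec y)$ has a solution in $M$.

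The key step is then to apply existential closedness with an \emph{empty} tuple of coefficients: since $G$ is e.c.\ for $\frak C$ and $M\in\frak C$ contains $G$, the group $G$ is e.c.\ in $M$, and the definition applied to the coefficient-free system $\Sigma(\vec y)$ yields a solution of $\Sigma$ in $G$. As $K$ and $\Sigma$ were arbitrary, the systems criterion of Lemma \ref{locunivequiv} gives $K\hookrightarrow G^\u$ for every $K\in\frak C$, which is precisely local universality of $G$ for $\frak C$.

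I do not anticipate a genuine obstacle, since this is exactly the content of Proposition \ref{Prop: EC}(2); the only points needing care are bookkeeping. First, one must note that $\Sigma$ carries no constants from $G$, so that it is the \emph{empty}-coefficient instance of the e.c.\ definition that is being used. Second, one should record that the internal claim of Lemma \ref{locunivequiv} is formulated for arbitrary countable groups, not merely for members of some $\mathcal G_P$, so it applies verbatim to $G,K\in\frak C$. If a route avoiding that claim were preferred, one could instead use the ``more general'' clause following Proposition \ref{Prop: EC} --- that $G$ e.c.\ in $M$ forces $M$ to embed into $G^\u$ over $G$ --- and simply compose $K\hookrightarrow M\hookrightarrow G^\u$.
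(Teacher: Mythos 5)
Your proof is correct. The paper itself states this Fact without proof (it is just a restatement of Proposition \ref{Prop: EC}(2), which is likewise left unproven), and your argument --- amalgamate $G$ and $K$ into a common $M\in\frak C$, apply existential closedness of $G$ in $M$ to the coefficient-free system, and then invoke the systems criterion from the proof of Lemma \ref{locunivequiv} (or, alternatively, the ``more general'' clause of Proposition \ref{Prop: EC} followed by composing $K\hookrightarrow M\hookrightarrow G^\u$) --- is exactly the intended argument, with the bookkeeping points you flag handled correctly.
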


The following theorem shows us that all possible ways of making the word generic precise in the previous question lead to the same conclusion:

\begin{thm}
The following are equivalent:
\begin{enumerate}
    \item Every locally universal element of $\cal G_w$ is nonamenable.
    \item Every e.c. element of $\cal G_w$ is nonamenable.
    \item $\cal G_w\setminus \cal G_{\operatorname{am},w}$ is comeager in $\cal G_w$.
    \item Being nonamenable is a $\cal G_w$-enforceable property.
\end{enumerate}
\end{thm}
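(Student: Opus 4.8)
The plan is to prove the theorem by establishing the cycle $(1)\Rightarrow(2)\Rightarrow(3)\Rightarrow(1)$ together with the equivalence $(3)\Leftrightarrow(4)$, relying on the enforceability/genericity machinery already set up. The equivalence $(3)\Leftrightarrow(4)$ is immediate from Theorem \ref{baireenforceable}: take $\mathcal{C}=\mathcal{G}_w$, which is saturated and, being closed in $\mathcal{G}$, is a Baire space, and let $P$ be the property of nonamenability. Then $P$ is invariant (it depends only on the isomorphism type) and Baire measurable (indeed Borel, since $\mathcal{G}_{am,w}=\mathcal{G}_{am}\cap\mathcal{G}_w$ is $G_\delta$ by the Folner description of amenability). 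As $\mathcal{C}_P=\mathcal{G}_w\setminus\mathcal{G}_{am,w}$, Theorem \ref{baireenforceable} says precisely that this set is comeager if and only if nonamenability is $\mathcal{G}_w$-enforceable, which is $(3)\Leftrightarrow(4)$.

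The implication $(1)\Rightarrow(2)$ is trivial: every e.c.\ element of $\mathcal{G}_w$ is locally universal by Fact \ref{eclocuniv} (applicable since $\frak G_w$ is closed under direct products), so if all locally universal groups are nonamenable, then so are all e.c.\ groups. For $(2)\Rightarrow(3)$, note that $\mathcal{G}_w=\mathcal{C}_T$ for the universal theory $T$ consisting of the group axioms together with the single sentence $\forall\vec x\,(w(\vec x)=e)$; hence by Lemma \ref{ecgen} the set $(\mathcal{G}_w)_{\ec}$ is comeager in $\mathcal{G}_w$. If $(2)$ holds, this comeager set is contained in the set of nonamenable groups, so $\mathcal{G}_w\setminus\mathcal{G}_{am,w}$ is comeager, which is $(3)$.

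The heart of the argument, and the step I expect to be the main obstacle, is $(3)\Rightarrow(1)$, which I would prove via the following sharp contrapositive: \emph{if some locally universal $H_0\in\mathcal{G}_w$ is amenable, then $\mathcal{G}_{am,w}$ is comeager} (so $(3)$ fails). This is an all-or-nothing phenomenon — a single amenable locally universal witness forces amenability to be generic. To prove it, write $\mathcal{G}_{am,w}=\bigcap_{\vec a,\epsilon}A_{\vec a,\epsilon}$, where $A_{\vec a,\epsilon}=\bigcup_{\vec b}U_{\vec a,\vec b,\epsilon}$ is the open Folner condition from the proof that $\mathcal{G}_{am}$ is $G_\delta$; it suffices to show each $A_{\vec a,\epsilon}$ is dense. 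Given a nonempty basic open set $[\Delta(\vec c)]_w$, the system $\Delta$ (augmented with the inequations $c_i\neq c_j$) is a $w$-system, hence solvable in $\mathcal{G}_w$; since $H_0$ is locally universal, the Claim inside the proof of Lemma \ref{locunivequiv} (every system solvable in a member of $\mathcal{G}_w$ is solvable in $H_0$) yields a solution $\vec h$ in $H_0$ with distinct entries. Fix a bijection $\beta\colon\mathbf{N}\to H_0$ with $\beta(c_i)=h_i$; the resulting enumeration $\bd H_0$ lies in $[\Delta(\vec c)]_w$. The elements occupying positions $\vec a$ form a finite set $F'\subseteq H_0$, and since $H_0$ is amenable, $F'$ admits a finite $(F',\epsilon)$-Folner set $K$; placing $K$ at the positions $\vec b:=\beta^{-1}(K)$ witnesses $\bd H_0\in U_{\vec a,\vec b,\epsilon}$. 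Hence $\bd H_0\in[\Delta(\vec c)]_w\cap A_{\vec a,\epsilon}$, establishing density and therefore comeagerness of $\mathcal{G}_{am,w}$.

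With this lemma in hand, $(3)\Rightarrow(1)$ follows: if $\mathcal{G}_w\setminus\mathcal{G}_{am,w}$ is comeager then $\mathcal{G}_{am,w}$ is meager, hence not comeager (as $\mathcal{G}_w$ is a nonempty Baire space), so no locally universal group can be amenable, i.e.\ every locally universal group is nonamenable. Combined with $(3)\Leftrightarrow(4)$ this closes the cycle and gives the equivalence of all four conditions. The delicacy of $(3)\Rightarrow(1)$ is exactly that one must upgrade a single amenable witness to genericity of amenability, and this is where both hypotheses on $H_0$ are used in tandem: local universality realizes every finite configuration $[\Delta(\vec c)]_w$ as an enumeration of $H_0$, while amenability of $H_0$ then supplies a Folner set for whichever elements land at the designated positions $\vec a$.
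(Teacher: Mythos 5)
Your proposal is correct, and three of its four steps coincide exactly with the paper's proof: you cite Fact \ref{eclocuniv} for (1)$\Rightarrow$(2), Lemma \ref{ecgen} for (2)$\Rightarrow$(3), and Theorem \ref{baireenforceable} for (3)$\Leftrightarrow$(4), which is precisely what the paper does. Where you genuinely diverge is in closing the cycle. The paper proves (4)$\Rightarrow$(1) by invoking Proposition \ref{AVE} with $\mathcal{C}=\mathcal{G}_w$ and $\mathcal{D}=\mathcal{G}_{\operatorname{am},w}$ (Polish, since amenability is $G_\delta$): an amenable locally universal element would make membership in $\mathcal{G}_{\operatorname{am},w}$ a $\mathcal{G}_w$-enforceable property, which is incompatible with enforceability of nonamenability. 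You instead prove (3)$\Rightarrow$(1) directly, showing that a single amenable locally universal $H_0$ forces $\mathcal{G}_{\operatorname{am},w}$ to be comeager: local universality (via the Claim in Lemma \ref{locunivequiv}) lets you re-enumerate $H_0$ so as to land in any nonempty basic open set $[\Delta(\vec c)]_w$, and amenability of $H_0$ then places that enumeration inside each open F\o lner condition $A_{\vec a,\epsilon}$, giving density. This is, in substance, a by-hand proof of exactly the special case of Proposition \ref{AVE} combined with Theorem \ref{baireenforceable} that the paper uses; your version is more self-contained and elementary for this step (it never touches the forcing game, working purely with Baire category and F\o lner sets), while the paper's route is shorter given machinery already built and isolates the reusable principle that a locally universal witness inside a Polish subspace $\mathcal{D}$ makes membership in $\mathcal{D}$ generic. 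One small imprecision: the augmentation of $\Delta$ should consist of the inequations $x_i\neq x_j$ between \emph{variables} whose corresponding coefficients $c_i, c_j$ are distinct (and identifications of variables where $c_i=c_j$), so that the solution found in $H_0$ can be matched with the positions $\vec c$ by a bijection; this is clearly what you intend, and the argument goes through.
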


\begin{proof}
(1) implies (2) follows from Fact \ref{eclocuniv}.  (2) implies (3) follows from Lemma \ref{ecgen}. The equivalence of (3) and (4) follows from Theorem \ref{baireenforceable}. Finally, (4) implies (1) follows from Proposition \ref{AVE} and the fact that the amenable groups form a Polish space.
\end{proof}

% All possible precise versions of the previous question must have the same answer:

% \begin{thm}
% For a given word $w$, the following are equivalent:  
% \begin{enumerate}
%     \item $\cal G_w\setminus \cal G_{\operatorname{am},w}$ is comeager in $\cal G_w$.
%         \item Every locally universal element of $\frak G_w$ is nonamenable.
%     \item Every e.c. element of $\frak G_w$ is nonamenable. 

% \end{enumerate}
% \end{thm}

% \begin{proof}
% (1) implies (2) follows from \ref{locunivgenprop}.  (2) implies (3) follows from Fact \ref{eclocuniv} using the fact that $\frak G_w$ is closed under direct products.  Finally, (3) implies (1) follows from Lemma \ref{ecgen}.
% % (1) implies (2) by Corollary \ref{locunivgenprop}.  (2) implies (3) follows from the fact that $\frak G_w$ is closed under direct products and (3) implies (1) follows from Lemma \ref{ecgen}.
% % Suppose first that (1) holds.  Then by the Baire Category Theorem, $\cal G_w\setminus \Gamw$ has nonempty interior, say it contains the nonempty open set $[\Sigma(\vec a)]_{\cal G_w}$.  Fix an e.c. group $G$ and take $\bd H\in [\Sigma(\vec a)]_{\cal G_w}$.  Since $G$ is e.c. in $G\times H$, there is a tuple $\vec g\in G$ of distinct elements such that $G\models \Sigma(\vec g)$.  Let $\bd G$ be an enumeration of $G$ such that $\vec a$ enumerates $\vec g$.  It follows that $\bd G\in [\Sigma(\vec a)]_{\cal G_w}$, whence $G$ is nonamenable.

% % (2) implies (3) follows from the fact that $\frak G_w$ is closed under direct products and (3) follows from Theorem \ref{locunivcomeager}.
% \end{proof}

The connection between Question \ref{vndultra} and the amenability of the generic element of $\cal G_w$ is the following:

\begin{cor}
If $w$ is a closed nonamenable word, then $\cal G_w\setminus \cal G_{\operatorname{am},w}$ is comeager in $\cal G_w$.
\end{cor}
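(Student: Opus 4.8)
The plan is to deduce the corollary directly from the preceding theorem by verifying its condition (1), namely that \emph{every locally universal element of $\cal G_w$ is nonamenable}; since that theorem asserts the equivalence of (1) with condition (3), which is exactly the statement $\cal G_w\setminus \Gamw$ being comeager in $\cal G_w$, this will finish the proof.

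To carry this out, I would fix an arbitrary locally universal element $G\in \cal G_w$. Because $w$ is a nonamenable word, by definition $\frak G_w$ does not consist solely of amenable groups, so I may fix a nonamenable group $H\in \frak G_w$. Local universality of $G$ for $\cal G_w$ gives, for any nonprincipal ultrafilter $\u$, an embedding $H\hookrightarrow G^\u$. Now suppose toward a contradiction that $G$ is amenable, so that $G\in \frak G_{\operatorname{am},w}$. Here is where the hypothesis that $w$ is a \emph{closed} word enters: item (1) of the proposition characterizing closed words applies to $G$ and yields that $G^\u$ is amenable. Since amenability is inherited by subgroups, the embedding $H\hookrightarrow G^\u$ forces $H$ to be amenable, contradicting the choice of $H$. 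Hence $G$ is nonamenable, establishing condition (1) of the theorem.

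Having verified (1), the equivalence (1) $\Leftrightarrow$ (3) from the theorem immediately gives that $\cal G_w\setminus \Gamw$ is comeager in $\cal G_w$, as desired. I do not expect any genuine obstacle here: the conceptual work is already packaged into (a) the equivalence theorem, which reduces comeagerness to a statement about locally universal groups, and (b) the closed-word proposition, which converts amenability of $G$ into amenability of its ultrapower. The only point requiring slight care is making sure the witness $H$ is drawn from $\frak G_w$ (so that local universality actually applies to it) and that it is genuinely nonamenable, both of which are guaranteed verbatim by the definition of a nonamenable word. In short, the result is a formal consequence of the two preceding statements once one identifies condition (1) of the theorem as the correct target.
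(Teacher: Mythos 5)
Your proposal is correct and follows essentially the same route as the paper: both verify condition (1) of the preceding equivalence theorem (every locally universal element of $\cal G_w$ is nonamenable) by using closedness of $w$ to pass amenability of a locally universal group to its ultrapower, then embedding elements of $\frak G_w$ into that ultrapower and using that amenability is inherited by subgroups. The only cosmetic difference is that the paper states this as a contrapositive (an amenable locally universal element would force $w$ to be an amenable word), whereas you argue by contradiction with a single nonamenable witness $H\in\frak G_w$; the underlying argument is identical.
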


\begin{proof}
Suppose that $w$ is a closed word and that $H$ is an amenable group that is locally universal for $\frak G_w$.  Since $w$ is closed, every ultrapower of $H$ is also amenable, whence so is every element of $\frak G_w$ since $H$ is locally universal for $\frak G_w$.  Consequently, $w$ is an amenable word.
% The nonamenable elements are open since $w$ is a closed law.  They are dense since $w$ is nonamenable.
\end{proof}

\begin{question}
Does the converse to the previous corollary hold?
\end{question}

\subsection{A test case}

We now consider one case where we might be able to establish that the generic element of $\cal G_w$ is nonamenable.

For sufficiently large odd $n$, we let ${\gos}_n$ denote the group constructed by Olshanskii and Sapir in \cite{olshanskiisapir}.  We note that ${\gos}_n$ is a finitely presented, small, nonamenable group.  Moreover, ${\gos}_n$ satisfies the law $w_n:=[x,y]^n=e$ and contains the free Burnside group $B(2,n)$ of exponent $n$.\footnote{Recall that $B(2,n)$ is the group generated by $x$ and $y$ subject to the relations $w^n=e$ for all nontrivial words $w=w(x,y)$.  For $n$ sufficiently large and odd, $B(2,n)$ is infinite.}

% \begin{defn}
% For sufficiently large odd $n$, let $\gos_n$ denote the group constructed by Olshanskii and Sapir \cite{} with the following properties:
% \begin{enumerate}
%     \item $\gos_n$ is finitely presented;
%     \item $\gos_n$ is nonamenable and small;
%     \item $\gos_n$ satisfies the law $w_n:=[x,y]^n=e$;
%     \item $\gos_n$ contains the free Burnside group $B(2,n)$ of exponent $n$ with two generators.
% \end{enumerate}
% \end{defn}

We believe that the following question is still open.

\begin{question}\label{burnside}
For sufficiently large odd $n$, is $B(2,n)$ residually amenable?
\end{question}

The connection with the above discussion is the following:

\begin{thm}
Either $B(2,n)$ is residually amenable or else $\cal G_{w_n}\setminus \cal G_{\operatorname{am},w_n}$ is comeager in $\cal G_{w_n}$.
\end{thm}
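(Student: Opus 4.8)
The plan is to establish the contrapositive of the first alternative: assuming that $\cal G_{w_n}\setminus\cal G_{\operatorname{am},w_n}$ is \emph{not} comeager in $\cal G_{w_n}$, I would deduce that $B(2,n)$ is residually amenable. The starting point is the characterization theorem of the previous subsection, applied to the word $w=w_n$: the failure of condition $(3)$ there forces the failure of condition $(1)$, so there exists a locally universal element $H\in\cal G_{w_n}$ that is amenable. I fix such an $H$ and regard it as a countable amenable group.

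Next I would use the finitely presented, nonamenable overgroup $\gos_n$ of $B(2,n)$ as a bridge. Since $\gos_n$ satisfies the law $w_n$, it belongs to $\frak G_{w_n}$, so by local universality (together with the Claim in the proof of Lemma \ref{locunivequiv}) every finite system with a solution in $\gos_n$ also has a solution in $H$. Writing a finite presentation $\gos_n=\langle x_1,\dots,x_d\mid r_1,\dots,r_s\rangle$, for each nontrivial $g\in\gos_n$ represented by a word $w_g$ in the generators I would form the system $\Sigma_g:=\{r_1=e,\dots,r_s=e,\ w_g\neq e\}$, which is solved in $\gos_n$ by the generators themselves. By the above, $\Sigma_g$ has a solution $\vec b=(b_1,\dots,b_d)$ in $H$; since the relators are killed, the assignment $x_i\mapsto b_i$ extends, by the universal property of the presentation, to a homomorphism $\phi_g\colon\gos_n\to H$ with $\phi_g(g)=w_g(\vec b)\neq e$. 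As $H$ is amenable and $g$ was arbitrary, this exhibits $\gos_n$ as residually amenable.

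Finally, since $B(2,n)\leq\gos_n$ and residual amenability passes to subgroups (one simply restricts each separating homomorphism to the subgroup), $B(2,n)$ would be residually amenable, completing the contrapositive.

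I expect the crux of the argument to be the upgrade from \emph{local} embeddability into amenable groups---which is all that a bare embedding of $\gos_n$ into an ultrapower $H^\u$ provides---to honest residual amenability. This upgrade is available precisely because $\gos_n$ is finitely presented: all of its defining relators can be collected into a single finite system, so a solution in $H$ yields a genuine homomorphism rather than merely a partial map compatible with finitely many products. This is exactly why the argument must be routed through the finitely presented group $\gos_n$ rather than applied to $B(2,n)$ directly, since $B(2,n)$ is infinitely presented for large odd $n$, and local embeddability alone would not suffice to conclude residual amenability for it.
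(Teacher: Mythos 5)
Your proof is correct and takes essentially the same route as the paper: extract an amenable locally universal element $H$ of $\cal G_{w_n}$ from the failure of comeagerness via the equivalence theorem of the preceding subsection, use the finite presentability of $\gos_n$ to convert solvability of systems in $H$ into genuine separating homomorphisms $\gos_n\to H$, and pass residual amenability down to the subgroup $B(2,n)$. The only difference is cosmetic: the paper cites Lemma \ref{ultraargument} for the middle step, whereas you reprove that lemma inline.
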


\begin{proof}
If there is an amenable group that is locally universal for $\frak G_{w_n}$, then ${\gos}_n$ is residually amenable, whence so is $B(2,n)$.
\end{proof}

In \cite{weiss}, Weiss asked if the free Burnside groups $B(m,n)$ are sofic.  Since this still remains an open question\footnote{At least to the best of our knowledge.}, we believe that either Question \ref{burnside} is still open or else it has a negative answer, for residually amenable groups are sofic.

%The goal of this paper is to use the above described machinery and apply it to a large variety of special subspaces of the space of enumerated groups, and extract interesting information. This is where we must introduce some historical context of how important these subspaces have been to 20th and 21st century group theory.  

\bibliographystyle{plain}
\bibliography{bib}{}

\def\lfhook#1{\setbox0=\hbox{#1}{\ooalign{\hidewidth
  \lower1.5ex\hbox{'}\hidewidth\crcr\unhbox0}}}
\begin{thebibliography}{10}

\bibitem{bartholdi}
Laurent Bartholdi.
\newblock Amenability of groups and $g$-sets, 2017.

\bibitem{bekkadelaharpevalette2008}
Bachir Bekka, Pierre de~la Harpe, and Alain Valette.
\newblock {\em Kazhdan's {Property} {(T)}}.
\newblock New Mathematical Monographs. Cambridge University Press, 2008.

\bibitem{bludovglass2}
V.~V. Bludov and A.~M.~W. Glass.
\newblock On free products of right ordered groups with amalgamated subgroups.
\newblock {\em Mathematical Proceedings of the Cambridge Philosophical
  Society}, 146(3):591–601, 2009.

\bibitem{bludovglass}
V.~V. Bludov and A.~M.~W. Glass.
\newblock {Word problems, embeddings, and free products of right-ordered groups
  with amalgamated subgroup}.
\newblock {\em Proceedings of the London Mathematical Society}, 99(3):585--608,
  04 2009.

\bibitem{BLT}
Christian Bonatti, Yash Lodha, and Michele Triestino.
\newblock {Hyperbolicity as an obstruction to smoothability for one-dimensional
  actions}.
\newblock {\em Geometry \& Topology}, 23(4):1841 -- 1876, 2019.

\bibitem{BridsonDeLaHarpe}
Martin~R. Bridson, Pierre de~la Harpe, and Victor Kleptsyn.
\newblock {The Chabauty space of closed subgroups of the three-dimensional
  Heisenberg group}.
\newblock {\em Pacific J. Math.}, 240(1):1--48, 2009.

\bibitem{Steenbock}
Martin~R. Bridson, Pierre de~la Harpe, and Victor Kleptsyn.
\newblock {Rips-Segev torsion-free groups without the unique product property}.
\newblock {\em J. Algebra}, 438:337--378, 2015.

\bibitem{burilllodhareeves}
José Burillo, Yash Lodha, and Lawrence Reeves.
\newblock Commutators in groups of piecewise projective homeomorphisms.
\newblock {\em Advances in Mathematics}, 332, 09 2015.

\bibitem{arman2}
ARMAN DARBINYAN.
\newblock Computability, orders, and solvable groups.
\newblock {\em The Journal of Symbolic Logic}, 85(4):1588–1598, Oct 2020.

\bibitem{deroinnavasrivas}
B.~Deroin, A.~Navas, and C.~Rivas.
\newblock Groups, orders, and dynamics.
\newblock 08 2014.

\bibitem{GOD}
B.~Deroin, A.~Navas, and C.~Rivas.
\newblock Groups, orders, and dynamics, 2016.

\bibitem{gardam}
Giles Gardam.
\newblock A counterexample to the unit conjecture for group rings, 2021.

\bibitem{GhysSergiescu}
Etienne Ghys and Vlad Sergiescu.
\newblock Sur un groupe remarquable de diff{\'e}omorphismes du cercle.
\newblock {\em Commentarii Mathematici Helvetici}, 62(1):185--239, 1987.

\bibitem{gleb17}
Lev Glebsky.
\newblock Approximations of groups, characterizations of sofic groups, and
  equations over groups.
\newblock {\em Journal of Algebra}, 477:147–162, May 2017.

\bibitem{grigorchuk84}
R.~I. Grigorchuk.
\newblock Degrees of growth of finitely generated groups and the theory of
  invariant means.
\newblock {\em Izv. Akad. Nauk SSSR Ser. Mat.}, 48(5):939--985, 1984.

\bibitem{grigorchuk98}
R.~I. Grigorchuk.
\newblock An example of a finitely presented amenable group that does not
  belong to the class {EG}.
\newblock {\em Mat. Sb.}, 189(1):79--100, 1998.

\bibitem{GrigorchukGrowth}
Rostislav Grigorchuk.
\newblock Degrees of growth of finitely generated groups and the theory of
  invariant means.
\newblock {\em Math. USSR Izv.}, 25(2):259--300, 1985.

\bibitem{Gromov}
Mikhael Gromov.
\newblock Degrees of growth of finitely generated groups and the theory of
  invariant means.
\newblock {\em Publ. Math. Inst. Hautes Etudes Sci.}, 53:53--78, 1981.

\bibitem{hodges}
W.~Hodges.
\newblock {\em Building models by games}, volume~2 of {\em London Mathematical
  Society Student Texts}.
\newblock Cambridge University Press, Cambridge, 1985.

\bibitem{juschenkomonod}
Kate Juschenko and Nicolas Monod.
\newblock Cantor systems, piecewise translations and simple amenable groups.
\newblock {\em Ann. of Math. (2)}, 178(2):775--787, 2013.

\bibitem{keller}
Gordon Keller.
\newblock Amenable groups and varieties of groups.
\newblock {\em Illinois J. Math.}, 16:257--269, 1972.

\bibitem{Dunfield}
Steffen Kionke and Jean Raimbault.
\newblock {On Geometric Aspects of Diffuse Groups}.
\newblock {\em Documenta Mathematica}, 21:873–915, 2016.
\newblock {With an Appendix by Nathan Dunfield}.

\bibitem{lodhamoore}
Yash Lodha and Justin Moore.
\newblock A nonamenable finitely presented group of piecewise projective
  homeomorphisms.
\newblock {\em Groups, Geometry, and Dynamics}, 10:177--200, 01 2016.

\bibitem{CFMiller}
C.F.~III Miller.
\newblock The word problem in quotients of a group.
\newblock {\em Aspects of Effective Algebra, ed. J.N. Crossley, Proceedings of
  a conference at Monash University, August 1979, Upside Down A Book Company,
  Steel's Creek}, pages 246--250, 08 1979.

\bibitem{minasyanosinwitzel}
A.~Minasyan, D.~Osin, and S.~Witzel.
\newblock Quasi-isometric diversity of marked groups.
\newblock {\em arXiv: Group Theory}, 2019.

\bibitem{NavasSmooth}
Andres Navas.
\newblock {A finitely generated, locally indicable group with no faithful
  action by $C^1$ diffeomorphisms of the interval }.
\newblock {\em Geometry \& Topology.}, 14:573--584, 2010.

\bibitem{Neumann1952ANO}
B.~H. Neumann.
\newblock A note on algebraically closed groups.
\newblock {\em Journal of The London Mathematical Society-second Series}, pages
  247--249, 1952.

\bibitem{NEUMANN1973553}
B.H. Neumann.
\newblock The isomorphism problem for algebraically closed groups.
\newblock In W.W. Boone, F.B. Cannonito, and R.C. Lyndon, editors, {\em Word
  Problems}, volume~71 of {\em Studies in Logic and the Foundations of
  Mathematics}, pages 553--562. Elsevier, 1973.

\bibitem{olshanskii}
Alexander~Yu. Olshanskii.
\newblock On the question of the existence of an invariant mean on a group.
\newblock {\em Uspekhi Mat. Nauk}, 35(4(214)):199--200, 1980.

\bibitem{olshanskiisapir}
Alexander~Yu. Olshanskii and Mark~V. Sapir.
\newblock Non-amenable finitely presented torsion-by-cyclic groups.
\newblock {\em Publ. Math. Inst. Hautes \'{E}tudes Sci.}, (96):43--169 (2003),
  2002.

\bibitem{osinannals}
D.~Osin.
\newblock Small cancellations over relatively hyperbolic groups and embedding
  theorems.
\newblock {\em Annals of Mathematics}, 172(1):1--39, 2010.

\bibitem{pestov_2008}
Vladimir~G. Pestov.
\newblock Hyperlinear and sofic groups: A brief guide.
\newblock {\em Bulletin of Symbolic Logic}, 14(4):449–480, 2008.

\bibitem{grigorchukkravchenko}
R.Kravchenko R.~Grigorchuk and A.Olshanskii.
\newblock Constructions of torsion-free countable, amenable, weakly mixing
  groups.
\newblock {\em L'Enseignement Mathematique}, 61:321--342.

\bibitem{rogers}
H.~Rogers~Jr.
\newblock Theory of recursive functions and effective computability., 1967.

\bibitem{weiss}
Benjamin Weiss.
\newblock Sofic groups and dynamical systems.
\newblock {\em Sankhy\={a} Ser. A}, 62(3):350--359, 2000.
\newblock Ergodic theory and harmonic analysis (Mumbai, 1999).

\end{thebibliography}

\end{document}